\newtheorem{thm}{Theorem}[section] 
\newtheorem{cor}[thm]{Corollary}
\newtheorem{prop}[thm]{Proposition}
\newtheorem{lem}[thm]{Lemma}
\theoremstyle{definition} 
\newtheorem{defn}[thm]{Definition}
\newtheorem{eg}[thm]{Example} 
\theoremstyle{remark}
\newtheorem{rem}[thm]{Remark}
\newtheorem{ques}[thm]{Question}
\newtheorem{cl}{Claim}
\newtheorem{step}{Step}
\newtheorem*{ack}{Acknowledgements}
\title{Splitting of algebraic fiber spaces with nef relative anti-canonical divisor and \\ decomposition of $F$-split varieties}
\author{Sho Ejiri}
\address{Department of Mathematics, Graduate School of Science, Osaka Metropolitan University, Osaka City, Osaka 558-8585, Japan}
\email{shoejiri.math@gmail.com}
\subjclass[2020]{14G17,~14D06,~14J40}
\keywords{positive characteristic, algebraic fiber space, relative anti-canonical divisor, rational point}
\begin{document}
\maketitle
\markboth{SHO EJIRI}{Algebraic fiber spaces with nef relative anti-canonical divisor in positive characteristic}
\begin{abstract}
In this paper, we prove that an algebraic fiber space $f:X\to Y$ over a perfect field $k$ of characteristic $p>0$ with nef relative anti-canonical divisor $-K_{X/Y}$ splits into the product after taking the base change along a finite cover of $Y$, if the geometric generic fiber has mild singularities and if one of the following conditions holds:~(i) $k\subseteq\overline{\mathbb F_p}$;~(i\hspace{-1pt}i) $\pi^{\textup{\'et}}(Y)$ is finite; (i\hspace{-1pt}i\hspace{-1pt}i) $-K_X$ is semi-ample and $Y$ is an abelian variety. 
As its application, we generalize Patakfalvi and Zdanowicz's Beauville--Bogomolov decomposition in positive characteristic to the case when the anti-canonical divisor is numerically equivalent to a semi-ample divisor, which is applied to study the abundance conjecture in a new case and the fundamental group of an $F$-split variety with semi-ample anti-canonical divisor. 
We also show that a variety over a finite field with nef anti-canonical divisor satisfying some conditions has a rational point. 
Furthermore, to show the splitting theorem, we generalize partially Popa and Schnell's global generation theorem and Viehweg's weak positivity theorem to the case of generalized pairs in positive characteristic. 
\end{abstract}
\tableofcontents
\section{Introduction} \label{section:intro}
\subsection{Positivity of relative anti-canonical divisors}
The positivity of the relative anti-canonical divisor $-K_{X/Y}=-K_X+f^*K_Y$ restricts the geometric structure of an algebraic fiber space $f:X\to Y$. 
Here, an algebraic fiber space is a surjective morphism between smooth projective varieties with connected fibers that induces a separable extension of function fields.
Koll\'ar, Miyaoka and Mori~\cite[Corollary~2.8]{KMM92} proved that if $-K_{X/Y}$ is ample, then $Y$ is a point.
The same statement was shown in \cite[Corollary~4.10]{Eji19w} in the case when $-K_{X/Y}$ is nef and big. 
In characteristic zero, such theorems are generalized in terms of the augmented base locus $\mathbb B_+(-K_{X/Y})$ in \cite[Theorem~1.1]{EIM23}. 
 
Over the field of complex numbers $\mathbb C$, an argument due to Cao and H\"oring~\cite{CH19} implies that if $-K_{X/Y}$ is nef, then $f$ is locally constant, and in particular it is locally trivial. (For the singular case, see \cite{CCM21} and \cite[Appendix~A]{PZ19}.)
This was generalized by using the diminished base locus $\mathbb B_-(-K_{X/Y})$ in \cite[Theorem~1.4]{EIM23}. 
Also, applying Ambro's results~\cite[Proposition~4.4 and~Theorem~4.7]{Amb05} to the case when $-K_{X/Y}$ is semi-ample, we see that $f$ splits into the product after taking the base change along a finite \'etale cover of $Y$ (see also \cite[Theorem~1.7]{EIM23}). 
 
The study of such algebraic fiber spaces in positive characteristic started from Patakfalvi and Zdanowicz's pioneering work~\cite{PZ19}. They proved that if $-K_{X/Y}$ is nef and the geometric generic fiber $X_{\overline\eta}$ is strongly $F$-regular, where strong $F$-regularity is one of singularities defined by using the Frobenius morphism (Definition~\ref{defn:SFR}) that is closely related to klt singularity, then $f$ is a flat morphism with reduced fibers \cite[\S 4]{PZ19}.  
Additionally, their argument shows further that if either $-K_{X/Y}$ is semi-ample or the base field $k$ is contained in $\overline{\mathbb F_p}$, then $f$ splits into the product after taking the base change along a surjective flat morphism $I\to Y$ from a quasi-projective scheme $I$ (the isom scheme) \cite[\S 9]{PZ19}. In particular, we obtain that all the fibers of $f$ over $k$-rational points are isomorphic. 
 
In this paper, we study an algebraic fiber space $f:X\to Y$ with nef relative anti-canonical divisor $-K_{X/Y}$ in positive characteristic, and give sufficient conditions for $f$ to split after taking the base change along a finite cover of $Y$. They include a strengthening of the above statement in the case when $k\subseteq\overline{\mathbb F_p}$ (Theorem~\ref{thm:decomp1-intro}). 
Although we only introduce in this section the statements in the smooth case, 
we also deal in this paper with the singular case and the log case. 
\begin{thm}[\textup{Special case of Theorem~\ref{thm:decomp1}}] \label{thm:decomp1-intro} 
Let the base field $k$ be a perfect field of characteristic $p>0$. 
Let $f:X\to Y$ be a surjective morphism between smooth projective varieties. 
Suppose that $-K_{X/Y}$ is nef and the geometric generic fiber of $f$ is strongly $F$-regular. 
Then the following hold:
\begin{enumerate}[$(1)$]
\item If $Y$ is separably rationally connected, then $X\cong F\times_k Y$ as $Y$-schemes. 
\item If $\pi^{\textup{\'et}}(Y)=0$, then there exists an integer $e\ge 0$ such that $X\times_Y Y^e \cong F\times_k Y^e$ as $Y^e$-schemes. 
Here, $Y^e$ denotes the source of the $e$-times iterated Frobenius morphism. 
\item If either $k\subseteq \overline{\mathbb F_p}$ or $\pi^{\textup{\'et}}(Y)$ is finite, then there exists a finite surjective morphism $Z\to Y$ that is the composite of a finite \'etale cover and an iterated Frobenius morphism such that $X\times_Y Z \cong F\times_k Z$ as $Z$-schemes. 
\end{enumerate}
Here, $F$ in the above statements is $($a disjoint union of copies of$)$ a strongly $F$-regular projective variety. 
\end{thm}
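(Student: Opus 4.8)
The plan is to prove first that $f$ becomes trivial after one well-chosen base change, and then to determine which base change suffices from the hypotheses on $Y$. Fix a geometric fibre $F$ of $f$ (a priori defined only over a finite extension of $k$; descending it, and everything below, to $k$ is part of the bookkeeping, which I suppress here). By \cite{PZ19} the hypotheses already force $f$ to be flat with reduced geometric fibres, each a disjoint union of strongly $F$-regular projective varieties, so $F$ has the stated form. The first and hardest step is to show that $f$ is \emph{isotrivial} over an arbitrary perfect $k$ assuming only that $-K_{X/Y}$ is nef; this is where the generalized-pair versions of Viehweg's weak positivity and of Popa--Schnell's global generation enter. I would equip $X$, or a suitable alteration, with a generalized pair structure tailored to $-K_{X/Y}$, apply these theorems to the relevant relative pluricanonical-type direct image sheaves, and combine their weak positivity with the nefness of $-K_{X/Y}$ to force these sheaves to be numerically flat; numerical flatness of the direct image then forces the moduli map of $f$ to be constant. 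Concretely, with $I$ the relative isomorphism scheme of \cite{PZ19} this says $I\to Y$ is faithfully flat, so $X\to Y$ is an fppf-locally trivial family with fibre $F$, and a section of $I\to Y$ over a $Y$-scheme $T$ is exactly an isomorphism $X\times_Y T\cong F\times_k T$.

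It remains to produce such a section after an allowable base change, for which one must understand $I\to Y$. Since $X$ is now a form of $F\times_k Y$, it is classified by a cohomology class on $Y$ valued in the automorphism group scheme $\underline{\mathrm{Aut}}(F)$; passing to a component of $I$ dominating $Y$ and normalizing, I expect to obtain a \emph{locally constant} description of $f$: a finite surjective morphism $Y'\to Y$ that is the composite of an iterated Frobenius $Y^{e}\to Y$ and a finite \'etale Galois cover $Y'\to Y^{e}$, together with an action of the Galois group on $F$, such that $X\times_Y Y'\cong F\times_k Y'$ as $Y'$-schemes. The iterated Frobenius factor is genuinely necessary in positive characteristic because $\underline{\mathrm{Aut}}(F)$, hence $I\to Y$, can fail to be smooth, so one cannot always extract a separable multisection; it is, however, the only inseparable phenomenon that occurs and it is killed by the pullback along $Y^{e}\to Y$. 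Granting this, part $(3)$ is immediate with $Z=Y'$.

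For $(2)$: since $Y^{e}\to Y$ is a universal homeomorphism, $\pi^{\textup{\'et}}(Y^{e})\cong\pi^{\textup{\'et}}(Y)=0$, so the \'etale cover $Y'\to Y^{e}$ is trivial, i.e. $Y'=Y^{e}$, whence $X\times_Y Y^{e}\cong F\times_k Y^{e}$. For $(1)$: a separably rationally connected variety has $\pi^{\textup{\'et}}(Y)=0$ (Koll\'ar's simple-connectedness theorem), so exactly as in $(2)$ we reduce to removing the Frobenius twist. To do this I would restrict $f$ to a general free rational curve $\mathbb P^1\cong C\subset Y$ and show, by a one-dimensional rigidity computation, that the pulled-back family over $\mathbb P^1$ with nef relative anti-canonical divisor is already trivial; letting $C$ vary in a covering family and using the isotriviality established above, one propagates this triviality over all of $Y$ to obtain $X\cong F\times_k Y$ with no Frobenius twist, which is $(1)$.

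The main obstacle is the isotriviality step: establishing it over an arbitrary perfect field assuming only that $-K_{X/Y}$ is nef — rather than $k\subseteq\overline{\mathbb F_p}$ or $-K_{X/Y}$ semi-ample, as in \cite{PZ19} — is precisely what the new generalized Viehweg and Popa--Schnell theorems are meant to supply, and the delicate point is to set up a generalized pair on $X$ for which those theorems apply and from which the required numerical flatness can be read off from the nefness of $-K_{X/Y}$. A secondary, more technical, difficulty is the descent of $F$ and of the locally constant structure from $\overline k$ down to $k$, together with the verification that the trivializing cover $Y'\to Y$ really does decompose into an \'etale part and an iterated-Frobenius part, and, in $(1)$, that the rational-curve rigidity indeed removes the remaining Frobenius twist.
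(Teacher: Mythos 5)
Your first step --- using the generalized weak positivity and Popa--Schnell theorems to show that the direct images $f_*\mathcal O_X(mA)$ of powers of a well-chosen $f$-ample divisor are numerically flat --- is exactly the paper's Theorem~\ref{thm:num flat}, and is the right starting point. But the second half of your argument diverges from the paper and contains genuine gaps.

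First, you assert that numerical flatness ``forces the moduli map of $f$ to be constant,'' i.e.\ that $I\to Y$ is faithfully flat over an arbitrary perfect field. This is essentially Question~\ref{ques:1} of the paper, which is explicitly left open: the known argument in characteristic zero uses universal covers and does not transfer, and in positive characteristic the surjectivity of $I\to Y$ is only established in \cite{PZ19} under the stronger hypotheses that $-K_{X/Y}$ is semi-ample or $k\subseteq\overline{\mathbb F_p}$. You give no argument for this step, and the paper does not need it: it bypasses the isom scheme entirely. Second, even granting faithful flatness of $I\to Y$, your claim that normalizing a dominating component of $I$ yields a \emph{finite} cover $Y'\to Y$ decomposing as (iterated Frobenius)\,$\circ$\,(finite \'etale) is unjustified --- the paper remarks after Question~\ref{ques:2} that it is not even known whether $I\to Y$ is finite --- and the \'etale-composed-with-Frobenius structure in the actual proof has a completely different source: it is Langer's theorem on numerically flat vector bundles (Propositions~\ref{prop:Langer} and~\ref{prop:Fp}), applied directly to the bundles $f_*\mathcal O_X(mA)$.

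The paper's route from numerical flatness to splitting is more elementary than yours: once each $f_*\mathcal O_X(mA)$ pulls back to $\mathcal O^{\oplus r_m}$ along the allowable cover $Z\to Y$, the finitely generated $\mathcal O_Z$-algebra $\bigoplus_m {f_Z}_*\mathcal O_{X_Z}(mA_Z)$ is the constant algebra $\mathcal O_Z\otimes_k\bigoplus_m H^0$, and taking $\mathrm{Proj}$ gives $X_Z\cong F\times_k Z$ directly --- no torsor or cohomological classification is needed. For case~(1) your proposed ``rational-curve rigidity'' to remove the Frobenius twist is only a sketch; the paper instead proves (Proposition~\ref{prop:sep rat conn}) that on a separably rationally connected variety every numerically flat bundle is already a direct sum of $\mathcal O_Y$'s, by descending the trivialization of ${F_Y^e}^*\mathcal E$ step by step through the exact sequence $0\to\mathcal O_Y\to{F_Y}_*\mathcal O_Y\to B^1_Y\to0$ using $H^0(Y,\Omega^1_Y)=0$. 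You would need to supply arguments of comparable substance for each of these points before your outline becomes a proof.
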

The assumption that the geometric generic fiber $X_{\overline\eta}$ is strongly $F$-regular is satisfied if, for example, $X$ is $F$-split (Definition~\ref{defn:F-split}). In this case, $X_{\overline\eta}$ is indeed normal and $F$-split, so $F$-pure, and thus we can apply \cite[Theroem~6.3]{PZ19}. 
Here, $F$-purity is one of singularities defined by using Frobenius morphism (Definition~\ref{defn:F-pure}) that is closely related to log canonical singularity. 
 
If either~(i) we allow $X_{\overline\eta}$ to have ``bad'' singularities or~(i\hspace{-1pt}i) $k\not\subseteq \overline{\mathbb F_p}$ and $\pi^{\textup{\'et}}(Y)$ is not finite, then there exists an algebraic fiber space with nef relative anti-canonical divisor that does NOT split after taking the base change along any proper surjective morphism (Examples~\ref{eg:not F_p},~\ref{eg:not SFR} and~\ref{eg:F-pure but not SFR}).  
As for case~(i), however, a positive result has been known when $Y$ is an elliptic curve \cite[Theorem~1.6]{EP23}. 
On the other hand, negative examples are known for the case when $Y$ is a curve of genus at least two (Example~\ref{eg:not SFR}). 
If we restrict ourselves to the case when $Y$ is a curve, then the remaining is the case of $Y\cong\mathbb P^1$. 
In this case, we prove the following positive theorem without assuming the strong $F$-regularity of $X_{\overline\eta}$:
\begin{thm}[\textup{Special case of Theorem~\ref{thm:decomp P^1}}] \label{thm:decomp P^1-intro} 
Let $k$ and $f:X\to Y$ be as in Theorem~\ref{thm:decomp1-intro}. 
Assume that $Y\cong\mathbb P^1$.
Suppose that $-K_{X/Y}$ is nef. 
Then $X\cong X_y\times_k Y$ as $Y$-schemes, where $X_y$ is the fiber of $f$ 
over a $k$-rational point $y\in Y(k)$. 
\end{thm}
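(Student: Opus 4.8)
I outline the argument for Theorem~\ref{thm:decomp P^1} in the smooth setting of Theorem~\ref{thm:decomp P^1-intro}, where the essential point is already visible. Write $Y=\mathbb P^1$ and fix $y\in Y(k)$. First note that $f$ is automatically flat and $f_*\mathcal O_X=\mathcal O_Y$: since $X$ is integral and dominates the smooth curve $Y$, each local ring $\mathcal O_{X,x}$ is torsion-free, hence flat, over the discrete valuation ring $\mathcal O_{Y,f(x)}$, and $f_*\mathcal O_X=\mathcal O_Y$ because $f$ is an algebraic fiber space. Thus the only thing that can go wrong is a degeneration of the scheme-theoretic fibers, and the goal is to construct a $Y$-isomorphism $X\cong X_y\times_k Y$.

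The core is a positivity estimate on the base. On one side, the generalized Viehweg weak positivity theorem and the generalized Popa--Schnell global generation theorem of this paper, applied to $f$ together with the generalized-pair structure coming from the nefness of $-K_{X/Y}$, show that the relevant direct image sheaves on $Y$ — the Hodge-type bundles $f_*\omega_{X/Y}^{[m]}$ and the twisted sheaves appearing in those statements — are weakly positive. Since $Y=\mathbb P^1$, weak positivity becomes nefness, and by Grothendieck's splitting theorem each such bundle equals $\bigoplus_i\mathcal O_{\mathbb P^1}(a_i)$ with all $a_i\ge 0$. On the other side, feeding the nefness of $-K_{X/Y}$ back into the same inequalities — now comparing $K_{X/Y}$ with $-K_{X/Y}$ — forces the degree of each of these bundles to be $\le 0$, hence $0$. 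Therefore every $a_i$ vanishes and all of these sheaves are trivial, $f_*\omega_{X/Y}^{[m]}\cong\mathcal O_Y^{\oplus N_m}$, and similarly for the others. (In the elliptic-fibration case this is the concrete fact that $-K_{X/Y}$ nef over $\mathbb P^1$ makes $\deg f_*\omega_{X/Y}=0$ and forbids multiple fibers.)

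It then remains to pass from triviality of these direct images to triviality of the family. By cohomology and base change together with the previous step, a suitable graded $\mathcal O_Y$-algebra recovering $f$ via relative $\operatorname{Proj}$ is shown to be isomorphic to $A\otimes_k\mathcal O_Y$ for a fixed graded $k$-algebra $A$, whence $X\cong(\operatorname{Proj} A)\times_k Y$ over $Y$. Evaluating at the $k$-rational point $y$ identifies $\operatorname{Proj} A$ with the fiber $X_y$ and makes the isomorphism defined over $k$; the simple connectedness of $\mathbb P^1_{\bar k}$ guarantees there is no nontrivial étale or Frobenius twist to correct, so that the constant model $X_y\times_k Y$ is the answer.

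The step I expect to be the main obstacle is this last one: controlling the family when its scheme-theoretic fibers may fail to be normal, or even reduced, and when $\pm K_{X/Y}$ is never $f$-ample — exactly the positive-characteristic phenomena responsible for the counterexamples over base curves of genus $\ge 2$ (Example~\ref{eg:not SFR}). The feature of $\mathbb P^1$ that rescues the argument is quantitative: on $\mathbb P^1$ a weakly positive sheaf of degree $0$ must be \emph{trivial}, so the nefness of $-K_{X/Y}$ leaves no room for the twisting that would degenerate the fibers — compare the surface case, where any singular or multiple fiber of $X\to\mathbb P^1$ produces a curve on which $-K_{X/Y}$ has strictly negative degree. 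Carrying this rigidity through in arbitrary dimension and in the log setting, uniformly over all the auxiliary bundles, is where the full strength of the generalized-pair positivity theorems and the detailed construction behind Theorem~\ref{thm:decomp P^1} are needed.
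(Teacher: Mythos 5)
Your proposal has a genuine gap at its central step: the nefness of the direct image sheaves cannot be obtained from the weak positivity theorem of this paper. Theorem~\ref{thm:wp} requires (conditions (5)--(7)) that the fibers over $V$ be geometrically normal (or $S_2$ and $G_1$) and that the \emph{geometric fibers} be $F$-pure; for a nonempty $V\subseteq\mathbb P^1$ this forces the geometric generic fiber to be normal and $F$-pure, which is precisely the hypothesis that Theorem~\ref{thm:decomp P^1} is designed to avoid (cf.\ Example~\ref{eg:not SFR}, where the geometric generic fiber has a cusp). So "apply the generalized Viehweg theorem to get weak positivity, hence nefness on $\mathbb P^1$" is not available here. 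Moreover, the sheaves to control are not the pluricanonical images $f_*\omega_{X/Y}^{[m]}$ (which carry no useful information when $-K_{X/Y}$ is nef and may well vanish) but $f_*\mathcal O_X(mL)$ for a relative polarization $L$; choosing $L$ correctly is itself a nontrivial normalization.

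The paper's actual mechanism is quite different. First, the Popa--Schnell-type theorem (Theorem~\ref{thm:PS-type}), which needs only $F$-purity of the pair on the \emph{total space} and uses $-K_{X/Y}-\Delta$ as the nef moduli part $M$, shows that every $f$-ample effective $\mathbb Q$-Cartier divisor is semi-ample; this makes the divisor $L:=\mu\tilde L-\nu f^*y$, normalized so that $L^{\dim X}=0$, nef. The vanishing $L^{\dim X}=0$ then gives \emph{anti}-nefness of $f_*\mathcal O_X(mL)$ by pure intersection theory (a summand $\mathcal O(a_i)$ with $a_i\ge1$ would produce a member of $|mL-f^*y|$ and contradict $L^{\dim X}=0$). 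The nefness of $f_*\mathcal O_X(mL)$ is the hard direction, and it is proved not by weak positivity but by passing to the Frobenius-stable image $S^0f_*(\sigma(X,\Delta)\otimes\mathcal O_X(mL))$, base changing along the $l$-th toric Frobenius of $\mathbb P^1$ (\'etale away from $0$ and $\infty$), and applying a global generation theorem for $S^0$; this yields $la_i+2\ge0$ for all $l\ge3$ prime to $p$, hence $a_i\ge0$. This step uses special features of $\mathbb P^1$ and imposes no condition on the fibers. Finally, $f_*\mathcal O_X\cong\mathcal O_Y$ is not automatic (connected fibers are not assumed in the statement): the paper rules out an elliptic Stein factor via the Albanese morphism and nefness of $-K_X$, and then kills the finite part by a rank count on global sections — "simple connectedness of $\mathbb P^1$" alone does not do this, since the Stein factor need not be \'etale over $Y$. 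Your relative $\operatorname{Proj}$ endgame is the right final move, but the positivity input feeding it must come from the $S^0$/toric-Frobenius argument, not from Theorem~\ref{thm:wp}.
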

 
Let us return to the case of higher dimension. 
When we take Albanese morphisms into account, it is important to study the case when $Y$ is an abelian variety. In this case, we show that $f$ has isomorphic fibers: 
\begin{thm}[\textup{Special case of Theorem~\ref{thm:decomp2}}] \label{thm:decomp2-intro} 
Let $k$ and $f:X\to Y$ be as in Theorem~\ref{thm:decomp1-intro}. 
Assume that $Y$ is an abelian variety. 
Suppose that $-K_X$ is nef and $X_{\overline\eta}$ is strongly $F$-regular. 
Then all the fibers of $f$ over $k$-rational points are isomorphic. 
\end{thm}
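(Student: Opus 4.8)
The plan is to show that, after pulling back along a suitable isogeny $\mu:Y'\to Y$, the morphism $f$ becomes an \'etale-locally trivial fibre bundle; since every geometric fibre of such a bundle is then isomorphic to a fixed one, all fibres of $f$ over closed points of $Y$ will be (geometrically) mutually isomorphic. Fix a closed point $0\in Y$ and write $X_0:=f^{-1}(0)$; the goal is that $X_y\cong X_0$ geometrically for every closed $y\in Y$.

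I would begin with reductions: one may assume $k$ is algebraically closed (base change preserves all the hypotheses and it suffices to prove the geometric statement), so that $Y(k)$ is the set of closed points. Since $K_Y\equiv 0$, the condition ``$-K_X$ nef'' coincides with ``$-K_{X/Y}$ nef'', and the results of Patakfalvi--Zdanowicz recalled above show that $f$ is flat with reduced, strongly $F$-regular fibres, which one may also take to be normal. The essential positivity input then replaces the analytic estimates of Cao--H\"oring: applying the generalisations of Popa--Schnell's global generation theorem and Viehweg's weak positivity theorem to relative (anti-)pluricanonical sheaves that are proved later in the paper, I would show that the sheaves $E_m:=f_*\mathcal O_X(-mK_{X/Y})$ are locally free and numerically flat on $Y$ for all sufficiently divisible $m>0$, and likewise for the graded pieces of the relative anti-canonical algebra $\bigoplus_m E_m$.

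The heart of the matter is to convert this numerical flatness into a locally trivial structure on $f$ — a step supplied over $\mathbb C$ by flat connections and parallel transport, but with no immediate characteristic-$p$ analogue. Here I would use that $Y$ is an abelian variety in an essential way: because $Y$ is $F$-split and its Picard scheme is again an abelian variety, numerically flat sheaves on $Y$ are well understood, and by the structure theory \`a la Mukai (together with Frobenius considerations) there is a finite isogeny $\mu:Y'\to Y$ such that each $\mu^*E_m$ is an iterated extension of line bundles in $\mathrm{Pic}^0(Y')$, modulo Frobenius twists. When $-K_{X/Y}$ is relatively semi-ample, a large $m$ gives a closed $Y$-embedding $X\hookrightarrow\mathbb P_Y(E_m)$ with $\mathcal O(1)|_X=\mathcal O_X(-mK_{X/Y})$; after the isogeny, this description of $\mu^*E_m$ together with the fact that the defining ideal of $X\times_Y Y'$ inside $\mathrm{Sym}^\bullet(\mu^*E_m)$ has numerically flat graded pieces forces that subscheme to be invariant, up to automorphisms, under the translation action of $Y'$, so $X\times_Y Y'\to Y'$ is \'etale-locally trivial. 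In general $-K_{X/Y}$ need not be $f$-semi-ample, and then no single $E_m$ recovers $X$; one must instead run the argument with the whole algebra $\bigoplus_m E_m$, or equivalently show, along the lines of Patakfalvi--Zdanowicz's treatment of their special cases, that the relative isom scheme $\underline{\mathrm{Isom}}_Y\big((X_0)_Y,X\big)\to Y$ becomes faithfully flat after the isogeny $\mu$. Either way one gets $X\times_Y Y'\cong X_0\times_k Y'$ \'etale-locally over $Y'$, so every geometric fibre of $X\times_Y Y'\to Y'$, and hence every geometric fibre of $f$ over a closed point of $Y$, is isomorphic to $X_0$.

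The step I expect to be the main obstacle is exactly this conversion of numerical flatness of the $E_m$ into \'etale-local triviality of the fibration, in the absence of the Hodge-theoretic machinery available in characteristic zero: it requires combining the Frobenius-split/Mukai structure of numerically flat sheaves on abelian varieties with the $F$-regularity of the fibres and the isom-scheme techniques of Patakfalvi--Zdanowicz, and the case in which $-K_{X/Y}$ is merely $f$-nef, so that no single $E_m$ reconstructs $X$, is the most delicate point — it is precisely there that the positivity theorems for generalised pairs established earlier in the paper are indispensable.
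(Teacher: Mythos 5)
Your overall setup --- reduce to $k$ algebraically closed, invoke Patakfalvi--Zdanowicz for flatness and reducedness, and use the numerical flatness of pushforwards (Theorem~\ref{thm:num flat}) together with the structure of numerically flat bundles on abelian varieties --- matches the paper's inputs. But the step you yourself flag as the main obstacle is a genuine gap, and it is exactly the point the paper's proof is designed to avoid. You propose to convert numerical flatness into \'etale-local triviality of $f$ after an isogeny (via translation-invariance of the ideal of $X$ in $\mathbb P_Y(E_m)$, or via faithful flatness/surjectivity of the isom scheme). Neither route is justified: the assertion that numerical flatness of the graded pieces of the defining ideal forces translation-invariance up to automorphism is not proved and is not a consequence of anything in the paper, and the isom-scheme approach is precisely what is known only when $-K_{X/Y}$ is semi-ample or $k\subseteq\overline{\mathbb F_p}$ (see the discussion around Questions~\ref{ques:1} and~\ref{ques:2}); in the merely nef case over a general perfect field the paper explicitly does not know this. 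Note also that no isogeny can make $f$ split globally in general (Example~\ref{eg:not F_p}: $\mathbb P_Y(\mathcal O_Y\oplus\mathcal L)$ with $\mathcal L$ non-torsion in $\mathrm{Pic}^0(Y)$), so any argument must live with the non-torsion line-bundle summands of $f_*\mathcal O_X(mA)$ rather than trivializing them. A secondary issue: you work with $E_m=f_*\mathcal O_X(-mK_{X/Y})$, but $-K_{X/Y}$ is only nef, not $f$-ample, so these sheaves do not reconstruct $X$; Theorem~\ref{thm:num flat} instead produces a specific $f$-ample $A$ with $f_*\mathcal O_X(mA)$ numerically flat, and the argument must be run with that.

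The paper's actual proof (Theorem~\ref{thm:decomp2}) never establishes local triviality. After an isogeny it arranges that each $f_*\mathcal O_X(mA)$ is a direct sum of algebraically trivial line bundles (Lemmas~\ref{lem:decomp0} and~\ref{lem:decomp}), and then tensors the whole $\mathcal O_Y$-algebra $\bigoplus_m f_*\mathcal O_X(mA)$ with the non-coherent sheaf $\mathcal G=\bigoplus_{\mathcal L\in\mathrm{Pic}^0(Y)}\mathcal L$. For each $m$ the evaluation map
\begin{equation*}
R_m:=H^0\bigl(Y, f_*\mathcal O_X(mA)\otimes\mathcal G\bigr)\longrightarrow f_*\mathcal O_X(mA)\otimes k(y)\cong H^0\bigl(X_y,\mathcal O_{X_y}(mA_y)\bigr)
\end{equation*}
is an isomorphism for \emph{every} closed point $y$, because tensoring a sum of algebraically trivial line bundles with all of $\mathrm{Pic}^0(Y)$ produces exactly one global section per summand. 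Hence all fibers have isomorphic section rings $\bigoplus_m H^0(X_y,\mathcal O_{X_y}(mA_y))\cong\bigoplus_m R_m$, and therefore are isomorphic --- with no local triviality statement ever needed. To repair your proof you would either need to supply the missing translation-invariance argument (which would answer a question the paper leaves open) or replace the local-triviality step by a direct comparison of fiberwise section rings along these lines.
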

Patakfalvi and Zdanowicz also proved that if $K_X$ is numerically trivial, if $X_{\overline\eta}$ is strongly $F$-regular and if $Y$ is an abelian variety, then $f$ splits into the product after taking the base change along an isogeny \cite[Proof of Theorem~11.6]{PZ19}. This was used to show the Beauville--Bogomolov decomposition in positive characteristic \cite[Theorem~1.1]{PZ19}. 
We show the same statement holds when $-K_X$ is semi-ample:
\begin{thm}[\textup{Special case of Theorem~\ref{thm:decomp3}}] \label{thm:decomp3-intro} 
Let $k$ and $f:X\to Y$ be as in Theorem~\ref{thm:decomp1-intro}. 
Assume that $Y$ is an abelian variety. 
Suppose that $-K_X$ is semi-ample and 
$X_{\overline\eta}$ is strongly $F$-regular. 
Then there exists an isogeny $Z\to Y$ such that 
$
X\times_Y Z \cong X_0\times_k Z
$
as $Z$-schemes, where $X_0$ is the fiber of $f$ over the identity element $0\in Y(k)$. 
\end{thm}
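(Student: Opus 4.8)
The plan is to adapt the proof of \cite[Theorem~11.6]{PZ19}, which handles the case $K_X\equiv 0$, by feeding in Theorem~\ref{thm:decomp2} and the generalized weak positivity and global generation results of this paper; the essential new input will be that semi-ampleness of $-K_X$ --- rather than mere nefness of $-K_{X/Y}$ --- forces the auxiliary sheaves produced on $Y$ to become trivial after an isogeny. \emph{Reduction.} Since $Y$ is an abelian variety, $K_Y\sim 0$, so $-K_{X/Y}=-K_X$ is semi-ample, in particular nef; hence by \cite{PZ19} the morphism $f$ is flat with strongly $F$-regular (so normal and reduced) fibers, and by Theorem~\ref{thm:decomp2} every fiber of $f$ over a $k$-rational point is isomorphic to $X_0$. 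Consequently the relative isomorphism scheme $I:=\underline{\operatorname{Isom}}_Y(X_0\times_k Y,\,X)$ is smooth and surjective over $Y$, and $X\times_Y I\cong X_0\times_k I$. Writing $G:=\underline{\operatorname{Aut}}_k(X_0)$, which is of finite type over $k$ (as one checks after passing to the anti-canonical model of $X_0$), the morphism $I\to Y$ is a torsor under $G_Y$ for the fppf topology, so it suffices to show that its class $[I]\in H^1_{\mathrm{fppf}}(Y,G)$ is killed by pullback along a suitable isogeny of $Y$ --- equivalently, that $I\to Y$ acquires a section after such a base change.

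\emph{The main step.} Following the architecture of \cite[Theorem~11.6]{PZ19}, let $\phi\colon X\to V$ be the morphism defined by $|-mK_X|$ for $m\gg 0$ divisible, so $-mK_X=\phi^*H$ with $H$ very ample, and consider the locally free sheaves $\mathcal E_\ell:=f_*\mathcal O_X(-\ell m K_{X/Y})=f_*\mathcal O_X(-\ell m K_X)$ together with the relative pluricanonical sheaves of the fibers of $\phi$ (whose canonical divisors are numerically trivial). Since $-mK_X$ is globally generated on $X$, each $\mathcal E_\ell$ is globally generated, hence nef, on $Y$; on the other hand the relative Frobenius splitting available near the strongly $F$-regular fibers, combined with the generalized Popa--Schnell global generation and Viehweg weak positivity statements of this paper, should propagate a Frobenius-trivialization of the fibers to the base, forcing $\mathcal E_\ell$ to be numerically flat and, moreover, trivialized by a torsor under a finite group scheme. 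On an abelian variety every such torsor is killed by an isogeny: a pointed connected finite \'etale cover of an abelian variety is an isogeny, and the Frobenius morphism is an isogeny, so both the \'etale and the infinitesimal contributions are absorbed. Hence, after replacing $Y$ by an isogeny, all these sheaves are free.

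\emph{Reconstruction and conclusion.} With $\mathcal E_1\cong\mathcal O_Y^{\oplus r}$ we get $\mathbb P_Y(\mathcal E_1)=\mathbb P^{r-1}_k\times_k Y$, and the relative anti-canonical model of $X$ embeds into it as a flat family of a \emph{fixed} polarized variety $(V_0,\mathcal O_{V_0}(-mK_{V_0}))$ (by Theorem~\ref{thm:decomp2} and the intrinsic nature of $-mK$), that is, as a morphism $\sigma\colon Y\to \mathrm{PGL}_r/H$ with $H:=\underline{\operatorname{Aut}}(V_0,-mK_{V_0})$ and with complete image. One then trivializes the residual data after a further isogeny: Poincar\'e complete reducibility disposes of the abelian subvarieties occurring in the automorphism group schemes of the fibers, while the linear part is controlled by the completeness of the image of $\sigma$ together with further positivity forced by the semi-ample hypothesis. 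Finally one reassembles all the isogenies and descends from $\bar k$ to $k$ using the $k$-rational point $0\in Y(k)$ and Galois equivariance, obtaining an isogeny $Z\to Y$ with $X\times_Y Z\cong X_0\times_k Z$.

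\emph{The hard part.} The main obstacle is precisely the passage from ``nef'' to ``trivialized by an isogeny'' for the sheaves $\mathcal E_\ell$ and for the classifying torsor $[I]$: for merely nef $-K_{X/Y}$ this is false (compare Example~\ref{eg:not SFR} and the surrounding discussion of non-split families), so semi-ampleness must be used essentially, through the global generation of $-mK_X$, the anti-canonical contraction $\phi$, and the relative Frobenius splitting of the geometric generic fiber. A secondary technical point is the characteristic-$p$ phenomenon of possibly non-reduced automorphism group schemes, which is why allowing inseparable isogenies (Frobenius twists) in the statement is essential.
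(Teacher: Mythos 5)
There is a genuine gap at what you yourself label ``the main step.'' You assert that the relative Frobenius splitting together with the Popa--Schnell and weak-positivity theorems ``should'' force the sheaves $\mathcal E_\ell=f_*\mathcal O_X(-\ell mK_{X/Y})$ to be numerically flat \emph{and trivialized by a torsor under a finite group scheme}, but you give no argument for the second, decisive claim. On an abelian variety, numerical flatness only yields that $\mathcal E_\ell$ is an iterated extension of \emph{algebraically trivial} line bundles (Theorem~\ref{thm:num flat ab var}); an isogeny can kill the extension classes (Lemma~\ref{lem:decomp0}) and any \emph{torsion} line bundles, but it can never kill a non-torsion element of $\mathrm{Pic}^0(Y)$, since pullback along an isogeny has finite kernel on $\mathrm{Pic}^0$. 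This is precisely the obstruction in Example~\ref{eg:not F_p}, where $-K_{X/Y}$ is nef and the family does not split after any isogeny. So the entire content of the theorem is to show that the algebraically trivial line bundles occurring in $\mathcal E_\ell$ are torsion, and your proposal does not address this. The same issue undermines your torsor formulation: the claim that ``on an abelian variety every such torsor is killed by an isogeny'' is valid only for torsors under \emph{finite} group schemes, and finiteness of the relevant structure group here is equivalent to the statement being proved (a $\mathbb G_m$-torsor attached to a non-torsion degree-zero line bundle is not killed by any isogeny).

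For comparison, the paper's proof of Theorem~\ref{thm:decomp3} deliberately avoids the isom scheme (as noted right after Theorem~\ref{thm:decomp3-intro}). After reducing, via Theorem~\ref{thm:num flat}, to showing that $f_*\mathcal O_X(mA)$ is a direct sum of torsion line bundles, it introduces the finite sets $\mathbb L(\mathcal G_r)$ of algebraically trivial line bundles admitting generically surjective maps from $\mathcal G_r:=f_*\mathcal O_X(mA-r(K_X+\Delta))$, and uses the semi-ampleness of $D$ (through Lemma~\ref{lem:semi-ample decomp}) together with the Frobenius trace maps $\phi^{(e)}$ to show that $\mathcal L\in M(1)$ forces $\mathcal L^{p^e}(-iqP)\in M(\nu)$ with $\nu<p^e$; a pigeonhole argument then shows $\bigcup_n M(n)$ is finite, whence $P\sim_{\mathbb Q}0$ and every element of $M(1)$ is torsion. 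That combinatorial Frobenius argument is exactly the missing ingredient in your write-up, and it is where semi-ampleness (as opposed to nefness) enters in an essential, non-formal way.
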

The proof does not use the isom scheme, which differs from that of \cite[Theroem~11.6]{PZ19}. 
\subsection{Decomposition theorems}
Mimicking the argument in \cite[Proof of Theorem~11.6]{PZ19}, 
we obtain from Theorems~\ref{thm:decomp1-intro} and~\ref{thm:decomp3-intro} the following decomposition theorem, which generalizes \cite[Theorem~1.1]{PZ19} to the case when the anti-canonical divisor is numerically equivalent to a semi-ample $\mathbb Q$-divisor.
\begin{thm}[\textup{Special case of Theorems~\ref{thm:F-split semi-ample} and \ref{thm:F-split nef}}] \label{thm:F-split-intro} 
Let $X$ be a smooth $F$-split projective variety over a perfect field $k$ of characteristic $p>0$. 
Suppose that $-K_X$ is numerically equivalent to 
a semi-ample $\mathbb Q$-divisor on $X$ 
$($resp. nef and $k\subseteq \overline{\mathbb F_p}$$)$. 
Then there exist finite surjective morphisms
$$
Z\times_{k'}A\xrightarrow{\tau}Y\xrightarrow{\sigma}X
$$
with the following properties:
\begin{enumerate}[$(1)$]
\item $k'$ is a finite field extension of $k$. 
\item $\sigma:Y\to X$ is \'etale. 
\item $\tau:Z\times_{k'}A\to Y$ is a torsor under $\Pi_{i=1}^{\hat q(X)}\mu_{p^{j_i}}$ for some integers $j_i\ge0$. 
\item $K_{Z\times_{k'}A}\sim \tau^*\sigma^*K_X$. 
\item $A$ is an ordinary abelian variety over $k'$ of dimension $\hat q(X)$. 
\item $Z$ is a Gorenstein strongly $F$-regular $F$-split geometrically integral projective variety over $k'$ such that $-K_Z$ is semi-ample $($resp. nef$)$ and $\hat q(Z)=0$.
\end{enumerate}
\end{thm}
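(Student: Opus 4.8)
The plan is to reduce the decomposition theorem to the splitting theorems for algebraic fiber spaces (Theorems~\ref{thm:decomp1-intro} and~\ref{thm:decomp3-intro}) applied to the Albanese morphism of $X$, following the strategy of \cite[Proof of Theorem~11.6]{PZ19}. First I would recall that since $X$ is $F$-split, $X$ is normal and (after passing to $X_{\bar k}$) the total space has mild singularities; in particular the Albanese morphism $a\colon X\to \mathrm{Alb}(X)$ is well-behaved, and because $X$ is $F$-split its Albanese variety is an ordinary abelian variety and $a$ is surjective with connected fibers, so $a$ is an algebraic fiber space onto an abelian variety. The number $\hat q(X)$ should be the dimension of $\mathrm{Alb}(X)$ (or its "$F$-split part"), matching item~(5). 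The geometric generic fiber of $a$ is the relevant object whose strong $F$-regularity we need; here I would invoke that an $F$-split variety fibered over an ordinary abelian variety has $F$-split, hence $F$-pure, geometric generic fiber, and then use a purity/adjunction argument (as in the discussion right after Theorem~\ref{thm:decomp1-intro}, via \cite[Theorem~6.3]{PZ19}) to upgrade $F$-purity of the fiber to strong $F$-regularity — this is exactly where the hypothesis that $-K_X$ is nef (resp.\ numerically semi-ample) enters, since $-K_{X/\mathrm{Alb}(X)} = -K_X$ (the canonical divisor of an abelian variety is trivial), so $-K_{X/Y}$ inherits the positivity of $-K_X$.

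Next I would apply Theorem~\ref{thm:decomp3-intro} (in the semi-ample case) or the appropriate part of Theorem~\ref{thm:decomp1-intro}(3) (in the nef, $k\subseteq\overline{\mathbb F_p}$ case) to the fiber space $a\colon X\to \mathrm{Alb}(X)$. This produces, after base change along an isogeny $A'\to \mathrm{Alb}(X)$ (resp.\ along a composite of an isogeny and an iterated Frobenius), an isomorphism $X\times_{\mathrm{Alb}(X)} A' \cong X_0\times_{k} A'$ of $A'$-schemes, where $X_0$ is a fiber. The isogeny $A'\to \mathrm{Alb}(X)$ is the composite of a separable isogeny (which is finite étale) and a purely inseparable one; the purely inseparable part, being a $p$-power isogeny of an ordinary abelian variety, is a torsor under a product of $\mu_{p^{j_i}}$'s. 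So I would organize the factorization $Z\times_{k'}A \xrightarrow{\tau} Y \xrightarrow{\sigma} X$ by letting $Y\to X$ be the base change of $X$ along the étale part of $A'\to \mathrm{Alb}(X)$ (which is étale over $X$, giving~(2)), and $\tau$ the base change along the infinitesimal part, which realizes $Z\times_{k'}A$ as the claimed $\mu_{p^{j_i}}$-torsor over $Y$, giving~(3); here $A$ is the (ordinary) abelian variety obtained from $\mathrm{Alb}(X)$ after these isogenies, and $Z := X_0$ is the fiber, which inherits $F$-splitness and Gorensteinness from $X$, inherits strong $F$-regularity from the geometric generic fiber (by semicontinuity, after possibly a further finite extension $k'/k$ to guarantee geometric integrality and the rationality of the relevant point), and has $\hat q(Z)=0$ because the Albanese of $X$ has been entirely absorbed into $A$. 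For~(4), $K_{Z\times_{k'}A}\sim \tau^*\sigma^*K_X$ follows from the product decomposition together with $K_A\sim 0$ and the fact that $K_{X_0}$ pulls back to $K_X$ restricted to the fiber; the potential torsion ambiguity in numerical versus linear equivalence should be killed by the explicit torsor structure. The positivity of $-K_Z$ (semi-ample resp.\ nef) is just the restriction of $-K_X$ to the fiber $X_0$.

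The main obstacle I expect is bookkeeping around the non-étale (infinitesimal) part of the isogeny and ensuring the final numerical statements become linear equivalences. Concretely: (a) identifying precisely that the purely inseparable isogeny of the ordinary abelian variety $\mathrm{Alb}(X)$ needed for the splitting is a torsor under a product $\prod \mu_{p^{j_i}}$ — this uses the structure theory of ordinary abelian varieties (the connected-étale sequence of $p^n$-torsion, or Serre--Tate / Dieudonné theory) and must be compatible with the pullback of $-K_X$; (b) passing from the numerical equivalence $-K_X\equiv$ (semi-ample) to an honest linear equivalence $K_{Z\times_{k'}A}\sim\tau^*\sigma^*K_X$ after the covers — the point is that numerically trivial divisors on abelian varieties that remain numerically trivial after pullback become trivial after a suitable isogeny, and the $F$-split hypothesis forces the relevant torsion line bundles to be killed by iterated Frobenius pullback (an ordinary/$F$-split phenomenon). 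I would handle (b) by combining the product structure with the fact that on $Z\times_{k'}A$ the divisor $K_{Z\times_{k'}A}-\tau^*\sigma^*K_X$ is numerically trivial and pulled back from $A$, hence torsion, and then arranging $A$ (enlarging the isogeny) so that this torsion class vanishes, which is possible because $-K_X$ was numerically equivalent to a semi-ample $\mathbb Q$-divisor and $A$ is ordinary. The remaining steps — geometric integrality of $Z$ after a finite base field extension, Gorensteinness, $\hat q(Z)=0$, and the $F$-split property — are routine consequences of the construction and the results cited in the excerpt.
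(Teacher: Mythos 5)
Your overall strategy is the same as the paper's: pass to the Albanese morphism $a\colon X\to\mathrm{Alb}_X$, use $F$-splitness to get ordinariness of the Albanese and to upgrade the geometric generic fiber from $F$-pure to strongly $F$-regular via \cite[Theorem~6.3]{PZ19}, invoke the splitting theorem over abelian varieties (Theorem~\ref{thm:decomp3} in the semi-ample case, Theorem~\ref{thm:decomp1} in the nef case over $\overline{\mathbb F_p}$), and then factor the resulting isogeny into an \'etale part (via the Verschiebung, using ordinariness) and an infinitesimal part that is a $\prod_i\mu_{p^{j_i}}$-torsor. Most of your bookkeeping matches the actual proof, including the use of $[n]_A=[m]_A\circ V^{(e)}_{A/k}\circ F^{(e)}_{X/k}$ and the argument that $\hat q(Z)=0$ because $\tau$ is a universal homeomorphism and $\sigma$ is (quasi-)\'etale, so $\hat q$ is preserved (Proposition~\ref{prop:hat q}).

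There is, however, one genuine gap in how you treat the hypothesis that $-K_X$ is only \emph{numerically equivalent} to a semi-ample divisor. You write $-K_X\sim_{\mathbb Q}D+a^*P$ with $D$ semi-ample and $P$ numerically trivial on the Albanese, and then assert that the discrepancy, being numerically trivial and pulled back from $A$, is ``hence torsion,'' to be killed by enlarging the isogeny. This inference is false over a general perfect field: numerically trivial line bundles on an abelian variety are algebraically trivial up to torsion, not torsion, and Example~\ref{eg:not F_p} in the paper is built precisely from a non-torsion element of $\mathrm{Pic}^0$ of an elliptic curve. The claim $P\sim_{\mathbb Q}0$ is in fact a substantive \emph{conclusion} of Theorem~\ref{thm:decomp3}, proved there by a pigeonhole argument on the finite sets $M(n)$ of numerically trivial line bundles occurring as quotients of the pushforwards $\mathcal G_r$; the semi-ampleness of $D$ enters essentially through Lemma~\ref{lem:semi-ample decomp}. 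Relatedly, if you only invoke the introductory version Theorem~\ref{thm:decomp3-intro} (which assumes $-K_X$ semi-ample on the nose), the reduction from ``numerically semi-ample'' to ``semi-ample'' is circular, since that reduction is essentially Corollary~\ref{cor:F-split semi-ample-intro}, itself a consequence of the theorem being proved. A second, minor omission: before taking the Albanese you must first replace $X$ by a finite (quasi-)\'etale cover realizing $\hat q(X)=\dim\mathrm{Alb}_X$ (Lemma~\ref{lem:F-split quasi-etale}); without this initial cover the abelian factor $A$ you construct has the wrong dimension and the argument for $\hat q(Z)=0$ does not close.
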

Here, $\hat q(X)$ denotes the \textit{augmented irregularity} of $X$ defined as 
$$
\hat q(X):=\max \{\dim \mathrm{Alb}_{X'}|\textup{$X'\to X$ is a finite \'etale morphism}\}.
$$
When $-K_X$ is nef, we see from \cite[Theorem~1.1]{EP23} that $\hat q(X) \le \dim X$.

Theorem~\ref{thm:F-split-intro} cannot be generalized to each of the following cases:~(i) $X$ is not $F$-split (\cite[Proposition~1.3]{PZ19}); (i\hspace{-1pt}i) $k\not\subseteq\overline{\mathbb F_p}$ and $-K_X$ is nef but not semi-ample (Example~\ref{eg:indecomp}). 
Also, ``the torsor part'' cannot be dropped from Theorem~\ref{thm:F-split-intro} (Example~\ref{eg:torsor need}). 
\begin{rem} \label{rem:Delta}
It is well known that the $F$-splitting of $X$ implies that there exists an effective $\mathbb Q$-divisor on $X$ such that $K_X+\Delta\sim_{\mathbb Q} 0$ and $(X,\Delta)$ is $F$-split. Hence, if the pair $(X,\Delta)$ is strongly $F$-regular, then we obtain from \cite[Theorem~1.14]{PZ19} the same conclusion as Theorem~\ref{thm:F-split-intro}. However, it is not known whether or not we can choose such $\Delta$ so that $(X,\Delta)$ is strongly $F$-regular, even when $-K_X$ is semi-ample. 
\end{rem}
Combining Theorem~\ref{thm:F-split-intro} with a recent study of varieties with nef tangent bundle \cite{KW23, EY23}, we obtain the following decomposition theorem:
\begin{thm}[\textup{Theorem~\ref{thm:nef tangent}}] 
\label{thm:nef tangent-intro}
Let $k$ be an algebraically closed field of characteristic $p>0$. 
Let $X$ be a smooth $F$-split projective variety over $k$ 
with nef tangent bundle $T_X$. 
Suppose that either $k=\overline{\mathbb F_p}$ or 
$-K_X$ is numerically equivalent to a semi-ample $\mathbb Q$-divisor on $X$. 
Then there exist finite surjective morphisms
$$
Z\times_k A \xrightarrow{\tau} Y \xrightarrow{\sigma} X
$$
with the following properties: 
\begin{enumerate}[$(1)$]
\item $\sigma:Y\to X$ is \'etale. 
\item $\tau:Z\times_k A\to Y$ is a torsor under 
$\Pi_{i=1}^{\hat q(X)}\mu_{p^{j_i}}$ for some integers $j_i\ge 0$. 
\item $K_{Z\times_k A}\sim\tau^*\sigma^*K_X$. 
\item $A$ is an ordinary abelian variety. 
\item $Z$ is a smooth $F$-split separably rationally connected Fano variety
with nef tangent bundle. 
\end{enumerate}
In particular, $-K_X$ is semi-ample. 
\end{thm}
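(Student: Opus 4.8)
The plan is to reduce to Theorem~\ref{thm:F-split-intro} and then to pin down the factor $Z$ by means of the recent structure theory of varieties with nef tangent bundle.

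First, note that $-K_X=\det T_X$ is nef, being the determinant of a nef bundle. Hence, in either case of the hypothesis, $X$ satisfies the assumptions of Theorem~\ref{thm:F-split-intro} (the version for numerically semi-ample $-K_X$, or the version for $k\subseteq\overline{\mathbb F_p}$). Applying it, and using that $k$ is algebraically closed so that the finite extension $k'/k$ appearing there is trivial, we obtain finite surjective morphisms $Z\times_k A\xrightarrow{\tau}Y\xrightarrow{\sigma}X$ satisfying conditions~(1)--(4) of the present theorem together with: $Z$ is a Gorenstein strongly $F$-regular $F$-split integral projective variety with $\hat q(Z)=0$ and $-K_Z$ semi-ample (resp.\ nef). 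It remains to upgrade this to~(5)---that $Z$ is smooth and separably rationally connected with $-K_Z$ ample and $T_Z$ nef---and then to deduce that $-K_X$ is semi-ample.

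Since $\sigma$ is \'etale, $Y$ is again smooth, $F$-split and has nef tangent bundle $T_Y\cong\sigma^{*}T_X$. One cannot transport nefness of $T_Y$ directly along $\tau$, because a torsor under $\prod_i\mu_{p^{j_i}}$ is purely inseparable and the differential of $\tau$ degenerates. Instead I would invoke \cite{KW23, EY23}: for a smooth $F$-split projective variety with nef tangent bundle, such as $Y$, the Albanese morphism $\mathrm{alb}_Y\colon Y\to\mathrm{Alb}_Y$ is a smooth surjective fibration all of whose fibers are smooth $F$-split separably rationally connected Fano varieties with nef tangent bundle; moreover $\dim\mathrm{Alb}_Y=\hat q(X)$, this being built into the construction of the tower in Theorem~\ref{thm:F-split-intro}. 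Then $p_A\colon Z\times_k A\to A$ and $\tau$ fit, by functoriality of the Albanese and $\hat q(Z)=0$, into a commutative square over an isogeny $A\to\mathrm{Alb}_Y$; comparing degrees (both $\tau$ and this isogeny have degree equal to the order of the torsor group) shows that the induced morphism $Z\times_k A\to Y\times_{\mathrm{Alb}_Y}A$ is finite and birational onto a smooth variety, hence an isomorphism. Consequently $Z$ is isomorphic to a fiber of $\mathrm{alb}_Y$, and thus has all the properties listed in~(5).

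Finally, since $Z$ is now known to be Fano, $-K_{Z\times_k A}=p_Z^{*}(-K_Z)$ is semi-ample; by~(3) it equals $(\sigma\circ\tau)^{*}(-K_X)$, and since $\sigma\circ\tau$ is finite and surjective, semi-ampleness descends, so $-K_X$ is semi-ample, which also gives the final assertion. The step I expect to be the main obstacle is the identification of $Z$ with a fiber of the Albanese morphism: the purely inseparable nature of $\tau$ means this must be extracted from the structure theory of \cite{KW23, EY23} rather than from differential calculus, and it requires checking that the torsor produced in Theorem~\ref{thm:F-split-intro} exactly trivializes the Albanese fibration with no residual inseparable twist on the fiber.
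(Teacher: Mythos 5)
Your overall strategy is the same as the paper's --- combine the $F$-split decomposition theorem with the structure theory of \cite{KW23,EY23} for $F$-split varieties with nef tangent bundle --- but you organize it in the opposite order, and that ordering is where the one soft spot lies. The paper first invokes \cite[Theorem~1.3]{EY23} to replace $X$ by an \'etale cover on which $\dim\mathrm{Alb}_X=\hat q(X)$ and every closed Albanese fiber is a smooth $F$-split separably rationally connected Fano variety with nef tangent bundle, and then \emph{reruns} the proof of Theorem~\ref{thm:F-split semi-ample} (resp.~\ref{thm:F-split nef}); in that construction $Z=X_0$ is by definition a closed fiber of the Albanese morphism, the square relating $\tau$ to the isogeny $F^{(e)}_{A/k}\colon A\to A'=\mathrm{Alb}_W$ is cartesian by construction, and property~(5) is immediate. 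You instead quote Theorem~\ref{thm:F-split-intro} as a black box and try to recover the identification of $Z$ with an Albanese fiber a posteriori.

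The gap in that a posteriori identification is your assertion that the isogeny $\phi\colon A\to\mathrm{Alb}_Y$ obtained from Albanese functoriality has degree equal to $|\Pi_i\mu_{p^{j_i}}|=\deg\tau$. From the statement of Theorem~\ref{thm:F-split-intro} alone this is not available: you only know $\deg(Z\times A\to Y\times_{\mathrm{Alb}_Y}A)=\deg\tau/\deg\phi$ is a positive integer, and without it being $1$ you only get a finite surjective map from $Z$ onto a fiber of $\mathrm{alb}_Y$, which does not transfer smoothness, Fano-ness, or nefness of the tangent bundle. Proving $\deg\phi=\deg\tau$ amounts to verifying that the torsor trivializes the Albanese fibration with no residual inseparable twist --- exactly the caveat you flag at the end --- and the only way to verify it is to open the proof of Theorem~\ref{thm:F-split semi-ample}, where it holds because $\tau$ is literally the base change of the relative Frobenius of the Albanese variety along $\mathrm{alb}_W$. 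Once you do that, you are running the paper's argument; so the efficient fix is to apply \cite{EY23} up front to $X$ and cite the \emph{proof}, not just the statement, of the decomposition theorem. Your final descent of semi-ampleness of $-K_X$ along the finite surjective morphism $\sigma\circ\tau$ is fine.
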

We cannot drop from this theorem each of the following:~(i) the assumption that $-K_X$ is semi-ample or $k=\overline{\mathbb F_p}$ (Example~\ref{eg:indecomp});~(i\hspace{-1pt}i) ``the torsor part'' (Example~\ref{eg:torsor need}). 

We also treat the case when $X$ is not necessarily $F$-split, 
and obtain the decomposition theorem in the case of $\hat q(X)=1$. 
\begin{thm}[\textup{Special case of Theorem~\ref{thm:hat q=1}}] \label{thm:hat q=1-intro} 
Let $X$ be a smooth projective variety $($not necessarily $F$-split$)$ over a perfect field $k$ of characteristic $p>0$. 
Suppose that $\hat q(X)=1$ and $-K_X$ is numerically equivalent to 
a semi-ample $\mathbb Q$-divisor on $X$. 
Then there exist finite surjective morphisms
$$
Z\times_{k'}E\xrightarrow{\tau}Y\xrightarrow{\sigma}X
$$
with the following properties:
\begin{enumerate}[$(1)$]
\item $k'$ is a finite field extension of $k$. 
\item $\sigma:Y\to X$ is \'etale. 
\item $\tau:Z\times_{k'}E\to Y$ is an infinitesimal torsor. 
\item $K_{Z\times_{k'}E}\sim \tau^*\sigma^*K_X$. 
\item $E$ is an elliptic curve over $k'$. 
\item $Z$ is a Gorenstein geometrically integral projective variety over $k'$ such that $-K_Z$ is semi-ample and $\hat q(Z)=0$.
\end{enumerate}
\end{thm}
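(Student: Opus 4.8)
The plan is to use the hypothesis $\hat q(X)=1$ to reduce to the Albanese fibration onto an elliptic curve, apply the known splitting theorem over an elliptic base, and then reorganize the covers produced into an \'etale part $\sigma$ and an infinitesimal-torsor part $\tau$ while keeping control of canonical divisors.

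First, after replacing $k$ by a finite extension $k'$ and $X$ by a geometrically integral finite \'etale cover $X_1$ admitting a $k'$-rational point, one may assume $\dim\mathrm{Alb}_{X_1}=1$: indeed $\hat q(X)$ is a maximum, realized by a finite \'etale cover, and $\hat q$ is unchanged under finite \'etale and purely inseparable morphisms. Let $f_1\colon X_1\to B$ be the Stein factorization of the Albanese morphism $X_1\to\mathrm{Alb}_{X_1}$. The smooth projective curve $B$ surjects onto the one-dimensional $\mathrm{Alb}_{X_1}$, so $g(B)\ge 1$, while by the universal property of the Albanese $\mathrm{Alb}_B$ is generated by the image of $\mathrm{Alb}_{X_1}$, so $g(B)=\dim\mathrm{Alb}_B\le 1$; hence $B$ is an elliptic curve over $k'$. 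If $f_1$ is inseparable, one first replaces $B$ by an appropriate purely inseparable (Frobenius-twist) cover so that $f_1$ becomes an algebraic fiber space, the cost being absorbed into $\tau$ below. Since $K_B\sim 0$ we have $-K_{X_1/B}\sim -K_{X_1}\sim\sigma_1^{*}(-K_X)$, which is numerically equivalent to a semi-ample $\mathbb Q$-divisor, in particular nef.

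Applying \cite[Theorem~1.6]{EP23} to $f_1\colon X_1\to B$ yields a finite surjective $\beta\colon B'\to B$ which, after a translation, is an isogeny of elliptic curves, hence factors as $B'\xrightarrow{\beta_2}B''\xrightarrow{\beta_1}B$ with $\beta_1$ \'etale and $\beta_2$ purely inseparable, such that $X_1\times_B B'\cong Z\times_{k'}B'$ as $B'$-schemes, where $Z:=f_1^{-1}(b_0)$ for $b_0\in B(k')$ the image of the chosen point. Put $E:=B'$, $Y:=X_1\times_B B''$, take $\sigma\colon Y\to X$ to be $Y\to X_1\xrightarrow{\sigma_1}X$ (\'etale, as a base change of $\beta_1$ followed by $\sigma_1$), and $\tau\colon Z\times_{k'}E=X_1\times_B B'=Y\times_{B''}B'\to Y$ to be the base change of the $\ker(\beta_2)$-torsor $\beta_2$, so $\tau$ is an infinitesimal torsor. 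One then checks the remaining assertions: $Z=f_1^{*}(b_0)$ is a Cartier divisor in the regular $X_1$, hence Gorenstein, and $K_Z\sim K_{X_1}|_Z\sim\sigma_1^{*}K_X|_Z$ by adjunction (as $Z|_Z\sim 0$); $Z$ is geometrically connected since $f_{1*}\mathcal O_{X_1}=\mathcal O_B$ and geometrically reduced by the reduction above, hence geometrically integral; for every finite \'etale $Z'\to Z$ one obtains a finite \'etale $Z'\times_{k'}E\to X_1\times_B B'$, so $\dim\mathrm{Alb}_{Z'}+1=\dim\mathrm{Alb}_{Z'\times_{k'}E}\le\hat q(X_1\times_B B')=\hat q(X)=1$, whence $\hat q(Z)=0$; since $q(Z)=0$ and $k'$ is perfect, the numerically trivial line bundles on $Z$ form a finite group, so $-K_Z-\sigma_1^{*}(-K_X)|_Z$ is torsion and $-K_Z$, being a torsion twist of the semi-ample $\sigma_1^{*}(-K_X)|_Z$, is semi-ample; and since $K_{B'}\sim 0\sim K_B$ the ramification divisor of $\beta$ is linearly trivial, so $K_{X_1\times_B B'}\sim\mathrm{pr}_{X_1}^{*}K_{X_1}\sim(\sigma\circ\tau)^{*}K_X$ (using $\sigma\circ\tau=\sigma_1\circ\mathrm{pr}_{X_1}$), which under $Z\times_{k'}E\cong X_1\times_B B'$ gives $K_{Z\times_{k'}E}\sim\tau^{*}\sigma^{*}K_X$.

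The main obstacle I anticipate is the inseparability bookkeeping: turning the Albanese fibration $f_1$ into a genuine algebraic fiber space via the preliminary purely inseparable base change while keeping the total space suitable for \cite[Theorem~1.6]{EP23}, and identifying the purely inseparable part $\beta_2$ of the trivializing cover with a bona fide torsor under an infinitesimal group scheme so that the conclusion takes the stated form, together with the careful tracking of the finite field extension $k'$ and of how the \'etale factor $\sigma$ and the torsor factor $\tau$ fit together.
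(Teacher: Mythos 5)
Your overall architecture (reduce to the Albanese fibration over an elliptic curve, split after an isogeny, then separate the isogeny into an \'etale part absorbed into $\sigma$ and a purely inseparable part giving the infinitesimal torsor $\tau$, and finally use $\hat q(Z)=0$ to upgrade $-K_Z$ from numerically semi-ample to semi-ample) matches the paper's. But there is a genuine gap at the central step: you obtain the splitting $X_1\times_B B'\cong Z\times_{k'}B'$ by applying \cite[Theorem~1.6]{EP23} to $f_1\colon X_1\to B$, and the only hypothesis you verify before invoking it is that $-K_{X_1/B}$ is nef (``numerically equivalent to a semi-ample $\mathbb Q$-divisor, in particular nef''). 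Nefness of the relative anti-canonical divisor over an elliptic curve does \emph{not} imply splitting after any proper surjective base change: Example~\ref{eg:not F_p} of this paper ($X=\mathbb P_Y(\mathcal O_Y\oplus\mathcal L)$ over an elliptic curve $Y$ with $\mathcal L$ numerically trivial and non-torsion) is a smooth $\mathbb P^1$-bundle with nef $-K_{X/Y}$ that never splits. So whatever \cite[Theorem~1.6]{EP23} actually states, it cannot deliver the conclusion you extract from the data you have checked; the semi-ampleness hypothesis must enter \emph{before} the splitting is derived, not only afterwards when you repair $-K_Z$.

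The paper supplies exactly this missing ingredient as Theorem~\ref{thm:decomp ell}: one first writes $-K_X\sim_{\mathbb Q}D+a^*P$ with $D$ semi-ample on $X$ and $P$ numerically trivial on the Albanese (possible because a numerically trivial Cartier divisor on $X$ is, up to torsion, algebraically trivial and hence pulled back from $\mathrm{Alb}_X$), and then proves both that $P\sim_{\mathbb Q}0$ and that the fibration splits after an isogeny. That proof is the technical core you are skipping: starting from the numerically flat pushforwards $f_*\mathcal O_X(mL)$ of \cite[Theorem~7.5]{EP23}, it uses the freeness of $|iD|$ via Lemma~\ref{lem:semi-ample decomp}, the trace maps $\phi^{(e)}$, and a pigeonhole argument on the finite subsets $M(n)\subseteq\mathrm{Pic}^0(Y)$ to show that every line bundle in $\mathbb L(\mathcal G_0)$ is torsion (and, separately, that the Stein factorization curve trivializes after an isogeny). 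Your remaining steps --- the separable/inseparable factorization of the isogeny, adjunction for $K_Z$, geometric integrality of $Z$, and $\hat q(Z)=0$ --- are essentially the paper's, but the proposal is not a proof until the splitting over the elliptic curve is actually established using the semi-ampleness hypothesis.
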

This theorem cannot be generalized to each of the following cases: 
(i) $\hat q(X)\ge 2$ (\cite[Proposition~1.3]{PZ19}); 
(i\hspace{-1pt}i) $-K_X$ is nef but not semi-ample (Example~\ref{eg:indecomp}).
Also, ``the torsor part'' cannot be dropped from Theorem~\ref{thm:hat q=1-intro} (Example~\ref{eg:torsor need}). 
\subsection{Applications}
The above decomposition theorems have several applications. 
First, we enumerate direct corollaries of Theorems~\ref{thm:F-split-intro} and~\ref{thm:hat q=1-intro}. 
\begin{cor}[\textup{Special case of Corollary~\ref{cor:F-split semi-ample}}] \label{cor:F-split semi-ample-intro}
Let $X$ be a smooth $F$-split projective variety 
over a perfect field of positive characteristic. 
Suppose that $-K_X$ is numerically equivalent to 
a semi-ample $\mathbb Q$-divisor on $X$. 
Then $-K_X$ is semi-ample. 
\end{cor}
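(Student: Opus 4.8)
The plan is to deduce the corollary immediately from Theorem~\ref{thm:F-split-intro}. Under the hypotheses, $-K_X$ is numerically equivalent to a semi-ample $\mathbb Q$-divisor, so the ``semi-ample'' case of that theorem applies and produces finite surjective morphisms
$$
Z\times_{k'}A\xrightarrow{\ \tau\ }Y\xrightarrow{\ \sigma\ }X
$$
with the listed properties. Set $\rho:=\sigma\circ\tau$, which is again finite surjective; by property~(4) we have $\rho^*K_X\sim K_{Z\times_{k'}A}$, and by property~(6) the divisor $-K_Z$ is semi-ample.

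Next I would check that $\rho^*(-K_X)$ is semi-ample. Since $A$ is an abelian variety (property~(5)), $\omega_A\cong\mathcal O_A$, so $K_A\sim 0$; writing $\mathrm{pr}_Z\colon Z\times_{k'}A\to Z$ for the first projection, this gives $K_{Z\times_{k'}A}\sim\mathrm{pr}_Z^*K_Z$. As the pullback of a semi-ample divisor under any morphism is again semi-ample, $-K_{Z\times_{k'}A}\sim\mathrm{pr}_Z^*(-K_Z)$ is semi-ample, and hence so is $\rho^*(-K_X)\sim-K_{Z\times_{k'}A}$.

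It then remains to descend semi-ampleness along $\rho$: if $\pi$ is a finite surjective morphism of projective varieties and $\pi^*D$ is semi-ample, then $D$ is semi-ample. Applying this with $\pi=\rho$ and $D=-K_X$ finishes the proof. The bulk of the work is of course carried by Theorem~\ref{thm:F-split-intro}; the only genuinely extra ingredient is this descent statement, which is standard (if one prefers to avoid invoking it in full generality, one can treat $\sigma$ and $\tau$ separately, using that $\sigma$ is \'etale and that $\tau$ is a torsor under an infinitesimal group scheme). So I do not expect any real obstacle here once the decomposition theorem is in hand.
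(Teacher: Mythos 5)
Your proposal is correct and is exactly the route the paper takes: Corollary~\ref{cor:F-split semi-ample} is stated there without proof as a direct consequence of the decomposition theorem, and your deduction (pull back $-K_X$ along the finite surjective $\sigma\circ\tau$, identify $K_{Z\times_{k'}A}$ with $\mathrm{pr}_Z^*K_Z$ via $K_A\sim 0$, then descend semi-ampleness along a finite surjective morphism) is precisely how that ``direct consequence'' is meant to be read. The descent step you flag is indeed the only extra ingredient, and your suggested fallback of handling the \'etale part and the infinitesimal torsor (a universal homeomorphism) separately is a perfectly safe way to justify it.
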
 
\begin{cor}[\textup{Special case of Corollary~\ref{cor:hat q=1}}] \label{cor:hat q=1-intro}
Let $X$ be a smooth projective variety $($not necessarily $F$-split$)$
over a perfect field of positive characteristic. 
Suppose that $\hat q(X)\le 1$ and $-K_X$ is numerically equivalent to 
a semi-ample $\mathbb Q$-divisor on $X$. 
Then $-K_X$ is semi-ample.
In particular, if $\hat q(X)\le 1$ and $K_X\equiv 0$, 
then $K_X\sim_{\mathbb Q}0$. 
\end{cor}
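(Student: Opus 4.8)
The plan is to deduce the Corollary from the structural decomposition in Theorem~\ref{thm:hat q=1-intro} by descending semi-ampleness along finite covers, treating the borderline case $\hat q(X)=0$ separately with a short Picard-scheme argument. Assume first that $\hat q(X)=1$. Applying Theorem~\ref{thm:hat q=1-intro} I obtain finite surjective morphisms $Z\times_{k'}E\xrightarrow{\tau}Y\xrightarrow{\sigma}X$ with the properties listed there. Since $E$ is an elliptic curve we have $K_E\sim 0$, and as $Z\times_{k'}E$ is a fibre product over $k'$ it follows that $K_{Z\times_{k'}E}\sim\mathrm{pr}_Z^{*}K_Z+\mathrm{pr}_E^{*}K_E\sim\mathrm{pr}_Z^{*}K_Z$, where $\mathrm{pr}_Z$ and $\mathrm{pr}_E$ are the two projections. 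Property~(6) gives that $-K_Z$ is semi-ample, hence $-K_{Z\times_{k'}E}\sim\mathrm{pr}_Z^{*}(-K_Z)$ is semi-ample, being a pullback of a semi-ample divisor; and property~(4) then shows that $\pi^{*}(-K_X)\sim -K_{Z\times_{k'}E}$ is semi-ample, where $\pi:=\sigma\circ\tau\colon Z\times_{k'}E\to X$ is finite and surjective.

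It remains to use the standard fact that semi-ampleness of a $\mathbb Q$-Cartier divisor on a projective variety is unaffected by finite surjective base change: one implication is immediate, and for the other one factors $\pi$ into its \'etale and purely inseparable parts, deals with the latter via the fact that, $X$ being normal, a suitable power of the absolute Frobenius of $X$ factors as $X\to Z\times_{k'}E\xrightarrow{\pi}X$ together with the triviality ``$L$ semi-ample $\iff L^{\otimes p}$ semi-ample'', and handles the former by passing to a Galois closure and averaging the global sections of a sufficiently divisible base-point-free multiple over the Galois group. This yields that $-K_X$ is semi-ample whenever $\hat q(X)=1$. If instead $\hat q(X)=0$, then $(\mathrm{Pic}^{0}_X)_{\mathrm{red}}$ is the abelian variety dual to $\mathrm{Alb}_X$, which vanishes by hypothesis, so $\mathrm{Pic}^{0}_X$ is infinitesimal; since $\mathrm{Pic}^{\tau}_X/\mathrm{Pic}^{0}_X$ is finite, $\mathrm{Pic}^{\tau}_X$ is a finite group scheme, and hence every numerically trivial line bundle on $X$ is torsion. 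Writing $-K_X\equiv D$ with $D$ a semi-ample $\mathbb Q$-divisor, this forces $-K_X-D\sim_{\mathbb Q}0$, so again $-K_X\sim_{\mathbb Q}D$ is semi-ample. For the last assertion, if $K_X\equiv 0$ then $-K_X$ is numerically equivalent to the semi-ample divisor $0$, so $-K_X$ is semi-ample by the above; a base-point-free multiple of $-K_X$ then defines a morphism contracting every curve of $X$, hence a constant morphism, whence $-K_X\sim_{\mathbb Q}0$.

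Granting Theorem~\ref{thm:hat q=1}, the deduction above is essentially routine, so the real content of the Corollary sits in that theorem. Within the deduction itself, the one step deserving genuine care is the descent of semi-ampleness through the purely inseparable factor $\tau$ of $\pi$, which is exactly what the Frobenius-factorization remark above is designed to handle; once that is in place, the $\hat q(X)=0$ case and the final assertion are formal.
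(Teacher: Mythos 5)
Your proposal is correct and follows essentially the same route as the paper: the paper's proof of Corollary~\ref{cor:hat q=1} simply invokes Theorem~\ref{thm:hat q=1} for $\hat q(X)=1$ and, for $\hat q(X)=0$, the fact that numerical and $\mathbb Q$-linear equivalence coincide when $\mathrm{Alb}_X$ is trivial --- exactly the two steps you carry out, with the descent of semi-ampleness along $\sigma\circ\tau$ left implicit in the paper. One small caution: in the \'etale step, additive averaging of sections over the Galois group can fail when $p$ divides the group order, so you should take the norm (the product of the Galois translates of a section of a free multiple) rather than a sum.
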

\begin{cor}[\textup{Special case of Corollary~\ref{cor:surface}}] \label{cor:surface-intro}
Let $X$ be a smooth projective surface $($not necessarily $F$-split$)$
over a perfect field of positive characteristic. 
Suppose that $-K_X$ is numerically equivalent to 
a semi-ample $\mathbb Q$-divisor on $X$. 
Then $-K_X$ is semi-ample. 
\end{cor}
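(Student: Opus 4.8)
The plan is to reduce, via Corollary~\ref{cor:hat q=1-intro}, to the case $\hat q(X)=2$, and then to settle that case using the classification of surfaces in positive characteristic. Throughout I may assume $k$ is algebraically closed, since passing to $\bar k$ affects neither the hypothesis nor the conclusion (it can only increase $\hat q$, which is harmless for the argument below).

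Since $-K_X$ is numerically equivalent to a semi-ample $\mathbb Q$-divisor it is in particular nef, so \cite[Theorem~1.1]{EP23} gives $\hat q(X)\le\dim X=2$. If $\hat q(X)\le 1$, the assertion is exactly Corollary~\ref{cor:hat q=1-intro}. So I assume $\hat q(X)=2$, and I claim the stronger statement $-K_X\sim_{\mathbb Q}0$ (which is certainly semi-ample).

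I would prove the claim in three steps. \emph{Step 1: $\kappa(X)\le 0$.} Write a Zariski decomposition $K_X=P+N$ with $P$ nef and $P\cdot N=0$. As a product of two nef divisors on a surface, $(-K_X)\cdot P=-P^2\ge 0$, so $P^2\le 0$; this already excludes $\kappa(X)=2$, where $P$ is big and $P^2>0$. If $\kappa(X)=1$, then $P^2=0$, so $K_X^2=N^2$; but $(-K_X)^2\ge 0$ forces $N^2\ge 0$, hence $N=0$ by negative-definiteness, hence $K_X=P$ is nef, hence $K_X\equiv 0$, contradicting $\kappa(X)=1$. \emph{Step 2: $\kappa(X)\ne -\infty$.} If $\kappa(X)=-\infty$, then $X$ is birationally ruled, and combining $(-K_X)^2\ge 0$ with the identity $K_X^2=8(1-g(C))-(\#\text{blow-ups})$ for a ruled surface over a smooth curve $C$ shows that $X$ is either geometrically rational or a geometrically ruled surface over an elliptic curve. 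In the first case $\hat q(X)=0$; in the second, every finite \'etale cover of $X$ is again geometrically ruled over an \'etale---hence elliptic---cover of $C$, so $\hat q(X)=1$. Either way $\hat q(X)\ne 2$. \emph{Step 3: $X$ is minimal and $K_X$ is torsion.} By Steps 1 and 2, $\kappa(X)=0$, so the minimal model $X_0$ of $X$ satisfies $K_{X_0}\equiv 0$. If the contraction $\mu\colon X\to X_0$ were not an isomorphism, pick a blown-up point $p\in X_0$ and a curve $D_0\ni p$; then for the strict transform $\widetilde D\subset X$ one has $(-K_X)\cdot\widetilde D=-E\cdot\widetilde D\le -1<0$, where $E$ is the total exceptional divisor (using $K_{X_0}\equiv 0$ and $E\cdot\widetilde D\ge 1$), contradicting the nefness of $-K_X$. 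Hence $X=X_0$ is minimal with $\kappa(X)=0$, so by the classification of such surfaces (abelian, $K3$, Enriques, (quasi-)hyperelliptic) the class $K_X$ is torsion, i.e.\ $-K_X\sim_{\mathbb Q}0$.

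The main obstacle is Step~2 together with Step~3: extracting from the twin hypotheses ``$-K_X$ nef'' and ``$\hat q(X)=2$'' that $X$ must be a minimal surface of Kodaira dimension zero; after that, the structure theory of surfaces does the rest. (If \cite{EP23} records a classification of the boundary case $\hat q(X)=\dim X$, one can instead argue directly that $X$ acquires an abelian-surface structure after a finite \'etale cover and then descend the triviality of the canonical class by a norm argument, invoking the surface classification only through the triviality of the canonical class of an abelian surface.)
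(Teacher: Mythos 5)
Your proposal is correct, but for the decisive case it takes a genuinely different route from the paper. The paper's proof of Corollary~\ref{cor:surface} also begins with the trichotomy $\hat q(X)\in\{0,1,2\}$ (from \cite[Theorem~1.3]{EP23}) and also disposes of $\hat q(X)\le 1$ via Corollary~\ref{cor:hat q=1}; but for $\hat q(X)=2$ it simply passes to a quasi-\'etale cover realizing $\dim\mathrm{Alb}_X=2$ and quotes \cite[Proposition~3.2]{EP23}, which says that a surface with nef anti-canonical divisor and two-dimensional Albanese image is an abelian variety, whence $K_X\sim 0$ — exactly the alternative you anticipate in your closing parenthesis. You instead argue intrinsically through the Bombieri--Mumford classification: the Zariski decomposition of $K_X$ against the nef divisor $-K_X$ to kill $\kappa(X)\ge 1$, the formula $K_X^2=8(1-g)-\#\{\text{blow-ups}\}$ to show that $\kappa(X)=-\infty$ forces $\hat q(X)\le 1$, and minimality plus the $\kappa=0$ classification to get $K_X$ torsion. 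Each step is sound (the Zariski computation $-P^2=(-K_X)\cdot P\ge 0$, the negative-definiteness of $N$, the strict-transform computation $-K_X\cdot\widetilde D=-E\cdot\widetilde D<0$, and the standard facts that rational surfaces are \'etale simply connected and that \'etale covers of an elliptic ruled surface remain elliptic ruled). What your route buys is independence from \cite[Proposition~3.2]{EP23} and the slightly sharper conclusion that in the case $\hat q(X)=2$ one even has $K_X\sim_{\mathbb Q}0$ without passing to any cover; what it costs is reliance on the full surface classification in positive characteristic, and it is specific to the smooth absolute case, whereas the paper's argument extends verbatim to the strongly $F$-regular pairs $(X,\Delta)$ treated in Corollary~\ref{cor:surface}.
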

The last statement of Corollary~\ref{cor:hat q=1-intro} says that the abundance conjecture holds true when $\hat q(X)\le1$ and $K_X\equiv 0$. 
 
Theorems~\ref{thm:F-split-intro} and~\ref{thm:nef tangent-intro} have applications to the study of \'etale fundamental groups, which is an imitation of \cite[Corollary~1.12]{PZ19}. 
\begin{cor}[\textup{Corollary~\ref{cor:pi}}] \label{cor:pi-intro}
Let $X$ be a smooth $F$-split projective variety over an algebraically closed field $k$ of characteristic $p>0$. 
Suppose that either $-K_X$ is semi-ample or 
$-K_X$ is nef and $k=\overline{\mathbb F_p}$. 
If $\hat q(X)\ge \dim X-2$ $($e.g., $\dim X=3$ and $\hat q(X)>0$$)$, 
then $\pi^{\textup{\'et}}(X)$ is virtually abelian. 
I.e., there exists an abelian subgroup of finite index. 
\end{cor}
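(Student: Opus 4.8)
The plan is to deduce the statement from the decomposition Theorem~\ref{thm:F-split-intro}, applied to $X$, together with standard facts about \'etale fundamental groups and the classification of surfaces; the argument parallels that of \cite[Corollary~1.12]{PZ19}. Since $k$ is algebraically closed we may take $k'=k$, so Theorem~\ref{thm:F-split-intro} produces finite surjective morphisms
$$
Z\times_k A\xrightarrow{\ \tau\ }Y\xrightarrow{\ \sigma\ }X
$$
in which $\sigma$ is \'etale, $\tau$ is a torsor under $\Pi_{i=1}^{\hat q(X)}\mu_{p^{j_i}}$, $A$ is an (ordinary) abelian variety of dimension $\hat q(X)$, and $Z$ is a Gorenstein strongly $F$-regular $F$-split geometrically integral projective variety with $-K_Z$ semi-ample (resp. nef) and $\hat q(Z)=0$. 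In particular $\dim Z=\dim X-\hat q(X)\le 2$.

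The first step is to reduce the statement to the claim that $\pi^{\textup{\'et}}(Z)$ is finite. Being virtually abelian is inherited both by finite-index subgroups and by finite-index overgroups; since $\sigma$ is finite \'etale, $\sigma_*\pi^{\textup{\'et}}(Y)$ is a finite-index subgroup of $\pi^{\textup{\'et}}(X)$, so it suffices to treat $\pi^{\textup{\'et}}(Y)$. As $\Pi_i\mu_{p^{j_i}}$ is an infinitesimal finite group scheme over $k$, the torsor $\tau$ is a universal homeomorphism and hence induces an isomorphism $\pi^{\textup{\'et}}(Z\times_k A)\xrightarrow{\sim}\pi^{\textup{\'et}}(Y)$. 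By the K\"unneth formula for the \'etale fundamental group of a product of proper geometrically connected $k$-schemes, $\pi^{\textup{\'et}}(Z\times_k A)\cong\pi^{\textup{\'et}}(Z)\times\pi^{\textup{\'et}}(A)$, and $\pi^{\textup{\'et}}(A)$ is abelian, being the product of the Tate modules of $A$. Thus, once $\pi^{\textup{\'et}}(Z)$ is finite, $\pi^{\textup{\'et}}(X)$ is virtually abelian, a finite-index abelian subgroup being the image of $\pi^{\textup{\'et}}(A)$.

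It then remains to show $\pi^{\textup{\'et}}(Z)$ is finite, where $\dim Z\le 2$, $Z$ is Gorenstein strongly $F$-regular with $-K_Z$ nef, and $\hat q(Z)=0$. If $\dim Z\le 1$, then $Z$ is a point or (since $-K_Z$ is nef and $\hat q(Z)=0$ forbids elliptic curves) $Z\cong\mathbb P^1$, so $\pi^{\textup{\'et}}(Z)=0$. Assume $\dim Z=2$. Being Gorenstein and strongly $F$-regular, hence klt, hence canonical in dimension two, $Z$ has at worst Du Val singularities, so a resolution $\rho:\widetilde Z\to Z$ is crepant; then $-K_{\widetilde Z}=\rho^*(-K_Z)$ is nef, $\mathrm{Alb}_{\widetilde Z}=\mathrm{Alb}_Z$ is trivial (Du Val singularities are rational and $\dim\mathrm{Alb}_Z\le\hat q(Z)=0$), and $\pi^{\textup{\'et}}(\widetilde Z)$ surjects onto $\pi^{\textup{\'et}}(Z)$ because $\rho_*\mathcal O_{\widetilde Z}=\mathcal O_Z$. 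Running the minimal model program on the smooth surface $\widetilde Z$ contracts only $(-1)$-curves, hence preserves $\pi^{\textup{\'et}}$ and the nefness of the anti-canonical class, and terminates at a smooth projective surface $W$ that is rational, or ruled over a curve of positive genus, or minimal with $K_W$ nef. The second possibility is excluded, since it would make $\mathrm{Alb}_{\widetilde Z}$ nontrivial. In the third case $K_W$ and $-K_W$ are both nef, so $K_W\equiv 0$; then $W$ is a minimal surface of Kodaira dimension $0$ with trivial Albanese, hence not abelian or (quasi-)hyperelliptic, so $W$ is a K3 surface or an Enriques surface (including the non-classical Enriques surfaces in characteristic $2$), whose $\pi^{\textup{\'et}}$ is trivial, respectively finite of order at most $2$. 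In the first case $\pi^{\textup{\'et}}(W)=0$. Since $\pi^{\textup{\'et}}(\widetilde Z)\cong\pi^{\textup{\'et}}(W)$, the group $\pi^{\textup{\'et}}(\widetilde Z)$, and therefore $\pi^{\textup{\'et}}(Z)$, is finite, which completes the argument.

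I expect the two-dimensional step to be the main obstacle. One must verify that the resolution of Gorenstein strongly $F$-regular surface singularities and the surface minimal model program behave as described in positive characteristic, that nefness of $-K$ genuinely forces $\kappa\le 0$ there, and, most delicately, that the hypothesis $\hat q(Z)=0$ excludes exactly the surfaces with infinite \'etale fundamental group. Confirming the last point requires going carefully through the characteristic $2$ and $3$ phenomena in the classification of surfaces of Kodaira dimension $0$ (quasi-hyperelliptic surfaces and non-classical Enriques surfaces) and invoking the known finiteness of their \'etale fundamental groups.
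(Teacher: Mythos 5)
Your argument is correct, but it takes a genuinely different route from the paper's proof of Corollary~\ref{cor:pi} at the crucial surface step. The paper first splits off the case $K_X\sim_{\mathbb Q}0$, which it settles by citing \cite[Corollary~12.5]{PZ19}, and then works under the assumption $\kappa(X)=-\infty$ (the dichotomy is complete, since $-K_X$ nef together with $\kappa(X)\ge0$ forces $K_X\sim_{\mathbb Q}0$). This gives $\kappa(F)=-\infty$ for the fiber factor $F$ of the decomposition, so after passing to the minimal resolution and then to a minimal model the surface is $\mathbb P^2$ or ruled, and $\hat q(F)=0$ excludes ruling over a curve of positive genus; hence $F$ is rational and $\pi^{\textup{\'et}}(F)=0$. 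You skip this dichotomy, so your minimal model $W$ may satisfy $K_W\equiv0$, and you must then invoke the Bombieri--Mumford classification of minimal surfaces of Kodaira dimension zero (including quasi-hyperelliptic and non-classical Enriques surfaces in characteristics $2$ and $3$) and the finiteness of their \'etale fundamental groups. All of that is true and known, so there is no gap, but it is precisely the extra machinery the paper's reduction avoids; in exchange, your proof does not rely on \cite[Corollary~12.5]{PZ19}. The remaining difference is cosmetic: the paper passes the fundamental group through the composite $F\times_k B\to X$ via Deligne's finite-index lemma (Lemma~\ref{lem:pi}), whereas you use topological invariance of $\pi^{\textup{\'et}}$ under the universal homeomorphism $\tau$ together with the open-subgroup property of the finite \'etale $\sigma$; these are interchangeable here. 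One point in your favor: in the nef case the decomposition only yields $-K_Z$ nef rather than semi-ample, and your surface argument uses only nefness, so it applies uniformly to both hypotheses.
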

\begin{cor}[\textup{Corollary~\ref{cor:pi nef tangent}}] 
\label{cor:pi nef tangent-intro}
Let $k$ be an algebraically closed field of characteristic $p>0$. 
Let $X$ be a smooth $F$-split projective variety over $k$ 
with nef tangent bundle $T_X$. 
Suppose that either $-K_X$ is semi-ample or $k=\overline{\mathbb F_p}$. 
Then $\pi^{\textup{\'et}}(X)$ is virtually abelian. 
\end{cor}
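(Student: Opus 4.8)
The plan is to feed $X$ into Theorem~\ref{thm:nef tangent-intro} and then track $\pi^{\textup{\'et}}$ along the two finite morphisms it produces. Since $-K_X$ semi-ample is in particular numerically equivalent to a semi-ample $\mathbb Q$-divisor (namely $-K_X$ itself), the hypotheses of Theorem~\ref{thm:nef tangent-intro} hold, so we obtain finite surjective morphisms $Z\times_k A\xrightarrow{\tau}Y\xrightarrow{\sigma}X$ with $\sigma$ \'etale, $\tau$ a torsor under $G:=\prod_{i=1}^{\hat q(X)}\mu_{p^{j_i}}$, $A$ an ordinary abelian variety, and $Z$ a smooth $F$-split separably rationally connected Fano variety (and, as a byproduct, $-K_X$ is semi-ample). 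Note that $Z$ and $A$ are integral over the algebraically closed field $k$, hence so is $Z\times_k A$, and $Y=\tau(Z\times_k A)$ is connected.

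First I would reduce to $\pi^{\textup{\'et}}(Z\times_k A)$. As $\sigma$ is a connected finite \'etale cover, $\pi^{\textup{\'et}}(Y)$ is (conjugate to) an open subgroup of $\pi^{\textup{\'et}}(X)$ of index $\deg\sigma<\infty$, so it suffices that $\pi^{\textup{\'et}}(Y)$ be virtually abelian. Next, $\tau$ is a torsor under the finite \emph{infinitesimal} group scheme $G$, hence a finite surjective universal homeomorphism; by the topological invariance of the small \'etale site it induces an isomorphism $\pi^{\textup{\'et}}(Z\times_k A)\xrightarrow{\ \sim\ }\pi^{\textup{\'et}}(Y)$. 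So it remains to show that $\pi^{\textup{\'et}}(Z\times_k A)$ is (virtually) abelian.

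For this I would apply the K\"unneth formula for the \'etale fundamental group of a product of proper geometrically connected schemes over the algebraically closed field $k$ (equivalently, the homotopy exact sequence of the projection $Z\times_k A\to A$, whose fibers are the geometrically connected $Z$), to get $\pi^{\textup{\'et}}(Z\times_k A)\cong\pi^{\textup{\'et}}(Z)\times\pi^{\textup{\'et}}(A)$. Since $Z$ is smooth, projective and separably rationally connected, $\pi^{\textup{\'et}}(Z)=0$: every connected finite \'etale cover of $Z$ pulls back to a trivial cover along a general free rational curve $\mathbb P^1\to Z$ (using $\pi^{\textup{\'et}}(\mathbb P^1_k)=0$), and enough such curves trivialize the cover. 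On the other hand, every connected finite \'etale cover of the abelian variety $A$ is, after a translation, an isogeny of abelian varieties, so $\pi^{\textup{\'et}}(A)\cong\varprojlim_n A[n](k)$ is abelian. Hence $\pi^{\textup{\'et}}(Z\times_k A)\cong\pi^{\textup{\'et}}(A)$ is abelian, whence $\pi^{\textup{\'et}}(X)$ contains the finite-index abelian subgroup $\pi^{\textup{\'et}}(Y)$ and is therefore virtually abelian.

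The real content lies entirely in Theorem~\ref{thm:nef tangent-intro}; once that is granted, the above is a routine assembly of standard facts, and the only points that deserve a line of care are that the K\"unneth formula genuinely applies (it does, as both $Z$ and $A$ are proper over $k$) and that a separably rationally connected smooth projective variety over an algebraically closed field of characteristic $p$ is \'etale simply connected, whose proof in characteristic $p$ runs through free rational curves rather than through topology.
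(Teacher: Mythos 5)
Your argument is correct and follows essentially the same route as the paper: apply the decomposition of Theorem~\ref{thm:nef tangent} (\ref{thm:nef tangent-intro}), observe that the separably rationally connected Fano factor has trivial \'etale fundamental group (the paper cites \cite[Corollary~1.4]{KW23} for this, where you sketch the free-rational-curve argument), and conclude that $\pi^{\textup{\'et}}(X)$ contains an abelian subgroup of finite index. The only cosmetic difference is that the paper handles the composite finite surjective morphism $F\times_k B\to X$ in one step via Deligne's lemma (Lemma~\ref{lem:pi}, giving a finite-index image), whereas you treat the \'etale cover and the infinitesimal torsor separately; both are standard and equivalent here.
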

Theorem~\ref{thm:decomp1-intro} can be used to find a rational point over a finite field. The following theorem is a combination of Theorem~\ref{thm:decomp1-intro} and \cite[Theorem~1.3 and~Proposition~3.4]{BF23}. 
\begin{cor}[\textup{Special case of Corollary~\ref{cor:rational points}}]
	\label{cor:rational points-intro}
Let $X$ be a geometrically integral smooth $F$-split 
projective variety over $\mathbb F_q$ for some $q=p^e>19$. 
Suppose that either 
\begin{enumerate}[$(a)$]
\item $-K_X$ is nef, $K_X\not\sim_{\mathbb Q}0$, $\frac{1}{2}b_1(X)\ge\dim X-3$ $($e.g., $\dim X =4$ and $b_1(X)>0$$)$ and $p>5$, or  
\item $K_X \equiv 0$ and $\frac{1}{2}b_1(X)\ge \dim X-2$ $($e.g., $\dim X=3$ and $b_1(X)>0$$)$. 
\end{enumerate}
Then $X(\mathbb F_q)\ne \emptyset$. 
\end{cor}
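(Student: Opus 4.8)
The plan is to fibre $X$ over its Albanese variety, apply the splitting theorems for algebraic fiber spaces with nef relative anti-canonical divisor to identify one fibre $F$ with a low-dimensional strongly $F$-regular variety of Fano or Calabi--Yau type, and then invoke \cite[Theorem~1.3 and~Proposition~3.4]{BF23} to produce an $\mathbb F_q$-point on $F$, which is a closed subscheme of $X$ defined over $\mathbb F_q$.

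First I would set up the Albanese fibration. Let $a\colon X\to A$ be the Albanese morphism. Over the finite field $\mathbb F_q$ every torsor under an abelian variety is trivial by Lang's theorem, so the Albanese torsor of $X$ is trivial and $A$ may be taken to be an abelian variety over $\mathbb F_q$; in particular $0\in A(\mathbb F_q)$. Since $X$ is $F$-split with $-K_X$ nef, $a$ is surjective and induces a separable extension of function fields (see \cite{PZ19}); after replacing $A$ by the base of the Stein factorisation $X\to B\to A$ — which, using that $-K_X$ is nef, is again an abelian variety over $\mathbb F_q$ (a geometrically integral finite cover of $A$, \'etale by purity of the branch locus, hence trivial by Lang) — I may assume that $a$ is an algebraic fiber space onto an abelian variety, and, by \cite[\S 4]{PZ19}, that it is flat with reduced fibres. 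Because $\dim A=\dim\mathrm{Alb}_X=\tfrac12 b_1(X)$, each fibre of $a$ has dimension $\dim X-\tfrac12 b_1(X)$, hence dimension at most $3$ in case $(a)$ and at most $2$ in case $(b)$. Moreover $-K_{X/A}=-K_X$ is nef, and since $X$ is $F$-split its geometric generic fibre $X_{\overline\eta}$ is normal and $F$-split, hence $F$-pure, hence strongly $F$-regular by \cite[Theorem~6.3]{PZ19} (cf.\ the remark following Theorem~\ref{thm:decomp1-intro}).

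Next I would analyse the fibre $F:=a^{-1}(0)$. By flatness of $a$ and $K_A=0$ one has $K_F=K_X|_F$, so $-K_F$ is nef and, in case $(b)$, $K_F\equiv 0$. Applying Theorem~\ref{thm:decomp1-intro}(3), which is available because $\mathbb F_q\subseteq\overline{\mathbb F_p}$, and base changing to $\overline{\mathbb F_q}$, I obtain a finite surjective morphism $Z\to A_{\overline{\mathbb F_q}}$ — a composite of a finite \'etale cover and an iterated Frobenius morphism, so that $K_Z=0$ and $K_{Z/A_{\overline{\mathbb F_q}}}=0$ — and an isomorphism $X_{\overline{\mathbb F_q}}\times_{A_{\overline{\mathbb F_q}}}Z\cong F_0\times Z$ of $Z$-schemes, where $F_0$ is a strongly $F$-regular projective variety. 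Comparing fibres over an $\overline{\mathbb F_q}$-point of $Z$ lying over $0$ gives $F_{\overline{\mathbb F_q}}\cong F_0$, so $F$ is strongly $F$-regular, hence normal; and pulling $K_X$ back along the finite surjection $F_0\times Z\cong X_{\overline{\mathbb F_q}}\times_{A_{\overline{\mathbb F_q}}}Z\to X_{\overline{\mathbb F_q}}$ gives $\mathrm{pr}_1^*K_{F_0}$ (using $K_Z=K_{Z/A_{\overline{\mathbb F_q}}}=0$), so that $K_X\not\sim_{\mathbb Q}0$ forces $K_{F_0}\not\sim_{\mathbb Q}0$, i.e.\ $K_F\not\sim_{\mathbb Q}0$ in case $(a)$. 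In summary, $F$ is a strongly $F$-regular projective variety over $\mathbb F_q$ of dimension at most $3$ (resp.\ at most $2$), with $-K_F$ nef, and with $K_F\not\sim_{\mathbb Q}0$ (resp.\ $K_F\equiv 0$).

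Finally, since $F\subseteq X$ is a closed subscheme defined over $\mathbb F_q$, it suffices to prove $F(\mathbb F_q)\ne\emptyset$, and this is exactly what \cite[Theorem~1.3]{BF23} provides, with \cite[Proposition~3.4]{BF23} supplying the fibration/descent step: for $q>19$, and $p>5$ when $\dim F=3$, a strongly $F$-regular projective variety over $\mathbb F_q$ of dimension at most $3$ with nef, non-torsion anti-canonical class, or of dimension at most $2$ with nef anti-canonical class, has an $\mathbb F_q$-rational point — in outline, one runs a minimal model program to reduce to a (weak) Fano fibration, where Esnault's theorem on rational points applies, or to a Calabi--Yau surface, where the Weil bounds on the Betti numbers suffice once $q>19$. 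The genuinely hard input is this last step, i.e.\ the theorem of \cite{BF23}; within the reduction, the delicate points are keeping all the data defined over $\mathbb F_q$ rather than over a finite extension — hence the repeated use of Lang's theorem, and the passage to $\overline{\mathbb F_q}$ only for the $\mathbb Q$-linear-equivalence statement — and the possibility that $\hat q(F)>0$, which is either incorporated into \cite[Theorem~1.3]{BF23} or removed by recursing on the Albanese fibration of $F$.
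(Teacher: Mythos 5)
Your proposal is correct and follows essentially the same route as the paper's proof of Corollary~\ref{cor:rational points}: fiber $X$ over its Albanese variety, use Lang's theorem to get a rational point of the base, use the $F$-splitting together with \cite[Theorem~6.3]{PZ19} and Theorem~\ref{thm:decomp1} to control the fiber $F$, and conclude with \cite[Theorem~1.3 and Proposition~3.4]{BF23}. The only cosmetic differences are that the paper avoids your Stein-factorization step by citing \cite{Eji19w} for connectedness of the Albanese fibers, and that it converts your ``$-K_F$ nef and $K_F\not\sim_{\mathbb Q}0$'' into the hypothesis $\kappa(\tilde F)=-\infty$ for a resolution $\tilde F\to F$ by the same push-forward argument you implicitly use.
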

Here, $b_1(X):=\dim_{\mathbb Q_l}H^1_{\textup{\'et}}(X,\mathbb Q_l)$ for a prime $l\ne p$, which is known to be equal to the twice of the dimension of the Albanese variety. 
\subsection{Questions}
Let us have a rest from introducing results and consider three questions:
\begin{ques} \label{ques:1}
Let $k$ and $f:X\to Y$ be as in Theorem~\ref{thm:decomp1-intro}. Suppose that $-K_{X/Y}$ is nef and $X_{\overline\eta}$ is strongly $F$-regular. Then are all the fibers of $f$ over $k$-rational points isomorphic? I.e., can we drop the assumption that $Y$ is an abelian variety from Theorem~\ref{thm:decomp2-intro}?
\end{ques}
An argument over $\mathbb C$ that derives the local constancy of $f$ from a characteristic zero counterpart of Theorem~\ref{thm:num flat-intro} below is known (cf. \cite[Proposition~2.1]{Wan22a}), which implies all the fibers are isomorphic. However, the argument uses universal covers, so it cannot be applied to the case of positive characteristic. 
\begin{ques} \label{ques:2}
Let $k$ and $f:X\to Y$ be as in Theorem~\ref{thm:decomp1-intro}. Suppose that $-K_{X/Y}$ is semi-ample and $X_{\overline\eta}$ is strongly $F$-regular. Then does there exist a \textit{finite} cover of $Y$ such that $f$ splits into the product after taking the base change along the cover?
\end{ques}
As mentioned above, the answer is affirmative in characteristic zero \cite{Amb05}. In positive characteristic, $f$ splits into the product after taking the base change along a surjective flat morphism $I\to Y$ from the isom scheme $I$ (\cite[\S 9]{PZ19}), but it is not known if $I\to Y$ is finite. 
\begin{ques} \label{ques:3}
Let $X$ be a smooth projective variety over an algebraically closed field of positive characteristic. Suppose that $-K_X$ is numerically equivalent to a semi-ample $\mathbb Q$-divisor. Then is $-K_X$ semi-ample?
\end{ques}
In characteristic zero, it follows from the Bertini theorem and Gongyo's abundance theorem \cite[Theorem~1.2]{Gon13} that, for a log canonical projective pair $(X,\Delta)$, if $-K_X-\Delta$ is numerically equivalent to a semi-ample $\mathbb Q$-Cartier divisor, then $-K_X-\Delta$ is semi-ample.  
The same argument does not work in positive characteristic, since the Bertini theorem does not hold and the abundance theorem in this case is not known. 
\subsection{Positivity theorems}
We return to the introduction of theorems. 
Theorems~\ref{thm:decomp1-intro}, \ref{thm:decomp2-intro} and \ref{thm:decomp3-intro} are obtained as applications of the following theorem: 
\begin{thm}[\textup{Special case of Theorem~\ref{thm:num flat}}] 
\label{thm:num flat-intro} 
Let $k$ and $f:X\to Y$ be as in Theorem~\ref{thm:decomp1-intro}. 
Suppose that $-K_{X/Y}$ is nef and $X_{\overline\eta}$ is strongly $F$-regular. Then there exists an $f$-ample divisor $A$ on $X$ such that 
$f_*\mathcal O_X(mA)$ is numerically flat for each $m\in\mathbb Z_{\ge 0}$. 
\end{thm}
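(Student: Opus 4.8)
\subsection*{Proof strategy}

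The plan is to deduce numerical flatness of $\mathcal{E}_m:=f_*\mathcal{O}_X(mA)$ from two properties, to be established separately: that $\mathcal{E}_m$ is weakly positive, and that $\det\mathcal{E}_m$ is numerically trivial. In the fibre directions one has a great deal of freedom for $A$, but in the horizontal directions essentially none --- as the examples with $A$ globally ample already show, an arbitrary $f$-ample divisor will not work --- so producing a correct $A$ is part of the assertion. I would fix from the outset an $f$-ample divisor $A$ which is in addition nef and relatively positive enough that $\mathcal{O}_X(mA)$ is $f$-very ample with $R^if_*\mathcal{O}_X(mA)=0$ for all $i>0$ and all $m\ge1$. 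Since $f$ is flat with reduced fibres (by the Patakfalvi--Zdanowicz results recalled above), each $\mathcal{E}_m$ is then locally free and compatible with arbitrary base change, and $\mathcal{E}_0=\mathcal{O}_Y$. The construction of $A$ itself --- arranging that $A$ carries no ``horizontal'' positivity --- would come out of the positivity analysis below, first run for an auxiliary choice and then corrected.

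For weak positivity I would use the nefness of $-K_{X/Y}$: writing $mA=K_{X/Y}+(mA-K_{X/Y})$, with $A$ nef so that $mA-K_{X/Y}$ is a nef Cartier divisor for every $m\ge1$, the generalization of Viehweg's weak positivity theorem to generalized pairs in positive characteristic established in this paper --- applied to the generalized pair $(X,0;\,mA-K_{X/Y})$ over $Y$, whose geometric generic fibre is $F$-pure because $X_{\overline\eta}$ is strongly $F$-regular --- gives that $\mathcal{E}_m=f_*\mathcal{O}_X\left(K_{X/Y}+(mA-K_{X/Y})\right)$ is weakly positive; in particular $\det\mathcal{E}_m$ is pseudoeffective. (If an effective statement is needed downstream, one invokes the companion generalization of Popa and Schnell's global generation theorem instead.)

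The heart of the argument, and the step I expect to be the main obstacle, is the numerical triviality of $\det\mathcal{E}_m$, where the nefness of $-K_{X/Y}$ enters essentially (and where the strong $F$-regularity of $X_{\overline\eta}$ is again indispensable, since it is what makes Frobenius-theoretic arguments available). By Grothendieck--Serre relative duality, using $R^if_*\mathcal{O}_X(mA)=0$ for $i>0$, one has $\mathcal{E}_m^{\vee}\cong R^{d}f_*\left(\omega_{X/Y}\otimes\mathcal{O}_X(-mA)\right)$ with $d=\dim X-\dim Y$. I would establish a dual (``negativity'') counterpart of the weak positivity theorem showing that this top direct image, namely $\mathcal{E}_m^{\vee}$, is also weakly positive once $-K_{X/Y}$ is nef and $A$ has been chosen correctly; then $\det\mathcal{E}_m^{\vee}=-\det\mathcal{E}_m$ is pseudoeffective as well, and together with the preceding paragraph this forces $\det\mathcal{E}_m\equiv0$. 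An equivalent route is to compute $c_1(\mathcal{E}_m)$ by Grothendieck--Riemann--Roch and reduce the claim to the numerical triviality of $c_1(R^if_*\mathcal{O}_X)$ --- the same ``co-positivity'' input in disguise. Either way, this is the delicate point: it is false for a general $f$-ample $A$, and depends both on the correct choice of $A$ and on the $F$-regularity hypothesis.

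Finally, a weakly positive locally free sheaf with numerically trivial determinant is nef: every torsion-free quotient is again weakly positive, hence has pseudoeffective first Chern class, and with $\det\equiv0$ this forces each such quotient to have numerically trivial determinant. Applying this to $\mathcal{E}_m$ and to $\mathcal{E}_m^{\vee}$ shows that both are nef, i.e.\ $\mathcal{E}_m$ is numerically flat, for every $m\ge0$. It may be cleanest to carry out the whole argument after reducing to $\dim Y=1$ by cohomology and base change --- on a curve ``weakly positive'' coincides with ``nef'' --- although then one must take care, since a base-change curve can meet the locus where the fibres fail to be strongly $F$-regular.
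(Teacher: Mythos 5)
Your outline correctly identifies the two ingredients (a positivity statement for $f_*\mathcal O_X(mA)$ and the numerical triviality of its determinant), and your use of the paper's weak positivity theorem via $mA=K_{X/Y}+(mA-K_{X/Y})$ with nef moduli part is essentially how the nef direction is obtained. But the step you yourself flag as the main obstacle is a genuine gap, not a deferred computation. You propose to get $\det f_*\mathcal O_X(mA)\equiv 0$ from a ``dual weak positivity theorem'' asserting that $R^df_*(\omega_{X/Y}(-mA))\cong (f_*\mathcal O_X(mA))^\vee$ is also weakly positive. No such theorem exists in this paper or elsewhere, and it cannot be a formal dual of Theorem~\ref{thm:wp}: weak positivity of both a bundle and its dual is essentially the numerical flatness you are trying to prove, so the route is circular unless you supply an independent mechanism. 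The paper avoids duality entirely. It first \emph{normalizes} the divisor by the determinant of a pushforward, setting $A:=ir_iP-f^*E_i$ with $\mathcal O_Y(E_i)\cong\det f_*\mathcal O_X(iP)$, so that numerical triviality of $\det$ of the twisted bundle reduces to the relation $nr_nE_m\equiv mr_mE_n$ (Step~\ref{step:num equiv}); and that relation is proved by showing $mr_mE_n-nr_nE_m$ is pseudo-effective via the Frobenius trace surjection $f_*\mathcal O_X((1-p^e)K_{X/Y}+np^eP)\twoheadrightarrow {F_Y^e}^*f_*\mathcal O_X(nP)$ combined with the $t$-invariant estimate on the $r_m$-fold fiber product $X\times_Y\cdots\times_Y X$ (Step~\ref{step:t}), and then \emph{swapping the roles of $m$ and $n$} to get pseudo-effectivity of the opposite divisor. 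Both inequalities come from the same weak positivity machinery, applied twice — not from a negativity statement for higher direct images.

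A second, smaller gap: Theorem~\ref{thm:wp} only yields weak positivity (equivalently $t_V\ge 0$) over a dense open $V\subseteq Y$ where the fibers are $F$-pure, and weak positivity over a dense open set together with $\det\equiv 0$ does \emph{not} imply nefness — curves inside $Y\setminus V$ see nothing. The paper must therefore prove that the relative Frobenius trace maps $\phi^{(e)}_{(X/Y,\Delta)}$ are surjective over \emph{all} of $Y$ (Step~\ref{step:phi surj}, using Lemma~\ref{lem:wp_isom}), which is what turns the generic surjections into global ones and makes the limit $e\to\infty$ produce genuine nefness of the $\mathbb Q$-twisted bundle. Your final paragraph's deduction ``weakly positive $+$ $\det\equiv0$ $\Rightarrow$ nef via quotients'' also does not close as stated, since the kernel of a quotient is a subsheaf and inherits no positivity; but this is repairable once the global (not merely generic) surjectivity is in hand.
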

Theorem~\ref{thm:decomp1-intro} follows almost directly from Theorem~\ref{thm:num flat-intro}. 
For example, in case~(1) ($Y$ is separably rationally connected), every numerically flat vector bundle on $Y$ is isomorphic to a direct sum of structure sheaves, so there exists an isomorphism 
$$
\bigoplus_{m\ge0}f_*\mathcal O_X(mA)
\cong 
\mathcal O_Y \otimes_k \bigoplus_{m\ge0} H^0(X,\mathcal O_X(mA))
$$
of $\mathcal O_Y$-algebras, which means that $f$ splits into the product. 
A similar argument works for cases~(2) and~(3) of Theorem~\ref{thm:decomp1-intro}. 

Theorem~\ref{thm:num flat-intro} is obtained by using a partial generalization of Viehweg's weak positivity theorem \cite[Folgerung~3.4]{Vie82} to the case of generalized pairs in positive characteristic. This is established over an $F$-finite field, i.e., a field of characteristic $p>0$ with $[k:k^p]<\infty$. 
\begin{thm}[\textup{Theorem~\ref{thm:wp}}] \label{thm:wp-intro} 
Let the base field be an $F$-finite field of characteristic $p>0$. 
Let $f:X\to Y$ be a surjective morphism from a normal projective variety $X$ to a regular projective variety $Y$. 
Let $\Delta$ be an effective $\mathbb Q$-Weil divisor on $X$. 
Let $M$ be a nef $\mathbb Q$-Cartier divisor on $X$. 
Let $V$ be an open subset of $Y$ and set $U:=f^{-1}(V)$. 
Suppose the following conditions:  
\begin{itemize}
\item $i(K_X+\Delta)$ and $iM$ are Cartier for an integer $i>0$ not divisible by $p$; 
\item $K_X+\Delta+M$ is $f$-ample; 
\item $U$ is flat over $V$; 
\item $\mathrm{Supp}(\Delta)$ does not contain any component of any fiber over $V$;
\item every fiber over $V$ is geometrically normal;
\item $\left(X_{\overline y}, \Delta|_{X_{\overline y}}\right)$ is $F$-pure for every $y\in V$, where $X_{\overline y}$ is the geometric fiber of $f$over $y$. 
\end{itemize}
Then there exists an $m_0\in\mathbb Z_{>0}$ such that 
$$
f_*\mathcal O_X(m(K_{X/Y}+\Delta+M)+N)
$$
is weakly positive over $V$ for each $m\ge m_0$ with $i|m$ and every nef Cartier divisor $N$ on $X$. 
\end{thm}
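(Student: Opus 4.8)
The plan is to imitate the standard covering-trick proof of Viehweg's weak positivity theorem, replacing the analytic/vanishing input by the Frobenius-based trace argument that powers weak positivity in positive characteristic (as in the work of Patakfalvi, Hacon--Xu, Ejiri, and others), and carrying the nef divisor $M$ along as part of a ``generalized pair'' by exploiting its numerical positivity. First I would reduce to the key statement: it suffices to produce, for each sufficiently divisible $m$ and each nef Cartier $N$, a sufficiently positive twist of $f_*\mathcal O_X(m(K_{X/Y}+\Delta+M)+N)$ that is globally generated over $V$ after pulling back along a high Frobenius power and twisting by an ample divisor on $Y$; weak positivity over $V$ then follows by the usual argument (Viehweg's lemma on weak positivity detected by global generation after twist).

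The core mechanism is the following. Fix a very ample divisor $H$ on $Y$ and set $B:=f^*H$. For $e\gg 0$, consider the $e$-th relative Frobenius $F^e_{X/Y}:X\to X^{(e)}=X\times_{Y,F^e_Y}Y$ (or directly the absolute Frobenius, whichever is more convenient in the $F$-finite setting). The Cartier index assumption ($i(K_X+\Delta)$, $iM$ Cartier with $p\nmid i$) lets us choose, for each $e$, an effective divisor and a line bundle so that $(p^e-1)(K_X+\Delta+M)/i$-type twists become honest Cartier divisors, and the hypothesis that $\bigl(X_{\overline y},\Delta|_{X_{\overline y}}\bigr)$ is $F$-pure over $V$ gives, fiberwise over $V$, a splitting of the relative Frobenius twisted by $\Delta$ — equivalently a nonzero trace map
\[
F^e_*\mathcal O_X\bigl((1-p^e)(K_{X/Y}+\Delta)+\text{(round-down corrections)}\bigr)\longrightarrow \mathcal O_X
\]
that is surjective along every fiber over $V$. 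Tensoring with $\mathcal O_X\bigl(m(K_{X/Y}+\Delta+M)+N+(p^e-1)(\text{ample}+M)\bigr)$, pushing forward by $f$, and using that $M$ and $N$ are nef so that $(p^e-1)M$ plus a fixed ample is relatively globally generated after adding $B$, I would obtain a surjection of the shape
\[
f_*\Bigl(F^e_*\text{(line bundle)}\Bigr)\otimes\mathcal O_Y(cH)\twoheadrightarrow f_*\mathcal O_X\bigl(m(K_{X/Y}+\Delta+M)+N\bigr)
\]
over $V$, for a constant $c$ independent of $e$, where the left-hand side — by flat base change along $F^e_Y$, flatness of $U$ over $V$, and geometric normality of the fibers (which gives $f_*\mathcal O_X=\mathcal O_Y$ and commutation of pushforward with the base change) — is identified with $\bigl(F^e_Y\bigr)^*$ of a fixed sheaf, hence is $\bigl(F^e_Y\bigr)^*$(coherent)$\otimes\mathcal O_Y(cH)$. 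Since $Y$ is regular, $\bigl(F^e_Y\bigr)^*$ of any fixed sheaf twisted by a sufficiently ample line bundle becomes globally generated for all $e$ after one further fixed twist (a Fujita-type/boundedness statement for Frobenius pullbacks on a fixed regular projective variety); letting $e\to\infty$ makes $(p^e-1)$ absorb the fixed ample corrections, and the image sheaf inherits global generation over $V$ after twist by $\mathcal O_Y((c+c')H)$. This is exactly the global-generation-after-twist criterion for weak positivity over $V$.

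The steps, in order: (1) set up the $F$-finite formalism and reduce the theorem to ``$\exists m_0$ such that for $m\ge m_0$, $i\mid m$, and every nef Cartier $N$, the sheaf $f_*\mathcal O_X(m(K_{X/Y}+\Delta+M)+N)\otimes\mathcal O_Y(aH)$ is globally generated over $V$ for a fixed $a$'' (Viehweg's criterion); (2) construct the relative Frobenius trace maps from the fiberwise $F$-purity of $(X_{\overline y},\Delta|_{X_{\overline y}})$, keeping careful track of round-downs coming from the $\mathbb Q$-divisor $\Delta$ and of the Cartier index $i$; (3) twist by $m(K_{X/Y}+\Delta+M)+N+(p^e-1)(A_0+M)$ for a fixed ample $A_0$ and use nefness of $M$ and $N$ together with $f$-ampleness of $K_X+\Delta+M$ to get relative global generation of the inner sheaf after adding $f^*(\text{ample on }Y)$; (4) push forward, apply flat base change along $F^e_Y$ using flatness of $U/V$ and geometric normality of fibers to identify the source with a Frobenius pullback of a fixed sheaf; (5) apply the boundedness of global generation for Frobenius pullbacks on the fixed regular $Y$, let $e\to\infty$, and conclude.

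The main obstacle I expect is step (4): making flat base change and the identification ``$f_*(F^e_*\,\mathcal L)\cong (F^e_Y)^*(\text{something fixed})$'' actually work over $V$ only, not over all of $Y$. Over $V$ the hypotheses (flatness of $U$, geometric normality of fibers, $\mathrm{Supp}(\Delta)$ avoiding fiber components) are tailored precisely for this — they guarantee $R^{>0}f_*$ vanishing in the relevant range on fibers and compatibility of the trace maps with restriction to fibers — but outside $V$ one has no control, so one must phrase everything relative to $V$ and only claim weak positivity over $V$, and verify that the global generation produced by step (5) is genuinely the one required by the ``over $V$'' version of Viehweg's criterion. A secondary technical point, needed in step (2), is ensuring that $F$-purity of the geometric fiber $X_{\overline y}$ (a statement over $\overline{k(y)}$) descends to give the trace splitting over $k(y)$ and spreads out over $V$; this is where $F$-finiteness of the base field and openness of the $F$-pure locus enter.
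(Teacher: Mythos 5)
Your overall philosophy matches the paper's (Frobenius trace maps coming from fiberwise $F$-purity, absorption of the nef divisors via $p^e$-scaling, a boundedness statement on the fixed regular base $Y$, and a limit $e\to\infty$), and you correctly identify why the hypotheses on flatness, geometric normality and $F$-purity of the fibers are needed. However, the core mechanism in your steps (3)--(5) contains a pushforward/pullback confusion that hides a genuine gap. Since $f\circ F_X^e=F_Y^e\circ f$, the sheaf $f_*\bigl(F^e_{X*}\mathcal L\bigr)$ is $F^e_{Y*}$ of a sheaf \emph{depending on} $e$, not $\bigl(F_Y^e\bigr)^*$ of a fixed sheaf; and the boundedness statement you invoke is false for pullbacks (take $\mathcal G=\mathcal O_Y(-D)$ with $D$ ample: $\bigl(F_Y^e\bigr)^*\mathcal G\otimes\mathcal O_Y(cH)=\mathcal O_Y(-p^eD+cH)$ is never globally generated for $e\gg0$). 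The correct statement, Lemma~\ref{lem:Fgg} in the paper, concerns Frobenius \emph{pushforwards} $F^e_{Y*}(\mathcal G(a_eH))$ and requires $a_e/p^e$ to converge to something exceeding $jn$. Once this is repaired, your mechanism produces exactly the Popa--Schnell-type statement (Theorem~\ref{thm:PS-type}): $f_*\mathcal O_X(m(K_{X/Y}+\Delta+M)+N)\otimes\mathcal O_Y(lH)$ is globally generated over $V$ for a \emph{fixed} twist $l$ of order $m(jn+1)$. Global generation after a fixed twist does not imply weak positivity (consider $\mathcal O_{\mathbb P^1}(-1)$), so the reduction in your step (1), as literally stated, is false.

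The missing idea is a second, independent use of Frobenius. The paper base-changes the entire family along $F_Y^e$ and uses flat base change (no trace map at this level) to identify ${F_Y^e}^*f_*\mathcal O_X(mL+N)$ with ${f_{Y^e}}_*\mathcal O_{X_{Y^e}}(mL_{Y^e}+N_{Y^e})$ --- crucially with the \emph{same} coefficient $m$ --- and then applies the global generation theorem to $f_{Y^e}:X_{Y^e}\to Y^e$ with a constant $m_0$ and a twist $m(n+2)H$ that are uniform in $e$; this uniformity is where hypotheses (4)--(7) and \cite[Proposition~3.6]{Eji22c} enter (see Remark~\ref{rem:surj}). This yields $t_V(\mathcal G_m,H)\ge -m(n+2)/p^e$ for all $e$, hence $t_V(\mathcal G_m,H)\ge0$, and Proposition~\ref{prop:t-wp} (which needs its own argument, using the local splitting of $F^e_{Y*}\mathcal O_Y$ over the regular open set $V$) converts this into weak positivity over $V$. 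If instead one applies the trace map at the top level, as you propose, the source of the resulting surjection onto ${F_Y^e}^*\mathcal G_m$ is $f^{(e)}_*$ of roughly $(p^e(m-1)+1)(K_{X/Y}+\Delta+M)$ plus nef terms, whose global generation requires a twist growing like $p^e$; the limit argument then gives only $t_V\ge -c$ for a positive constant $c$, which is not sufficient.
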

For the definition of weak positivity, see Definition~\ref{defn:wp}.

The reason why Theorem~\ref{thm:wp-intro} is a ``partial'' generalization is that the moduli part $M$ of the generalized pair $(X, \Delta+M)$ is a nef $\mathbb Q$-Cartier divisor, while we do not include the case where $M$ is only a push-forward of a nef $\mathbb Q$-Cartier divisor on a higher birational model of $X$. 

To prove Theorem~\ref{thm:wp-intro}, we partially generalize Popa and Schnell's global generation theorem (\cite[Variant~1.6]{PS14}) to the case of generalized pairs. 
\begin{thm}[\textup{Theorem~\ref{thm:PS-type}}] \label{thm:PS-type-intro} 
Let the base field be an $F$-finite field of characteristic $p>0$. 
Let $(X,\Delta)$ be an $F$-pure projective pair. 
Let $f:X\to Y$ be a morphism to a projective variety $Y$. 
Let $M$ be a nef $\mathbb Q$-Cartier divisor on $X$. 
Suppose that 
\begin{itemize}
\item $i(K_X+\Delta)$ and $iM$ are Cartier for an integer $i>0$ not divisible by $p$, and  
\item $K_X+\Delta+M$ is $f$-ample. 
\end{itemize} 
Let $L$ be an ample Cartier divisor on $Y$, and let $j$ be the smallest positive integer such that $|jL|$ is free. 
Then there exists an $m_0\in \mathbb Z_{>0}$ such that 
$$
f_*\mathcal O_X(m(K_X+\Delta+M)+N) \otimes \mathcal O_Y(lL)
$$
is generated by its global sections for each $m\ge m_0$ with $i|m$, each $l\ge m(jn+1)$ and every nef Cartier divisor $N$ on $X$. 
\end{thm}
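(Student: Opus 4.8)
The plan is to transpose Popa and Schnell's proof of \cite[Variant~1.6]{PS14} to positive characteristic: the characteristic-zero vanishing theorems (Kollár, Kawamata--Viehweg) used there are replaced by Frobenius techniques that rely on the $F$-purity of $(X,\Delta)$, the uniformity in the nef divisor $N$ is supplied by Fujita's vanishing theorem (valid in all characteristics), and the hypothesis that $K_X+\Delta+M$ is $f$-ample is what allows one to bypass Viehweg-type weak positivity --- which, conversely, will be deduced from this theorem. Set $D:=K_X+\Delta+M$, $n:=\dim Y$, and $\mathcal F_{m,N}:=f_*\mathcal O_X(mD+N)$; whenever the Frobenius is invoked we take an exponent $e>0$ with $p^{e}\equiv1\pmod i$ and let it tend to infinity through such values, so that $(p^{e}-1)\Delta$, $(p^{e}-1)M$ and $(p^{e}-1)(K_X+\Delta)$ are Cartier.

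\textbf{Reduction.} Since $\mathcal O_Y(jL)$ is globally generated and ample, Castelnuovo--Mumford regularity reduces the claim to the $0$-regularity of $\mathcal F_{m,N}\otimes\mathcal O_Y(lL)$ with respect to $\mathcal O_Y(jL)$, i.e. to
$$
H^t\!\big(Y,\mathcal F_{m,N}\otimes\mathcal O_Y((l-tj)L)\big)=0\qquad(1\le t\le n).
$$
As $D$ is $f$-ample, relative Fujita vanishing gives an $m_1$ such that $R^tf_*\mathcal O_X(mD+N)=0$ for every $t>0$, every $m\ge m_1$ with $i\mid m$, and every nef Cartier $N$; hence for $m\ge m_1$ the Leray spectral sequence turns the displayed condition into
$$
H^t\!\big(X,\mathcal O_X\!\big(mD+N+(l-tj)f^*L\big)\big)=0\qquad(1\le t\le n),
$$
which must hold uniformly in $m\ge m_0$ (with $m_0\ge m_1$), in the nef divisor $N$, and for all $l\ge m(jn+1)$.

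\textbf{Frobenius input.} The $F$-purity of $(X,\Delta)$ makes the Grothendieck trace $F^{e}_*\mathcal O_X\big((1-p^{e})(K_X+\Delta)\big)\to\mathcal O_X$ surjective; twisting by $\mathcal O_X(mD+N)$ and using the projection formula together with $K_X+\Delta=D-M$ yields a surjection
$$
F^{e}_*\mathcal O_X\!\big((p^{e}(m-1)+1)D+(p^{e}-1)M+p^{e}N\big)\ \longrightarrow\ \mathcal O_X(mD+N).
$$
For $m\ge2$ the line bundle on the left is $f$-ample, so its relative higher cohomology vanishes by Fujita's theorem, and after the $f^*L$-twists dictated by the regularity computation its absolute higher cohomology is governed by Serre's theorem as $e\to\infty$. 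Feeding this surjection --- together with a bounded relative very-ampleness of $D$ --- into the cohomology groups of the reduction step, and running Popa--Schnell's numerical bookkeeping for the generalized pair $(X,\Delta+M)$ (here $\Delta$ is a genuine boundary because $(X,\Delta)$ is $F$-pure and $M$ is nef, so $D=K_X+\Delta+M$ cannot be ``too negative'' in the directions transverse to $f$), one obtains the required $0$-regularity of $\mathcal F_{m,N}\otimes\mathcal O_Y(lL)$ for $l\ge m(jn+1)$, which proves the theorem.

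\textbf{Main difficulty.} The substance of the proof is entirely in making these estimates uniform and effective with the sharp constant $m(jn+1)$, and there are two places where positive characteristic forces extra work. First, the surjection in the Frobenius step need not split (that would require $(X,\Delta)$ to be globally $F$-split, not merely $F$-pure), so passing from the cohomology of $F^{e}_*(\,\cdot\,)$ to that of $\mathcal O_X(mD+N)$ --- and likewise for the pushforwards along $f$ --- requires controlling the cohomology of the kernels of the iterated traces; this is handled by a Frobenius-amplitude argument showing these contributions die as $e\to\infty$, and keeping that compatible with the bound $m(jn+1)$ is the core obstacle. Second, the trace introduces the ``off-weight'' summand $(p^{e}-1)M$, and the integral part of $(p^{e}-1)\Delta$, relative to $p^{e}D$; one absorbs these by running the whole argument with $mD+\Delta+N$ in place of $mD+N$ and enlarging $m_0$, checking that the constant survives, which uses that $(m-1)(K_X+\Delta+M)+M+N$ is $f$-ample for $m\ge2$. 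With these two points settled, the proof of \cite[Variant~1.6]{PS14} transposes to the present generalized-pair setting.
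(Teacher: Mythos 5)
Your reduction to Castelnuovo--Mumford $0$-regularity on $Y$ is fine in any characteristic, but the step it leads to --- the vanishing $H^t\big(X,\mathcal O_X(mD+N+(l-tj)f^*L)\big)=0$ for $1\le t\le n$, uniformly in $N$ and with the effective bound $l\ge m(jn+1)$ --- is exactly a Koll\'ar/Kodaira-type vanishing, and this is where the argument breaks in characteristic $p$. Your proposed fix does not close the gap: the target group $H^t\big(X,\mathcal O_X(mD+N+(l-tj)f^*L)\big)$ is a fixed vector space independent of $e$, and a non-split surjection $F^e_*\mathcal E\twoheadrightarrow\mathcal O_X(mD+N)$ gives, via the long exact sequence, only an injection $H^t(\mathcal L)\hookrightarrow H^{t+1}(\mathrm{kernel})$ once $H^t(X,\mathcal E)=0$; since $(X,\Delta)$ is merely $F$-pure (a local splitting condition), there is no global splitting and no control on the cohomology of the trace kernels --- this is precisely why Kodaira vanishing fails in positive characteristic. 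The fallback of writing $mD+N+(l-tj)f^*L$ as (fixed sufficiently ample)$+$(nef) and invoking Fujita vanishing only yields the vanishing for $l\ge mc+jn$ with a constant $c$ depending on $X$ and $f$, which does not recover the universal constant $m(jn+1)$ that is the whole point of the statement.

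The paper avoids proving any vanishing theorem on $X$. It first shows (Proposition~\ref{prop:surj}, using only Keeler's relative Fujita vanishing) that the pushed-forward trace
${F_Y^e}_*f_*\mathcal O_X(\lceil(1-p^e)(K_X+\Delta)\rceil+p^e(mL+N))\to f_*\mathcal O_X(mL+N)$
is surjective for $m\ge m_0$; it then presents the source, via relative Castelnuovo--Mumford regularity and multiplication maps, as a quotient of $\bigoplus{F_Y^e}_*\mathcal O_Y(-(s_eu+\nu t_e)H)$ with $(s_eu+\nu t_e)/p^e\to\frac{m-1}{\mu}u$; twisting by $lH$, global generation follows from Lemma~\ref{lem:Fgg}, which needs only Serre vanishing on $Y$ because the twist $p^el-s_eu-\nu t_e$ grows like $p^e$. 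A final bootstrap (apply the bound to $m=\mu$, $N=0$ to get $u\le\mu(jn+1)$) produces the constant $m(jn+1)$. If you want to salvage your plan, you would have to replace the vanishing step by this kind of ``Frobenius pushforward to $Y$ plus Serre vanishing'' device; as written, the vanishing you need is false or at least unprovable from the hypotheses.
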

This paper is organized as follows. 
In Section~\ref{section:notation}, notation and terminology are explained. 
In Section~\ref{section:positivity theorems}, we consider a property of the trace maps of iterated Frobenius morphisms and prove Theorems~\ref{thm:wp-intro} and~\ref{thm:PS-type-intro}. 
Section~\ref{section:nef relative anti} is devoted to show Theorems~\ref{thm:decomp1-intro} and \ref{thm:num flat-intro}. 
In Section~\ref{section:over abelian}, we deal with algebraic fiber spaces over abelian varieties and prove Theorems~\ref{thm:decomp2-intro} and \ref{thm:decomp3-intro}. 
In Section~\ref{section:over elliptic}, we treat algebraic fiber spaces over elliptic curves whose geometric generic fibers may have bad singularities. 
In Section~\ref{section:decomposition}, we establish a decomposition theorem for $F$-split varieties whose anti-canonical divisors satisfy some positivity conditions, and apply it to study the abundance conjecture and \'etale fundamental groups. 
In Section~\ref{section:P^1}, we consider an algebraic fiber space over $\mathbb P^1$ without assuming that the geometric generic fiber is strongly $F$-regular, and show Theorem~\ref{thm:decomp P^1-intro}.
Section~\ref{section:examples} is used to collect examples which ensures that the assumptions of Theorems~\ref{thm:decomp1-intro}, \ref{thm:F-split-intro}, \ref{thm:nef tangent-intro} and \ref{thm:hat q=1-intro} cannot be dropped. 
\begin{ack}
The author is greatly indebted to Shou Yoshikawa for answering several questions. 
He wishes to express his thanks to Fabio Bernasconi for reading the preprint version of this paper and pointing out that the assumption of smoothness in Corollary~\ref{cor:rational points} can be dropped. 
He would like to thank Osamu Fujino, Masataka Iwai, Shin-ichi Matsumura, Zsolt Patakfalvi and Lei Zhang for valuable comments. 
He also would like to thank Fabio Bernasconi and Stefano Filipazzi for sharing \cite{BF23}. 
He was partly supported by MEXT Promotion of Distinctive Joint Research Center Program JPMXP0723833165.
\end{ack}
\section{Notation and terminology} \label{section:notation}
Let $k$ be a field. %
By \textit{$k$-scheme} we mean a separated scheme of finite type over $k$. 
A \textit{variety} is an integral $k$-scheme.
Let $X$ be an equi-dimensional $k$-scheme satisfying $S_2$ and $G_1$. 
Here, $S_2$ (resp. $G_1$) is Serre's second condition 
(resp. the condition that it is Gorenstein in codimension $1$). 
Let $\mathcal K$ be the sheaf of total quotient rings on $X$. 
An \textit{AC divisor} (or almost Cartier divisor) on $X$ is 
a reflexive coherent subsheaf of $\mathcal K$ which is invertible on 
an open subset of $X$ whose complement has codimension at least two 
(\cite[p. 301]{Har94}, \cite[Definition 2.1]{MS12}). 

Let $D$ be an AC divisor on $X$. 
We let $\mathcal O_X(D)$ denote the coherent sheaf defining $D$. 
We say that $D$ is \textit{effective} if $\mathcal O_X \subseteq O_X (D)$. 
The set $\mathrm{WSh}(X)$ of AC divisors on $X$ forms naturally 
an additive group (\cite[Corollary 2.6]{Har94}). 
In this paper, a \textit{prime} AC divisor is an effective AC divisor 
that cannot be written as the sum of two non-zero effective AC divisors. 

A \textit{$\mathbb Q$-AC divisor} is an element of $\mathrm{WSh}(X)\otimes_{\mathbb Z} \mathbb Q$. 
Let $\Delta$ be a $\mathbb Q$-AC divisor. 
Then there are prime AC divisors $\Delta_1,\ldots, \Delta_n$ on $X$ 
such that $\Delta= \sum_i \delta_i\Delta_i$. 
We define
$$
\lfloor \Delta \rfloor := \sum_i \lfloor\delta_i\rfloor \Delta_i, 
\quad \textup{and} \quad 
\lceil \Delta \rceil := \sum_i \lceil\delta_i\rceil \Delta_i. 
$$
Note that $\lfloor\Delta\rfloor$ and $\lceil\Delta\rceil$ are not necessarily uniquely determined by $\Delta$, 
because the choice of the decomposition $\Delta=\sum_i\delta_i\Delta_i$ 
is not necessarily unique. 
In this paper, for a $\mathbb Q$-AC divisor $\Delta$, 
we fix a decomposition $\Delta=\sum_{i=1}^n\delta_i\Delta_i$. 
If $\Delta$ is $\mathbb Q$-Cartier, 
we also fix a decomposition into Cartier divisors. 
If $\Delta=a\Delta'+b\Delta''$ for some $a,b \in \mathbb Q$ and 
$\mathbb Q$-AC divisors $\Delta'$ and $\Delta''$ whose decompositions 
$\Delta'=\sum_i \delta'_i \Delta'_i$ and 
$\Delta=\sum_i \delta''_i \Delta''_i$ have already been given, 
then we choose the natural decomposition 
$\Delta=\sum_i a\delta'_i\Delta'+b\delta''_i\Delta''_i$. 
We say that $\Delta$ is \textit{effective} if $\delta_i \ge 0$ for each $i$. 
By $\Delta\ge\Delta'$, we mean $\Delta-\Delta'$ is effective. 
We say that a $\mathbb Q$-AC divisor $\Delta=\sum_i\delta_i\Delta_i$ 
is \textit{integral} if $\delta_i \in\mathbb Z$ for each $i$. 
Replacing $\mathbb Q$ by $\mathbb Z_{(p)}$, we can define a similar notions. 

Let $\varphi: S \to T$ be a morphism of schemes 
and let $T'$ be a $T$-scheme. 
Let $S_{T'}$ denote the fiber product $S\times_T T'$ and 
let $\phi_{T'}$ be its second projection. 
For an $\mathcal O_S$-module $\mathcal G$, its pullback to $S_{T'}$ 
is denoted by $\mathcal G_{T'}$. 
We use the same notation for an AC or $\mathbb Q$-AC divisor 
if its pullback is well-defined.
 
Let $k$ be a field of positive characteristic. 
Let $X$ be a $k$-scheme. 
We denote by $F_X:X^1\to X$ the absolute Frobenius morphism of $X$. 
When we regard $X$ as the source of the $e$-times iterated Frobenius morphism $F_X^e$, we denote it by $X^e$. 
Let $f:X\to Y$ be a morphism of $k$-schemes. 
We denote the $e$-th relative Frobenius morphism by 
$F_{X/Y}^{(e)}:X^e\to X_{Y^e}$, which is given by 
$F_{X/Y}^{(e)}=\left(F_X^e, f^{(e)}\right):X^e\to X\times_Y Y^e$, 
where $f^{(e)}$ denote $f:X\to Y$ when we regard $X$ and $Y$ as $X^e$ and $Y^e$, respectively. 
\begin{defn}[\textup{$F$-purity, \cite[Definition~2.1]{HW02}}] \label{defn:F-pure}
Let $k$ be an $F$-finite field. 
Let $X$ be an equi-dimensional $k$-scheme satisfying $S_2$ and $G_1$. %
Let $\Delta$ be an effective $\mathbb Q$-AC divisor on $X$. %
We say that the pair $(X,\Delta)$ is \textit{$F$-pure} if for each $e\in\mathbb Z_{>0}$, the composite
\begin{align*} \tag{$\ast$}
\mathcal O_X 
\xrightarrow{{F_X^e}^\sharp}
{F_X^e}_*\mathcal O_X
\hookrightarrow 
{F_X^e}_*\mathcal O_X(\lfloor (p^e-1)\Delta \rfloor)
\end{align*}
locally splits as an $\mathcal O_X$-module homomorphism.
\end{defn}
\begin{defn}[\textup{strong $F$-regularity, \cite[Definition~3.1]{SS10}}] \label{defn:SFR}
Let $k$ be an $F$-finite field. 
Let $(X,\Delta)$ be an affine normal pair. 
We say that the pair $(X,\Delta)$ is \textit{strongly $F$-regular} if for every effective divisor $D$ on $X$, %
there exists an $e\in\mathbb Z_{>0}$ such that the composite %
$$
\mathcal O_X 
\xrightarrow{{F_X^e}^\sharp}
{F_X^e}_*\mathcal O_X
\hookrightarrow
{F_X^e}_*\mathcal O_X(\lceil(p^e-1)\Delta\rceil +D)
$$
splits as an $\mathcal O_X$-module homomorphism. 
Let $(X,\Delta)$ be a normal pair. 
We say that $(X,\Delta)$ is \textit{strongly $F$-regular} if there exists an affine open cover $\{V_i\}$ of $X$ such that $\left(V_i,\Delta|_{V_i}\right)$ is strongly $F$-regular for each $i$. 
\end{defn}
Let $k$ be an $F$-finite field. 
Let $X$ be an equi-dimensional $k$-scheme satisfying $S_2$ and $G_1$. 
Let $\Delta$ be an effective $\mathbb Z_{(p)}$-AC divisor on $X$. 
Take an $e\in\mathbb Z_{>0}$ with $(p^e-1)\Delta$ integral. 
Applying $\mathcal Hom(?, \mathcal O_X)$ to $(\ast)$, we obtain the 
morphism 
$$
\phi_{(X,\Delta)}^{(e)}:
{F_X^e}_*\mathcal O_X((1-p^e)(K_X+\Delta))
\to 
\mathcal O_X
$$
by the Grothendieck duality. 
Let $M$ be an AC divisor on $X$. 
Let $\phi_{(X,\Delta)}^{(e)}(M)$ denote the morphism obtained by taking 
the reflexive hull of the tensor product of 
$\phi_{(X,\Delta)}^{(e)}$ and $\mathcal O_X(M)$:
$$
\phi_{(X,\Delta)}^{(e)}(M):
{F_X^e}_*\mathcal O_X((1-p^e)(K_X+\Delta)+p^eM)
\to 
\mathcal O_X(M). 
$$
One can easily check that if $\phi_{(X,\Delta)}^{(e)}$ is surjective, 
then so is $\phi_{(X,\Delta)}^{(e)}(M)$. 

Let $f:X\to Y$ be a surjective morphism to a regular variety $Y$. 
By a procedure similar to the above, we can define $\phi_{(X/Y,\Delta)}^{(e)}(M_{Y^e})$ that is a relative version of $\phi_{(X,\Delta)}^{(e)}(M)$: 
$$
\phi_{(X/Y,\Delta)}^{(e)}(M_{Y^e}):
{F_{X/Y}^{(e)}}_* \mathcal O_X((1-p^e)(K_{X/Y}+\Delta) +p^e M) 
\to 
\mathcal O_{X_{Y^e}}(M_{Y^e}). 
$$
For a concrete explanation, see \cite[\S 3]{Eji19d}. 
\begin{defn}[\textup{Glogal generation over $V$}]
Let $Y$ be a variety over a field
and let $\mathcal G$ be a coherent sheaf on $Y$. 
Let $V$ be a subset of the underlying topological space $\mathrm{tp}(Y)$ of $Y$. 
We say that $\mathcal G$ is \textit{generated by its global sections over $V$}
(or \textit{globally generated over $V$}) 
if $V$ and the support of the cokernel of the natural map 
$$
H^0(Y,\mathcal G) \otimes \mathcal O_Y \to \mathcal G
$$
do not intersect. 
\end{defn}
\begin{defn}[\textup{Weak positivity, \cite[Definition~1.2]{Vie82}}]
	\label{defn:wp}
Let $Y$ be a normal quasi-projective variety over a field
and let $\mathcal G$ be a torsion-free coherent sheaf on $Y$. 
Let $V$ be an open subset of $Y$ such that $\mathcal G|_V$ is locally free. 
We say that $\mathcal G$ is \textit{weakly positive over $V$} if 
for every $\alpha\in\mathbb Z_{>0}$ and every ample Cartier divisor $H$ 
on $Y$, there exists a $\beta\in\mathbb Z_{>0}$ such that 
$(S^{\alpha\beta}(\mathcal G) )^{\vee\vee}(\beta H)$ 
is generated by its global sections over $V$. 
Here, $S^{\alpha\beta}(\mathcal G)$ denotes the $\alpha\beta$-th 
symmetric product of $\mathcal G$ and $(?)^{\vee\vee}$ denotes the double dual. 
\end{defn}
\begin{defn}[\textup{Numerical flatness}]
Let $\mathcal E<\delta>$ be a $\mathbb Q$-twisted vector bundle 
on a projective variety
(for the definition of $\mathbb Q$-twisted vector bundles, see \cite[Chapter 6, \S 2]{Laz04II}). 
We say that $\mathcal E<\delta>$ is \textit{numerically flat} if 
both $\mathcal E<\delta>$ and $\mathcal E^\vee<-\delta>$ are nef. 
\end{defn}
One can easily check that $\mathcal E<\delta>$ is numerically flat if 
$\mathcal E<\delta>$ is nef and $\det\mathcal E<r\delta>$ is 
numerically trivial, where $r:=\mathrm{rank}\,\mathcal E$. 
\section{Positivity theorems} \label{section:positivity theorems}
In this section, we work over an $F$-finite field $k$ of characteristic $p>0$ (i.e., a field of characteristic $p>0$ with $[k:k^p]<\infty$). 
\subsection{Global generation theorem of Popa--Schnell type}
In this subsection, we prove a positivity theorem of Popa--Schnell type. 
\begin{prop}[\textup{\cite[Corollary~2.23]{Pat14}}] \label{prop:surj}
Let $X$ be an equi-dimensional $k$-scheme satisfying $S_2$ and $G_1$. 
Let $\Delta$ be an effective $\mathbb Z_{(p)}$-AC divisor on $X$. 
Let $Y$ be a variety and let $f:X\to Y$ be a projective morphism. 
Let $L$ be an $f$-ample Cartier divisor on $X$. 
Suppose that 
\begin{itemize}
\item $K_X+\Delta$ is $\mathbb Q$-Cartier, and 
\item $(X,\Delta)$ is $F$-pure. 
\end{itemize}
Then there exists an $m_0\in\mathbb Z_{>0}$ such that 
$$
f_*\phi_{(X,\Delta)}^{(e)}(mL+N): 
f_*{F_X^e}_*\mathcal O_X((1-p^e)(K_X+\Delta)+p^e(mL+N))
\to f_*\mathcal O_X(mL+N)
$$
is surjective for each $m\ge m_0$, each $e\in\mathbb Z_{>0}$ with $(p^e-1)\Delta$ integral 
and every $f$-nef Cartier divisor $N$. 
\end{prop}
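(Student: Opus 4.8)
The plan is to reduce the surjectivity of the pushed-forward trace map to the vanishing of a single first higher direct image, and then to propagate that vanishing over all Frobenius powers using the composition law for trace maps. Since $(X,\Delta)$ is $F$-pure, the composite $(\ast)$ of Definition~\ref{defn:F-pure} splits locally for every $e$ with $(p^e-1)\Delta$ integral, so the morphism $\phi_{(X,\Delta)}^{(e)}:{F_X^e}_*\mathcal O_X((1-p^e)(K_X+\Delta))\to\mathcal O_X$ obtained from it by applying $\mathcal Hom(-,\mathcal O_X)$ is locally split surjective, hence surjective; as recalled above, $\phi_{(X,\Delta)}^{(e)}(M)$ is then surjective for every AC divisor $M$. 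Put $\mathcal G_e:={F_X^e}_*\mathcal O_X((1-p^e)(K_X+\Delta))$ and $\mathcal K_e:=\ker\phi_{(X,\Delta)}^{(e)}$. Because $mL+N$ is Cartier, the projection formula gives ${F_X^e}_*\mathcal O_X((1-p^e)(K_X+\Delta)+p^e(mL+N))\cong\mathcal G_e\otimes\mathcal O_X(mL+N)$, under which $\phi_{(X,\Delta)}^{(e)}(mL+N)$ becomes $\phi_{(X,\Delta)}^{(e)}\otimes\mathcal O_X(mL+N)$, with kernel $\mathcal K_e\otimes\mathcal O_X(mL+N)$. Pushing forward the resulting short exact sequence, $f_*\phi_{(X,\Delta)}^{(e)}(mL+N)$ is surjective as soon as $R^1f_*\big(\mathcal K_e\otimes\mathcal O_X(mL+N)\big)=0$, so it is enough to find one $m_0$, independent of $e$ and of $N$, making this hold for all such $e$, all $m\ge m_0$, and all $f$-nef Cartier $N$.

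I would first settle the base case. Let $e_0$ be the smallest positive integer with $(p^{e_0}-1)\Delta$ integral; inspecting the denominators of the coefficients of $\Delta$ shows that the set of admissible $e$ is exactly $e_0\mathbb Z_{>0}$. Applying Keeler's relative Fujita vanishing to the fixed coherent sheaf $\mathcal K_{e_0}$ and the $f$-ample Cartier divisor $L$ yields an $m_0'$ such that $R^{>0}f_*\big(\mathcal K_{e_0}\otimes\mathcal O_X(mL+P)\big)=0$ for all $m\ge m_0'$ and every $f$-nef Cartier $P$, which is the assertion for $e=e_0$ (when $K_X+\Delta$ is only $\mathbb Q$-Cartier one works off a codimension-$\ge2$ locus and replaces tensor products by reflexive hulls, which does not affect the vanishing). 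Then I would bootstrap using the composition law $\phi_{(X,\Delta)}^{(a+b)}(M)=\phi_{(X,\Delta)}^{(a)}(M)\circ{F_X^a}_*\!\big(\phi_{(X,\Delta)}^{(b)}((1-p^a)(K_X+\Delta)+p^aM)\big)$: iterating it in steps of size $e_0$ telescopes $\phi_{(X,\Delta)}^{(ne_0)}(mL+N)$ into a composite of $n$ maps of the form ${F_X^{je_0}}_*\!\big(\phi_{(X,\Delta)}^{(e_0)}(M^{(j)})\big)$ with $M^{(j)}=(1-p^{je_0})(K_X+\Delta)+p^{je_0}(mL+N)$. Applying $f_*$, and using $f_*{F_X^{je_0}}_*={F_Y^{je_0}}_*f_*$ together with the exactness of ${F_Y^{je_0}}_*$, it suffices that each $f_*\phi_{(X,\Delta)}^{(e_0)}(M^{(j)})$ be surjective; and since
\[
M^{(j)}=mL+\big[(p^{je_0}-1)(mL-(K_X+\Delta))+p^{je_0}N\big],
\]
the bracketed divisor is $f$-nef once $mL-(K_X+\Delta)$ is $f$-nef. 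So, taking $m_0:=\max\{m_0',m_1\}$ with $m_1$ chosen so that $mL-(K_X+\Delta)$ is $f$-ample for $m\ge m_1$, the base case applies to every $M^{(j)}$, and the proposition follows for every $e=ne_0$.

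The main obstacle is the uniformity of $m_0$ in $e$: a direct appeal to relative Serre vanishing for $\mathcal K_e$ gives a threshold that may blow up with $e$, since $\mathrm{rank}\,\mathcal G_e$ grows like $p^{e\dim X}$. The composition law is precisely what circumvents this, by replacing the problem for general $e$ with $n=e/e_0$ copies of the problem for $e_0$ in which the ``positive part'' of the twist, $(p^{je_0}-1)(mL-(K_X+\Delta))+p^{je_0}N$, scales with $p^{je_0}$ at exactly the rate of the ample contribution $p^{je_0}\!\cdot mL$ against which it is measured; the remaining points — reflexive hulls when $K_X+\Delta$ is only $\mathbb Q$-Cartier, and that $f$-nefness of the internal twist is a numerical condition insensitive to $j$ — are routine.
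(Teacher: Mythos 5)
Your overall strategy---reduce surjectivity to $R^1f_*$-vanishing of the kernel, settle the case $e=e_0$ by Keeler's relative Fujita vanishing, and propagate to all $e\in e_0\mathbb Z_{>0}$ by factoring $\phi^{(ne_0)}_{(X,\Delta)}$ through the maps ${F_X^{je_0}}_*\bigl(\phi^{(e_0)}_{(X,\Delta)}(M^{(j)})\bigr)$---is exactly the paper's. But the bootstrap step has a genuine gap. You write $M^{(j)}=mL+\bigl[(p^{je_0}-1)(mL-(K_X+\Delta))+p^{je_0}N\bigr]$ and assert that the base case applies because the bracketed divisor is $f$-nef. The problem is that it is only $f$-nef as a $\mathbb Q$-Cartier divisor: $(p^{je_0}-1)(K_X+\Delta)$ need not be Cartier, since the integrality of $(p^{e_0}-1)\Delta$ controls the Weil index of $K_X+\Delta$ but not its Cartier index $i_C$ (which may even be divisible by $p$, in which case it divides no $p^e-1$ at all). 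Consequently $M^{(j)}$ is not of the form $mL+(\text{$f$-nef Cartier})$, the kernel of $\phi^{(e_0)}(M^{(j)})$ is not $\mathcal K_{e_0}$ tensored with a line bundle, and your base-case vanishing---Keeler's theorem for the single fixed sheaf $\mathcal K_{e_0}$ twisted by Cartier divisors---does not apply to it. This is not a matter of reflexive hulls, nor of nefness being ``a numerical condition insensitive to $j$''; Fujita-type vanishing with a uniform threshold genuinely requires the varying twist to be an honest line bundle.

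The missing idea is the paper's residue trick. Writing $K:=K_X+\Delta$ and dividing $p^{je_0}-1$ by the Cartier index $i_C$, say $p^{je_0}-1=q_ji_C+r_j$, one gets
$$
(1-p^{je_0})K+p^{je_0}(mL+N)\sim -r_jK+q_ji_C(m_2L-K)+(p^{je_0}m-q_ji_Cm_2)L+p^{je_0}N,
$$
where now $q_ji_C(m_2L-K)$ \emph{is} an $f$-nef Cartier divisor once $m_2L-K$ is $f$-nef. The non-Cartier residue $-r_jK$ is absorbed into the fixed coherent sheaf by replacing $\mathcal K_{e_0}$ with the finite direct sum $\mathcal G=\bigoplus_{r}\mathrm{Ker}\bigl(\phi^{(e_0)}_{(X,\Delta)}(-rK)\bigr)$ over the finitely many admissible residues $0\le r<i_C$ (those with $rK$ integral), and Keeler's vanishing is applied once to $\mathcal G$; one then checks the elementary inequality $p^{je_0}m-q_ji_Cm_2\ge m_1$ with $m_0:=m_1+m_2$. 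With this modification your argument closes; without it, the claim that ``the base case applies to every $M^{(j)}$'' fails whenever $i_C\nmid p^{e_0}-1$.
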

\begin{proof}
For simplicity, we denote $K_X+\Delta$ by $K$. 
Let $g$ be the smallest integer such that $(p^g-1)K$ is integral. 
For each $e\in\mathbb Z_{>0}$, the morphism $\phi_{(X,\Delta)}^{(ge)}$ 
can be decomposed into the surjective morphisms of the following form:
\begin{align*}
& {F_X^{g(j-1)}}_*\big(\phi_{(X,\Delta)}^{(g)}((1-p^{g(j-1)})K) \big):
\\ & {F_X^{gj}}_*\mathcal O_X((1-p^{gj})K )
\to
{F_X^{g(j-1)}}_*\mathcal O_X((1-p^{g(j-1)})K). 
\end{align*}
Here, $1\le j\le e$. 
Thus, it is enough to show that 
\begin{align*}
& f_*\left( {F_X^{g(j-1)}}_*
\big(\phi_{(X,\Delta)}^{(g)}((1-p^{g(j-1)})K) \big)
\otimes \mathcal O_X(mL+N) \right)
\\ \cong & f_*{F_X^{g(j-1)}}_*
\big(\phi_{(X,\Delta)}^{(g)}((1-p^{g(j-1)})K +p^{g(j-1)}(mL+N)) \big) 
\\ \cong & {F_Y^{g-1}}_*f_*
\big(\phi_{(X,\Delta)}^{(g)}((1-p^{g(j-1)})K +p^{g(j-1)}(mL+N)) \big)
\end{align*}
is surjective, which is equivalent to the surjectivity of 
$$
f_*\big(\phi_{(X,\Delta)}^{(g)}((1-p^{g(j-1)})K+p^{g(j-1)}(mL+N)) \big), 
$$
since $F_Y$ is affine. 
Let $i_C$ (resp. $i_W$) be the Cartier (resp. Weil) index of $K$. 
Set 
$$
\mathcal G:=\bigoplus_{0\le r<i_C,~i_W|r}
\mathrm{Ker}\left(\phi_{(X,\Delta)}^{(g)}(-rK) \right). 
$$
Note that $\phi_{(X,\Delta)}^{(g)}(-rK)$ is surjective for each $r$. 
By Keeler's relative Fujita vanishing theorem \cite[Theorem~1.5]{Kee03}, 
we can find an $m_1\in\mathbb Z_{>0}$ such that 
$$
R^1f_*\mathcal G(mL+N)=0
$$
for each $m\ge m_1$ and every $f$-nef Cartier divisor $N$ on $X$. 
Then 
$$
f_*\phi_{(X,\Delta)}^{(g)}(-rK+mL+N)
$$
is surjective for each $0\le r<i_C$ with $i_W|r$, 
each $m\ge m_1$ and every $f$-nef Cartier divisor $N$ on $X$. 
Take an $m_2 \in \mathbb Z_{>0}$ so that $m_2L-K$ is $f$-nef. 
Put $m_0:=m_1+m_2$. 
Let $q_j$ and $r_j$ be the quotient and the remainder of the division 
of $p^{g(j-1)}-1$ by $i_C$. 
Since $i_W|i_C$, we have $i_W|r_j$. 
Then, for each $m\ge m_0$, we get 
\begin{align*}
&f_*\big(\phi_{(X,\Delta)}^{(g)}((1-p^{g(j-1)})K+p^{g(j-1)}(mL+N))\big)
\\ \cong & 
f_*\big(\phi_{(X,\Delta)}^{(g)}(
(-r_j-q_ji_C)K+p^{g(j-1)}(mL+N)
)\big)
\\ \cong & 
f_*\big(\phi_{(X,\Delta)}^{(g)}(
-r_jK+\underbrace{q_ji_C(m_2L-K)}_{\textup{$f$-nef}}
+{(p^{g(j-1)}m-q_ji_Cm_2)}L+\underbrace{p^{g(j-1)}N}_{\textup{$f$-nef}}
)\big),  
\end{align*}
so we only need to check that $p^{g(j-1)}m-q_ji_Cm_2 \ge m_1$. 
If $q_j=0$, then it follows from $m\ge m_1$. 
If $q_j>0$, then 
$$
p^{g(j-1)}m-q_ji_Cm_2 > q_ji_Cm-q_ji_Cm_2=q_ji_Cm_1\ge m_1.
$$ 
\end{proof}
\begin{lem}[\textup{\cite[Lemma~3.4]{Eji19p}}] \label{lem:gg}
Let $f:X\to Y$ be a morphism between projective varieties. 
Let $A$ be an ample Cartier divisor on $X$. 
Let $\mathcal F$ be a coherent sheaf on $X$. 
Then there exists an $m_0\in\mathbb Z_{>0}$ such that 
$f_*\mathcal F(mA+N)$ is generated by its global sections 
for each $m\ge m_0$ and every nef Cartier divisor on $X$. 
\end{lem}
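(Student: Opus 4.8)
The plan is to combine a Fujita--type vanishing statement with the Castelnuovo--Mumford regularity criterion for global generation; the point is that Fujita vanishing is exactly what furnishes uniformity over all nef divisors $N$, while everything else is routine bookkeeping.

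First I would set $n:=\dim Y$ and fix a very ample Cartier divisor $H$ on $Y$, so that $\mathcal O_Y(H)$ is globally generated, and then fix an integer $c>0$ with $cA-f^*H$ nef on $X$ (possible since $A$ is ample). Since $A$ is ample it is $f$-ample, and any nef Cartier divisor on $X$ is $f$-nef; hence Keeler's relative Fujita vanishing theorem \cite[Theorem~1.5]{Kee03}, together with its absolute form (the case where the base is a point, which is Fujita's original vanishing), provides an integer $m_1>0$ such that for every $m\ge m_1$ and every nef Cartier divisor $N'$ on $X$ one has $H^i(X,\mathcal F(mA+N'))=0$ and $R^if_*\mathcal F(mA+N')=0$ for all $i>0$.

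I would then take $m_0:=m_1+nc$. Given $m\ge m_0$ and a nef Cartier divisor $N$ on $X$, the goal becomes to show that $f_*\mathcal F(mA+N)$ is $0$-regular with respect to $\mathcal O_Y(H)$, i.e. that $H^i\bigl(Y,f_*\mathcal F(mA+N)\otimes\mathcal O_Y(-iH)\bigr)=0$ for $1\le i\le n$ (the higher $i$ vanishing automatically by Grothendieck vanishing on $Y$). By the projection formula, $R^jf_*\bigl(\mathcal F(mA+N)\otimes f^*\mathcal O_Y(-iH)\bigr)\cong R^jf_*\mathcal F(mA+N)\otimes\mathcal O_Y(-iH)$, which vanishes for $j>0$ by the choice of $m_1$; hence the Leray spectral sequence degenerates and gives $H^i\bigl(Y,f_*\mathcal F(mA+N)\otimes\mathcal O_Y(-iH)\bigr)\cong H^i\bigl(X,\mathcal F(mA+N-if^*H)\bigr)$. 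Rewriting $mA+N-if^*H=(m-ic)A+\bigl(N+i(cA-f^*H)\bigr)$, the divisor $N+i(cA-f^*H)$ is nef and $m-ic\ge m-nc\ge m_1$, so this cohomology group vanishes by the choice of $m_1$ once more. Finally, Mumford's theorem that a $0$-regular coherent sheaf on a projective scheme carrying a globally generated ample line bundle is globally generated yields the conclusion.

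The only genuine difficulty is the uniformity of $m_0$ in the nef divisor $N$; this is precisely the strengthening of Serre vanishing due to Fujita, and relatively to Keeler, and once it is available the remaining ingredients --- the projection formula, Leray degeneration, and the regularity criterion --- are entirely standard.
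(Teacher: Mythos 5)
Your argument is correct: Fujita/Keeler vanishing supplies the uniformity in the nef divisor $N$, the Leray spectral sequence and projection formula reduce the regularity conditions to cohomology on $X$, the decomposition $mA+N-if^*H=(m-ic)A+\bigl(N+i(cA-f^*H)\bigr)$ keeps the twist in the range where vanishing applies, and Mumford's criterion finishes. The paper itself gives no proof of this lemma (it is quoted from \cite[Lemma~3.4]{Eji19p}), but your route is the standard one for such statements and is exactly what the cited reference carries out.
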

\begin{lem}[\textup{\cite[Lemma~3.4]{Eji22a}}] \label{lem:Fgg}
Let $Y$ be a projective variety of dimension $n$ and 
let $\mathcal G$ be a coherent sheaf on $Y$. 
Let $H$ be an ample Cartier divisor on $Y$ 
and let $j$ be a positive integer such that $|jH|$ is free. 
Let $\{a_e\}_{e\ge0}$ be a sequence of positive integers
such that $a_e/p^e$ converses to $\varepsilon +jn$ 
for some $\epsilon\in\mathbb R_{>0}$. 
Then there exists an $e_0\in\mathbb Z_{>0}$ such that 
${F_Y^e}_*(\mathcal G(a_eH))$ is generated by its global sections 
for each $e\ge e_0$. 
\end{lem}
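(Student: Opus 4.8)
The plan is to prove that for all $e\gg 0$ the coherent sheaf $\mathcal F_e:={F_Y^e}_*(\mathcal G(a_eH))$ on $Y$ is $0$-regular in the sense of Castelnuovo--Mumford with respect to the base-point-free ample line bundle $\mathcal O_Y(jH)$, and then to apply Mumford's theorem that a coherent sheaf which is $0$-regular with respect to a base-point-free ample line bundle is generated by its global sections. Recall that $0$-regularity of $\mathcal F_e$ with respect to $jH$ amounts to the vanishing $H^i\!\left(Y,\mathcal F_e\otimes\mathcal O_Y(-ijH)\right)=0$ for all $i\ge 1$, and since $\dim Y=n$ it suffices to check this for $1\le i\le n$.

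The heart of the matter is a short cohomological computation. Because ${F_Y^e}$ is finite, hence affine, and $\mathcal O_Y(-ijH)$ is invertible, the projection formula together with the identity $(F_Y^e)^*\mathcal O_Y(H)=\mathcal O_{Y^e}(p^eH)$ yields, for every $i\ge 1$,
$$
H^i\!\left(Y,\mathcal F_e\otimes\mathcal O_Y(-ijH)\right)
\;\cong\;H^i\!\left(Y^e,\mathcal G((a_e-ijp^e)H)\right)
\;\cong\;H^i\!\left(Y,\mathcal G((a_e-ijp^e)H)\right),
$$
the last isomorphism holding because $Y^e$ and $Y$ coincide as schemes. By Serre vanishing there is a $d_0\in\mathbb Z$, depending only on $\mathcal G$ and $H$, such that $H^i(Y,\mathcal G(dH))=0$ for all $i\ge 1$ and all $d\ge d_0$. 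Hence $\mathcal F_e$ is $0$-regular with respect to $jH$ as soon as $a_e-ijp^e\ge d_0$ for $1\le i\le n$, i.e. as soon as $a_e-jnp^e\ge d_0$. The hypothesis $a_e/p^e\to\varepsilon+jn$ with $\varepsilon>0$ gives $a_e-jnp^e=p^e\left(a_e/p^e-jn\right)\to+\infty$, so there is an $e_0$ with $a_e-jnp^e\ge d_0$ for all $e\ge e_0$; this is the bound we need.

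The one spot calling for some care is the passage from $0$-regularity to global generation when the ground field is finite, since the usual proof of Mumford's theorem chooses a general divisor in $|jH|$ missing the associated points of $\mathcal F_e$. This is harmless: one may either quote a version of the regularity theorem valid over an arbitrary field, or base change to $\overline k$, where $0$-regularity is preserved by flat base change of cohomology and global generation descends by faithfully flat descent. The only genuine subtlety I anticipate is purely bookkeeping --- tracking the factor $p^e$ introduced by the Frobenius pullback and noting that the constant $jn$ in the hypothesis is exactly what is required to force $0$-regularity; with a strictly smaller limiting slope one would obtain only $m$-regularity for some $m>0$, hence global generation of $\mathcal F_e\otimes\mathcal O_Y(mjH)$ in place of $\mathcal F_e$ itself.
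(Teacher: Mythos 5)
Your argument is correct: the projection formula plus affineness of $F_Y^e$ reduces $0$-regularity of ${F_Y^e}_*(\mathcal G(a_eH))$ with respect to $jH$ to the vanishing $H^i(Y,\mathcal G((a_e-ijp^e)H))=0$, the worst case $i=n$ is handled by Serre vanishing once $a_e-jnp^e\ge d_0$, and the hypothesis $a_e/p^e\to\varepsilon+jn$ with $\varepsilon>0$ forces $a_e-jnp^e\to+\infty$; the passage to global generation via Mumford's theorem (with base change to $\overline k$ if needed) is also fine. The paper itself gives no proof here --- it only cites \cite[Lemma~3.4]{Eji22a} --- and your Castelnuovo--Mumford regularity argument is the standard one behind that reference, so there is nothing to add.
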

\begin{thm} \label{thm:PS-type}
Let $X$ be an equi-dimensional projective $k$-scheme satisfying $S_2$ and $G_1$. 
Let $\Delta$ be an effective $\mathbb Q$-AC divisor. 
Let $Y$ be a projective variety of dimension $n$ 
and let $f:X\to Y$ be a morphism. 
Let $M$ be a nef $\mathbb Q$-Cartier divisor on $X$. 
Let $V$ be an open subset of $Y$. 
Set $U:=f^{-1}(V)$ and $f|_U:U\to V$.  
Suppose that 
\begin{itemize}
\item $\Delta|_U$ is a $\mathbb Z_{(p)}$-AC divisor,
\item $(U,\Delta|_U)$ is $F$-pure, 
\item $K_X+\Delta$ is $\mathbb Q$-Cartier, and  
\item $K_X+\Delta+M$ is $f$-ample. 
\end{itemize}
Let $H$ be an ample Cartier divisor on $Y$ and 
let $j$ be the smallest positive integer such that $|jH|$ is free. 
Then there exists an $m_0\in\mathbb Z_{>0}$ such that 
$$
f_*\mathcal O_X(m(K_X+\Delta+M)+N) \otimes \mathcal O_Y(lH)
$$
is generated by its global sections over $V$ 
for each $m\ge m_0$ such that $m(K_X+\Delta)$ and $mM$ are Cartier, 
each $l\ge m(jn+1)$ and 
every nef Cartier divisor $N$ on $X$. 
\end{thm}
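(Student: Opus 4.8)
The plan is to run the Popa--Schnell global generation argument, using the surjectivity of the pushed-forward Frobenius trace maps from Proposition~\ref{prop:surj} in place of the vanishing theorems that drive the characteristic-zero proof, and then to descend the resulting iterated Frobenius on $Y$ by means of Lemmas~\ref{lem:gg} and~\ref{lem:Fgg}. Write $D:=K_X+\Delta+M$; for every $m$ with $m(K_X+\Delta)$ and $mM$ Cartier, $mD$ is $f$-ample and Cartier. Let $m_2$ be the smallest such positive integer and put $L:=m_2D$, so that every admissible $m$ is a multiple of $m_2$ and $mD$ is a multiple of the $f$-ample Cartier divisor $L$.

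First I would pass to $V$. Global generation over $V$ only depends on the restriction to $V$, and $(U,\Delta|_U)$ is $F$-pure with $\Delta|_U$ a $\mathbb Z_{(p)}$-AC divisor and $(K_X+\Delta)|_U=K_U+\Delta|_U$ a $\mathbb Q$-Cartier divisor; hence Proposition~\ref{prop:surj} applies to the projective morphism $f|_U\colon U\to V$ with the $f|_U$-ample Cartier divisor $L|_U$. Since $f^*(lH)$ is $f$-nef, this gives an $m_1$ such that for admissible $m\ge m_1$, each $e$ with $(p^e-1)\Delta|_U$ integral, each nef Cartier divisor $N$ on $X$ and each $l\in\mathbb Z_{\ge0}$, the map $f_*\phi^{(e)}_{(X,\Delta)}\big(mD+N+f^*(lH)\big)$ is surjective over $V$. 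By $f\circ F_X^e=F_Y^e\circ f$ and the projection formula its source is
$$
{F_Y^e}_*\!\big(\mathcal H_e\otimes\mathcal O_Y(p^elH)\big),\qquad
\mathcal H_e:=f_*\mathcal O_X\big((1-p^e)(K_X+\Delta)+p^e(mD+N)\big),
$$
so that $f_*\mathcal O_X(mD+N)\otimes\mathcal O_Y(lH)=f_*\mathcal O_X(mD+N+f^*(lH))$ is, over $V$, a quotient of this sheaf. A quotient over $V$ of a sheaf globally generated over $V$ is again globally generated over $V$ (chase the two evaluation maps), so it is enough to show that, for each admissible $m\ge m_1$ and each nef Cartier $N$, there is some $e$ for which ${F_Y^e}_*(\mathcal H_e\otimes\mathcal O_Y(p^elH))$ is globally generated over $V$ whenever $l\ge m(jn+1)$.

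For this I would rewrite the divisor defining $\mathcal H_e$ as
$$
(1-p^e)(K_X+\Delta)+p^e(mD+N)=mD+(p^e-1)\Theta+p^eN,\qquad\Theta:=(m-1)D+M,
$$
with $\Theta$ $f$-nef (indeed $f$-ample for $m\ge2$, since $\Theta=(m-1)(K_X+\Delta+M)+M$). The positivity of this divisor relative to $Y$ is governed by the $f$-ample Cartier divisor $mD$ and the $f$-nef Cartier divisor $(p^e-1)\Theta+p^eN$, so $\mathcal H_e$, after twisting by a controlled multiple of $H$ and re-expressing it via Lemma~\ref{lem:gg} (applied in this relative setting) as a quotient of a sheaf on $Y$ whose shape is independent of $e$, becomes amenable to Lemma~\ref{lem:Fgg} with $a_e\sim p^e l$, i.e. $a_e/p^e\to\varepsilon+jn$ with $\varepsilon=l-jn>0$; Lemma~\ref{lem:Fgg} then produces the required $e$ and the global generation of ${F_Y^e}_*(\mathcal H_e\otimes\mathcal O_Y(p^elH))$. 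One expects the budget $l\ge m(jn+1)$ to split as $m$ copies of the $jn$-threshold of Lemma~\ref{lem:Fgg} together with one further ample unit for each of the $m$ copies of $D$ inside the $f$-ample divisor $mD$; finally one takes $m_0$ to be a sufficiently large multiple of $m_2$.

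The step I expect to be the main obstacle is precisely this last estimate: one must keep the two demands for copies of $H$ (those that turn the $f$-ample and $f$-nef divisors on $X$ into usable input for Lemmas~\ref{lem:gg} and~\ref{lem:Fgg}, and those that reach the threshold $a_e/p^e>jn$) separated and controlled so that their total is at most $m(jn+1)$, uniformly in $N$ and with the right $m$-dependence. This is what forces a genuinely relative (over $Y$) formulation, so that the $f$-ampleness of $K_X+\Delta+M$, rather than ampleness on $X$, carries the fiber direction and does not cost extra, geometry-dependent multiples of $H$. Everything else (the reduction to $V$, the trace computation via $f\circ F_X^e=F_Y^e\circ f$ and the projection formula, and the elementary quotient argument for global generation over $V$) is routine.
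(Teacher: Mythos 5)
Your overall strategy matches the paper's: use Proposition~\ref{prop:surj} to reduce global generation of $f_*\mathcal O_X(mD+N)(lH)$ over $V$ to global generation of the Frobenius pushforward of the trace-map source, realize that source (up to surjection over $V$) as a quotient of a sheaf of controlled, $e$-independent shape via Lemma~\ref{lem:gg}, and then invoke Lemma~\ref{lem:Fgg} as $e\to\infty$. However, there is a genuine gap exactly where you flag "the main obstacle": you never actually derive the threshold $l\ge m(jn+1)$, and your heuristic of splitting the budget into "$m$ copies of the $jn$-threshold plus one ample unit per copy of $D$" is not how the bound arises and does not constitute an argument. The paper's mechanism is a bootstrap: writing $p^em-q_ei=s_e\mu+t_e$ (with $\mu$ a fixed admissible generator), one first obtains a surjection over $V$ from $\bigoplus {F_Y^e}_*\mathcal O_Y((p^el-s_eu-\nu t_e)H)$, where $u$ is the \emph{a priori unknown} minimal twist making $f_*\mathcal O_X(\mu L)$ globally generated over $V$; since $(p^el-s_eu-\nu t_e)/p^e\to l-\tfrac{m-1}{\mu}u$, Lemma~\ref{lem:Fgg} gives generation for $l>\tfrac{m-1}{\mu}u+jn$. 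One then applies this very conclusion to the case $m=\mu$, $N=0$ to force $u\le\mu(jn+1)$, and substitutes back to get $\tfrac{m-1}{\mu}u+jn\le m(jn+1)-1$. Without this self-improving estimate (or a substitute), you only prove the weaker statement that \emph{some} unspecified $l_0(m)$ works, which is not the theorem.

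A secondary but real issue: your identity $(1-p^e)(K_X+\Delta)+p^e(mD+N)=mD+(p^e-1)\Theta+p^eN$ with $\Theta=(m-1)D+M$ is an identity of $\mathbb Q$-divisors only; $\Theta$ and $(p^e-1)\Theta$ are in general not Cartier (neither $(m-1)D$ nor $(p^e-1)M$ need be integral multiples of Cartier divisors), and the actual sheaf in the trace-map source involves the round-up $\lceil(1-p^e)(K_X+\Delta)\rceil$. To apply Fujita vanishing and Lemmas~\ref{lem:gg}--\ref{lem:Fgg} uniformly in $e$ one must, as the paper does, divide $p^e-1$ by the Cartier index $i$ and absorb the finitely many remainder sheaves $\mathcal O_X(-\lfloor r(K_X+\Delta)\rfloor)$, $0\le r<i$, into a single fixed coherent sheaf $\mathcal G$. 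Your clean decomposition skips this rounding bookkeeping, which is needed for the argument to make literal sense.
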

\begin{proof}
Put $L:=K_X+\Delta+M$. 
Let $i$ be the smallest positive integer such that $i(K_X+\Delta)$ and $iM$ are Cartier. 
By Proposition~\ref{prop:surj}, there is an $m_0$ such that 
\begin{align*} \tag{$\ast$}
& f_*\phi_{(X,\Delta)}^{(e)}(mL+N): 
\\ & {F_Y^e}_*f_*\mathcal O_X(\lceil(1-p^e)(K_X+\Delta)\rceil+p^e(mL+N))
\to f_*\mathcal O_X(mL+N)
\end{align*}
is surjective over $V$ for each $m\ge m_0$ with $i|m$, 
each $e\in\mathbb Z_{>0}$ with $(p^e-1)\Delta|_U$ integral 
and every nef Cartier divisor $N$ on $X$. 
Let $q_e$ and $r_e$ be the quotient and the remainder of the division of 
$p^e-1$ by $i$. 
Then 
\begin{align*}
&\lceil(1-p^e)(K_X+\Delta)\rceil+p^e(mL+N)
\\ = & 
-q_ei(K_X+\Delta)-\lfloor r_e(K_X+\Delta)\rfloor +p^e(mL+N)
\\ \sim & 
(p^em-q_ei)L +q_ei M +p^eN -\lfloor r_e(K_X+\Delta)\rfloor. 
\end{align*}
Put $\mathcal G:=\bigoplus_{0\le r <i} \mathcal O_X(-\lfloor r(K_X+\Delta)\rfloor)$. 
Take a $\mu\in\mathbb Z_{>0}$ so that $i|\mu$, $\mu\ge m_0$ 
and $\mu L$ is $f$-free. 
Applying Keeler's relative Fujita vanishing \cite[Theorem~1.5]{Kee03} 
to $\mathcal G$ and using the relative Castelnuovo--Mumford regularity 
\cite[Example~1.8.24]{Laz04I}, 
we obtain an $l_0\in\mathbb Z_{>0}$ such that 
\begin{align*}
& f_*\mathcal O_X(\mu L) \otimes f_*\mathcal O_X(lL +q_eiM+p^eN -\lfloor r_e(K_X+\Delta)\rfloor )
\\ & \to 
f_*\mathcal O_X((\mu+l)L +q_eiM+p^eN -\lfloor r_e(K_X+\Delta)\rfloor)
\end{align*}
is surjective for each $l\ge l_0$ and each $e$.  
Take a $\nu \in\mathbb Z_{>0}$ so that $L+\nu f^*H$ is ample. 
Applying Lemma~\ref{lem:gg} to $\mathcal G$ and $i(L+\nu f^*H)$, 
we find an $l_1\in\mathbb Z_{>0}$ with $i|l_1$ and $l_0\le l_1$ such that 
\begin{align*}
& f_*\mathcal O_X(lL+l\nu f^*H +q_eiM +p^eN -\lfloor r_e(K_X+\Delta) \rfloor)
\\ & \cong 
f_*\mathcal O_X(lL +q_eiM+p^e N -\lfloor r_e(K_X+\Delta) \rfloor) (l\nu H)
\end{align*}
is globally generated for each $l\ge l_1$ and each $e$. 
We have the surjective morphism 
$$
\bigoplus \mathcal O_Y(-l\nu H)
\twoheadrightarrow
f_*\mathcal O_X(lL +q_eiM +p^eN -\lfloor r_e(K_X+\Delta) \rfloor)
$$
Let $s_e$ and $t_e$ be integers satisfying 
$p^em-q_ei =s_e\mu +t_e$ and $l_1\le t_e < l_1+\mu$. 
Note that $i|t_e$. 
Let $u$ be the smallest integer such that $f_*\mathcal O_X(\mu L)(uH)$ 
is globally generated over $V$. 
Then we have the morphism  
$ 
\bigoplus \mathcal O_Y(-uH)
\to
f_*\mathcal O_X(\mu L) 
$
that is surjective over $V$. 
Using them, we obtain the following sequence of morphisms 
that are surjective over $V$:
\begin{align*}
& \bigoplus \mathcal O_Y(-(s_eu+\nu t_e)H)
\\ \cong & \left( \bigoplus \mathcal O_Y(-s_eu H) \right)
\otimes \left( \bigoplus \mathcal O_Y(-\nu t_eH) \right)
\\ \to & 
\left( \bigotimes^{s_e} f_*\mathcal O_X(\mu L) \right)
\otimes f_*\mathcal O_X(t_eL+q_eiM+p^eN -\lfloor r_e(K_X+\Delta)\rfloor)
\\ \twoheadrightarrow & 
f_* \mathcal O_X((p^em-q_ei)L +q_eiM+p^eN -\lfloor r_e(K_X+\Delta)\rfloor)
\\ \cong & 
f_* \mathcal O_X(\lceil (1-p^e)(K_X+\Delta)\rceil +p^e(mL+N)). 
\end{align*}
Pushing forward this by $F_Y^e$ and combining with $(\ast)$, 
we obtain the morphism 
\begin{align*} 
\bigoplus {F_Y^e}_* \mathcal O_Y(-(s_eu+\nu t_e)H)
\to  f_*\mathcal O_X(mL+N)
\end{align*}
that is surjective over $V$. 
Taking tensor product with $\mathcal O_Y(lH)$ for an $l\in\mathbb Z$, 
we get 
\begin{align*} \tag{$\ast\ast$}
\bigoplus {F_Y^e}_* \mathcal O_Y((p^el-s_eu-\nu t_e)H)
\to  f_*\mathcal O_X(mL+N)(lH). 
\end{align*}
Since 
$$
\lim_{e\to\infty} \frac{p^el-s_eu-\nu t_e}{p^e} =l-\frac{m-1}{\mu}u, 
$$
if $l>\frac{m-1}{\mu}u +jn$, then 
$
{F_Y^e}_*\mathcal O_Y((p^el-s_eu -\nu t_e)H)
$
is globally generated by Lemma~\ref{lem:Fgg}, so 
$f_*\mathcal O_X(mL+N)(lH)$ is globally generated over $V$ by $(\ast\ast)$. 
We show that 
$$
m(jn+1) >\frac{m-1}{\mu}u+jn, 
$$ 
which implies our assertion.
When $m=\mu$ and $N=0$, if $l>\frac{m-1}{\mu}u+jn$, 
then $f_*\mathcal O_X(\mu L)(lH)$ is globally generated over $V$, 
so $u\le l$, which means that 
$$
u \le \frac{\mu-1}{\mu}u +jn +1, 
$$
and hence $u \le \mu(jn+1)$. 
Therefore, we get 
$$
\frac{m-1}{\mu}u +jn \le (m-1)(jn+1) +jn =m(jn+1) -1 < m(jn+1). 
$$
\end{proof}
\subsection{Weak positivity theorem}
To prove the weak positivity theorem, we introduce an invariant of coherent sheaves that measures positivity, 
which is a version of the one introduced in \cite{Eji17} adjusted to our situation. 
\begin{defn}[\textup{\cite[Definition~4.4]{Eji17}}] \label{defn:t}
Let $Y$ be a quasi-projective variety
and let $\mathcal G$ be a coherent sheaf on $Y$. 
Let $H$ be an ample Cartier divisor on $Y$ 
and let $V$ be a subset of the underlying topological space $\mathrm{tp}(Y)$ of $Y$. 
We define 
\begin{align*}
T_V(\mathcal G, H) & :=
\left\{\varepsilon\in\mathbb Z\left\lbrack\frac{1}{p}\right\rbrack \middle|
\begin{tabular}{c}
\textup{$\exists e\in\mathbb Z_{>0}$ s.t. $p^e\varepsilon \in\mathbb Z$ and
$\left({F_Y^e}^*\mathcal G\right)(-p^e\varepsilon H) $ } \\
\textup{is globally generated over $V$}
\end{tabular}
\right\}, 
\\ t_V(\mathcal G,H) &:= \sup T_V(\mathcal G,H), \textup{~and} 
\\ t(\mathcal G,H) &:= t_{\{\eta\}}(\mathcal G,H). 
\end{align*}
Here, $\eta$ is the generic point of $Y$. 
\end{defn}
\begin{lem} \label{lem:t-basic}
Let $Y$, $\mathcal G$, $V$ and $H$ be as in Definition~\ref{defn:t}. 
\begin{enumerate}[$(1)$]
\item $t_V(\mathcal G,mH)=\frac{1}{m}t_V(\mathcal G,H)$.  
\item $t_V(F_Y^*\mathcal G,H) =pt_V(\mathcal G,H)$. 
\item Let $\mathcal E$ be a coherent sheaf on $Y$. 
If there is a morphism $\mathcal G\to \mathcal E$ that is surjective over $V$, then $t_V(\mathcal G,H)\le t_V(\mathcal E,H)$. 
\item Let $\mathcal E$ be a coherent sheaf on $Y$. 
Then $t_V(\mathcal G\otimes \mathcal E,H)\ge t_V(\mathcal G,H)+t_V(\mathcal E, H)$.
\end{enumerate}
\end{lem}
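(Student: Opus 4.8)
The plan is to verify each of the four assertions directly from the definition of $T_V(\mathcal G,H)$, unwinding the meaning of global generation over $V$ after pulling back by an iterated Frobenius. The common mechanism throughout is the identity $F_Y^e \circ F_Y^{e'} = F_Y^{e+e'}$ together with the fact that $(F_Y^e)^*$ sends $\mathcal O_Y(D)$ to $\mathcal O_Y(p^eD)$, and that $(F_Y^e)^*$, being affine (indeed finite), preserves surjectivity of sheaf maps over $V$.

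For (1): given $\varepsilon\in T_V(\mathcal G,H)$ witnessed by some $e$, the sheaf $\bigl((F_Y^e)^*\mathcal G\bigr)(-p^e\varepsilon H)$ is globally generated over $V$; rewriting $p^e\varepsilon H = p^e\cdot\frac{\varepsilon}{m}\cdot(mH)$ shows $\frac{\varepsilon}{m}\in T_V(\mathcal G,mH)$, and conversely, so $T_V(\mathcal G,mH) = \frac{1}{m}T_V(\mathcal G,H)$ as subsets of $\mathbb Z[1/p]$, whence the equality of suprema. For (2): if $\varepsilon\in T_V(\mathcal G,H)$ via $e$, then $(F_Y^{e-1})^*\bigl((F_Y^*\mathcal G)(-p^{e-1}(p\varepsilon)H)\bigr) = \bigl((F_Y^e)^*\mathcal G\bigr)(-p^e\varepsilon H)$ is globally generated over $V$ provided $e\ge 1$, so $p\varepsilon\in T_V(F_Y^*\mathcal G,H)$; the reverse inclusion is the same computation read backwards (handling $e=0$ by first applying one more Frobenius, which is harmless since global generation over $V$ is preserved under $(F_Y)^*$). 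This gives $t_V(F_Y^*\mathcal G,H) = p\,t_V(\mathcal G,H)$. For (3): a map $\mathcal G\to\mathcal E$ surjective over $V$ pulls back under $(F_Y^e)^*$ to a map $(F_Y^e)^*\mathcal G\to(F_Y^e)^*\mathcal E$ still surjective over $V$, and a twist by $-p^e\varepsilon H$ preserves this; since the quotient of a sheaf globally generated over $V$ is again globally generated over $V$, we get $T_V(\mathcal G,H)\subseteq T_V(\mathcal E,H)$, hence $t_V(\mathcal G,H)\le t_V(\mathcal E,H)$. For (4): if $\varepsilon\in T_V(\mathcal G,H)$ via $e_1$ and $\delta\in T_V(\mathcal E,H)$ via $e_2$, pass to $e:=\max\{e_1,e_2\}$ (global generation over $V$ is stable under further Frobenius pullback, so both witnesses survive), and note that the tensor product of two sheaves each globally generated over $V$ is globally generated over $V$; thus $\bigl((F_Y^e)^*\mathcal G\otimes(F_Y^e)^*\mathcal E\bigr)(-p^e(\varepsilon+\delta)H) = (F_Y^e)^*(\mathcal G\otimes\mathcal E)(-p^e(\varepsilon+\delta)H)$ is globally generated over $V$, so $\varepsilon+\delta\in T_V(\mathcal G\otimes\mathcal E,H)$; taking suprema yields $t_V(\mathcal G\otimes\mathcal E,H)\ge t_V(\mathcal G,H)+t_V(\mathcal E,H)$.

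The only point requiring a little care — and the nearest thing to an obstacle — is the bookkeeping needed to align the Frobenius exponents: in (2) and (4) one must know that once $(F_Y^e)^*\mathcal F$ (suitably twisted) is globally generated over $V$, the same holds after applying $(F_Y^{e'})^*$ for any $e'\ge 0$ and re-twisting by the appropriate power of $H$. This is immediate since $(F_Y^{e'})^*$ is right-exact and takes $\mathcal O_Y^{\oplus N}$ to $\mathcal O_Y^{\oplus N}$, so it carries a surjection $\mathcal O_Y^{\oplus N}\to\mathcal F|_V$ to $\mathcal O_Y^{\oplus N}\to (F_Y^{e'})^*\mathcal F|_V$; I would state this monotonicity once at the start and invoke it freely. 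Everything else is a formal manipulation of the defining sets, and no positivity input beyond the definitions is needed.
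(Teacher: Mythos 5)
Your arguments for (2)--(4) are fine (and the paper itself dismisses these as easy), but your proof of (1) has a genuine gap. You claim $T_V(\mathcal G,mH)=\frac{1}{m}T_V(\mathcal G,H)$ as subsets of $\mathbb Z[\frac{1}{p}]$, but the inclusion $\frac{1}{m}T_V(\mathcal G,H)\subseteq T_V(\mathcal G,mH)$ fails for the simple reason that $\frac{\varepsilon}{m}$ need not lie in $\mathbb Z[\frac{1}{p}]$ at all when $m$ is not a power of $p$ (e.g.\ $\varepsilon=1$, $m=3$, $p=2$): membership in $T_V(\mathcal G,mH)$ requires an exponent $e$ with $p^e\cdot\frac{\varepsilon}{m}\in\mathbb Z$, and no such $e$ exists. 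So "rewriting $p^e\varepsilon H=p^e\cdot\frac{\varepsilon}{m}\cdot(mH)$" does not exhibit $\frac{\varepsilon}{m}$ as an element of the defining set. Only the easy inclusion $mT_V(\mathcal G,mH)\subseteq T_V(\mathcal G,H)$, hence $t_V(\mathcal G,mH)\le\frac{1}{m}t_V(\mathcal G,H)$, comes for free.

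The paper closes this gap by approximation rather than exact membership: writing $p^e\varepsilon=mq_e+r_e$ with $0\le r_e<m$, choosing $g$ so that $|p^gr_eH|$ is free for all such remainders, and absorbing the effective divisor in $|p^gr_eH|$ into the global generation statement, one finds that $\frac{q_e}{p^e}\in T_V(\mathcal G,mH)$ for every $e$; since $\frac{q_e}{p^e}\to\frac{\varepsilon}{m}$ as $e\to\infty$, the supremum satisfies $t_V(\mathcal G,mH)\ge\frac{\varepsilon}{m}$ even though $\frac{\varepsilon}{m}$ itself may never belong to the set. You need this (or an equivalent limiting argument) to get the reverse inequality; the set-theoretic identity you rely on is simply false. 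Note that no such issue arises in (2), since multiplication by $p$ preserves $\mathbb Z[\frac{1}{p}]$, which is why your argument there does go through.
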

\begin{proof}
The proofs of (2)--(4) are easy. 
We show (1). One can easily check that $T_V(\mathcal G,mH)\subseteq \frac{1}{m}T_V(\mathcal G,H)$, 
so $t_V(\mathcal G,mH)\le \frac{1}{m}t_V(\mathcal G,H)$. 
Take $\varepsilon\in T_V(\mathcal G,H)$. 
Then there is an $e\in\mathbb Z_{>0}$ such that $p^e\varepsilon\in\mathbb Z$ 
and $({F_Y^e}^*\mathcal G)(-p^e\varepsilon H)$ is globally generated over $V$. 
Let $q_e$ and $r_e$ be the quotient and the remainder of the division of 
$p^e\varepsilon$ by $m$. 
Take $g\in\mathbb Z_{>0}$ so that $|p^gr_eH|$ is free 
for each $0\le r_e<m$. 
Then $ -p^{e+g}\varepsilon=-mp^gq_e-p^gr_e, $ so 
$
({F_Y^{e+g}}^*\mathcal G)(-mp^gq_e H)
$
is globally generated over $V$. 
Therefore, 
$$
T_V(\mathcal G,mH) \ni \frac{p^gq_e}{p^{e+g}}=\frac{q_e}{p^e}
\xrightarrow{e\to\infty} \frac{\varepsilon}{m}, 
$$
so we get $t_V(\mathcal G,mH)\ge \frac{\varepsilon}{m}$, 
and hence $t_V(\mathcal G,mH)\ge \frac{1}{m}t_V(\mathcal G,H)$. 
\end{proof}
\begin{prop}[\textup{\cite[Proposition~4.7]{Eji17}}] \label{prop:t-wp}
Let $Y$ be a normal projective variety 
and let $\mathcal G$ be a torsion-free coherent sheaf on $Y$. 
Let $H$ be an ample Cartier divisor on $Y$ 
and let $V$ be a regular open subset of $Y$ 
such that $\mathcal G|_V$ is locally free. 
If $t_V(\mathcal G,H) \ge 0$, then $\mathcal G$ is weakly positive over $V$. 
\end{prop}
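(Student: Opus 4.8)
The plan is to propagate the hypothesis $t_V(\mathcal G,H)\ge 0$ to all symmetric powers of $\mathcal G$ using Lemma~\ref{lem:t-basic}, to reduce the weak positivity of $\mathcal G$ over $V$ to a single sheaf-independent ``bounded twist'' global generation statement, and to prove that statement by a Frobenius push-forward argument based on Lemma~\ref{lem:Fgg}. Fix $\alpha\in\mathbb Z_{>0}$ and an ample Cartier divisor on $Y$; one must find $\beta\in\mathbb Z_{>0}$ so that $(S^{\alpha\beta}\mathcal G)^{\vee\vee}$ twisted by $\beta$ times that divisor is globally generated over $V$. Since $t_V$ only rescales under a change of polarization (Lemma~\ref{lem:t-basic}(1)) and global generation over $V$ transfers between comparable ample divisors, we may assume the divisor is $H$ and, after replacing $H$ by a multiple and $\alpha$ accordingly, that $|H|$ is free, so that $\mathcal O_Y(tH)$ is globally generated for all $t\ge 0$. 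By Lemma~\ref{lem:t-basic}(4), $t_V(\mathcal G^{\otimes m},H)\ge 0$ for every $m\ge 1$, and since $\mathcal G|_V$ is locally free the canonical map $\mathcal G^{\otimes m}\to(S^m\mathcal G)^{\vee\vee}$ is surjective over $V$, so Lemma~\ref{lem:t-basic}(3) gives $t_V\bigl((S^m\mathcal G)^{\vee\vee},H\bigr)\ge 0$ for every $m\ge 1$.

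It then suffices to prove the following uniform claim: there is a constant $c=c(Y,H)\in\mathbb Z_{>0}$, independent of the sheaf, such that every torsion-free coherent sheaf $\mathcal F$ on $Y$ with $t_V(\mathcal F,H)\ge 0$ has $\mathcal F\otimes\mathcal O_Y(cH)$ globally generated over $V$. Granting it, taking $\mathcal F=(S^m\mathcal G)^{\vee\vee}$ with $m:=\alpha c$ shows that $(S^m\mathcal G)^{\vee\vee}(cH)=(S^{\alpha\beta}\mathcal G)^{\vee\vee}(\beta H)$ with $\beta=c$ is globally generated over $V$, which is the required weak positivity. To prove the uniform claim, I would unwind Definition~\ref{defn:t}: for $e\gg 0$ there is an integer $0\le b<p^e$ with $({F_Y^e}^*\mathcal F)\otimes\mathcal O_Y(bH)$ globally generated over $V$; tensoring with the globally generated $\mathcal O_Y((p^e-b)H)$ and using $p^eH={F_Y^e}^*H$ shows that ${F_Y^e}^*\bigl(\mathcal F\otimes\mathcal O_Y(H)\bigr)$ is globally generated over $V$. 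Pushing forward along the affine morphism $F_Y^e$ and applying the projection formula, $\mathcal F(H)\otimes{F_Y^e}_*\mathcal O_Y$ is a quotient over $V$ of a direct sum of copies of ${F_Y^e}_*\mathcal O_Y$; twisting by $\mathcal O_Y((c-1)H)$ and writing ${F_Y^e}_*\mathcal O_Y((c-1)p^eH)={F_Y^e}_*\mathcal O_Y\otimes\mathcal O_Y((c-1)H)$, Lemma~\ref{lem:Fgg} makes this push-forward globally generated over $V$ as soon as $c-1>\dim Y$ and $e\gg 0$; hence $\mathcal F(H)\otimes{F_Y^e}_*\mathcal O_Y((c-1)p^eH)$ is globally generated over $V$.

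The remaining --- and, I expect, genuinely delicate --- step is to descend from this to $\mathcal F(cH)$ itself with a twist $c$ that does not grow with $\mathcal F$, i.e.\ to exhibit $\mathcal F(cH)$ as a quotient over $V$ of the last sheaf. Over $V\subseteq Y_{\mathrm{reg}}$ the sheaf ${F_Y^e}_*\mathcal O_Y((c-1)p^eH)$ is locally free of rank $p^{e\dim Y}$, and the needed quotient reduces to a surjection of it onto $\mathcal O_Y((c-1)H)$ over $V$, equivalently a global section of the sheaf of homomorphisms between these two sheaves that vanishes nowhere on $V$; after enlarging $c$ by a bounded amount, such a section is produced by combining the freeness of a high Frobenius push-forward of a sufficiently positive line bundle (Lemma~\ref{lem:Fgg} applied to $\omega_Y$, via relative duality for $F_Y^e$ over $Y_{\mathrm{reg}}$) with the elementary fact that $p^{e\dim Y}>\dim V$, so that a general global section of a sheaf globally generated over $V$ is nowhere zero on $V$. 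Tensoring the resulting surjection by $\mathcal F(H)$ then presents $\mathcal F(cH)$ as a quotient over $V$ of a sheaf globally generated over $V$, proving the uniform claim and hence the proposition.
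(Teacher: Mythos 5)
Your opening reduction is sound: using Lemma~\ref{lem:t-basic}~(3),(4) to get $t_V\bigl((S^m\mathcal G)^{\vee\vee},H\bigr)\ge 0$ and normalizing the polarization are both fine, and they parallel the paper's first move. The gap is exactly where you flag it. Your uniform claim requires descending from the global generation over $V$ of $\mathcal F(H)\otimes{F_Y^e}_*\mathcal O_Y((c-1)p^eH)$ to that of $\mathcal F(cH)$ with a twist bounded \emph{independently of $e$}, and for this you need a morphism ${F_Y^e}_*\mathcal O_Y((c-1)p^eH)\to\mathcal O_Y(c'H)$ surjective over $V$ with $c'$ uniform. By duality for $F_Y^e$ over $Y_{\mathrm{reg}}$ the relevant Hom sheaf is ${F_Y^e}_*\mathcal O_Y\bigl((1-p^e)K_Y+(c'-c+1)p^eH\bigr)$, and Lemma~\ref{lem:Fgg} does not apply to it: that lemma concerns ${F_Y^e}_*(\mathcal G(a_eH))$ for a \emph{fixed} coherent sheaf $\mathcal G$, whereas here the sheaf being pushed forward carries the $e$-dependent factor $(1-p^e)K_Y$, which cannot be absorbed into $a_eH$ unless $K_Y$ is proportional to $H$ (and on a merely normal $Y$ it need not even be Cartier). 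So the global generation you need is a genuinely different, unproved uniform statement about ${F_Y^e}_*\mathcal O_Y((p^e-1)(-K_Y)+\delta p^eH)$. On top of that, extracting a section nowhere zero on $V$ from ``globally generated of rank $p^{e\dim Y}>\dim V$'' is a general-position argument that fails over a finite base field, and $F$-finite fields here may be finite.

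The paper avoids the uniformity problem altogether, and comparing the two shows why your claim is stronger than what is needed. It applies $F_Y^e$ to $S^{\alpha\beta p^e}(\mathcal G)(-\alpha\beta p^e\varepsilon H)$ directly and, instead of choosing a surjection of ${F_Y^e}_*\mathcal O_Y$ onto a line bundle, tensors with the full dual and uses the canonical evaluation map $\tau:({F_Y^e}_*\mathcal O_Y)\otimes({F_Y^e}_*\mathcal O_Y)^*\to\mathcal O_Y$, which is surjective over the regular open set $V$ with no section to be chosen and no duality computation. The price is a twist $l=l(e)$ making $({F_Y^e}_*\mathcal O_Y)\otimes({F_Y^e}_*\mathcal O_Y)^*(lH)$ globally generated; since Definition~\ref{defn:wp} only demands \emph{some} $\beta$, this $e$-dependent $l$ is harmlessly absorbed by choosing $\beta$ with $l\le\beta p^e$. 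If you replace your sought-for surjection by this tensor-with-the-dual device, your argument closes, but the twist then depends on $e$ and hence on the sheaf, which is precisely why the uniform claim should be dropped in favor of letting $\beta$ grow.
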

\begin{proof}
We may assume that $|H|$ is free. 
Fix an $\alpha\in\mathbb Z_{>0}$. 
Since $t_V(\mathcal G, H)\ge 0$, there is an $\varepsilon\in T_V(\mathcal G,H)$ with $-\alpha^{-1} <\varepsilon$. 
Let $e$ be a positive integer such that 
$
({F_Y^e}^*\mathcal G)(-p^e\varepsilon H)
$
is globally generated over $V$, and then so is 
\begin{align*}
S^{\alpha\beta p^e}\big(({F_Y^e}^*\mathcal G)(-p^e\varepsilon H) \big)
& \cong 
\big(S^{\alpha\beta p^e}({F_Y^e}^*\mathcal G) \big) (-\alpha\beta p^{2e} \varepsilon H)
\\ & \cong 
\big({F_Y^e}^*S^{\alpha\beta p^e}(\mathcal G) \big) (-\alpha\beta p^{2e} \varepsilon H)
\cong 
{F_Y^e}^*\big(S^{\alpha\beta p^e}(\mathcal G) (-\alpha\beta p^e\varepsilon H) \big)
\end{align*}
for each $\beta\in \mathbb Z_{>0}$. 
Then there is a morphism 
$$
\sigma:\bigoplus \mathcal O_Y 
\to 
{F_Y^e}^*\big(S^{\alpha\beta p^e}(\mathcal G) (-\alpha\beta p^e\varepsilon H) \big)
$$
that is surjective over $V$. 
Put $\mathcal F:=({F_Y^e}_*\mathcal O_Y) \otimes ({F_Y^e}_*\mathcal O_Y)^*$. 
Then there is a natural morphism $\tau:\mathcal F \to \mathcal O_Y$ 
that is surjective over $V$ (note that ${F_Y^e}_*\mathcal O_Y$ is locally free over $V$, since $V$ is regular). 
Take an $l\in\mathbb Z_{>0}$ so that $\mathcal F(lH)$ is globally generated. 
Then we obtain the sequence of morphisms
\begin{align*}
\bigoplus \mathcal F(lH)
\cong & ({F_Y^e}_*\bigoplus \mathcal O_Y) \otimes ({F_Y^e}_*\mathcal O_Y)^*(lH)
\\ \xrightarrow{({F_Y^e}_*\sigma)\otimes ({F_Y^e}_*\mathcal O_Y)^*(lH)} &
\left( {F_Y^e}_*
{F_Y^e}^*\big(S^{\alpha\beta p^e}(\mathcal G) (-\alpha\beta p^e\varepsilon H) \big)
\right) \otimes ({F_Y^e}_*\mathcal O_Y)^*(lH)
\\ \cong & 
\big(S^{\alpha\beta p^e}(\mathcal G) (-\alpha\beta p^e\varepsilon H) \big) \otimes \mathcal F(lH)
\\ \xrightarrow{\textup{{\tiny induced by $\tau$}}} &
S^{\alpha\beta p^e}(\mathcal G) ((l-\alpha\beta p^e\varepsilon) H) 
\end{align*}
that are surjective over $V$, 
so $S^{\alpha\beta p^e}(\mathcal G)((l-\alpha\beta p^e\varepsilon)H)$ is globally generated over $V$.
Since $-\alpha \varepsilon <1$ and $|H|$ is free, we see that 
$S^{\alpha\beta p^e}(\mathcal G)((l+\beta p^e)H)$
is globally generated over $V$. 
Choose $\beta$ so that $l\le \beta p^e$. 
Then 
$S^{\alpha\beta p^e}(\mathcal G)(2\beta p^eH)$
is globally generated over $V$. 
Hence, replacing $H$ by $2H$, we can check that $\mathcal G$ is weakly positive over $V$. 
\end{proof}
\begin{thm} \label{thm:wp}
Let $X$ be an equi-dimensional projective $k$-scheme satisfying $S_2$ and $G_1$. 
Let $\Delta$ be an effective $\mathbb Q$-AC divisor. 
Let $Y$ be a regular projective variety 
and let $f:X\to Y$ be a surjective morphism. 
Let $M$ be a nef $\mathbb Q$-Cartier divisor on $X$. 
Let $V$ be an open subset of $Y$. 
Set $U:=f^{-1}(V)$ and $f|_U:U\to V$.  
Suppose the following conditions:
\begin{enumerate}[$(1)$]
\item $K_X+\Delta$ is $\mathbb Q$-Cartier; 
\item $K_X+\Delta+M$ is $f$-ample;
\item $K_U+\Delta|_U$ is a $\mathbb Z_{(p)}$-Cartier divisor; 
\item $U$ is flat over $V$; 
\item $\mathrm{Supp}(\Delta)$ does not contain any component of any fiber over $V$;
\item every fiber over $V$ satisfies $S_2$ and $G_1$;
\item $(X_{\overline y}, \Delta|_{X_{\overline y}})$ is $F$-pure for every $y\in V$, where $X_{\overline y}$ is the geometric fiber over $y$, $\Delta|_{X_{\overline y}}$ is the unique extension of the restriction $\Delta|_{W_{\overline y}}$ to $W_{\overline y}$, and $W$ is the Gorenstein locus of $X$. 
\end{enumerate}
Let $H$ be an ample divisor on $Y$. 
Then there exists an $m_0\in\mathbb Z_{>0}$ such that  
$$
t_V(f_*\mathcal O_X(m(K_{X/Y}+\Delta+M)+N), H) \ge 0
$$
for each $m\ge m_0$ with $m(K_X+\Delta)$ and $mM$ Cartier 
and every nef Cartier divisor $N$ on $X$. 
In particular, $f_*\mathcal O_X(m(K_{X/Y}+\Delta+M)+N)$ is weakly positive over $V$. 
\end{thm}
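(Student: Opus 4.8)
The plan is to deduce the statement from the positivity of the invariant $t_V$ of Definition~\ref{defn:t} and then to prove that positivity by a Frobenius‑iteration argument combining Theorem~\ref{thm:PS-type} with the relative trace maps $\phi_{(X/Y,\Delta)}^{(e)}$. For $m\gg 0$ the divisor $m(K_{X/Y}+\Delta+M)+N$ is $f$‑ample, so by relative Serre vanishing together with conditions~(4) and~(6) the sheaf $\mathcal G_m:=f_*\mathcal O_X(m(K_{X/Y}+\Delta+M)+N)$ is torsion‑free and is locally free over $V$; hence, by Proposition~\ref{prop:t-wp}, it suffices to show that $t_V(\mathcal G_m,H)\ge 0$ for all such $m$ and every nef Cartier divisor $N$, since the ``in particular'' assertion is then immediate from Proposition~\ref{prop:t-wp} and Definition~\ref{defn:wp}. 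By Lemma~\ref{lem:t-basic}(1) we may assume $|H|$ is free, and we fix $b\in\mathbb Z_{>0}$ with $bH-K_Y$ ample; this will be used to absorb twists by $K_Y$.

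The heart of the matter is a self‑improving inequality coming from the relative trace. Put $D:=(m-1)(K_{X/Y}+\Delta+M)+M+N$, so that $(K_{X/Y}+\Delta)+D=m(K_{X/Y}+\Delta+M)+N$, and, for $e$ with $(p^e-1)(K_{X/Y}+\Delta)$ integral over $V$, consider $\phi_{(X/Y,\Delta)}^{(e)}$ twisted by the divisor $(K_{X/Y}+\Delta)+D$. Pushing this map forward by the projection $g\colon X_{Y^e}\to Y^e$ and using flat base change along $F_Y^e$ (here the regularity of $Y$ is used), the target becomes ${F_Y^e}^*\mathcal G_m$ and the source becomes $\mathcal H_m^{(e)}:=f_*\mathcal O_X\bigl((K_{X/Y}+\Delta)+p^eD\bigr)$, giving a morphism
$$
\mathcal H_m^{(e)}\longrightarrow {F_Y^e}^*\mathcal G_m .
$$
Exactly as in Proposition~\ref{prop:surj}, the $F$‑purity of the geometric fibres over $V$ (condition~(7)), Keeler's relative Fujita vanishing \cite{Kee03} applied to the kernels of the one‑step traces, and the $f$‑ampleness of $(K_{X/Y}+\Delta)+p^eD$ (which follows from condition~(2) and the nefness of $M,N$) force this morphism to be surjective over $V$, while conditions~(3) and~(5) ensure that all divisors and pairs in sight are defined over $V$. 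Combining the surjection with parts~(2) and~(3) of Lemma~\ref{lem:t-basic} yields
$$
t_V(\mathcal G_m,H)\ \ge\ \tfrac{1}{p^e}\,t_V\!\bigl(\mathcal H_m^{(e)},H\bigr).
$$

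It then remains to bound $t_V(\mathcal H_m^{(e)},H)$ below by $-o(p^e)$ and let $e\to\infty$. Using $K_{X/Y}=K_X-f^*K_Y$ one rewrites $\mathcal H_m^{(e)}\cong f_*\mathcal O_X\bigl(m''(K_X+\Delta+M)+(p^e-1)M+p^eN\bigr)\otimes\mathcal O_Y(-m''K_Y)$ with $m''=1+p^e(m-1)$; Theorem~\ref{thm:PS-type}, applied uniformly in the nef Cartier divisor $(p^e-1)M+p^eN$, controls the $t_V$ of the first factor, the choice of $b$ controls the twist by $-m''K_Y$, and Lemma~\ref{lem:t-basic}(4) combines the two. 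The main obstacle is precisely the size of $m''$: this naive estimate only gives $t_V(\mathcal G_m,H)\ge -(m-1)(jn+1+b)$ after dividing by $p^e$, which is not good enough once $m\ge 2$. To beat this one must arrange, in the auxiliary sheaf to which Theorem~\ref{thm:PS-type} is applied, that the relative log‑canonical appears with coefficient bounded independently of $e$ while only the nef part gets multiplied by $p^e$, so that its crude estimate is $O(1)$ in $e$; following Viehweg's proof of weak positivity \cite{Vie82}, I would do this by replacing $f\colon X\to Y$ with an $r$‑fold fibre product $X\times_Y\cdots\times_Y X\to Y$ — which over $V$ is again flat with geometrically $S_2G_1$ and $F$‑pure fibres, since products of $F$‑pure pairs are $F$‑pure — and by using the relative canonical of the fibre product, with its induced boundary and moduli divisors, in place of the $m$‑th power of $K_{X/Y}+\Delta+M$. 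Carrying this reduction through, and checking its compatibility with the generalized‑pair data $\Delta+M$ in positive characteristic even though the fibre products are singular away from $V$, completes the proof.
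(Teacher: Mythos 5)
Your framework (reduce everything to the inequality $t_V(\mathcal G_m,H)\ge 0$ and then feed the relative trace maps into Theorem~\ref{thm:PS-type}) is the right one, and you correctly diagnose the fatal defect of the naive estimate: iterating $\phi^{(e)}_{(X/Y,\Delta)}$ on the fixed family produces a source sheaf in which $K_{X/Y}+\Delta$ carries the coefficient $m''=1+p^e(m-1)$, so Theorem~\ref{thm:PS-type} only yields $t_V(\mathcal G_m,H)\ge -(m-1)(jn+1+b)+O(p^{-e})$. But the fix you propose does not close this gap. Passing to an $r$-fold fibre product $Z=X\times_Y\cdots\times_Y X$ replaces $mL+N$ by $\sum_i\mathrm{pr}_i^*(mL+N)=m(K_{Z/Y}+\Gamma+M_Z)+N_Z$, in which the relative log canonical of the fibre product still carries the coefficient $m$, and after the $e$-th trace it again carries $1+p^e(m-1)$: the number of factors $r$ is not coupled to the Frobenius exponent $e$, so the coefficient you need to make ``bounded independently of $e$'' is untouched. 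Moreover, even if the fibre-product route gave a uniform lower bound for $t_V\bigl(\mathcal G_m^{\otimes r},H\bigr)$, Lemma~\ref{lem:t-basic} only provides $t_V(\mathcal G^{\otimes r})\ge r\,t_V(\mathcal G)$, not the reverse inequality $t_V(\mathcal G)\ge\tfrac1r t_V(\mathcal G^{\otimes r})$ that you would need; and the conclusion of the theorem is the inequality $t_V\ge 0$ itself (used as such in the proof of Theorem~\ref{thm:num flat}), not merely weak positivity, so a Viehweg-style symmetric-power endgame would in any case prove less than what is claimed.

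The paper's resolution is different and does not iterate the trace on $X/Y$ at this stage at all. Since $Y$ is regular, $F_Y^e$ is flat, so ${F_Y^e}^*\mathcal G_m\cong{f_{Y^e}}_*\mathcal O_{X_{Y^e}}(mL_{Y^e}+N_{Y^e})$ by flat base change; one then applies Theorem~\ref{thm:PS-type} directly to the base-changed family $f_{Y^e}\colon X_{Y^e}\to Y^e$ and the divisor $m(K_{X_{Y^e}}+\Delta_{Y^e}+M_{Y^e})+N_{Y^e}$ --- same exponent $m$, with an $m_0$ uniform in $e$ supplied by \cite[Proposition~3.6]{Eji22c} (this is where hypotheses (4)--(7) enter; cf.\ Remark~\ref{rem:surj}). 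The relative-to-absolute conversion costs only $-mK_Y$, which is absorbed into $mH$ after arranging $|-K_Y+H|$ to be free. The required twist is therefore $m(n+2)H$, independent of $e$, giving $-m(n+2)/p^e\in T_V(\mathcal G_m,H)$ and hence $t_V(\mathcal G_m,H)\ge 0$ as $e\to\infty$. To repair your write-up, replace the iteration-plus-fibre-product step by this base-change step.
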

\begin{proof}
Since $t_V(?,lH)=\frac{1}{l}t_V(?,H)$ for each $l\in \mathbb Z_{>0}$, 
we may assume that $|H|$ and $|-K_Y+H|$ are free. 
Put $L:=K_{X/Y}+\Delta+M$. 
Take an $e\in\mathbb Z_{>0}$.
Consider the morphism $f_{Y^e}:X_{Y^e}\to Y^e$ 
and the divisor $\Delta_{Y^e}$. 
Note that $X_{Y^e}\to X$ is a Gorenstein morphism by 
\cite[Corollary~2']{WITO69}, 
and $X_{Y^e}$ satisfies $S_2$ and $G_1$ by 
\cite[Proposition~1~(i\hspace{-1pt}i)]{RF72}. 
By \cite[Proposition~3.6]{Eji22c}, there is an $m_0\in\mathbb Z_{>0}$ 
independent of $e$ such that 
\begin{align*} \tag{$\ast$}
& {f_{Y^e}}_*\phi_{\left( X_{Y^e},\Delta_{Y^e} \right)}^{(g)}\left(mL_{Y^e}+N_{Y^e}\right):
\\ & {F_Y^g}_* {f_{Y^e}}_* \mathcal O_{X_{Y^e}}(\lceil(1-p^g)(K_{X_{Y^e}}+\Delta_{Y^e}) \rceil
+mp^gL_{Y^e}+p^gN_{Y^e})
\\ & \to {f_{Y^e}}_*\mathcal O_{X_{Y^e}}(mL_{Y^e}+N_{Y^e})
\end{align*}
is surjective over $V^e$ for each $m\ge m_0$ with $i|m$, 
each $g\in\mathbb Z_{>0}$ such that $(p^g-1)\Delta|_U$ is integral,
and every nef Cartier divisor $N$ on $X$. 
Put $\mathcal G_m:=f_*\mathcal O_X(mL+N)$ for each $m\ge m_0$ with 
$m(K_X+\Delta)$ and $mM$ Cartier, and set $n:=\dim Y$. 
Then we have 
\begin{align*}
& {F_Y^e}^*\mathcal G_m(m(n+2)H) 
\\ & \cong {f_{Y^e}}_* \mathcal O_{X_{Y^e}}(mL_{Y^e}+N_{Y^e}) (m(n+2)H)
\\ & = {f_{Y^e}}_* \mathcal O_{X_{Y^e}}(m(K_{X_{Y^e}/Y^e}+\Delta_{Y^e}+M_{Y^e})+N_{Y^e}) (m(n+2)H)
\\ & \cong {f_{Y^e}}_* \mathcal O_{X_{Y^e}}(m(K_{X_{Y^e}}+\Delta_{Y^e}+M_{Y^e})+N_{Y^e}) (m(n+1)H) \otimes \mathcal O_Y(-mK_Y+mH), 
\end{align*}
which is globally generated over $V$ 
by the proof of Theorem~\ref{thm:PS-type} and the choice of $H$. 
Hence, we obtain that $-\frac{m(n+2)}{p^e} \le t_V(\mathcal G_m, H)$. 
Taking $e\to \infty$, we conclude that $0\le t_V(\mathcal G_m, H)$. 
\end{proof}
\begin{rem} \label{rem:surj}
In the proof above, we use the surjectivity over $V$ of 
$$
{f_{Y^e}}_*\phi_{\left(X_{Y^e},\Delta_{Y^e}\right)}^{(g)}\left(mL_{Y^e}+N_{Y^e}\right),
$$
which follows from the surjectivity over $V$ of 
$$
{f_{Y^{e+g}}}_*\phi_{\left(X_{Y^e}/Y^e,\Delta_{Y^e}\right)}^{(g)}
\left(mL_{Y^{e+g}}+N_{Y^{e+g}}\right)
\cong
{F_Y^e}^*\left( {f_{Y^g}}_* \phi_{\left(X/Y,\Delta\right)}^{(g)}
\left(mL_{Y^g}+N_{Y^g}\right)
\right) 
$$
(see the proof of \cite[Proposition~3.6]{Eji22c}). 
This surjectivity over $V$ follows from the $f$-ampleness of $L$ and  
the surjectivity over $U_{V^g}$ of 
$
\phi_{\left(X/Y,\Delta\right)}^{(g)} 
$
(recall that $U:=f^{-1}(V)$). 
To ensure this surjectivity over $U_{V^g}$, 
we need assumptions~(5)--(7) in Theorem~\ref{thm:wp}. 
Therefore, when we have the surjectivity over $U_{V^g}$ of 
$
\phi_{\left(X/Y,\Delta\right)}^{(g)} 
$
for each $g\in\mathbb Z_{>0}$ with $(p^e-1)(K_U+\Delta|_U)$ Cartier, 
the same consequence as that of Theorem~\ref{thm:wp} holds
without assumptions~(5)--(7). 
Note that, when $\Delta=0$, the surjectivity of $\phi_{(X/Y,0)}^{(e)}$ over 
$U_{V^e}$ implies that $f|_U:U\to V$ is an $F$-pure morphism
(\cite[\S 2]{Has10}). 
\end{rem}
\begin{prop} \label{prop:Weil}
Let $(X,\Delta)$ be a normal projective pair and 
let $f:X\to Y$ be a surjective morphism to a 
normal projective variety $Y$. 
Suppose that 
\begin{itemize}
\item $\Delta$ is $\mathbb Z_{(p)}$-Weil, 
\item $(K_X+\Delta)|_{X_{\overline\eta}}$ is $\mathbb Z_{(p)}$-Cartier,  
\item $\left(X_{\overline\eta},\Delta|_{X_{\overline \eta}}\right)$ is $F$-pure, and  
\item $f$ is equi-dimensional, 
\end{itemize}
Let $D$ be a $\mathbb Z_{(p)}$-Weil divisor on $Y$. 
If $-K_{X/Y}-\Delta +f^*D$ is a nef $\mathbb Z_{(p)}$-Cartier divisor, 
then $\mathcal O_Y(mD)$ is pseudo-effective for an $m\in\mathbb Z_{>0}$ 
with $mD$ integral. 
\end{prop}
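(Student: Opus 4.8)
The plan is to deduce this from the weak positivity theorem (Theorem~\ref{thm:wp}, in the form of Remark~\ref{rem:surj}) applied to small perturbations of the moduli part $M_0:=-K_{X/Y}-\Delta+f^*D$, and then to let the perturbation tend to zero. Fix an ample Cartier divisor $A$ on $X$ and an ample divisor $H$ on $Y$. First I would set up the auxiliary data: using generic flatness, the $\mathbb Z_{(p)}$-Cartier locus of $K_X+\Delta$ near the generic fibre, and the spreading-out of the surjectivity of $\phi^{(g)}_{(X/Y,\Delta)}$ coming from the $F$-purity of $(X_{\overline\eta},\Delta|_{X_{\overline\eta}})$, I choose a nonempty open $V\subseteq Y_{\mathrm{reg}}$ over which $D$ is Cartier, $f$ is flat, and for a suitable $g$ (hence, by composition, for every multiple of $g$) the relative trace $\phi^{(g)}_{(X/Y,\Delta)}$ is surjective over $U_{V^g}$, where $U:=f^{-1}(V)$. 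The equi-dimensionality of $f$ is used here both to make sense of $f^*D$ as a $\mathbb Z_{(p)}$-Weil divisor and to spread out the conditions on the fibres; note also that $K_{X/Y}+\Delta+M_0=f^*D$ as $\mathbb Z_{(p)}$-Weil divisors on $X$.

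Next, for each integer $j\ge 1$ put $M_j:=M_0+\tfrac1j A$; this is an ample (in particular nef) $\mathbb Q$-Cartier divisor, and $K_X+\Delta+M_j=f^*(K_Y+D)+\tfrac1j A$ is $f$-ample since $\tfrac1j A$ is $f$-ample. Applying the weak positivity theorem in the form of Remark~\ref{rem:surj}, with boundary $\Delta$ and moduli part $M_j$, gives an $m_0(j)\in\mathbb Z_{>0}$ such that for every $m\ge m_0(j)$ with the appropriate divisibility and every nef Cartier $N$ on $X$ one has $t_V\!\left(f_*\mathcal O_X\big(m(K_{X/Y}+\Delta+M_j)+N\big),H\right)\ge 0$. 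Taking $N=0$ and using $m(K_{X/Y}+\Delta+M_j)=mf^*D+\tfrac mj A$ together with the projection formula over $Y_{\mathrm{reg}}$, this reads $t_V\!\left(\mathcal O_Y(mD)\otimes f_*\mathcal O_X(aA),H\right)\ge 0$ with $a:=m/j$.

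Now choose a torsion-free rank-one quotient $f_*\mathcal O_X(aA)\twoheadrightarrow\mathcal O_Y(B_a)$ (a saturated corank-one subsheaf gives one); tensoring by $\mathcal O_Y(mD)$ and taking reflexive hulls yields a morphism to $\mathcal O_Y(mD+B_a)$ which is surjective over $V$ (shrink $V$ once more). By Lemma~\ref{lem:t-basic}~(3) we get $t_V(\mathcal O_Y(mD+B_a),H)\ge 0$, hence by Proposition~\ref{prop:t-wp} the reflexive rank-one sheaf $\mathcal O_Y(mD+B_a)$ is weakly positive over $V$. Unwinding Definition~\ref{defn:wp} for a rank-one reflexive sheaf, for every $\alpha$ there is a $\beta$ with $\mathcal O_Y(\alpha\beta(mD+B_a)+\beta H)$ globally generated over $V$; since "globally generated over $V$" refers to $H^0(Y,-)$, this forces $\alpha\beta(mD+B_a)+\beta H$, hence $\alpha(mD+B_a)+H$, to be effective, and letting $\alpha\to\infty$ shows $mD+B_a$, and therefore $D+\tfrac1m B_a$, is pseudoeffective on $Y$.

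Finally I would let $j\to\infty$. The key point is the estimate $m_0(j)=O(j)$: every threshold occurring in the proofs of Proposition~\ref{prop:surj} and Theorems~\ref{thm:PS-type},~\ref{thm:wp} measures how much of the $f$-ample class $K_X+\Delta+M_j$ one must add in order to dominate a fixed relative divisor or to obtain relative freeness and Fujita vanishing, and since the relatively ample part of $K_X+\Delta+M_j=f^*(K_Y+D)+\tfrac1j A$ is scaled by $1/j$, each such threshold grows at most linearly in $j$. Granting this, for each $j$ one picks $m=m(j)\ge m_0(j)$ with the required divisibility and with $a(j):=m(j)/j$ lying in a fixed finite set of positive integers; then $m(j)\to\infty$ while $B_{a(j)}$ runs over finitely many divisor classes, so $\tfrac1{m(j)}B_{a(j)}\to 0$ in $N^1(Y)_{\mathbb R}$ and $D=\lim_{j\to\infty}\big(D+\tfrac1{m(j)}B_{a(j)}\big)$ is pseudoeffective; in particular $\mathcal O_Y(mD)$ is pseudoeffective for every $m$ with $mD$ integral. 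The main obstacle is precisely this estimate $m_0(j)=O(j)$, which requires tracking the constants through the cited proofs. A possible alternative that sidesteps it is to twist the relative trace $\phi^{(e)}_{(X/Y,\Delta)}$ directly by $f^*D$: using $K_{X/Y}+\Delta+M_0=f^*D$ and the usual Frobenius/flat-base-change bookkeeping, this produces a morphism $f_*\mathcal O_X(K_{X/Y}+\Delta+p^eM_0)\to\mathcal O_Y(p^eD)$ surjective over $V$, reducing everything to the weak positivity of $f_*\mathcal O_X(K_{X/Y}+\Delta+p^eM_0)$ — i.e. of $f_*$ of $\omega_{X/Y}(\Delta)$ twisted by a nef line bundle — with no relative ampleness hypothesis, which however is not literally Theorem~\ref{thm:wp} and would require the sharper Viehweg-type weak positivity from the author's earlier work.
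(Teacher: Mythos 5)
Your main route has a genuine gap: it rests on Theorem~\ref{thm:wp} (even in the form of Remark~\ref{rem:surj}), but that theorem and the machinery behind it (Proposition~\ref{prop:surj}, Theorem~\ref{thm:PS-type}) require $K_X+\Delta$ to be $\mathbb Q$-Cartier on all of $X$ --- the Cartier index $i_C$ of $K_X+\Delta$, the sheaves $\mathcal O_X(-\lfloor r(K_X+\Delta)\rfloor)$, and the $f$-ampleness of $K_X+\Delta+M$ all enter the proofs. In Proposition~\ref{prop:Weil} only $-K_{X/Y}-\Delta+f^*D$ is assumed $\mathbb Z_{(p)}$-Cartier; $K_X+\Delta$ itself is merely Weil (this is the whole point of the proposition), so your $M_j$ is a legitimate $\mathbb Q$-Cartier divisor but the pair $(K_X+\Delta,\,M_j)$ does not satisfy hypothesis~(1) of Theorem~\ref{thm:wp}, and the theorem cannot be invoked. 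On top of this, the estimate $m_0(j)=O(j)$ that your limiting argument needs is asserted rather than proved, and you yourself flag it as the main obstacle; without it the passage $j\to\infty$ does not go through.

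The paper's actual proof is much closer to the ``possible alternative'' you sketch at the end, and it avoids both problems by perturbing the \emph{boundary} instead of the moduli part. Fix $m$ with $mD$ integral and $n$ prime to $p$; since $nm(-K_{X/Y}-\Delta+f^*D)+A$ is ample $\mathbb Z_{(p)}$-Cartier for a very ample $A$, by \cite[Corollary~6.10]{SW13} it is $\mathbb Z_{(p)}$-linearly equivalent to an effective $\Gamma$ with $\bigl(X_{\overline\eta},(\Delta+\tfrac{1}{nm}\Gamma)|_{X_{\overline\eta}}\bigr)$ still $F$-pure. Then $K_{X/Y}+\Delta+\tfrac{1}{nm}\Gamma\sim_{\mathbb Z_{(p)}}f^*D+\tfrac{1}{nm}A$, and the earlier weak positivity theorem \cite[Theorem~5.1]{Eji17} --- which needs neither relative ampleness nor a global $\mathbb Q$-Cartier canonical divisor --- gives pseudo-effectivity of $f_*\mathcal O_X(lA+lnmf^*D)$ for $l\gg0$ divisible. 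Pulling back by $f$ (using equi-dimensionality and \cite[Lemma~2.4]{EG19}) and mapping to $\mathcal O_X(lA+lnmf^*D)$ shows $A+nmf^*D$ is pseudo-effective for every such $n$, whence $f^*(mD)$ is pseudo-effective in the limit, and one descends to $\mathcal O_Y(mD)$ again via equi-dimensionality. If you want to salvage your write-up, replace the appeal to Theorem~\ref{thm:wp} by this boundary perturbation plus the citation of \cite[Theorem~5.1]{Eji17}; the limiting step then happens in $n$ on $X$, where no threshold tracking is needed.
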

Note that since $f$ is equi-dimensional, we can define the pullback of 
a $\mathbb Q$-Weil divisor on $Y$. 
\begin{proof}
Let $A$ be a very ample Cartier divisor on $X$. 
Take an $n\in\mathbb Z_{>0}$ with $p\nmid n$. 
Since 
$
-nm(K_{X/Y}+\Delta) +nmf^*D +A
$
is an ample $\mathbb Z_{(p)}$-Cartier divisor on $X$, it is $\mathbb Z_{(p)}$-linearly equivalent to an effective $\mathbb Z_{(p)}$-Cartier divisor $\Gamma$ on $X$ such that 
$
\left(X_{\overline\eta},\left(\Delta+\frac{1}{nm}\Gamma\right)|_{X_{\overline\eta}} \right)
$ 
is $F$-pure by \cite[Corollary~6.10]{SW13}. 
Then \cite[Theorem~5.1]{Eji17} tells us that 
$$
f_*\mathcal O_X(lA+lnmf^*D) 
\cong 
f_*\mathcal O_X\left(lnm\left(K_{X/Y}+\Delta+\frac{1}{nm}\Gamma\right) \right)
$$
is pseudo-effective for each $l\gg0$ divisible enough (in \cite{Eji17}, instead of the pseudo-effectivity, the terminology ``weak positivity'' is used). 
Since $f$ is equi-dimensional, 
$f^*f_*\mathcal O_X(lA+lnmf^*D)$ is also pseudo-effective by \cite[Lemma~2.4~(1)]{EG19}, 
so the natural morphism 
$$
f^*f_*\mathcal O_X(lA+lnmf^*D) \to \mathcal O_X(lA+lnmf^*D)
$$
that is generically surjective 
implies that $\mathcal O_X(A+nmf^*D)$ is pseudo-effective. 
Since $n$ is any positive integer, a standard argument shows that 
$\mathcal O_X(mf^*D)$ is also pseudo-effective. 
Because the pseudo-effectivity of a coherent sheaf is equivalent to that of the restriction of the coherent sheaf to an open subset whose complement has codimension at least two, 
we see that $f^*\mathcal O_X(mD)$ is pseudo-effective, 
so \cite[Lemma~2.4~(2)]{EG19} implies that $\mathcal O_X(mD)$ 
is pseudo-effective. 
\end{proof}
\section{Algebraic fiber spaces with nef relative anti-canonical divisor}
\label{section:nef relative anti}
In this section, we work over a perfect field $k$ of characteristic $p>0$, 
and study an algebraic fiber space with nef relative anti-canonical divisor. 
\begin{thm}[\textup{\cite[Theorem~4.1 and Proposition~4.2]{PZ19}}]
\label{thm:PZ}
Let $(X,\Delta)$ be a normal projective pair 
and let $Y$ be a smooth projective variety. 
Let $f:X\to Y$ be a surjective morphism. 
Suppose that 
\begin{itemize} \leftskip=-10pt
\item $-K_{X/Y}-\Delta$ is a nef $\mathbb Q$-Cartier divisor, and 
\item $\left(X_{\overline\eta},\Delta|_{X_{\overline\eta}}\right)$
is strongly $F$-regular, where $X_{\overline\eta}$ is the geometric generic fiber of $f$. 
\end{itemize}
Then the following hold:
\begin{enumerate}[$(1)$]
\item $f$ is equi-dimensional. In particular, if $X$ is Cohen--Macaulay, then $f$ is flat.   
\item Every geometric fiber of $f$ is reduced. 
\item $\mathrm{Supp}(\Delta)$ does not contain any component of any fiber of $f$. 
\end{enumerate}
\end{thm}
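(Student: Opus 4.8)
The strategy is to transport to every fibre the good behaviour that strong $F$-regularity forces on the geometric generic fibre, using the nefness of $-K_{X/Y}-\Delta$ as the mechanism that prevents the ``bad locus'' from being small. The basic observation is that $(1-p^e)(K_{X/Y}+\Delta)=(p^e-1)(-K_{X/Y}-\Delta)$ is nef, so for any ample Cartier divisor $A$ on $X$ the sheaf $\mathcal O_X((1-p^e)(K_{X/Y}+\Delta)+p^eA)$ is ample, and since $(X_{\overline\eta},\Delta|_{X_{\overline\eta}})$ is strongly $F$-regular (Definition~\ref{defn:SFR}) the relative trace $\phi_{(X/Y,\Delta)}^{(e)}(A_{Y^e})$ restricts to a surjection over the generic point of $Y$ for all $e\gg 0$; hence, $f$ being proper so that the image of the closed non-surjectivity locus is closed, it is surjective over a dense open $V\subseteq Y$. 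The theorem amounts to upgrading ``over $V$'' to ``everywhere'' in three different senses.

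For $(1)$, over a smooth $Y$ a dominant morphism from a normal variety is equi-dimensional if and only if no prime divisor of $X$ is contracted onto a subvariety of codimension $\ge 2$ in $Y$. Arguing by contradiction, let $E\subseteq X$ be such a divisor. Cutting $Y$ by general members of a very ample system — and checking, by Bertini-type arguments adapted to positive characteristic, that $X$ and its successive slices stay normal, that $-K_{X/Y}-\Delta$ stays nef, and that the geometric generic fibre of the sliced family stays strongly $F$-regular — I would reduce to $\dim Y=2$ with $E$ contracted to a closed point $y_0$, so that $E\subseteq X_{y_0}$ is a prime divisor. Restricting a general pencil through $y_0$ pulled back to $X$ shows that $N_{E/X}$ is numerically equivalent to minus a nonzero effective class on $E$; combining this with adjunction, $(-K_{X/Y}-\Delta)|_E\equiv -K_E-\mathrm{Diff}_E+N_{E/X}-\Delta|_E$ would have to be nef despite a strictly negative normal contribution, and one plays this against the sections of $\mathcal O_X(m(-K_{X/Y}-\Delta)+mA)$ near $E$ produced by the surjective trace to reach a numerical contradiction. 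This intersection-theoretic comparison on the surface base is where I expect the real work to lie.

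For $(2)$ and $(3)$, I would first use $(1)$ to make $f$ equi-dimensional, then restrict to a general smooth curve $C\subseteq Y$: since the locus of points with strongly $F$-regular (trace-surjective) geometric fibre is open and dense, its generic point lies in it, so $f_C:X_C\to C$ again has strongly $F$-regular geometric generic fibre, nef relative anti-canonical class, and normal total space, reducing us to $\dim Y=1$. Now $f^*y=\sum_i a_iE_i$ is a genuine Cartier divisor on $X$. Strong $F$-regularity of $X_{\overline\eta}$, hence $F$-purity, spreads out to a surjection of the relative trace $\phi_{(X/C,\Delta)}^{(e)}$ over $U=f^{-1}(C\setminus\{y\})$; using that $(1-p^e)(K_{X/C}+\Delta)=(p^e-1)(-K_{X/C}-\Delta)$ is nef I would show this surjection extends across $y$, the obstruction to extending being an effective $f$-vertical divisor supported over $y$ which a pseudo-effectivity argument in the spirit of Proposition~\ref{prop:Weil} forces to vanish. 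An extended surjection along $X_y$ says $(X_y,\Delta|_{X_y})$ is $F$-pure, whence $X_y$ is reduced — so every $a_i=1$, giving $(2)$ — and no $E_i$ appears in $\Delta$, giving $(3)$; the passage to geometric fibres is harmless because $F$-purity, reducedness, and the non-containment of a divisor in $\mathrm{Supp}\Delta$ are insensitive to enlarging the residue field to its perfection.

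The two genuine difficulties, as I see them, are: first, the foundational bookkeeping in positive characteristic — preservation of normality and of strong $F$-regularity of the geometric generic fibre under the iterated hyperplane sections and under the base changes $X\times_Y C$ and $X\times_Y\widetilde Y$, where generic smoothness is unavailable; and second, the core of $(1)$, namely turning the rigidity of a contracted divisor $E$ (one has $f_*\mathcal O_X(mE)=\mathcal O_Y$ for all $m\ge 0$) into a quantitative failure of nefness of $-K_{X/Y}-\Delta$ along $E$ and then defeating it with the section-producing power of the Frobenius trace — a step I would model on the structure of the weak positivity argument in Theorem~\ref{thm:wp} together with the negativity of contracted loci over a surface base.
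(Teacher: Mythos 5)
First, a point of reference: the paper does not prove this statement at all --- Theorem~\ref{thm:PZ} is quoted verbatim from \cite[Theorem~4.1 and Proposition~4.2]{PZ19} and used as an input. So your sketch has to be judged against the known argument of Patakfalvi--Zdanowicz rather than against anything in this paper. In outline your strategy is the right one and matches theirs: spread the surjectivity of the relative Frobenius trace from the generic point, cut the base down to a surface (resp.\ a curve), and play the negativity of a contracted divisor (resp.\ the non-reducedness of a special fibre) against the nefness of $-K_{X/Y}-\Delta$.

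The problem is that both places where the theorem is actually proved are left as declarations of intent. For $(1)$, the entire content is the ``numerical contradiction'' over a surface base, and you explicitly defer it (``where I expect the real work to lie''): you assert that $N_{E/X}$ is numerically negative and that this can be ``played against'' sections produced by the trace, but no inequality is written down, and your reduction ``equi-dimensional iff no prime divisor is contracted to codimension $\ge 2$'' is not literally correct --- an excessive-dimensional fibre component need not be a divisor of $X$, and making it one is part of the hyperplane-cutting reduction you also leave unverified. For $(2)$--$(3)$, the crux is extending the surjectivity of $\phi^{(e)}_{(X/Y,\Delta)}$ across the bad point $y$ of the curve; you wave at ``a pseudo-effectivity argument in the spirit of Proposition~\ref{prop:Weil}'', but that proposition produces pseudo-effectivity of a divisor class on $Y$ and does not extend a trace surjection. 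The actual mechanism in \cite{PZ19} is an $F$-adjunction/coefficient comparison: writing $f^*y=\sum a_iE_i$, a coefficient $a_i\ge 2$ forces the pair $(X,\Delta+f^*y)$ to fail $F$-purity along $E_i$, contradicting a global splitting obtained from strong $F$-regularity of $X_{\overline\eta}$ together with the nefness hypothesis. Finally, your Bertini reduction to a curve $C$ through the \emph{fixed} bad point $y$ is delicate for exactly the reason you gloss over: the linear system of hyperplanes through $y$ has base locus containing $X_y$, which is precisely where you need $X_C$ to remain normal. As it stands the proposal is a correct road map with the two load-bearing steps missing.
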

Note that the assumption that $\left(X_{\overline\eta},\Delta|_{X_{\overline\eta}}\right)$ is strongly $F$-regular is equivalent to that of \cite{PZ19} (strong $F$-regularity of general fibers) by \cite[Theorem~B]{PSZ18}.
\begin{thm} \label{thm:num flat}
Let $(X,\Delta)$ be a Cohen--Macaulay projective pair
and let $Y$ be a smooth projective variety.
Let $f:X\to Y$ be a surjective morphism. 
Suppose that 
\begin{itemize} \leftskip=-10pt
\item $-K_{X/Y}-\Delta$ is a nef $\mathbb Z_{(p)}$-Cartier divisor, 
\item $\left(X_{\overline\eta},\Delta|_{X_{\overline\eta}}\right)$
is strongly $F$-regular, where $X_{\overline\eta}$ is the geometric generic fiber of $f$. 
\end{itemize}
Let $B$ be an ample Cartier divisor on $X$. 
Then the following hold: 
\begin{enumerate}[$(1)$]
\item There exists an $n_0\in\mathbb Z_{>0}$ such that 
for every nef $\mathbb Q$-Cartier divisor $N$ on $X$ 
and for each $m,n\in\mathbb Z_{>0}$ with $n \ge n_0$, the sheaf 
$$
f_*\mathcal O_X(m(nB+N)) <-\frac{m}{ir_{i,n}}E_{i,n}>
$$
is a numerically flat $\mathbb Q$-twisted vector bundle, where $i$ is the minimum positive integer such that $iM$ is Cartier, $r_{i,n}:=\mathrm{rank}\,f_*\mathcal O_X(i(nB+N))$ and $E_{i,n}$ is a Cartier divisor on $Y$ such that $\mathcal O_Y(E_{i,n})\cong \det f_*\mathcal O_X(i(nB+N))$. 
\item There exists an $f$-ample Cartier divisor $A$ on $X$ such that 
$$
f_*\mathcal O_X(mA)
$$
is numerically flat for each $m\in \mathbb Z_{\ge 0}$. 
\end{enumerate}
\end{thm}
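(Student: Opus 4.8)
The plan is to derive (1) from the weak positivity theorem (Theorem~\ref{thm:wp}) and then to deduce (2) from (1) by a careful choice of $A$. Put $M:=-K_{X/Y}-\Delta$, a nef $\mathbb Z_{(p)}$-Cartier divisor with $K_{X/Y}+\Delta+M\sim_{\mathbb Z_{(p)}}0$, and let $i$ be its Cartier index. By Theorem~\ref{thm:PZ}, $f$ is equi-dimensional --- hence flat, as $X$ is Cohen--Macaulay and $Y$ is regular --- with reduced (so Cohen--Macaulay) geometric fibers, and $\operatorname{Supp}(\Delta)$ contains no fiber component. Furthermore, the strong $F$-regularity of $X_{\overline\eta}$ together with the nefness of $M$ makes $f$ an $F$-pure morphism relative to $\Delta$, i.e. the relative Frobenius trace maps $\phi^{(g)}_{(X/Y,\Delta)}$ are surjective over all of $X_{Y^g}$, as in the proof of Theorem~\ref{thm:PZ} in \cite[\S 4]{PZ19}; by Remark~\ref{rem:surj} we may therefore invoke Theorem~\ref{thm:wp} with $V=Y$, so that hypotheses (5)--(7) of that theorem are automatic. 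Finally, fixing the ample Cartier divisor $B$, relative Fujita vanishing \cite{Kee03} and cohomology and base change produce an $n_0$ such that for $n\ge n_0$, $nB+N$ is $f$-ample with $R^{>0}f_*\mathcal O_X(m(nB+N))=0$ for every nef $\mathbb Q$-Cartier $N$ and every $m\ge 0$, so $\mathcal E_m:=f_*\mathcal O_X(m(nB+N))$ is locally free of rank $r_{m,n}$ on $Y$ (I omit $N$ from the notation).

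\emph{Nefness.} Fix a sufficiently ample Cartier divisor $H$ on $X$ and apply Theorem~\ref{thm:wp} to $(X,\Delta)$ with the Cartier moduli part $iM+H$: here $K_{X/Y}+\Delta+(iM+H)=(i-1)M+H$ is $f$-ample, and $H$ may be chosen so ample that the threshold furnished by Theorem~\ref{thm:wp} is at most $i$. Feeding in suitable auxiliary nef Cartier divisors (of the shape $m'(nB+N)$ minus a multiple of $(i-1)M+H$, which, after enlarging $n_0$, are nef and Cartier whenever $m'(nB+N)$ is), one gets that $\mathcal E_{m'}$ is weakly positive over $Y$ for every $m'\ge 1$. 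Since $Y$ is regular and $\mathcal E_{m'}$ is locally free, this means $\mathcal E_{m'}$ is a nef vector bundle; in particular $E_{i,n}=c_1(\mathcal E_i)$ is nef.

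\emph{The determinant twist --- the crux.} Write $\delta:=-\tfrac{m}{i\,r_{i,n}}E_{i,n}$. By the criterion recorded after the definition of numerical flatness, it suffices to show that $\mathcal E_m<\delta>$ is nef and that $\det\mathcal E_m\equiv\tfrac{m\,r_{m,n}}{i\,r_{i,n}}E_{i,n}$. This is exactly where the nefness, rather than mere pseudo-effectivity, of $M$ is indispensable: one cannot deduce $\mathcal E_m<\delta>$ nef from the nefness of $\mathcal E_m$, since twisting a nef bundle down by a fraction of its own determinant may destroy nefness; instead, because $M$ is nef and the trace maps $\phi^{(g)}_{(X/Y,\Delta)}$ are surjective, there is enough slack in the Popa--Schnell-type global generation estimates of Theorem~\ref{thm:PS-type} underlying Theorem~\ref{thm:wp} to absorb the subtraction of the pullback of a fraction of the nef class $E_{i,n}$ while keeping global generation, which yields that $\mathcal E_m<\delta>$ is weakly positive over $Y$, hence nef. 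Running the parallel estimate on the dual side --- using relative Serre duality for the flat Cohen--Macaulay morphism $f$ to express $\mathcal E_m^\vee$ as a higher direct image of a twist of $\omega_{X/Y}=\mathcal O_X(-M-\Delta)$ and again exploiting $M$ nef --- gives $\mathcal E_m^\vee<-\delta>$ weakly positive over $Y$, hence nef; together these force $\det\mathcal E_m<r_{m,n}\delta>$ to be numerically trivial, which is the asserted relation. That the single class $E_{i,n}$ works for all $m$ is then verified by first treating $i\mid m$ and then using multiplicativity of the first Chern classes in $m$. This determinant bound --- essentially the one in \cite[\S\S 7--9]{PZ19} --- is the step I expect to be the main obstacle.

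\emph{Proof of (2).} Fix $n\ge n_0$ and set $A:=r_{i,n}\,i\,nB-f^*E_{i,n}$, a Cartier divisor that is $f$-ample because $A|_{X_y}=r_{i,n}i n\,(B|_{X_y})$ is ample on each fiber. By the projection formula, $f_*\mathcal O_X(mA)\cong f_*\mathcal O_X\!\bigl((m r_{i,n}i)nB\bigr)\otimes\mathcal O_Y(-mE_{i,n})$, which by part (1) applied with $N=0$ is the numerically flat $\mathbb Q$-twisted bundle $f_*\mathcal O_X((m r_{i,n}i)nB)<-mE_{i,n}>$; since $-mE_{i,n}$ is an integral divisor, $f_*\mathcal O_X(mA)$ is itself an honest numerically flat vector bundle for every $m\ge 1$, and it equals $\mathcal O_Y$ for $m=0$. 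This gives (2).
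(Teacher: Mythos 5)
Your outline has the right target (nef $\mathbb Q$-twist plus numerically trivial determinant, via the criterion after Definition~\ref{defn:wp}'s neighbour), but the two steps you yourself flag as the crux are exactly where the argument is missing, and your proposed substitutes do not work. First, you assert that the relative trace maps $\phi^{(g)}_{(X/Y,\Delta)}$ are surjective over all of $X_{Y^g}$ ``as in \cite[\S 4]{PZ19}''; that is not available there. In the paper this global surjectivity is Step~\ref{step:phi surj} of the proof, and it is itself a consequence of the weak-positivity estimate of Step~\ref{step:t} together with the determinant trick of Lemma~\ref{lem:wp_isom} (a generically surjective map with $t\ge 0$ into a bundle with numerically trivial determinant and equal rank is surjective). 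So your order of deductions is inverted: you cannot start from $V=Y$. Second, and more seriously, the mechanism by which the twist $\langle -\tfrac{m}{ir_{i,n}}E_{i,n}\rangle$ is absorbed is absent. Theorem~\ref{thm:wp} only yields positivity of $f_*\mathcal O_X(m(K_{X/Y}+\Delta+M)+N)$ for \emph{nef} $N$; ``$-$ a fraction of $f^*E_{i,n}$'' is not nef, and there is no slack in Theorem~\ref{thm:PS-type} to subtract it. The paper's actual device is to pass to the $r_m$-fold fiber product $Z=X\times_Y\cdots\times_Y X$, observe that $\det f_*\mathcal O_X(mP)$ injects into $h_*\mathcal O_Z(C)$ so that $|C-h^*E_m|\ne\emptyset$, choose $D$ in that linear system, perturb the boundary to $\Gamma+\tfrac1l D$ to make $K_{Z/Y}+\Gamma'+M$ $h$-ample, and only then apply Theorem~\ref{thm:wp} on $Z$; the twist by $-E_m$ is thereby built into an effective divisor on the positive side. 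Your alternative of getting $\mathcal E_m^\vee\langle-\delta\rangle$ nef by Serre duality would require weak positivity of $R^{\dim X-\dim Y}f_*$ of $\omega_{X/Y}$ \emph{minus} an ample divisor, which none of the positivity theorems here provide; and in any case the determinant relation is not ``forced'' by two-sided nefness --- it must be proved first (the paper's Step~\ref{step:num equiv} does this by showing both $mr_mE_n-nr_nE_m$ and its negative are pseudo-effective via a limiting $t$-invariant argument).

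A smaller but genuine error: in part (2) you dispose of $m=0$ by claiming $f_*\mathcal O_X=\mathcal O_Y$. The morphism $f$ is only assumed surjective, not to have connected fibers, so this fails in general; the paper handles $m=0$ by a separate step, using the Stein factorization $X\to W\to Y$ and Proposition~\ref{prop:Weil} to show $K_{W/Y}\sim 0$, hence $W\to Y$ is \'etale and $f_*\mathcal O_X=h_*\mathcal O_W$ is numerically flat. Your choice of $A$ in (2) and the reduction of $m\ge1$ to part (1) do agree with the paper.
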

Note that $f$ is flat, since $f$ is equi-dimensional by Theorem~\ref{thm:PZ}
and $X$ is Cohen--Macaulay. 
To prove the theorem, we prepare the following lemmas:
\begin{lem} \label{lem:wp_isom}
Let $Y$ be a projective variety and let $\mathcal E$ be a vector bundle 
on $Y$ such that $\det \mathcal E$ is numerically trivial. 
Let $\mathcal G$ be a coherent subsheaf of $\mathcal E$. 
If $\mathrm{rank}\,\mathcal G=\mathrm{rank}\,\mathcal E$ 
and $t(\mathcal G, H)\ge 0$ for some ample Cartier divisor $H$ on $Y$, 
then $\mathcal G=\mathcal E$. 
\end{lem}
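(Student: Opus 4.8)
The plan is to argue by contradiction, reducing everything to a first--Chern-class computation on a birational model. Set $r:=\mathrm{rank}\,\mathcal E=\mathrm{rank}\,\mathcal G$ and $\mathcal Q:=\mathcal E/\mathcal G$; since the ranks agree, $\mathcal Q$ is a torsion sheaf, and $\mathcal G=\mathcal E$ is equivalent to $\mathcal Q=0$. So assume $\mathcal Q\neq 0$ and put $W:=\mathrm{Supp}(\mathcal Q)$, a nonempty proper closed subset of $Y$. The obvious first move is to take determinants: the inclusion $\mathcal G\hookrightarrow\mathcal E$ induces $\det\mathcal G\hookrightarrow\det\mathcal E$, and since $\det\mathcal E$ is numerically trivial, the hypothesis $t(\mathcal G,H)\ge 0$ should force $\det\mathcal G$ to be pseudo-effective, which is already absurd as soon as $W$ has a divisorial component. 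The difficulty, and the step I expect to be the main obstacle, is that this argument is completely empty when $\mathrm{codim}_Y W\ge 2$, because then $\det\mathcal G=\det\mathcal E$; I would get around it by blowing up to force $W$ to become a divisor.

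Concretely, fix an ideal sheaf $\mathcal J\subseteq\mathcal O_Y$ with $V(\mathcal J)=W$ (for instance $\mathcal J=\mathrm{Ann}(\mathcal Q)$), and let $\pi\colon Y'\to Y$ be the normalization of the blow-up of $\mathcal J$. Then $Y'$ is a normal projective variety, $\pi$ is birational, and $\mathcal J\cdot\mathcal O_{Y'}=\mathcal O_{Y'}(-E)$ for a nonzero effective Cartier divisor $E$ with $\mathrm{Supp}(E)=\pi^{-1}(W)$. Put $\mathcal G':=\mathrm{image}(\pi^*\mathcal G\to\pi^*\mathcal E)$. Right exactness of $\pi^*$ gives $\pi^*\mathcal E/\mathcal G'\cong\pi^*\mathcal Q$, whose support is exactly $\mathrm{Supp}(E)$; since $\mathrm{Supp}(E)$ is pure of codimension one, the codimension-one cycle of $\pi^*\mathcal Q$ is $\ge[\mathrm{Supp}(E)_{\mathrm{red}}]>0$. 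Feeding $0\to\mathcal G'\to\pi^*\mathcal E\to\pi^*\mathcal Q\to 0$ into $c_1$, and using that $\det\mathcal E$, hence $\pi^*\det\mathcal E$, is numerically trivial, I obtain $c_1(\mathcal G')=\pi^*c_1(\mathcal E)-D'$ for some nonzero effective divisor $D'$ on $Y'$.

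It remains to contradict this using $t(\mathcal G,H)\ge 0$. Fix an ample divisor $A'$ on $Y'$ and an arbitrary $\delta>0$, and set $n=\dim Y$. By definition of $t$ there are $e\in\mathbb Z_{>0}$ and $\varepsilon\in\mathbb Z[1/p]$ with $-\delta<\varepsilon<0$ such that $({F_Y^e}^*\mathcal G)(-p^e\varepsilon H)$ is globally generated over the generic point of $Y$. Pulling back along $\pi$, passing to the quotient $\mathcal G'$ of $\pi^*\mathcal G$, and then to $r$-th exterior powers — here using that $\wedge^r$ of a sheaf globally generated over a point is again globally generated over that point, and that $\wedge^r$ commutes with ${F_{Y'}^e}^*$ — one finds that the rank-one torsion-free sheaf $({F_{Y'}^e}^*\!\wedge^r\mathcal G')(-rp^e\varepsilon\pi^*H)$ is globally generated over the generic point of $Y'$, hence has a nonzero global section. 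Passing to reflexive hulls, this says that $p^e(c_1(\mathcal G')-r\varepsilon\pi^*H)$ is linearly equivalent to an effective divisor, so $c_1(\mathcal G')-r\varepsilon\pi^*H$ is $\mathbb Q$-effective. Intersecting with $(A')^{n-1}$ and using $\pi^*c_1(\mathcal E)\cdot(A')^{n-1}=0$ and $\pi^*H\cdot(A')^{n-1}\ge 0$, I get
\[
D'\cdot(A')^{n-1}\ \le\ -\,r\varepsilon\,\pi^*H\cdot(A')^{n-1}\ \le\ r\delta\,\pi^*H\cdot(A')^{n-1}.
\]
Letting $\delta\to 0$ forces $D'\cdot(A')^{n-1}\le 0$, contradicting that $D'$ is a nonzero effective divisor and $A'$ is ample. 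Hence $\mathcal Q=0$, that is, $\mathcal G=\mathcal E$.

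The routine points I am suppressing above are the compatibility of exterior powers with Frobenius pull-backs, the standard facts that global generation over the generic point is preserved by quotients, by $\wedge^r$, and by ${F^e}^*$ (all in the spirit of Lemma~\ref{lem:t-basic}), and the bookkeeping with reflexive hulls of rank-one sheaves on the merely normal (not regular) variety $Y'$. The one genuinely essential idea is the passage to $Y'$: the blow-up converts the codimension-$\ge 2$ part of $\mathcal Q$, which is invisible to any determinant on $Y$, into a nonzero effective divisor $D'$ that a first--Chern-class computation can detect.
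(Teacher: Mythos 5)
Your argument is correct and is essentially the paper's: the paper likewise passes to a birational model (a flattening of $\mathcal G$ rather than the normalized blow-up of $\mathrm{Ann}(\mathcal E/\mathcal G)$), observes that the determinant of the modified subsheaf differs from the numerically trivial $\pi^*\det\mathcal E$ by an effective divisor, and uses $t\ge 0$ to conclude that this determinant is pseudo-effective, forcing that divisor to vanish. Your explicit extraction of sections of twists of $\det\mathcal G'$ and the intersection with $(A')^{n-1}$ simply inline what the paper delegates to Proposition~\ref{prop:t-wp}.
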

\begin{proof}
Let $\pi: Y'\to Y$ be a flattening of $\mathcal G$. 
Put $\mathcal G':=(\pi^*\mathcal G)/_{\textup{torsion}}$
and $\mathcal E':=\pi^*\mathcal E$. 
Then there is a natural morphism $\sigma:\mathcal G'\to \mathcal E'$. 
It is enough to show that $\sigma$ is an isomorphism. 
Consider the induced injective morphism 
$\det \sigma:\det\mathcal G' \to \det\mathcal E'$. 
Then $(\det\mathcal G')^*$ is numerically equivalent to an effective divisor. 
Furthermore, one can easily check that $t_{\pi^{-1}(V)}(\det\mathcal G', H')\ge 0$, where $H'$ is an ample Cartier divisor on $Y'$, 
so $\det\mathcal G'$ is weakly positive by Proposition~\ref{prop:t-wp}, 
and hence it is pseudo-effective. 
This means that $\det\mathcal G'$ is numerically trivial and 
$\det\sigma$ is an isomorphism, 
so $\sigma$ is also an isomorphism. 
\end{proof}
\begin{proof}[Proof of Theorem~\ref{thm:num flat}]
First, we prove (1). 
Let $d$ be the minimum positive integer such that $(p^d-1)(K_X+\Delta)$ is Cartier. 
Let $V \subseteq Y$ be the maximal open subset such that 
$\phi_{(X/Y,\Delta)}^{(d)}$ is surjective over $f_{Y^d}^{-1}(V)$. 
Then $V\ne\emptyset$, as $\left(X_{\overline\eta},\Delta|_{X_{\overline\eta}}\right)$ is $F$-pure. 
Also, $\phi_{(X/Y,\Delta)}^{(e)}$ is surjective over $f_{Y^e}^{-1}(V)$ 
for each $e\in\mathbb Z_{>0}$ with $d|e$. 

Let $j$ be a positive integer such that $|jB|$ is free. 
Let $n$ be an integer large enough. 
Then the following properties hold. 
\begin{itemize}
\item $f_*\mathcal O_X(nB+N)$ is locally free for every nef Cartier divisor $N$ on $X$. 
This follows from Keeler's relative Fujita vanishing \cite[Theorem~1.5]{Kee03}, the flatness of $f$ and the cohomology and base change. 
\item $f_*\mathcal O_X(nB+N)$ is globally generated for every nef Cartier divisor $N$ on $X$. 
This follows from Lemma~\ref{lem:gg}. 
\item $|nB+N|$ is free for every nef Cartier divisor $N$ on $X$. 
This follows from the Fujita vanishing theorem and Castelnuovo--Mumford regularity~\cite[Theorem~1.8.5]{Laz04I} (with respect to $jB$). 
\item The natural morphism 
$$
f_*\mathcal O_X(nB+N) \otimes f_*\mathcal O_X((\dim X+1)(nB+N)+N')
\to f_*\mathcal O_X((\dim X+2)(nB+N)+N')
$$
is surjective for every $f$-nef Cartier divisors $N$ and $N'$ on $X$. 
This follows from Keeler's relative Fujita vanishing \cite[Theorem~1.5]{Kee03} and the relative Castelnuovo--Mumford regularity~\cite[Example~1.8.24]{Laz04I} (with respect to $f$ and $jB$).
\item For each $e\in\mathbb Z_{>0}$ with $(p^e-1)(K_X+\Delta)$ Cartier 
and for each $m\in\mathbb Z_{>0}$, 
\begin{align*}
&{f_{Y^e}}_* \phi_{\left(X/Y,\Delta\right)}^{(e)}((nB+N)_{Y^e}) : 
\\ & f^{(e)}_* \mathcal O_{X^e}((1-p^e)(K_{X/Y}+\Delta) +p^em(nB+N))
\to
{F_Y^e}^*f_*\mathcal O_X(m(nB+N))
\end{align*}
is surjective over $V$. This follows from the same argument as that of the proof of \cite[Lemma~3.7]{Eji19p}. 
\end{itemize}
Fix a nef $\mathbb Q$-Cartier divisor $N$ on $X$ and put $P:=nB+N$. 
Set $r_m:=\mathrm{rank}\,f_*\mathcal O_X(mP)$ 
for each $m\in\mathbb Z_{>0}$ with $mP$ Cartier.
Let $E_m$ be a divisor on $Y$ such that $\mathcal O_Y(E_m)\cong \det f_*\mathcal O_X(mP)$ for each $m\in\mathbb Z_{>0}$ with $mP$ Cartier. 
Let $H$ be an ample divisor on $Y$. 
\begin{step} \label{step:t}
In this step, we prove that 
$
t\left(\left(\bigotimes^{r_m}f_*\mathcal O_X(mP)\right)(-E_m), H\right)
\ge 
0
$
for each $m \in\mathbb Z_{>0}$ with $mP$ Cartier. 
Fix an $m\in\mathbb Z_{>0}$. 
We consider the $r_m$-th fiber product 
$$
Z:=\overbrace{X\times_Y\cdots\times_Y X}^{r_m}
$$
of $X$ over $Y$. Let $h:Z\to Y$ be the natural morphism.
Since $f$ is flat by Theorem~\ref{thm:PZ}, so is the $i$-th projection 
$\mathrm{pr}_i:Z\to X$, so we can define the pullback by $\mathrm{pr}_i$
of every $\mathbb Q$-Weil divisor on $X$. 
Set $\Gamma:=\sum_{i=1}^{r_m}\mathrm{pr}_i^*\Delta$ 
and $C:=m\sum_{i=1}^{r_m}\mathrm{pr}_i^*P$. 
Then 
$$
K_{Z/Y} +\Gamma \sim \sum_{i=1}^{r_m} \mathrm{pr}_i^*(K_{X/Y}+\Delta), 
$$
so $-K_{Z/Y}-\Gamma$ is nef. 
By \cite[Lemma~4.1]{Eji22d}, the pair
$
\left(Z_{\overline \eta}, \Gamma|_{Z_{\overline\eta}}\right)
$
is strongly $F$-regular. 
Since 
\begin{align*}
h_*\mathcal O_Z(C-h^*E_m)
\cong h_*\mathcal O_Z(C)(-E_m)
\cong \bigotimes^{r_m} f_*\mathcal O_X(mP)(-E_m)
\end{align*}
and there is an injective morphism 
$$
\mathcal O_Y(E_m)\cong 
\det f_*\mathcal O_X(mP)
\to \bigotimes^{r_m} f_*\mathcal O_X(mP)
$$
that is locally defined as 
$$
x_1\wedge \cdots\wedge x_{r_m} \mapsto 
\sum_{\sigma\in\mathfrak S} \mathrm{sign}(\sigma) x_1\otimes \cdots\otimes x_{r_m}, 
$$
we have $H^0(Y,h_*\mathcal O_Z(C-h^*E_m))\cong H^0(Z,\mathcal O_Z(C-h^*E_m))\ne0$.
Take a $D\in|C-h^*E_m|$. Note that $D$ is $h$-ample. 
Let $l$ be a positive integer with $p\nmid l$ such that 
$l(K_X+\Delta)$ is Cartier and 
$
\left(Z_{\overline \eta}, \Gamma|_{Z_{\overline\eta}}+\frac{1}{l}D|_{Z_{\overline\eta}}\right)
$
is strongly $F$-regular. 
Put $\Gamma':=\Gamma+\frac{1}{l}D$. 
By \cite[Th\'eor\`eme~(12.2.4)]{Gro65} and \cite[Theorem~B]{PSZ18}, 
there is a dense open subset $Y_0$ of $Y$ such that for every $y\in Y_0$, 
\begin{itemize}
\item $Z_y$ is geometrically normal, and 
\item $\left(Z_{\overline y},\Gamma'|_{Z_{\overline y}}\right)$ is strongly $F$-regular, where $Z_{\overline y}$ is the geometric fiber over $y$, $\Gamma'|_{Z_{\overline y}}$ is the unique extension of the restriction $\Gamma'|_{W_{\overline y}}$ to $W_{\overline y}$, and $W$ is the Gorenstein locus $W$ of $Z$. 
\end{itemize}
Put 
$$
M:=-K_{Z/Y}-\Gamma \quad \textup{and} \quad 
N_e:=(1-p^e)(K_{Z/Y}+\Gamma) 
$$ 
for each $e\in\mathbb Z_{>0}$. Then 
\begin{align*}
(1-p^e)(K_{Z/Y}+\Gamma) +p^eD
& \sim lp^e\left(K_{Z/Y}+\Gamma +\frac{1}{l}D +M\right) +N_e
\\ & = lp^e\underbrace{\left(K_{Z/Y}+\Gamma'+M\right)}_{\textup{$h$-ample}} +N_e, 
\end{align*}
so 
$$
t\left( h_*\mathcal O_Z((1-p^e)(K_{Z/Y}+\Gamma)+p^eD), H\right) \ge 0
$$
for $e\gg0$ with $(p^e-1)(K_Z+\Gamma')$ Cartier by Theorem~\ref{thm:wp}. 
Note that $l(K_Z+\Gamma)$ is Cartier by the choice of $l$. 
We now have the following sequence of morphisms 
that are generically surjective: 
\begin{align*} \tag{$\ast1$}
& h_*\mathcal O_Z((1-p^e)(K_{Z/Y}+\Gamma) +p^eD)
\\ & \cong h_*\mathcal O_Z((1-p^e)(K_{Z/Y}+\Gamma) +p^eC -p^eh^*E_m)
\\ & \cong h_*\mathcal O_Z((1-p^e)(K_{Z/Y}+\Gamma) +p^eC)(-p^eE_m)
\\ & \cong \left( \bigotimes^{r_m} f_*\mathcal O_X((1-p^e)(K_{X/Y}+\Delta) +mp^eP) \right) (-p^eE_m)
\\ & \xrightarrow{\bigotimes^{r_m} \left({f_{Y^e}}_*\phi_{(X/Y,\Delta)}^{(e)}(mP_{Y^e})\right)(-p^eE_m)} 
\left(\bigotimes^{r_m}{F_Y^e}^*f_*\mathcal O_X(mP) \right) (-p^eE_m) 
\\ & \cong {F_Y^e}^* \left( \left( \bigotimes^{r_m}f_*\mathcal O_X(mP) \right) (-E_m)\right). 
\end{align*}
Hence, by Lemma~\ref{lem:t-basic}, 
\begin{align*}
&t\left(\left(\bigotimes^{r_m}f_*\mathcal O_X(mP)\right)(-E_m), H\right)
\\ = & \frac{1}{p^e}t\left(
{F_Y^e}^* \left( \left( \bigotimes^{r_m}f_*\mathcal O_X(mP) \right) (-E_m)\right), H
\right)
\\ \ge & \frac{1}{p^e} t\left(
h_*\mathcal O_Z((1-p^e)(K_{Z/Y}+\Gamma) +p^eD), H
\right) 
\\ \ge & 0. 
\end{align*}
\end{step}
\begin{step} \label{step:phi surj}
In this step, we show that the open subset $V$ defined in the beginning of the proof is equal to $Y$. 
Since $\phi_{(X/Y,\Delta)}^{(e)}$ factors $\left(\phi_{(X/Y,\Delta)}^{(e')}\right)_{Y^e}$ 
for $e'\le e$, it is enough to show that $\phi_{(X/Y,\Delta)}^{(e)}$ 
is surjective for $e$ large and divisible enough. 
Let $\mathcal G$ be the image of the composite of $(\ast1)$. 
Then $t(\mathcal G,H)\ge 0$. 
As 
\begin{align*}
\det \left( \left( \bigotimes^{r_m}f_*\mathcal O_X(mP) \right) (-E_m)\right)
& \cong 
\det \left( \bigotimes^{r_m}f_*\mathcal O_X(mP) \right) \left(-r_m^{r_m}E_m\right)
\\ & \cong \mathcal O_Y\left(r_m^{r_m} E_m -r_m^{r_m}E_m\right)
\cong \mathcal O_Y, 
\end{align*}
by Lemma~\ref{lem:wp_isom}, we see that 
$
\mathcal G=
{F_Y^e}^*\left( \left( \bigotimes^{r_m}f_*\mathcal O_X(mP) \right) (-E_m)\right), 
$
so 
$$
\bigotimes^{r_m}\left(
	{f_{Y^e}}_*\phi_{(X/Y,\Delta)}^{(e)}(mP_{Y^e})
\right) (-p^e E_m)
$$
is surjective, which means that 
$
{f_{Y^e}}_*\phi_{(X/Y,\Delta)}^{(e)}(mP_{Y^e})
$
is also surjective. 
By the commutative diagram
$$
\xymatrix{
f_{Y^e}^*{f_{Y^e}}_*\left({F_{X/Y}^{(e)}}_*\mathcal O_X((1-p^e)(K_X+\Delta)+p^emP) \right) \ar@{->>}[r]^-\alpha \ar[d] & 
f_{Y^e}^*{f_{Y^e}}_*\mathcal O_{X_{Y^e}}(mP_{Y^e}) \ar@{->>}[d]  \\
{F_{X/Y}^{(e)}}_*\mathcal O_X((1-p^e)(K_X+\Delta)+p^emP) \ar[r]_-{\phi_{(X/Y,\Delta)}^{(e)}(mP_{Y^e})} & 
\mathcal O_{X_{Y^e}}(mP_{Y^e}), 
}
$$
where $\alpha:=f_{Y^e}^*{f_{Y^e}}_*\phi_{(X/Y,\Delta)}^{(e)}(mP_{Y^e})$, 
we see that $\phi_{(X/Y,\Delta)}^{(e)}(mP_{Y^e})$ is surjective,
or equivalently, $\phi_{(X/Y,\Delta)}^{(e)}$ is surjective, 
and thus $V=Y$. 
\end{step}
\begin{step} \label{step:num equiv}
In this step, we prove that 
$
nr_nE_m \equiv mr_mE_n
$
for each $m,n\in\mathbb Z_{>0}$ such that $mP$ and $nP$ are Cartier. 
Let $s_e$ and $t_e$ be the integers satisfying 
$np^e=s_emr_m+t_e$ and $\dim X+1\le t_e < \dim X+1+mr_m$. 
Note that $t_eP$ is Cartier. 
By the choice of $B$, 
we have the following sequence of surjective morphisms: 
\begin{align*}
& \bigotimes^{s_e}\left( \bigotimes^{r_m}f_*\mathcal O_X(mP) \right)
\otimes 
f_*\mathcal O_X(\underbrace{(1-p^e)(K_{X/Y}+\Delta)}_{\textup{nef}} +t_e P)
\\ \twoheadrightarrow & f_*\mathcal O_X((1-p^e)(K_{X/Y}+\Delta)+np^eP)
\twoheadrightarrow {F_Y^e}^*f_*\mathcal O_X(nP)
\end{align*}
Taking the tensor product with $\mathcal O_Y(-s_eE_m)$, 
we get the morphism 
\begin{align*}
\mathcal G &:= \bigotimes^{s_e}\left(
	\left( \bigotimes^{r_m}f_*\mathcal O_X(mP) \right)(-E_m)
\right)
\otimes 
\underbrace{f_*\mathcal O_X((1-p^e)(K_{X/Y}+\Delta) +t_e P)}_{\textup{globally generated}}
\\ \twoheadrightarrow & \left( {F_Y^e}^*f_*\mathcal O_X(nP) \right) (-s_eE_m)
\end{align*}
that is surjective over $V$. 
By Step~1 and Lemma~\ref{lem:t-basic}, we get $t(\mathcal G,H)\ge 0$, 
so $t(({F_Y^e}^*f_*\mathcal O_X(nP))(-s_eE_m),H)\ge0$.
Since 
\begin{align*}
\det \big(({F_Y^e}^*f_*\mathcal O_X(nP))(-s_eE_m) \big)
& \cong \det \big( {F_Y^e}^*f_*\mathcal O_X(nP) \big) (-r_ns_eE_m)
\\ & \cong \mathcal O_X(p^eE_n-r_ns_eE_m), 
\end{align*}
we see that $t(\mathcal O_X(p^eE_n-r_ns_eE_m),H)\ge0$, 
which means that $p^eE_n-r_ns_eE_m$ is pseudo-effective. 
As 
$$
E_n -\frac{r_ns_e}{p^e}E_m \xrightarrow{e\to\infty} E_n-\frac{nr_n}{mr_m}E_m, 
$$
we obtain that $mr_mE_n-nr_nE_m$ is pseudo-effective. 
Replacing $n$ and $m$, we also get that $nr_nE_m-mr_mE_n$ is pseudo-effective. 
Hence, we see from \cite[Lemma~5.4]{GLPSTZ} that $nr_nE_m\equiv mr_mE_n$.  
\end{step}
\begin{rem}
In positive characteristic, it is not known whether or not the weak positivity of a vector bundle $\mathcal V$ implies the pseudo-effectivity of $\det \mathcal V$, but we know that if $t(\mathcal V, H)\ge 0$ then $\det\mathcal V$ is pseudo-effective, so we use the invariant $t$ in the proof. 
\end{rem}
\begin{step} \label{step:num flat1}
In this step, we prove that 
$
f_*\mathcal O_X(inP)\left(-\frac{n}{r_i}E_i \right)
$ 
is numerically flat for some $n\in\mathbb Z_{>0}$ such that $r_i|n$
and $nP$ is Cartier, where $i$ is the minimum positive integer such that 
$iP$ is Cartier. 
Put $m=i$ and let $Z$, $h:Z\to Y$, $C$, $D$, $\Gamma$, $\Gamma'$ and $l$ be as in Step~\ref{step:t}. 
Since $\left(Z_{\overline \eta}, \Gamma'|_{Z_{\overline \eta}}\right)$
is strongly $F$-regular, there is an $n\in\mathbb Z_{>0}$ 
such that $r_i|n$, $nP$ is Cartier and 
\begin{align*}
{h_{Y^e}}_*\phi_{(Z/Y,\Gamma')}^{(e)}(D_{Y^e}):
h_*\mathcal O_Z((1-p^e)(K_{Z/Y}+\Gamma')+p^enD)
\to 
{F_Y^e}^*h_*\mathcal O_Z(nD)
\end{align*}
is generically surjective. 
Put 
$$
M:=K_{Z/Y}+\Gamma, \quad
N_e:=(1-p^e)(K_{Z/Y}+\Gamma) \quad \textup{and} \quad
m_e:=lnp^e+1-p^e
$$
for each $e\in\mathbb Z_{>0}$.
Then 
\begin{align*}
(1-p^e)(K_{Z/Y}+\Gamma')+p^enD
& \sim 
(1-p^e)(K_{Z/Y}+\Gamma)+(lnp^e+1-p^e)\frac{1}{l}D
\\ & =
N_e +m_e\frac{1}{l}D
\\ & \sim 
m_e\left(K_{Z/Y}+\Gamma+\frac{1}{l}D+M \right)+N_e
\\ & =
m_e(\underbrace{K_{Z/Y}+\Gamma'+M}_{\textup{$h$-ample}} )+N_e, 
\end{align*}
so 
$$
t\left( h_*\mathcal O_Z((1-p^e)(K_{Z/Y}+\Gamma')+p^enD) ,H \right) \ge 0
$$
for $e\gg0$ with $(p^e-1)(K_Z+\Gamma')$ Cartier by Theorem~\ref{thm:wp}. 
Note that $m_e(K_Z+\Gamma')$ is Cartier by the choice of $l$. 
Set $\mathcal G:=\mathrm{Im}\left({h_{Y^e}}_*\phi_{(Z/Y,\Gamma')}^{(e)}(D_{Y^e})\right)$. 
Then $t(\mathcal G,H)\ge 0$.
We show that $\det h_*\mathcal O_Z(nD)$ is numerically trivial. 
If this holds, 
then by an argument similar to that of Step~\ref{step:phi surj}, 
we obtain that $\phi_{(Z/Y,\Gamma')}^{(e)}$ is surjective. 
Thus we see from Remark~\ref{rem:surj} that 
$$
t_Y\left( h_*\mathcal O_Z((1-p^e)(K_{Z/Y}+\Gamma')+p^enD) ,H \right) \ge 0. 
$$
Using the surjectivity of ${h_{Y^e}}_*\phi_{(Z/Y,\Gamma')}^{(e)}(nD_{Y^e})$
again, we get 
$$
t_Y\left( {F_Y^e}^*h_*\mathcal O_Z(nD) ,H \right) \ge 0, 
$$
so ${F_Y^e}^*h_*\mathcal O_Z(nD)$ is nef by Proposition~\ref{prop:t-wp}, 
and hence it is numerically flat, 
or equivalently, 
\begin{align*} \tag{$\ast2$}
h_*\mathcal O_Z(nD)
\cong & h_*\mathcal O_Z(nC) (-nE_i)
\\ & \cong \left( \bigotimes^{r_i} f_*\mathcal O_X(inP) \right) (-nE_i)
\cong \bigotimes^{r_i} \left( f_*\mathcal O_X(inP) \left(-\frac{n}{r_i}E_i\right)\right)
\end{align*}
is numerically flat, so $f_*\mathcal O_X(inP)(-\frac{n}{r_i}E_i)$ 
is also numerically flat. 
Let us return to the proof of the numerical triviality of 
$\det h_*\mathcal O_Z(nD)$. 
We have 
\begin{align*}
\det \left( f_*\mathcal O_X(inP)\left(-\frac{n}{r_i}E_i\right)\right) 
\cong \mathcal O_Y\left(E_{in}-\frac{nr_{in}}{r_i}E_i \right)
\underset{\textup{Step~\ref{step:num equiv}}}{\equiv}
\mathcal O_Y(E_{in}-E_{in})
=\mathcal O_Y. 
\end{align*}
Hence, by $(\ast2)$, we see that 
$\det h_*\mathcal O_Z(nD)$ is numerically trivial. 
\end{step}
\begin{step} \label{step:num flat2}
In this step, we show that 
$ f_*\mathcal O_X(mP)<-\frac{m}{ir_i}E_i> $
is numerically flat for each $m\in\mathbb Z_{>0}$ with $mP$ Cartier. 
Fix an $m\in\mathbb Z_{>0}$ with $mP$ Cartier. Since 
$$
\det \left( f_*\mathcal O_X(mP)<-\frac{m}{ir_i}E_i> \right)
\cong \mathcal O_Y(E_{m})<-\frac{mr_m}{ir_i}E_i> \underset{\textup{Step~\ref{step:num equiv}}}{\equiv}
\mathcal O_Y, 
$$
it is enough to show that 
$ f_*\mathcal O_X(mr_iP)<-\frac{m}{ir_i}E_i> $ is nef. 
Let $n$ be as in Step~\ref{step:num flat1}. 
For each $e\in\mathbb Z_{>0}$, 
let $u_e$ and $v_e$ be integers satisfying $mp^e=inu_e+v_e$ and 
$\dim X+1\le v_e <\dim X+1+in$. 
By the choice of $B$, we have the following surjective morphisms: 
\begin{align*}
& \bigotimes^{u_e} f_*\mathcal O_X(inP) \otimes f_*\mathcal O_X(\underbrace{(1-p^e)(K_{X/Y}+\Delta)}_{\textup{nef}}+v_e P)
\\ & \twoheadrightarrow f_*\mathcal O_X((1-p^e)(K_{X/Y}+\Delta)+mp^eP)
\xrightarrow{{f_{Y^e}}_*\phi_{(X/Y,\Delta)}^{(e)}(mP_{Y^e})}
{F_Y^e}^* f_*\mathcal O_X(mP)
\end{align*}
for each $e\in\mathbb Z_{>0}$ with $(p^e-1)(K_X+\Delta)$ Cartier. 
Taking the tensor product with 
$\mathcal O_Y\left(-\frac{nu_e}{r_i}E_i\right)$, we obtain 
\begin{align*}
& \underbrace{\bigotimes^{u_e}\left(f_*\mathcal O_X(inP)\left(-\frac{n}{r_i}E_i\right)\right)}_{\textup{numerically flat}}
\otimes \underbrace{f_*\mathcal O_X((1-p^e)(K_{X/Y}+\Delta)+v_e P)}_{\textup{globally generated}}
\\ & \twoheadrightarrow \left({F_Y^e}^*f_*\mathcal O_X(mP) \right)\left(-\frac{nu_e}{r_i}E_i\right). 
\end{align*}
This surjective morphism shows that the target is nef, which is equivalent 
to the nefness of the $\mathbb Q$-twisted vector bundle 
$$
f_*\mathcal O_X(mP)<-\frac{nu_e}{p^er_i}E_i>. 
$$
Since $\lim_{e\to\infty}\frac{nu_e}{p^er_i}=\frac{m}{ir_i}$, we see that 
$
f_*\mathcal O_X(mP)<-\frac{m}{ir_i}E_i>
$
is nef. 
\end{step}
Put $A:=ir_iP-f^*E_i$. Then by the above argument, we see that (2) holds 
for $m\ge 1$. 
\begin{step}
Lastly, we prove (2) for $m=0$. 
Let $f:X\xrightarrow{g}W\xrightarrow{h}Y$ be the Stein factorization. 
Then 
$$
-K_{X/W}-\Delta -g^*K_{W/Y} \sim -K_{X/Y}-\Delta
$$
is nef, so Proposition~\ref{prop:Weil} shows that 
$\mathcal O_X(-K_{W/Y})$ is pseudo-effective. 
Since $f$ is separable, so is $h$, and hence $K_{W/Y}$ is linearly equivalent to an effective Weil divisor. 
Thus, we see from \cite[Lemma~4.4]{Eji19w} that $K_{W/Y}\sim 0$, 
so $h$ is \'etale. 
Then the relative Frobenius morphism $F_{W/Y}^{(1)}:W^1 \to W_{Y^1}$ is an 
isomorphism, so the induced morphism 
$$
{F_Y^1}^*h_*\mathcal O_W
\cong {h_{Y^1}}_*\mathcal O_{W_{Y^1}}
\xrightarrow{{h_{Y^1}}_*{F_{W/Y}^{(1)}}^\sharp} 
h_*\mathcal O_W
$$
is an isomorphism. From this isomorphism, one can easily check that 
$h_*\mathcal O_W \cong f_*\mathcal O_X$ is numerically flat. 
\end{step}
\end{proof}
\begin{thm} \label{thm:decomp1}
Let $(X,\Delta)$ be a strongly $F$-regular projective pair 
and let $Y$ be a smooth projective variety. 
Let $f:X\to Y$ be a surjective morphism.
Suppose that 
\begin{itemize} \leftskip=-10pt
\item $-K_{X/Y}-\Delta$ is a nef $\mathbb Z_{(p)}$-Cartier divisor, and 
\item $\left(X_{\overline\eta},\Delta|_{X_{\overline\eta}}\right)$ 
is strongly $F$-regular, where $X_{\overline\eta}$ is the geometric generic fiber of $f$. 
\end{itemize}
Then the following hold. 
\begin{enumerate}[$(1)$]
\item If $Y$ is separably rationally connected, 
then 
$
(X,\Delta)\cong (F,\Gamma)\times_k Y
$ 
as $Y$-schemes. 
\item If $\pi^{\textup{\'et}}(Y)=0$, then there exists an $e\in\mathbb Z_{>0}$ such that 
$
(X_{Y^e},\Delta_{Y^e}) \cong (F, \Gamma) \times_k Y
$ 
as $Y$-schemes. 
\item If $\pi^{\textup{\'et}}(Y)$ is finite, then there exists a finite surjective morphism $\pi:Z\to Y$ that is the composite of a finite \'etale cover and an iterated Frobenius morphism such that 
$
(X_Z, \Delta_Z) \cong (F,\Gamma) \times_k Z
$
as $Z$-schemes. 
\item If $k\subseteq \overline{\mathbb F_p}$, then there exists a finite surjective morphism $\pi:Z\to Y$ that is the composite of a finite \'etale cover and an iterated Frobenius morphism such that 
$
(X_Z, \Delta_Z) \cong (F,\Gamma) \times_k Z
$
as $Z$-schemes. 
\end{enumerate}
Here, $(F,\Gamma)$ in the above statements is $($a disjoint union of copy of$)$ strongly $F$-regular pair. 
\end{thm}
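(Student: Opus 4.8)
The plan is to derive all four statements from Theorem~\ref{thm:num flat}. A strongly $F$-regular pair $(X,\Delta)$ has $X$ strongly $F$-regular, hence Cohen--Macaulay, so Theorem~\ref{thm:num flat} applies and yields an $f$-ample Cartier divisor $A$ on $X$ with $\mathcal E_m:=f_*\mathcal O_X(mA)$ numerically flat for every $m\ge 0$; moreover $f$ is flat with reduced fibers and $\mathrm{Supp}\,\Delta$ contains no fiber component by Theorem~\ref{thm:PZ}. Set $\mathcal R:=\bigoplus_{m\ge 0}\mathcal E_m$, a finitely generated sheaf of graded $\mathcal O_Y$-algebras with $X\cong\operatorname{Proj}_Y\mathcal R$. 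The common strategy for (1)--(4) is to produce a finite surjective morphism $\pi\colon Z\to Y$ --- the identity for (1), an iterated Frobenius for (2), and a composite of a finite \'etale cover with an iterated Frobenius for (3) and (4) --- along which every $\pi^*\mathcal E_m$ becomes a trivial bundle. Granting this, $\pi$ is flat and flat base change gives ${f_Z}_*\mathcal O_{X_Z}(mA_Z)\cong\pi^*\mathcal E_m$, so the multiplication maps of the graded algebra $\pi^*\mathcal R$ are $\mathcal O_Z$-linear maps between trivial bundles and are therefore recovered by taking global sections degree by degree; this identifies $\pi^*\mathcal R$ with the constant algebra $\mathcal O_Z\otimes_k R$ for $R:=\bigoplus_m H^0(Z,\pi^*\mathcal E_m)$, whence $X_Z\cong\operatorname{Proj}_Y\pi^*\mathcal R\cong F\times_k Z$ with $F:=\operatorname{Proj}_k R$.

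The remaining content is the trivialization of the $\mathcal E_m$ over the appropriate cover. For (1) I would use that on a separably rationally connected variety every numerically flat vector bundle is a direct sum of copies of the structure sheaf, so $\pi=\mathrm{id}_Y$ works. For (2) the vanishing $\pi^{\textup{\'et}}(Y)=0$ forces each numerically flat $\mathcal E_m$ to become trivial after pullback along some iterated Frobenius $F_Y^{e_m}$; since $\mathcal R$ is generated by finitely many of the $\mathcal E_m$ and Frobenius pullback preserves triviality, a single $e$ works, and it then suffices to note that the remaining $\mathcal E_m$, being numerically flat and globally generated quotients of trivial bundles on $Y^e$, are automatically trivial there, so that $Z=Y^e$. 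For (4) the hypothesis $k\subseteq\overline{\mathbb F_p}$ is exactly what guarantees that numerically flat bundles on $Y$ are trivialized by a composite of a finite \'etale cover with an iterated Frobenius, and one obtains a single such $Z$ by trivializing finitely many algebra generators and passing to a connected component of the fiber product, over a common $Y^e$, of the resulting covers. For (3), with $\pi^{\textup{\'et}}(Y)$ finite, I would base change $f$ along the finite \'etale universal cover $Y'\to Y$ (all hypotheses persist), apply case (2) to get $(X_{Y'})_{(Y')^e}\cong F\times_k(Y')^e$, and observe that $(Y')^e\to Y$ is the composite of an iterated Frobenius and a finite \'etale morphism.

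Finally one must descend the boundary, i.e.\ show $\Delta_Z=\operatorname{pr}_F^*\Gamma$ with $\Gamma:=\Delta_Z|_{F\times\{z\}}$. Under the identification $X_Z\cong F\times_k Z$ we have $f_Z=\operatorname{pr}_Z$ and $K_{X_Z/Z}=\operatorname{pr}_F^*K_F$, so $-\operatorname{pr}_F^*K_F-\Delta_Z$ is nef, being the pullback of the nef divisor $-K_{X/Y}-\Delta$ along the finite morphism $X_Z\to X$. Restricting to a slice $\{x\}\times Z$ for a general $x\in F$ kills $K_F$ and exhibits $\Delta_Z|_{\{x\}\times Z}$ as an effective divisor on the projective variety $Z$ whose negative is nef; hence it vanishes. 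Therefore no prime component of $\Delta_Z$ dominates $F$, and a dimension count forces each such component to be of the form $D'\times Z$; collecting multiplicities gives $\Delta_Z=\operatorname{pr}_F^*\Gamma$, so $(X_Z,\Delta_Z)\cong(F,\Gamma)\times_k Z$. That $(F,\Gamma)$ is (a disjoint union of copies of) a strongly $F$-regular pair follows because the general closed fibers of $f$ are strongly $F$-regular pairs (Theorem~\ref{thm:PZ} together with \cite[Theorem~B]{PSZ18}) and strong $F$-regularity descends along the base field extension defining such a fiber.

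I expect the two substantive points to be (a) arranging the trivializations of the $\mathcal E_m$ simultaneously in all degrees and compatibly with the algebra structure, which is what upgrades ``each $\mathcal E_m$ is trivial'' to ``$X_Z$ is a product'', and (b) the descent of $\Delta$; the underlying facts on numerically flat bundles over $Y$ (triviality over a separably rationally connected base, Frobenius-trivializability when $\pi^{\textup{\'et}}(Y)=0$, and trivializability by \'etale-times-Frobenius covers when $k\subseteq\overline{\mathbb F_p}$) are the essential external ingredients, but are available in the literature (cf.\ \cite{PZ19}).
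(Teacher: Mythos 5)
Your proposal is correct and follows essentially the same route as the paper: Theorem~\ref{thm:num flat} produces the numerically flat pushforwards $f_*\mathcal O_X(mA)$, which are then trivialized via Proposition~\ref{prop:sep rat conn} in case (1), Proposition~\ref{prop:Langer} plus the ``globally generated and numerically flat implies trivial'' observation in case (2), a finite \'etale cover with trivial fundamental group reducing (3) to (2), and Proposition~\ref{prop:Fp} in case (4), after which the graded algebra becomes constant and $X_Z$ splits as a product. The only cosmetic difference is that you reprove the descent of the boundary $\Delta$ by the slice argument, where the paper simply cites \cite[Lemma~8.4]{PZ19}.
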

Before starting the proof of the theorem, we give three propositions. 
\begin{prop}[\textup{\cite[Proposition~8.2]{Lan12}}] \label{prop:Langer}
\label{prop:simply conn}
Let $Y$ be a smooth projective variety with $\pi^{\textup{\'et}}(Y)=0$. 
Let $\mathcal E$ be a numerically flat vector bundle of rank $r$ on $Y$. 
Then ${F_Y^e}^*\mathcal E \cong \mathcal O_Y^{\oplus r}$
for some $e\in\mathbb Z_{\ge 0}$. 
\end{prop}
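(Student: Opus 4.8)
I would prove the statement in two layers --- first for the stable ``building blocks'' of $\mathcal E$, then for the unipotent part that remains --- after reducing to the case where $k$ is algebraically closed. This reduction is harmless: numerical flatness, Frobenius pullbacks and $\pi^{\textup{\'et}}(Y)$ all behave compatibly with extension of the base field, and the conclusion ${F_Y^e}^*\mathcal E\cong\mathcal O_Y^{\oplus r}$ can be checked after base change because $F_Y^e$ commutes with it. The first thing I would record is the structure of $\mathcal E$: numerically flat bundles form an abelian full subcategory of $\mathrm{Coh}(\mathcal O_Y)$ closed under subsheaves, quotients, extensions, tensor products, duals and pullback along $F_Y$, and every numerically flat bundle is slope-semistable of slope $0$ with numerically trivial determinant and vanishing discriminant (cf. \cite[Chapter 6]{Laz04II}). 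Hence $\mathcal E$ admits a Jordan--H\"older filtration $0=\mathcal E_0\subset\mathcal E_1\subset\cdots\subset\mathcal E_m=\mathcal E$ whose graded pieces $\mathcal Q_i$ are stable numerically flat bundles, and since ${F_Y^e}^*$ is exact it is enough to trivialize, after a common number of Frobenius pullbacks, each $\mathcal Q_i$ and then the remaining (now unipotent) iterated extension.

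\textbf{Stable case.} Let $\mathcal Q$ be a stable numerically flat bundle of rank $s$. The key input I would use is that, by Langer's boundedness theorem, numerically flat bundles of rank $s$ on $Y$ form a bounded family, so all the pullbacks ${F_Y^e}^*\mathcal Q$ ($e\ge0$) lie in a single finite-type moduli space. Over $\overline{\mathbb F_p}$ such a space has only finitely many points over each finite subfield, and ${F_Y}^*$ is compatible with the Frobenius on points, so the sequence $\{{F_Y^e}^*\mathcal Q\}_e$ is eventually periodic: ${F_Y^a}^*\mathcal Q\cong{F_Y^b}^*\mathcal Q$ for some $a<b$. Putting $\mathcal Q':={F_Y^a}^*\mathcal Q$ gives ${F_Y^{b-a}}^*\mathcal Q'\cong\mathcal Q'$, so by the theorem of Lange--Stuhler $\mathcal Q'$ is trivialized by a connected finite \'etale cover of $Y$; as $\pi^{\textup{\'et}}(Y)=0$ that cover is trivial, whence ${F_Y^a}^*\mathcal Q\cong\mathcal O_Y^{\oplus s}$. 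For a general algebraically closed $k$ one would reduce to $\overline{\mathbb F_p}$ by spreading $Y$, $\mathcal Q$ and the stability data out over a finitely generated $\mathbb F_p$-algebra and arguing at a closed fibre.

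\textbf{Unipotent case.} Applying the stable case to the finitely many $\mathcal Q_i$ with a common exponent, I may assume after one Frobenius pullback that $\mathcal E$ has a filtration with all graded pieces isomorphic to $\mathcal O_Y$. Such bundles are governed by iterated extension classes lying in $H^1(Y,\mathcal O_Y)$ (and matrices thereof), and the crucial point is that $\pi^{\textup{\'et}}(Y)=0$ forces the absolute Frobenius to act nilpotently on $H^1(Y,\mathcal O_Y)$: by the Serre--Mumford decomposition $H^1(Y,\mathcal O_Y)=H^1_{\mathrm{ss}}\oplus H^1_{\mathrm{nil}}$ with $F$ Frobenius-semilinear and bijective on $H^1_{\mathrm{ss}}$, while the Artin--Schreier sequence identifies $\mathrm{Hom}(\pi^{\textup{\'et}}(Y),\mathbb Z/p)$ with $\ker(F-\mathrm{id}\mid H^1(Y,\mathcal O_Y))$, and on $H^1_{\mathrm{ss}}$ this kernel is an $\mathbb F_p$-structure spanning it over $k$; vanishing of the left-hand side thus forces $H^1_{\mathrm{ss}}=0$. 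Hence ${F_Y^N}^*$ annihilates $H^1(Y,\mathcal O_Y)$ for $N\gg0$, and an induction on the length of the filtration --- splitting off the bottom copy of $\mathcal O_Y$ and killing its extension class by a further Frobenius pullback --- yields ${F_Y^{N'}}^*\mathcal E\cong\mathcal O_Y^{\oplus r}$ for $N'$ large. Combining with the stable case finishes the proof.

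\textbf{Main obstacle.} The delicate part is the stable case: proving eventual $F$-periodicity of a stable numerically flat bundle and then descending along $\pi^{\textup{\'et}}(Y)=0$. Over $\overline{\mathbb F_p}$ this is a clean boundedness-plus-pigeonhole argument, but over a larger field the spreading-out step needs care, since the geometric \'etale fundamental group can only grow under specialization (its $p$-part in particular), so one must keep track of precisely which covers enter. An alternative route that bypasses the moduli-theoretic argument is the $S$-fundamental group scheme: $\mathcal{NF}(Y)\simeq\mathrm{Rep}_k(\pi^S(Y))$, the group $\pi^{\textup{\'et}}(Y)$ is the maximal pro-\'etale quotient of $\pi^S(Y)$, and when this quotient vanishes one shows directly that every representation is killed by a Frobenius twist; I would expect the analysis of the Frobenius twist on $\pi^S(Y)$ to be the corresponding pressure point in that approach.
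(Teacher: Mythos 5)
The paper does not actually prove this statement -- it is quoted verbatim from Langer (\cite[Proposition~8.2]{Lan12}) and used as a black box -- so there is no internal proof to compare against; I can only assess your reconstruction on its own terms. Your overall architecture (Jordan--H\"older filtration in the tensor category of numerically flat bundles, trivialize the stable graded pieces, then handle the resulting unipotent bundle) is reasonable, and the unipotent step is correct and complete: the Artin--Schreier identification of $H^1_{\textup{\'et}}(Y,\mathbb Z/p)$ with $\ker(F-\mathrm{id}\mid H^1(Y,\mathcal O_Y))$, together with the fact that the fixed points of a bijective $p$-linear map give an $\mathbb F_p$-form of the semisimple part, does show that $F$ acts nilpotently on $H^1(Y,\mathcal O_Y)$, and the induction on the length of the filtration then goes through.

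The genuine gap is in the stable case over a field $k$ not contained in $\overline{\mathbb F_p}$ -- and the proposition is applied in this paper over arbitrary perfect fields (e.g.\ in Theorem~\ref{thm:decomp1}(2)), so this case cannot be waved away. Your boundedness-plus-pigeonhole argument produces $F$-periodicity only over $\overline{\mathbb F_p}$, where it reproves the \emph{other} quoted result (Proposition~\ref{prop:Fp}); over general $k$ you propose to spread out and specialize to a closed fibre, but the conclusion does not transfer back. Semicontinuity goes the wrong way: $h^0$ of ${F^{e}}^*\mathcal Q$ can only jump \emph{up} at the special fibre, so triviality there says nothing about the original $Y$; moreover the exponent $e$ you obtain depends on the closed point with no a priori bound (already for numerically trivial line bundles the order of Frobenius-torsion is unbounded over closed points), and $\pi^{\textup{\'et}}$ of the special fibre may be strictly larger than that of $Y$, so the hypothesis is not even preserved. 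Note also that $F$-periodicity genuinely fails for numerically flat bundles over large fields absent further input (non-torsion degree-zero line bundles on an elliptic curve), so any correct argument must use $\pi^{\textup{\'et}}(Y)=0$ \emph{before} establishing periodicity, not after; your mechanism only invokes it afterwards, via Lange--Stuhler. Already in rank one the right argument is structural: $\pi^{\textup{\'et}}(Y)=0$ forces $\mathrm{Pic}^\tau(Y)$ to be a finite local group scheme, so numerically trivial line bundles are killed directly by a Frobenius pullback, with no periodicity step. This is the pattern Langer's actual proof follows (via the $S$-fundamental group scheme and the structure of its quotients), which is precisely the alternative route you mention at the end but do not carry out; as written, the stable case of your proof is incomplete.
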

The author learned the following proposition from Shou Yoshikawa. 
\begin{prop} \label{prop:sep rat conn}
Let $Y$ be a smooth projective separably rationally connected variety. 
Then every numerically flat vector bundle is isomorphic to 
a direct sum of structure sheaves $\mathcal O_Y$. 
\end{prop}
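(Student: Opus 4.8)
The plan is to reduce the statement to the case of numerically flat vector bundles on a smooth projective \emph{rational curve}, i.e.\ on $\mathbb P^1$, where the classification of such bundles is trivial by Grothendieck's splitting theorem. Let $\mathcal E$ be a numerically flat vector bundle of rank $r$ on $Y$. Since $Y$ is separably rationally connected, any two general closed points of $Y$ can be joined by a very free rational curve; in particular there is a morphism $g:\mathbb P^1\to Y$ from a smooth rational curve such that $g^*T_Y$ is ample (or at least globally generated with $g^*T_Y$ semiample and $g$ very free), and such curves sweep out a dense open subset of $Y$. The first step is to observe that for \emph{every} such $g$, the pullback $g^*\mathcal E$ is numerically flat on $\mathbb P^1$: numerical flatness is preserved under pullback because both $\mathcal E$ and $\mathcal E^\vee$ are nef and nefness pulls back. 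A numerically flat vector bundle on $\mathbb P^1$ is a direct sum $\bigoplus \mathcal O_{\mathbb P^1}(a_i)$ with $\sum a_i=0$ and each summand nef, hence each $a_i=0$; so $g^*\mathcal E\cong\mathcal O_{\mathbb P^1}^{\oplus r}$. In particular $\deg(g^*\mathcal E)=0$, and more importantly $g^*\mathcal E$ is trivial for every free rational curve $g$.

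The second step is to upgrade ``trivial on every free curve'' to ``trivial on $Y$''. The cleanest way is to use that a numerically flat vector bundle is, up to the étale fundamental group, built from the structure sheaf; concretely, by Langer's theory (as in Proposition~\ref{prop:Langer} and its proof via the work of Langer on $S$-fundamental group schemes) a numerically flat bundle on a smooth projective variety corresponds to a representation of the $S$-fundamental group scheme $\pi^S(Y)$. For a separably rationally connected variety one expects $\pi^S(Y)$ to be trivial: indeed the $S$-fundamental group scheme is a quotient of the (Nori) fundamental group scheme, which for an SRC variety is trivial over an algebraically closed field because SRC varieties are simply connected in the relevant sense and carry no nontrivial finite torsors or numerically flat bundles. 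Alternatively, and more self-containedly, I would argue as follows: the evaluation map $H^0(Y,\mathcal E)\otimes\mathcal O_Y\to\mathcal E$ becomes an isomorphism after restriction to each very free rational curve through a general point (since $\mathcal E$ is trivial there and $H^0$ of a trivial bundle on $\mathbb P^1$ has the right dimension, provided $h^0(Y,\mathcal E)=r$, which itself can be deduced by a standard argument using that $\mathcal E$ is globally generated on a dense family of curves covering $Y$); hence this map is an isomorphism on a dense open subset whose complement has no effect because $\mathcal E$ is locally free, giving $\mathcal E\cong\mathcal O_Y^{\oplus r}$.

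The main obstacle is the second step: showing that triviality of $\mathcal E$ along a covering family of free rational curves forces global triviality. Over $\mathbb C$ this is classical (e.g.\ via the vanishing $H^1(Y,\mathcal O_Y)=0$ for rationally connected $Y$ and a deformation/Hartogs argument, or via $\pi_1$), but in positive characteristic one must be careful: $H^1(Y,\mathcal O_Y)$ can be nonzero for SRC varieties, and the naive monodromy argument uses universal covers. The right tool is therefore the $S$-fundamental group scheme and the fact, due to work building on Biswas--Holla and Langer, that numerically flat bundles are exactly the $\pi^S$-representations, combined with the vanishing of $\pi^S(Y)$ for SRC $Y$; I would cite the relevant statement (this is presumably why the author attributes the proposition to Shou Yoshikawa) rather than reprove it. Once triviality of $\mathcal E$ is established the proof is complete.
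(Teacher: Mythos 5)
Your first step (pulling back to rational curves and invoking Grothendieck's theorem) is fine, but it is not where the difficulty lies, and your second step — the only substantive one — is not actually carried out. Route (a) amounts to citing a triviality theorem for the $S$-fundamental group scheme of SRC varieties, i.e.\ a statement essentially equivalent to the proposition itself, and you hedge with ``one expects'' and ``presumably''; that is a pointer to the literature, not a proof. Route (b), the ``self-contained'' argument, breaks down exactly at the parenthetical where you assume $h^0(Y,\mathcal E)=r$ ``by a standard argument using that $\mathcal E$ is globally generated on a dense family of curves.'' Triviality of $\mathcal E$ along a covering family of rational curves gives neither global generation nor any global sections: a section over one curve of the family need not extend to $Y$. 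A concrete (non-SRC, but structurally identical) warning example is $Y=\mathbb P^1\times E$ with $E$ elliptic and $\mathcal E=\mathrm{pr}_2^*\mathcal L$ for $\mathcal L$ numerically trivial and non-torsion: $\mathcal E$ is numerically flat and trivial on every curve $\mathbb P^1\times\{e\}$ of the covering family, yet $h^0(Y,\mathcal E)=0$. So producing the $r$ global sections is precisely the content of the proposition, and your sketch assumes it.

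For comparison, the paper's proof avoids rational curves entirely and uses two facts about SRC varieties: $\pi^{\textup{\'et}}(Y)=0$ and $H^0(Y,\Omega_Y^1)=0$. By Langer's theorem (Proposition~\ref{prop:Langer}), $\pi^{\textup{\'et}}(Y)=0$ gives ${F_Y^e}^*\mathcal E\cong\mathcal O_Y^{\oplus r}$ for some $e$. One then descends along Frobenius one step at a time: tensoring $0\to\mathcal O_Y\to {F_Y}_*\mathcal O_Y\to B_Y^1\to 0$ with ${F_Y^{e-1}}^*\mathcal E$ and taking $H^0$ shows that the obstruction to lifting the $r$ generating sections of ${F_Y^e}^*\mathcal E$ to ${F_Y^{e-1}}^*\mathcal E$ lives in $H^0(Y,B_Y^1\otimes{F_Y^e}^*\mathcal E)\cong H^0(Y,B_Y^1)^{\oplus r}\subseteq H^0(Y,\Omega_Y^1)^{\oplus r}=0$, whence ${F_Y^{e-1}}^*\mathcal E\cong\mathcal O_Y^{\oplus r}$, and induction finishes. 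If you want to salvage your approach, you would need to actually invoke (and verify the hypotheses of) the Biswas--dos Santos type theorem that a bundle on a smooth projective SRC variety which is trivial on every rational curve is trivial; as written, the proposal does not close the gap.
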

\begin{proof}
Let $\mathcal E$ be a numerically flat vector bundle of rank $r$. 
Since $\pi^{\textup{\'et}}(Y)=0$, by Proposition~\ref{prop:Langer}, 
there is an $e \in \mathbb Z_{\ge0}$ such that ${F_Y^e}^*\mathcal E \cong \mathcal O_Y^{\oplus r}$. 
Taking the tensor product of the exact sequence 
$$
0 \to \mathcal O_Y \to {F_Y}_*\mathcal O_Y \to B_Y^1 \to 0
$$
and ${F_Y^{e-1}}^*\mathcal E$, we get the exact sequence 
$$
0 \to H^0(Y, {F_Y^{e-1}}^*\mathcal E) 
\to H^0(Y, {F_Y^e}^*\mathcal E)
\to H^0(Y, {F_Y^e}^*\mathcal E \otimes B_Y^1)
$$
Since ${F_Y^e}^*\mathcal E\cong \mathcal O_Y^{\oplus r}$ and 
$H^0(Y,B_Y^1) \subseteq H^0(Y,\Omega_Y^1)=0$, 
we obtain that 
$$
H^0(Y,{F_Y^{e-1}}^*\mathcal E) 
\cong H^0(Y,{F_Y^e}^*\mathcal E) 
\cong k^{\oplus r}, 
$$
which means that ${F_Y^{e-1}}^*\mathcal E \cong \mathcal O_Y^{\oplus r}$. 
Repeating this argument, we get the assertion.
\end{proof}
\begin{prop}[\textup{\cite[Theorem~1.1]{Lan12}, \cite[Lemma~2.5]{PZ19}}]
	\label{prop:Fp}
Suppose that $k\subseteq \overline{\mathbb F_p}$. 
Let $\mathcal E$ be a numerically flat vector bundle of rank $r$ 
on a normal projective variety $Y$. 
Then there exists a finite surjective morphism $\pi:Z\to Y$ that is the 
composite of a finite \'etale cover and an iterated Frobenius morphism 
such that $\pi^*\mathcal E\cong \mathcal O_Z^{\oplus r}$. 
\end{prop}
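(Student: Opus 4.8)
The plan is to reduce to the Lange--Stuhler trivialisation of Frobenius-periodic bundles, which is the content of \cite[Theorem~1.1]{Lan12}, and then to repackage the resulting cover as in \cite[Lemma~2.5]{PZ19}. First I would descend everything to a finite field. Since $k\subseteq\overline{\mathbb F_p}$ is a union of finite fields and $Y$, $\mathcal E$ and a chosen ample divisor $H$ on $Y$ are of finite presentation, there are a finite subfield $\mathbb F_q\subseteq k$, a normal projective $\mathbb F_q$-variety $Y_0$ with an ample divisor $H_0$, and a vector bundle $\mathcal E_0$ on $Y_0$ with $(Y,H,\mathcal E)\cong(Y_0,H_0,\mathcal E_0)\times_{\mathbb F_q}k$. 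Numerical flatness descends, and since the absolute Frobenius is a natural transformation of the identity functor, $F_Y^e$ is the base change of $F_{Y_0}^e$; hence each ${F_{Y_0}^e}^*\mathcal E_0$ is again a numerically flat bundle on $Y_0$ with $({F_{Y_0}^e}^*\mathcal E_0)\times_{\mathbb F_q}k\cong{F_Y^e}^*\mathcal E$.

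The key point is then that the Frobenius pullbacks become periodic. Numerically flat bundles are $H_0$-semistable with numerically trivial Chern classes, so by Riemann--Roch $\mathcal E_0$ and all the ${F_{Y_0}^e}^*\mathcal E_0$ share the Hilbert polynomial $r\cdot\chi(\mathcal O_{Y_0}(mH_0))$, and by Langer's boundedness theorem for semistable sheaves they form a bounded family; hence only finitely many isomorphism classes occur among their $\mathbb F_q$-rational members (for instance, after a uniform twist they all appear as $\mathbb F_q$-points of a single Quot scheme of finite type over $\mathbb F_q$). Therefore ${F_{Y_0}^a}^*\mathcal E_0\cong{F_{Y_0}^b}^*\mathcal E_0$ for some integers $0\le a<b$. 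Writing $\mathcal G:={F_Y^a}^*\mathcal E$, regarded as a bundle on $Y^a$, and $s:=b-a$, the identity ${F_Y^b}^*={F_{Y^a}^s}^*\circ{F_Y^a}^*$ gives ${F_{Y^a}^s}^*\mathcal G\cong\mathcal G$; that is, $\mathcal G$ is $F^s$-periodic.

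Finally I would invoke the Lange--Stuhler theorem (in the form of \cite[Theorem~1.1]{Lan12}): since $\mathcal G$ is an $F^s$-periodic vector bundle of rank $r$ on the normal projective variety $Y^a$ over $k\subseteq\overline{\mathbb F_p}$, the \'etale sheaf of frames of $\mathcal G$ compatible with a fixed isomorphism ${F_{Y^a}^s}^*\mathcal G\xrightarrow{\ \sim\ }\mathcal G$ is a torsor under the finite group $\mathrm{GL}_r(\mathbb F_{p^s})$, and the associated finite \'etale cover $\rho\colon Z\to Y^a$ satisfies $\rho^*\mathcal G\cong\mathcal O_Z^{\oplus r}$ (absorbing, if necessary, a finite constant field extension of $k$, which is itself a finite \'etale cover). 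Then the composite $\pi\colon Z\xrightarrow{\rho}Y^a\xrightarrow{F_Y^a}Y$ is a finite \'etale cover followed by the $a$-times iterated Frobenius morphism, and $\pi^*\mathcal E=\rho^*{F_Y^a}^*\mathcal E=\rho^*\mathcal G\cong\mathcal O_Z^{\oplus r}$, as required. The substantive input is the boundedness theorem, which is exactly what forces $F$-periodicity over the finite field; granting that, the remaining work is the standard Lange--Stuhler torsor argument on a normal (rather than smooth) base together with the bookkeeping keeping every map a morphism of $k$-varieties, and this packaging is precisely what \cite[Lemma~2.5]{PZ19} provides.
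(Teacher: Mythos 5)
The paper gives no proof of Proposition~\ref{prop:Fp}: it is stated purely as a quotation of \cite[Theorem~1.1]{Lan12} and \cite[Lemma~2.5]{PZ19}, so your write-up is not an alternative to an argument in the text but a reconstruction of the proof of the cited results. Your skeleton is the standard and correct one: spread $(Y,H,\mathcal E)$ out over a finite field, use boundedness to see that the Frobenius pullbacks ${F_{Y_0}^e}^*\mathcal E_0$ realize only finitely many isomorphism classes, deduce $F^s$-periodicity of ${F_Y^a}^*\mathcal E$, apply the Lange--Stuhler torsor construction to obtain a finite \'etale trivializing cover $\rho:Z\to Y^a$, and compose with $F_Y^a$. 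The one step you should justify rather than assert is the constancy of the Hilbert polynomials: since $\mathrm{ch}_i({F^e}^*\mathcal E_0)=p^{ei}\,\mathrm{ch}_i(\mathcal E_0)$, the equality $\chi({F^e}^*\mathcal E_0(mH_0))=r\,\chi(\mathcal O_{Y_0}(mH_0))$ requires the numerical triviality of \emph{all} Chern classes of a numerically flat bundle, not merely of $c_1$; this is itself a nontrivial theorem of Langer and is proved on a smooth projective variety, whereas $Y$ here is only assumed normal. On a normal base one must either pass to a resolution (and descend the resulting periodicity) or run the boundedness argument for strongly slope-semistable sheaves of slope zero directly; this is exactly the packaging that \cite[Lemma~2.5]{PZ19} performs. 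With that point made explicit, or simply delegated to the citation as the paper itself does, your argument is complete, and the final bookkeeping ($\pi=F_Y^a\circ\rho$, with a finite constant field extension absorbed into the \'etale part) is correct.
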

\begin{proof}[Proof of Theorem~\ref{thm:decomp1}]
Let $A$ be an $f$-ample Cartier divisor on $X$ as in Theorem~\ref{thm:num flat}. We may assume that the $\mathcal O_Y$-algebra $\bigoplus_{m\ge0}f_*\mathcal O_X(mA)$ is generated by $f_*\mathcal O_X(A)$. 
We first prove (1). 
By Proposition~\ref{prop:sep rat conn}, $f_*\mathcal O_X(mA) \cong \mathcal O_Y^{\oplus r_m}$ for each $m\in\mathbb Z_{>0}$, where $r_m:=f_*\mathcal O_X(mA)$. 
Hence, $\bigoplus_{m\ge 0}f_*\mathcal O_X(mA)$ comes from $\mathrm{Spec}\,k$, 
and so $X\cong F\times Y$ for a projective variety $F$. 
The strongly $F$-regularity of $F$ follows from that of the general fiber of 
$f$. 
The equality $\Delta=\mathrm{pr}_2^*\Gamma$ follows from 
\cite[Lemma~8.4]{PZ19}. 
Next, we show (2). Let $A$ be as above. 
By Proposition~\ref{prop:simply conn}, there is an $e\in\mathbb Z_{>0}$
such that 
$$
{f_{Y^e}}_*\mathcal O_{X_{Y^e}}(A_{Y^e})
\cong {F_Y^e}^*f_*\mathcal O_X(A)
\cong \mathcal O_Y^{\oplus r_1}. 
$$
Since $\bigoplus_{m\ge0} {f_{Y^e}}_*\mathcal O_{X_{Y^e}}(mA_{Y^e})$ 
is generated by ${f_{Y^e}}_*\mathcal O_{X_{Y^e}}(A_{Y^e})$, 
we see that ${f_{Y^e}}_*\mathcal O_{X_{Y^e}}(mA_{Y^e})$ 
is globally generated, and so 
${f_{Y^e}}_*\mathcal O_{X_{Y^e}}(mA_{Y^e}) \cong \mathcal O_{Y}^{\oplus r_m}$ 
for each $m\in\mathbb Z_{>0}$. 
Thus the assertion follows from the same argument as that of the proof of (1). 
Statement~(3) is proved by taking the base change along a finite \'etale cover $\sigma:Y'\to Y$ with $\pi^{\textup{\'et}}(Y')=0$ and applying (2).  

Combining Proposition~\ref{prop:Fp} and the argument of the proof of (2), 
we can prove~(4). 
\end{proof}
\section{Algebraic fiber spaces over abelian varieties}
\label{section:over abelian}
In this section, we consider algebraic fiber spaces over abelian varieties 
whose anti-canonical divisors satisfy some positivity conditions. 
We work over a perfect field $k$ of characteristic $p>0$. 
\subsection{Numerically flat vector bundles on abelian varieties}
The following theorem follows from Langer's argument~\cite{Lan12}.
\begin{thm}[\textup{\cite[Corollary~5.5]{Lan12}}] \label{thm:filtration}
Let $\mathcal E$ be a numerically flat vector bundle 
on an abelian variety $Y$.  
Then there exists a filtration 
$$
0=\mathcal E_0 \subset \mathcal E_1 \subset \cdots \subset 
\mathcal E_{n-1} \subset \mathcal E_n=\mathcal E
$$
such that $\mathcal E_{i}/\mathcal E_{i-1}$ is an algebraically trivial  
line bundle for each $i=1,\ldots,n$. 
\end{thm}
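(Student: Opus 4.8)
The plan is to induct on the rank $r$ of $\mathcal E$, with all of the difficulty concentrated in one reduction step: every nonzero numerically flat coherent sheaf $\mathcal E$ on $Y$ admits an algebraically trivial line bundle $L$ together with either a saturated embedding $L\hookrightarrow\mathcal E$ whose quotient $\mathcal E/L$ is again numerically flat, or, dually, a surjection $\mathcal E\twoheadrightarrow L$ whose kernel is numerically flat. Granting this, the case $r=1$ is immediate, since a numerically flat line bundle is numerically trivial and numerically trivial line bundles on an abelian variety are algebraically trivial ($\mathrm{NS}(Y)$ being torsion-free). For $r\ge 2$, in the first case the reduction step produces a short exact sequence $0\to L\to\mathcal E\to\mathcal Q\to 0$ with $L\in\mathrm{Pic}^0(Y)$ and $\mathcal Q$ numerically flat of rank $r-1$, so the inductive hypothesis gives a filtration of $\mathcal Q$ with algebraically trivial line bundle subquotients, and its preimage along $\mathcal E\to\mathcal Q$, with $L$ prepended as $\mathcal E_1$, is the desired filtration of $\mathcal E$; each $\mathcal E_i$ is then a successive extension of line bundles, hence locally free, which also accounts for the local freeness implicit in the statement. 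The ``quotient'' case is reduced to this one by running the argument for $\mathcal E^\vee$ and then dualizing the filtration obtained.

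It remains to prove the reduction step, and the routine parts come first. Any quotient sheaf of the nef sheaf $\mathcal E$ is nef, so for every saturated subsheaf $\mathcal S\subseteq\mathcal E$ and every ample divisor $H$ on $Y$ one has $c_1(\mathcal E/\mathcal S)\cdot H^{\dim Y-1}\ge 0$, and since $c_1(\mathcal E)\equiv 0$ this forces $c_1(\mathcal S)\cdot H^{\dim Y-1}\le 0$. Hence a nonzero homomorphism $L_0\to\mathcal E$ with $L_0\in\mathrm{Pic}^0(Y)$ is automatically injective, and its saturation $L\cong L_0\otimes\mathcal O_Y(D)$, $D$ effective, satisfies $D\cdot H^{\dim Y-1}=c_1(L)\cdot H^{\dim Y-1}\le 0$, so $D=0$ and $L=L_0$ is already saturated; moreover $\mathcal E/L$ is torsion-free and nef with $\det(\mathcal E/L)\equiv\det\mathcal E$ numerically trivial, hence numerically flat by the elementary criterion recorded just after the definition of numerical flatness. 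Thus the reduction step comes down to producing, for $\mathcal E$ or for $\mathcal E^\vee$, a nonzero homomorphism from some $L_0\in\mathrm{Pic}^0(Y)$ --- equivalently, showing that $H^0(Y,\mathcal E\otimes P)\ne 0$ or $H^0(Y,\mathcal E^\vee\otimes P)\ne 0$ for some $P\in\mathrm{Pic}^0(Y)$.

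This non-vanishing is the crux. Riemann--Roch alone does not settle it: since $\mathrm{td}(Y)=1$ and all Chern classes of a numerically flat bundle are numerically trivial, $\chi(Y,\mathcal E\otimes P)=\chi(Y,\mathcal E)=0$ for every $P\in\mathrm{Pic}^0(Y)$, but a nontrivial $P$ is itself acyclic, so a vanishing Euler characteristic does not force a section. Genuine input is needed, and I would obtain it from the Fourier--Mukai transform on the dual abelian variety $\widehat Y$: using that $\mathcal E$ is strongly semistable (each of its Frobenius pullbacks being numerically flat) with vanishing Chern classes, one shows that the transform of $\mathcal E$ is, up to a shift, a nonzero Artinian sheaf on $\widehat Y$ --- equivalently, that $\mathcal E$ is a homogeneous (translation-invariant) vector bundle --- and any point of the support of that sheaf supplies the required $P$. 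This last step, which is the substance of Langer's argument in \cite{Lan12}, is the main obstacle; the induction on the rank and the saturation bookkeeping around it are formal.
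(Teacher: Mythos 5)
Your inductive scaffolding (saturation of a rank-one subsheaf, numerical flatness of the quotient, dualizing to cover the quotient case) is sound, but the entire content of the theorem sits in the one step you yourself flag as ``the main obstacle,'' and that step is not proved: you need some $P\in\mathrm{Pic}^0(Y)$ with $H^0(Y,\mathcal E\otimes P)\ne 0$ (or the dual non-vanishing), and you only assert that the Fourier--Mukai transform of $\mathcal E$ is a nonzero Artinian sheaf concentrated in a single degree. That assertion is equivalent to saying $\mathcal E$ is homogeneous, and ``numerically flat $\Rightarrow$ homogeneous'' on an abelian variety is essentially the theorem itself (via the Miyanishi--Mukai classification of homogeneous bundles). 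As you correctly observe, $\chi(Y,\mathcal E\otimes P)=0$ for all $P$, so no Euler-characteristic argument is available; establishing concentration of the transform in positive characteristic requires genuine input (Langer's boundedness of the Frobenius pullbacks $\{F_Y^{e*}\mathcal E\}$ of a strongly semistable sheaf with vanishing Chern classes, or an equivalent device), none of which appears in your writeup. So the proof is deferred, not given, at its only nontrivial point. A secondary gap: in dimension $\ge 2$ a saturated rank-one subsheaf of a vector bundle need not have locally free quotient, so ``torsion-free, nef, with numerically trivial determinant'' does not by itself let you invoke the elementary criterion for numerical flatness; one needs a $c_2$-type argument to rule out a non-locally-free quotient.

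The paper's own proof avoids the cohomological non-vanishing altogether, and you may want to adopt it. It reduces to showing that an \emph{irreducible} numerically flat bundle (one with no proper numerically flat subbundle) has rank one. For such $\mathcal E$ one sets $\mathcal F:=\mathcal Hom(\mathrm{pr}_2^*\mathcal E, m^*\mathcal E)$ on $Y\times_k Y$, where $m$ is the group law; $\mathcal F$ is numerically flat, its restriction to $\{0\}\times_k Y$ has $h^0=\dim_k\mathrm{End}(\mathcal E)=1$ by irreducibility, and the constancy of $h^0$ along fibers for numerically flat bundles on a product (Proposition~\ref{prop:locally free}) makes $\mathcal L:=\mathrm{pr}_{1*}\mathcal F$ a line bundle. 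The evaluation map $\mathrm{pr}_1^*\mathcal L\otimes\mathrm{pr}_2^*\mathcal E\to m^*\mathcal E$ is then surjective, hence an isomorphism of bundles of equal rank, and restricting to $Y\times_k\{0\}$ yields $\mathcal E\cong\mathcal L|_{Y\times_k\{0\}}^{\oplus\,\mathrm{rank}\,\mathcal E}$, forcing the rank to be one. This see-saw argument with the multiplication map is exactly the substitute for the Fourier--Mukai concentration you left unproved; without it (or an actual proof of the boundedness of $\{F_Y^{e*}\mathcal E\}$), your argument does not close.
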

\begin{proof}
It is enough to show that if a numerically flat vector bundle $\mathcal E$ 
on $Y$ is irreducible (i.e., $\mathcal E$ does not contain any proper 
numerically flat subbundles), 
then $\mathrm{rank}\,\mathcal E=1$. 
Let $m:Y\times_k Y \to Y$ be the morphism of operation. 
Put 
$
\mathcal F
:=\mathcal Hom\left(\mathrm{pr}_2^*\mathcal E, m^*\mathcal E\right). 
$
Then $\mathcal F$ is numerically flat. 
Since $\mathcal E$ is irreducible, we have
$$
H^0\left(\{0\}\times_k Y, \mathcal F|_{\{0\}\times_k Y} \right)
\cong \mathrm{Hom}(\mathcal E,\mathcal E) 
\cong k, 
$$ 
so by Proposition~\ref{prop:locally free} below we see that 
$\mathcal L:={\mathrm{pr}_1}_*\mathcal F$ is a line bundle. 
Consider the following morphisms:
$$
\varphi:
\mathrm{pr}_1^* \mathcal L
\otimes \mathrm{pr}_2^* \mathcal E
\to 
\mathcal F 
\otimes \mathrm{pr}_2^* \mathcal E
\to 
m^* \mathcal E. 
$$
Then the composite $\varphi$ is surjective, as it is surjective 
over $\{y\}\times Y$ for every closed point $y\in Y$. 
Note that one can check that $(m^*\mathcal E)|_{\{y\}\times_k Y}$ 
is also irreducible. 
Since $\varphi$ is a surjective morphism between vector bundles of
the same rank, it is an isomorphism. 
Thus
$$
\mathcal E 
\cong (m^*\mathcal E)|_{Y\times_k\{0\}}
\cong (\mathrm{pr}_1^* \mathcal L
\otimes \mathrm{pr}_2^* \mathcal E)|_{Y\times_k\{0\}}
\cong \mathcal L^{\oplus \mathrm{rank}\,\mathcal E}, 
$$
so $\mathrm{rank}\,\mathcal E=1$. 
\end{proof}
\begin{prop}[\textup{\cite[Corollary~3.2]{Lan12}}] \label{prop:locally free}
Let $X$ be a normal complete variety and let $Y$ be a complete variety. 
Let $\mathcal E$ be a numerically flat vector bundle on $X\times_k Y$. 
Then $h^0(X\times_k\{y\}, \mathcal E|_{X\times_k \{y\}})$ is constant on 
closed points of $Y$.
In particular, ${\mathrm{pr}_1}_*\mathcal E$ is locally free. 
\end{prop}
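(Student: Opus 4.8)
The idea is to reduce, via Grauert's theorem, to a statement about the fibrewise $h^{0}$ and then exploit the rigidity of numerically flat bundles. First one may assume $k=\overline{k}$: cohomology of a coherent sheaf commutes with the flat base change $\mathrm{Spec}\,\overline{k}\to\mathrm{Spec}\,k$, so the relevant $h^{0}$ is unchanged, and numerical flatness is preserved under base field extension. Next, since $Y$ is reduced, Grauert's theorem on cohomology and base change shows that ${\mathrm{pr}_1}_*\mathcal E$ is locally free as soon as $y\mapsto h^{0}(X\times_{k}\{y\},\,\mathcal E|_{X\times_{k}\{y\}})$ is constant on the closed points of $Y$. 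By upper semicontinuity this function attains its minimum $h_{0}$ on a dense open of $Y$, so the whole problem is to rule out an upward jump at a special closed point. A further harmless reduction: if $h^{0}$ jumped at $y_{0}$ it would jump along some integral curve through $y_{0}$ meeting the good open locus, and pulling back $\mathcal E$ to $X\times_{k}C$ for $C$ the normalization of such a curve keeps it numerically flat; so one may assume $Y$ is a smooth projective curve, in which case ${\mathrm{pr}_1}_*\mathcal E$ is automatically torsion free, hence locally free of rank $h_{0}$, and the jump of $h^{0}$ at a point is measured precisely by the torsion of the higher direct image there via the standard base change exact sequence.

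\textbf{The rigidity input.} The point that makes the jump impossible is that, for a numerically flat bundle on a product $X\times_{k}Y$, the family of restrictions to the fibres $X\times_{k}\{y\}$ is iso-trivial --- this is the positive characteristic shadow of the fact that a unitary local system on a product restricts to each factor to something independent of the chosen point. Concretely I would use the basic structural facts about a numerically flat bundle $\mathcal G$ on a normal complete variety $W$ over $\overline{k}$: the restriction $\mathcal G|_{W'}$ to any closed subvariety is again numerically flat; and any nonzero $s\in H^{0}(W,\mathcal G)$ spans a saturated rank one subsheaf isomorphic to $\mathcal O_{W}$ with numerically flat quotient --- indeed the dual cosection $\mathcal G^{\vee}\to\mathcal O_{W}$ has nef image, and a nef subsheaf of $\mathcal O_{W}$ on a normal complete variety must agree with $\mathcal O_{W}$ away from codimension two, which after saturating yields an exact sequence $0\to\mathcal O_{W}\to\mathcal G\to\mathcal Q\to0$ with $\mathcal Q$ numerically flat. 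Iterating, $H^{0}(W,\mathcal G)\otimes_{k}\mathcal O_{W}\to\mathcal G$ realizes the ``trivial part'' as a numerically flat subsheaf with numerically flat quotient. Applying this on $X\times_{k}Y$ and comparing with its restriction to a fibre, and using that the extension data of a numerically flat bundle on a product carries no component in the $X$-direction (a K\"unneth-type statement, most cleanly phrased through the $S$-fundamental group scheme $\pi^{S}(X\times_{k}Y)\cong\pi^{S}(X)\times\pi^{S}(Y)$ in the sense of Langer), one concludes that $\mathcal E|_{X\times_{k}\{y\}}$ is, up to isomorphism, independent of $y$; in particular its $h^{0}$ is constant.

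\textbf{Main obstacle.} The real work is the second paragraph: establishing in positive characteristic that the restrictions $\mathcal E|_{X\times_{k}\{y\}}$ form an iso-trivial family, equivalently that the relevant higher direct image has no torsion. No Hodge theory is available here, so one must argue through the structure theory of numerically flat (strongly semistable, vanishing Chern class) sheaves, and one must stay careful that $X$ and $Y$ are only assumed complete rather than projective. Granting this, the constancy of $h^{0}$ together with Grauert's theorem yields the local freeness of ${\mathrm{pr}_1}_*\mathcal E$, completing the proof.
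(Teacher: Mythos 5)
The paper does not prove this proposition at all; it is quoted verbatim from Langer \cite[Corollary~3.2]{Lan12}, so the only ``proof'' in the paper is the citation. Judged on its own terms, the first paragraph of your proposal is fine: the reduction to $k=\overline k$, semicontinuity plus Grauert, and the reduction to $Y$ a smooth projective curve (where the jump of $h^0$ at a closed point is the length of the torsion of the first higher direct image there) are standard and correct. The structural facts in your second paragraph --- that restrictions of numerically flat bundles to closed subvarieties stay numerically flat, and that a nonzero section of a numerically flat bundle on a normal complete variety spans a saturated trivial subsheaf with numerically flat quotient --- are also correct, modulo the completeness-versus-projectivity care that you yourself flag.

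The gap is exactly where you locate it, and it is not filled. The iso-triviality of the family $y\mapsto\mathcal E|_{X\times_k\{y\}}$ is a much stronger statement than the constancy of $h^0$ that the proposition asserts, and the tool you invoke to get it --- the K\"unneth formula $\pi^S(X\times_kY)\cong\pi^S(X)\times\pi^S(Y)$ --- makes the argument circular relative to the source: in \cite{Lan12} that K\"unneth decomposition is \emph{deduced from} this corollary (the local freeness and numerical flatness of the pushforward is precisely what makes the Tannakian argument for the product work), not the other way around. So the one step that does all the work is assumed rather than proved, as your ``Granting this'' concedes. A non-circular way to close it, and essentially the shape of Langer's argument, is an induction on the rank of $\mathcal E$: if $\mathcal E|_{X\times_k\{y_0\}}$ has a nonzero section for one closed point $y_0$, one produces a numerically trivial line bundle $\mathcal L$ on $Y$ and an injection $\mathrm{pr}_2^*\mathcal L\hookrightarrow\mathcal E$ restricting to a trivial subbundle on every fibre $X\times_k\{y\}$; passing to the numerically flat quotient and inducting yields the constancy of $h^0$ directly, without ever knowing that the restrictions are pairwise isomorphic. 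As written, your proposal records the right reductions and the right auxiliary lemmas but does not contain a proof of the statement.
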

\begin{defn}[\textup{Unipotent vector bundles}] \label{defn:unipotent}
Let $\mathcal E$ be a vector bundle on an abelian variety $Y$. 
We say that $\mathcal E$ is \textit{unipotent} if $\mathcal E$ is obtained 
as an iterated extension of $\mathcal O_Y$. 
\end{defn}
\begin{thm} \label{thm:num flat ab var}
Let $\mathcal E$ be a vector bundle on an abelian variety $Y$. 
Then the following are equivalent:
\begin{enumerate}[$(1)$]
\item $\mathcal E$ is numerically flat; 
\item $\mathcal E$ is obtained as an iterated extension of algebraically trivial line bundles; 
\item $\mathcal E \cong \bigoplus_i \mathcal U_i \otimes \mathcal L_i$, where
each $\mathcal U_i$ is a unipotent vector bundle and each $\mathcal L_i$ 
is an algebraically trivial line bundle. 
\end{enumerate}
\end{thm}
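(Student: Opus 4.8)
The plan is to establish $(1)\Leftrightarrow(2)$ and $(2)\Leftrightarrow(3)$. Of the four implications, $(1)\Rightarrow(2)$ is exactly Theorem~\ref{thm:filtration}. For $(2)\Rightarrow(1)$, an algebraically trivial line bundle is numerically trivial, hence numerically flat, and numerical flatness is stable under extensions because an extension of nef vector bundles is nef (applied both to a bundle and, dually, to its dual); thus any iterated extension of algebraically trivial line bundles is numerically flat. For $(3)\Rightarrow(2)$: tensoring the defining filtration of a unipotent bundle $\mathcal U_i$ by $\mathcal L_i$ exhibits $\mathcal U_i\otimes\mathcal L_i$ as an iterated extension of $\mathcal L_i$, and a finite direct sum of such is again an iterated extension of algebraically trivial line bundles (refine the filtration $0\subset\mathcal F_1\subset\mathcal F_1\oplus\mathcal F_2\subset\cdots$ of the direct sum). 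The real content is $(2)\Rightarrow(3)$.

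The engine for $(2)\Rightarrow(3)$ is the classical vanishing on the abelian variety $Y$: $H^j(Y,\mathcal M)=0$ for every $j$ whenever $\mathcal M$ is a nontrivial algebraically trivial line bundle. A dévissage through unipotent filtrations upgrades this to the statement that, for unipotent bundles $\mathcal U,\mathcal U'$ and algebraically trivial line bundles $\mathcal L\not\cong\mathcal L'$, one has $\mathrm{Ext}^i(\mathcal U\otimes\mathcal L,\mathcal U'\otimes\mathcal L')\cong H^i(Y,\mathcal U^{\vee}\otimes\mathcal U'\otimes\mathcal L'\otimes\mathcal L^{-1})=0$ for all $i$: here $\mathcal U^{\vee}\otimes\mathcal U'$ is again unipotent (duals and tensor products of unipotent bundles are unipotent), so $\mathcal U^{\vee}\otimes\mathcal U'\otimes\mathcal L'\otimes\mathcal L^{-1}$ is an iterated extension of the nontrivial algebraically trivial line bundle $\mathcal L'\otimes\mathcal L^{-1}$, and the vanishing propagates up the long exact sequences.

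Now I would prove $(2)\Rightarrow(3)$ by induction on $r=\mathrm{rank}\,\mathcal E$, the case $r=1$ being trivial. Given a filtration of $\mathcal E$ as in $(2)$, let $\mathcal L\subseteq\mathcal E$ be its bottom step; this is an algebraically trivial line bundle and $\mathcal Q:=\mathcal E/\mathcal L$ inherits such a filtration, so by induction $\mathcal Q\cong(\mathcal U_0\otimes\mathcal L)\oplus\bigoplus_{i\in I}\mathcal U_i\otimes\mathcal L_i$, where I have collected the entire $\mathcal L$-isotypic block of $\mathcal Q$ into a unipotent bundle $\mathcal U_0$ (possibly zero) and $\mathcal L_i\not\cong\mathcal L$ for $i\in I$. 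By the vanishing above, $\mathrm{Ext}^1(\mathcal U_i\otimes\mathcal L_i,\mathcal L)=0$ for $i\in I$, so the class of $0\to\mathcal L\to\mathcal E\to\mathcal Q\to0$ lies in the direct summand $\mathrm{Ext}^1(\mathcal U_0\otimes\mathcal L,\mathcal L)$ of $\mathrm{Ext}^1(\mathcal Q,\mathcal L)$; this yields a splitting $\mathcal E\cong\mathcal V\oplus\bigoplus_{i\in I}\mathcal U_i\otimes\mathcal L_i$ with $0\to\mathcal L\to\mathcal V\to\mathcal U_0\otimes\mathcal L\to0$. Tensoring this last sequence by $\mathcal L^{-1}$ shows $\mathcal V\otimes\mathcal L^{-1}$ is an extension of $\mathcal U_0$ by $\mathcal O_Y$, hence unipotent, so $\mathcal V\cong(\mathcal V\otimes\mathcal L^{-1})\otimes\mathcal L$ is of the form $\mathcal U\otimes\mathcal L$ and $\mathcal E$ has the desired shape. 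The main obstacle is precisely this inductive step: one must collect all summands of $\mathcal Q$ isomorphic-up-to-twist to $\mathcal L$ before invoking the $\mathrm{Ext}^1$-vanishing, verify that an extension of a unipotent bundle by $\mathcal O_Y$ stays unipotent, and check the closure of the class of unipotent bundles under duals and tensor products used in the dévissage.
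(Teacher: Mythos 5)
Your proposal is correct and follows essentially the same route as the paper: $(1)\Rightarrow(2)$ is quoted from Theorem~\ref{thm:filtration}, the easy implications are handled by standard nefness/filtration arguments, and $(2)\Rightarrow(3)$ is done by induction on the rank using the vanishing of $\mathrm{Ext}^1$ between non-isomorphic algebraically trivial line bundles (upgraded by d\'evissage to unipotent twists), which is exactly the key fact the paper's terse proof invokes. Your write-up simply supplies the details the paper leaves implicit.
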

\begin{proof}
(1) $\Rightarrow$ (2) follows from Theorem~\ref{thm:filtration}. 
(2) $\Rightarrow$ (3) is proved by induction on the rank and the fact that 
for every algebraically trivial line bundles $\mathcal L$ and $\mathcal M$, 
$$
\mathrm{Ext}^1(\mathcal L, \mathcal M) 
\cong 
\begin{cases}
	k^{\oplus \dim Y} & \textup{if $\mathcal L\cong\mathcal M$}, \\
	0 & \textup{otherwise.}
\end{cases}
$$ 
(3) $\Rightarrow$ (1) is obvious. 
\end{proof}
\begin{lem} \label{lem:decomp0}
Let $\mathcal E$ be a numerically flat vector bundle on an abelian variety $Y$. 
Then there exists an isogeny $\pi:Z\to Y$ such that 
$\pi^*\mathcal E$ is a direct sum of algebraically trivial line bundles. 
\end{lem}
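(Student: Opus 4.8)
The plan is to reduce to the case of a unipotent bundle using Theorem~\ref{thm:num flat ab var} and then to exploit the characteristic-$p$ phenomenon that pulling back along the multiplication-by-$p$ isogeny annihilates $H^1(Y,\mathcal O_Y)$. First I would apply Theorem~\ref{thm:num flat ab var} to write $\mathcal E\cong\bigoplus_i\mathcal U_i\otimes\mathcal L_i$ with each $\mathcal U_i$ unipotent and each $\mathcal L_i$ algebraically trivial. Pullback along an isogeny commutes with tensor products and direct sums and sends an algebraically trivial line bundle to an algebraically trivial line bundle; hence it suffices to produce one isogeny $\pi\colon Z\to Y$ with $\pi^*\mathcal U_i\cong\mathcal O_Z^{\oplus\mathrm{rank}\,\mathcal U_i}$ for every $i$ simultaneously, because then $\pi^*\mathcal E\cong\bigoplus_i(\pi^*\mathcal L_i)^{\oplus\mathrm{rank}\,\mathcal U_i}$ is a direct sum of algebraically trivial line bundles.

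The decisive input is that the pullback map $[p]^*\colon H^1(Y,\mathcal O_Y)\to H^1(Y,\mathcal O_Y)$ along multiplication by $p$ is zero: $H^1(Y,\mathcal O_Y)$ is canonically, and functorially in $Y$, the tangent space at the origin of $\mathrm{Pic}^0_Y$; the endomorphism of $\mathrm{Pic}^0_Y$ induced by $[p]$ is again multiplication by $p$ (as $[p]^*\mathcal L\cong\mathcal L^{\otimes p}$ for $\mathcal L\in\mathrm{Pic}^0$); and its differential at the origin is multiplication by $p$, which is $0$ in characteristic $p$. Granting this, I would prove by induction on the rank $r$ that $[p^{r-1}]^*\mathcal U\cong\mathcal O_Y^{\oplus r}$ for every unipotent bundle $\mathcal U$ of rank $r$. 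The case $r=1$ is trivial. For $r>1$, choose an exact sequence $0\to\mathcal O_Y\to\mathcal U\to\mathcal U''\to0$ with $\mathcal U''$ unipotent of rank $r-1$; by the inductive hypothesis $[p^{r-2}]^*\mathcal U''\cong\mathcal O_Y^{\oplus(r-1)}$, so $[p^{r-2}]^*\mathcal U$ is an extension of $\mathcal O_Y^{\oplus(r-1)}$ by $\mathcal O_Y$, classified by an element of $\mathrm{Ext}^1(\mathcal O_Y^{\oplus(r-1)},\mathcal O_Y)\cong H^1(Y,\mathcal O_Y)^{\oplus(r-1)}$; this class is killed by $[p]^*$, so $[p^{r-1}]^*\mathcal U=[p]^*\big([p^{r-2}]^*\mathcal U\big)$ splits and hence is $\mathcal O_Y^{\oplus r}$.

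To conclude, put $N:=\max_i\big(\mathrm{rank}\,\mathcal U_i-1\big)$ and take $\pi:=[p^N]\colon Y\to Y$, which is an isogeny. Factoring $[p^N]=[p^{N-\mathrm{rank}\,\mathcal U_i+1}]\circ[p^{\mathrm{rank}\,\mathcal U_i-1}]$ and using that the pullback of a trivial bundle is trivial, one gets $\pi^*\mathcal U_i\cong\mathcal O_Y^{\oplus\mathrm{rank}\,\mathcal U_i}$ for every $i$, so that $\pi^*\mathcal E$ is a direct sum of algebraically trivial line bundles, as required. The only step that is not formal bookkeeping is the vanishing $[p]^*=0$ on $H^1(Y,\mathcal O_Y)$; this is precisely where positive characteristic is essential (in characteristic zero unipotent bundles on abelian varieties cannot be trivialized by isogenies), so I expect the careful justification of that vanishing to be the main point of the argument rather than any serious obstacle.
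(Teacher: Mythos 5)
Your proof is correct and follows essentially the same route as the paper's: reduce via Theorem~\ref{thm:num flat ab var} to trivializing the unipotent factors, then use the characteristic-$p$ vanishing of pullback on $H^1(Y,\mathcal O_Y)$ along an isogeny. The paper merely cites that vanishing as a well-known fact, whereas you identify the isogeny explicitly as an iterate of $[p]$ and carry out the induction on rank that splits the extension classes --- a more detailed rendering of the same argument, not a different one.
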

\begin{proof}
By Theorem~\ref{thm:num flat ab var}, we may assume that $\mathcal E$ is unipotent. 
Then the claim follows from the well-known fact that, 
in positive characteristic, there is an isogeny
$\pi:Z\to Y$ such that the induced map 
$
\pi^*:H^1(Y,\mathcal O_Y)\to H^1(Z,\mathcal O_Z)
$
is the zero-map. 
\end{proof}
\begin{lem} \label{lem:decomp}
Let $\mathcal E$ be a numerically flat vector bundle on an abelian variety $Y$
and let $\bigoplus_i \mathcal N_i \twoheadrightarrow \mathcal E$
be a surjective morphism from a direct sum of algebraically trivial line bundles on $Y$. 
Then $\mathcal E$ is isomorphic to a direct sum of algebraically trivial 
line bundles on $Y$. 
\end{lem}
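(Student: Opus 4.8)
The plan is to combine the structure theorem for numerically flat bundles on an abelian variety (Theorem~\ref{thm:num flat ab var}) with a single auxiliary claim: \emph{a globally generated unipotent vector bundle on $Y$ is trivial}. Once this claim is in hand, the lemma follows by a short bookkeeping argument with the decomposition $\mathcal E\cong\bigoplus_j\mathcal U_j\otimes\mathcal L_j$.

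First I would write $\mathcal E\cong\bigoplus_j\mathcal U_j\otimes\mathcal L_j$ using Theorem~\ref{thm:num flat ab var}(3), with each $\mathcal U_j$ unipotent (Definition~\ref{defn:unipotent}) and each $\mathcal L_j$ algebraically trivial. Fix an index $j$ and compose the given surjection $\bigoplus_i\mathcal N_i\twoheadrightarrow\mathcal E$ with the projection onto the summand $\mathcal U_j\otimes\mathcal L_j$; this yields a surjection $\bigoplus_i\mathcal N_i\twoheadrightarrow\mathcal U_j\otimes\mathcal L_j$ whose $i$-th component lies in $\mathrm{Hom}(\mathcal N_i,\mathcal U_j\otimes\mathcal L_j)=H^0(Y,\mathcal U_j\otimes\mathcal L_j\otimes\mathcal N_i^{-1})$. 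Here I would use the observation that $H^0(Y,\mathcal U\otimes\mathcal M)=0$ whenever $\mathcal U$ is unipotent and $\mathcal M$ is a nontrivial algebraically trivial line bundle: this follows by induction on $\operatorname{rank}\mathcal U$ from the defining filtration of $\mathcal U$ with graded pieces $\mathcal O_Y$ and from $H^0(Y,\mathcal M)=0$ for nontrivial $\mathcal M\in\mathrm{Pic}^0(Y)$ (an effective numerically trivial divisor on a projective variety is zero). Hence the $i$-th component vanishes unless $\mathcal N_i\cong\mathcal L_j$, so the surjection already factors through $\bigoplus_{i:\,\mathcal N_i\cong\mathcal L_j}\mathcal N_i$; twisting by $\mathcal L_j^{-1}$ gives a surjection $\mathcal O_Y^{\oplus s_j}\twoheadrightarrow\mathcal U_j$, i.e.\ $\mathcal U_j$ is globally generated.

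Next I would prove the key claim. Let $\mathcal U$ be a globally generated unipotent bundle of rank $r$. Surjectivity of the evaluation map $H^0(Y,\mathcal U)\otimes_k\mathcal O_Y\to\mathcal U$, read off at a point, forces $h^0(Y,\mathcal U)\ge r$; on the other hand, sub-additivity of $h^0$ along the filtration of $\mathcal U$ with graded pieces $\mathcal O_Y$ gives $h^0(Y,\mathcal U)\le r$. Therefore $h^0(Y,\mathcal U)=r$, so the evaluation map is a surjection between vector bundles of the same rank $r$, hence an isomorphism, and $\mathcal U\cong\mathcal O_Y^{\oplus r}$.

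Combining the two parts, $\mathcal U_j\cong\mathcal O_Y^{\oplus r_j}$ for every $j$, and thus $\mathcal E\cong\bigoplus_j\mathcal L_j^{\oplus r_j}$ is a direct sum of algebraically trivial line bundles, which is the assertion. The only genuinely non-formal step is the key claim on globally generated unipotent bundles, and even that reduces to the elementary $h^0$-count above; the rest is organizing the structure theorem and the vanishing $H^0(Y,\mathcal U\otimes\mathcal M)=0$. (Alternatively one could descend triviality of $\mathcal U_j$ along the isogeny furnished by Lemma~\ref{lem:decomp0}, but the direct argument seems shorter.)
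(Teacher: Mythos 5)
Your argument is correct and follows essentially the same route as the paper: decompose $\mathcal E$ via Theorem~\ref{thm:num flat ab var}, use the vanishing of $\mathrm{Hom}(\mathcal N_i,\mathcal U_j\otimes\mathcal L_j)$ for $\mathcal N_i\not\cong\mathcal L_j$ to see that each $\mathcal U_j$ is globally generated, and conclude $\mathcal U_j\cong\mathcal O_Y^{\oplus r_j}$. The only difference is cosmetic: the paper reduces to a single summand $\mathcal E\cong\mathcal U_1\otimes\mathcal L_1$ and asserts the triviality of a globally generated unipotent bundle without proof, whereas you treat all summands at once and supply the (correct) $h^0$-count justifying that last step.
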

\begin{proof}
By Theorem~\ref{thm:num flat ab var}, 
we have $\mathcal E\cong \bigoplus_i\mathcal U_i\otimes \mathcal L_i$, 
where $\mathcal U_i$ is a unipotent vector bundle and $\mathcal L_i$ 
is an algebraically trivial line bundle. 
We may assume that $\mathcal E \cong \mathcal U_1 \otimes \mathcal L_1$. 
Since 
$$
\mathrm{Hom}(\mathcal N_i,\mathcal L_1)
\cong 
\begin{cases} 
k & \textup{if}~\mathcal N_i \cong \mathcal L_1, \\
0 & \textup{otherwise.}
\end{cases}
$$
we may assume that $\mathcal N_i \cong \mathcal L_1$ for each $i$. 
Then $\mathcal U_1$ is globally generated,
so $\mathcal U_1 \cong \mathcal O_Y^{\oplus r}$, where $r:=\mathrm{rank}\,\mathcal U_1$, 
and hence $\mathcal E \cong \mathcal L_1^{\oplus r}$. 
\end{proof}
\subsection{Case when $-K_X$ is nef}
\begin{thm} \label{thm:decomp2}
Let $(X,\Delta)$ be a strongly $F$-regular projective variety 
and let $Y$ be an abelian variety. 
Let $f:X\to Y$ be an algebraic fiber space. 
Suppose that 
\begin{itemize} \leftskip=-10pt
\item $-K_X-\Delta$ is a nef $\mathbb Z_{(p)}$-Cartier divisor, and 
\item $\left(X_{\overline\eta}, \Delta|_{X_{\overline\eta}}\right)$
is strongly $F$-regular, where $X_{\overline\eta}$ is the geometric generic fiber of $f$. 
\end{itemize}
Then $X_y\cong X_z$ for every $y,z\in Y(k)$. 
\end{thm}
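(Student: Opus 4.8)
The plan is to combine Theorem~\ref{thm:num flat} with the structure theory of numerically flat bundles on abelian varieties from Section~\ref{section:over abelian}. First I would set up the relevant graded algebra. Since $(X,\Delta)$ is strongly $F$-regular, $X$ is normal and Cohen--Macaulay, and since $\omega_Y\cong\mathcal O_Y$ we have $-K_{X/Y}-\Delta\sim-(K_X+\Delta)$, a nef $\mathbb Z_{(p)}$-Cartier divisor; hence Theorem~\ref{thm:num flat} supplies an $f$-ample Cartier divisor $A$ on $X$ with $f_*\mathcal O_X(mA)$ numerically flat for every $m\ge 0$. Replacing $A$ by a sufficiently divisible multiple I may assume that $\mathcal R:=\bigoplus_{m\ge 0}f_*\mathcal O_X(mA)$ is generated in degree one, so that $X\cong\mathrm{Proj}_Y\mathcal R$ and $X_y\cong\mathrm{Proj}\bigl(\mathcal R\otimes_{\mathcal O_Y}k(y)\bigr)$ for every $y\in Y(k)$. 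It then suffices to prove $X_y\cong X_0$ for every $y\in Y(k)$, where $0$ is the origin, since then $X_y\cong X_0\cong X_z$ for all $y,z$.

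Next I would pass to an isogeny that rigidifies $\mathcal R$. Applying Lemma~\ref{lem:decomp0} to $\mathcal R_1$ gives an isogeny after which $\mathcal R_1$ is a direct sum of algebraically trivial line bundles; as $\mathcal R$ is generated in degree one, each $\mathcal R_m$ is then a quotient of a symmetric power of $\mathcal R_1$, hence, by Lemma~\ref{lem:decomp}, again a direct sum of algebraically trivial line bundles, and the same holds, via Theorem~\ref{thm:num flat ab var}, for the relation sheaves $\mathcal I_m=\mathrm{Ker}(\mathrm{Sym}^m\mathcal R_1\to\mathcal R_m)$ (numerically flat subsheaves of such sums). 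Composing with a multiplication-by-$n$ isogeny I may moreover arrange that the finitely many isomorphism classes of line bundles occurring as summands generate a \emph{torsion-free} subgroup $G$ of $\mathrm{Pic}^0$ of the new base. Write $\pi\colon Z\to Y$ for the resulting isogeny, chosen over $k$, so that the whole presentation $\pi^*\mathcal R=\mathrm{Sym}(\pi^*\mathcal R_1)/\pi^*\mathcal I$ is built from algebraically trivial line bundles on $Z$.

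The heart of the argument is to show that $\pi^*X\to Z$ is then Zariski-locally trivial. One has $\pi^*\mathcal R=\bigoplus_{b\in G}\mathcal A_b\otimes_k\mathcal P_b$, where $\mathcal P_b$ is the algebraically trivial line bundle of class $b$ and $\mathcal A_b$ is a fixed graded $k$-vector space: this is forced because $\mathrm{Hom}$ between algebraically trivial line bundles vanishes unless they are isomorphic and is one-dimensional otherwise, so the multiplication on $\pi^*\mathcal R$ must respect the $\mathrm{Pic}^0$-isotypic decomposition, its line-bundle factor being the canonical isomorphism $\mathcal P_b\otimes\mathcal P_{b'}\xrightarrow{\ \sim\ }\mathcal P_{b+b'}$ coming from the bi-extension (theorem-of-the-square) structure on the Poincar\'e bundle of $Z\times\widehat Z$. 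Over an affine open $U\subseteq Z$ on which finitely many generators of $G$ — hence all $\mathcal P_b$ — are trivial, the discrepancy between this multiplication and the constant one is a symmetric $2$-cocycle on $G\cong\mathbb Z^{\oplus r}$ with values in $\mathcal O_U^{\times}$, necessarily a coboundary since $\mathrm{Ext}^1(\mathbb Z^{\oplus r},-)=0$; rescaling the trivializations makes $\pi^*\mathcal R|_U$ the constant algebra $\bigl(\bigoplus_b\mathcal A_b\bigr)\otimes_k\mathcal O_U$, whence $\pi^*X|_U\cong X_0\times_k U$. Thus all fibres of $\pi^*X\to Z$ are isomorphic to $X_0$; as $\pi$ is surjective and defined over $k$, and as the translation $t_y$ of $Y$ by any $y\in Y(k)$ is a $k$-automorphism realizing $X_y$ as the fibre over the origin of the fibre space $t_y^*X\to Y$ (to which the same analysis applies), one obtains $X_y\cong X_0$ over $k$, and therefore $X_y\cong X_z$ for all $y,z\in Y(k)$.

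I expect the main obstacle to be exactly this third step: upgrading the degreewise isomorphisms of the numerically flat bundles $\mathcal R_m$ to an isomorphism of the relative algebra (equivalently, of the $Y$-scheme $\mathrm{Proj}_Y\mathcal R$). This is why the isogeny is indispensable — one must annihilate both the unipotent parts of the $\mathcal R_m$ and the torsion among the occurring $\mathrm{Pic}^0$-classes so that the multiplication becomes rigid (pinned down by one-dimensional $\mathrm{Hom}$'s and a commutative bi-extension, which splits over a free group) — and one must take care that the isomorphisms of fibres need \emph{not} respect the $\mathrm{Pic}^0$-grading, which is precisely why the comparison is carried out through a local trivialization rather than degree by degree. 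The reduction to $X_y\cong X_0$ and the descent to $k$-rational points are, by contrast, routine.
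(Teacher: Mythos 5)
Your first two steps coincide with the paper's: invoke Theorem~\ref{thm:num flat} to get an $f$-ample $A$ with $\bigoplus_m f_*\mathcal O_X(mA)$ numerically flat and generated in degree one, then use Lemma~\ref{lem:decomp0} and Lemma~\ref{lem:decomp} (after an isogeny) to make every graded piece a direct sum of algebraically trivial line bundles. Your endgame, however, diverges from the paper's, and it is there that the argument breaks. The paper does not prove local triviality of $\pi^*X\to Z$; instead it tensors the graded algebra with the (non-coherent) sheaf $\mathcal G=\bigoplus_{\mathcal L\in\mathrm{Pic}^0(Y)}\mathcal L$ and takes global sections, producing one fixed $k$-algebra $\bigoplus_m R_m$ together with an evaluation map $R_m\to f_*\mathcal O_X(mA)\otimes k(y)$ that is an isomorphism for \emph{every} $y\in Y(k)$; the conclusion $X_y\cong X_z$ then falls out by comparing the two evaluations. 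Your cocycle analysis of the multiplication (one-dimensional $\mathrm{Hom}$'s, vanishing of $\mathrm{Ext}^1(\mathbb Z^{\oplus r},-)$ after killing torsion) is a legitimate, and in fact more explicit, way of seeing why such a comparison is possible, and the local triviality of $\pi^*X\to Z$ that it yields is correct.

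The genuine gap is the step you dismiss as routine: passing from fibres over $Z$ back to fibres over $k$-rational points of $Y$. An isogeny $\pi:Z\to Y$ is surjective as a morphism of schemes but \emph{not} on $k$-points when $k$ is not algebraically closed (multiplication by $2$ on an elliptic curve over $\mathbb Q$ already fails this), so for $y\in Y(k)$ not lifting to $Z(k)$ your local triviality only gives $X_y\otimes_k\kappa(z)\cong F\otimes_k\kappa(z)$ for some finite extension $\kappa(z)/k$, i.e.\ $X_y$ is a form of $F$, not isomorphic to $F$ over $k$. The translation patch does not close this: applying the whole analysis to $t_y^*X\to Y$ produces its own isogeny and its own fibre $F_y$ with $X_y\cong F_y$ over $k$, but $F_y$ and $F_0$ are then only identified over $\overline k$, which is exactly the problem you started with. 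To repair the argument you need a single $k$-algebra with canonical evaluation isomorphisms at every $k$-point of $Y$ simultaneously -- which is precisely what the paper's $\mathcal G$-construction (equivalently, a $\mathrm{Pic}^0$-isotypic global-sections functor) is for; your isotypic decomposition $\pi^*\mathcal R=\bigoplus_b\mathcal A_b\otimes\mathcal P_b$ together with the compatible system of isomorphisms $\mathcal P_b\otimes\mathcal P_{b'}\cong\mathcal P_{b+b'}$ (whose obstruction lives in $\mathrm{Ext}^1(G,k^\times)=0$ since $H^0(Z,\mathcal O_Z^\times)=k^\times$) can be turned into such a construction, but that step must actually be carried out rather than asserted.
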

\begin{proof}
Let $A$ be as in Theorem~\ref{thm:num flat}. 
We may assume that 
\begin{itemize}
\item the $\mathcal O_Y$-algebra $\bigoplus_{m\ge0}f_*\mathcal O_X(mA)$ is generated by $f_*\mathcal O_X(A)$, and 
\item for each $m\in\mathbb Z_{>0}$ and every closed point $y\in A$, 
	the natural morphism 
$$
f_*\mathcal O_X(mA) \otimes k(y) 
\to
H^0\left(X_y, \mathcal O_{X_y}(mA_y) \right)
$$
is an isomorphism. 
\end{itemize}
By Lemma~\ref{lem:decomp0}, 
replacing $f_*\mathcal O_X(A)$ by its pullback by an isogeny, 
we may assume that $f_*\mathcal O_X(A)$ is isomorphic to a direct sum of 
algebraically trivial line bundles. 
By the surjective morphism 
$
\bigotimes^m f_*\mathcal O_X(A)
\twoheadrightarrow f_*\mathcal O_X(mA)
$
and Lemma~\ref{lem:decomp}, we see that $f_*\mathcal O_X(mA)$ is isomorphic 
to a direct sum of algebraically trivial line bundles. 
Put $\mathcal G:=\bigoplus_{\mathcal L\in\mathrm{Pic}^0(Y)}\mathcal L$. 
Note that $\mathcal G$ is not coherent. 
For each $l,m\in\mathbb Z_{\ge0}$, we have the morphism
$$
\left( f_*\mathcal O_X(lA) \otimes \mathcal G \right)
\otimes \left( f_*\mathcal O_X(mA) \otimes \mathcal G \right)
\to f_*\mathcal O_X((l+m)A) \otimes \mathcal G 
$$
so $\bigoplus_{m\ge0} f_*\mathcal O_X(mA)\otimes\mathcal G$ forms $\mathcal O_Y$-algebra, which induces $k$-algebra 
$$
\bigoplus_{m\ge0} R_m
:= \bigoplus_{m\ge0} H^0(Y, f_*\mathcal O_X(mA)\otimes \mathcal G). 
$$
One can easily check that 
$$
R_m = H^0(Y, f_*\mathcal O_X(mA)\otimes \mathcal G )
\to 
f_*\mathcal O_X(mA) \otimes k(y)
$$
is an isomorphism, so 
$$
\bigoplus_{m\ge0}H^0\left(X_y, \mathcal O_{X_y}\left(mA_y\right)\right)
\cong 
\bigoplus_{m\ge0} R_m 
\cong 
\bigoplus_{m\ge0}H^0\left(X_z, \mathcal O_{X_z}\left(mA_z\right)\right) 
$$
for every $y,z\in A(k)$, which means that $X_y\cong X_z$.  
\end{proof}
\subsection{Case when $-K_X$ is numerically equivalent to a semi-ample divisor}
\begin{thm} \label{thm:decomp3}
Let $(X,\Delta)$ be a strongly $F$-regular projective variety 
and let $Y$ be an abelian variety. 
Let $f:X\to Y$ be a surjective morphism. 
Suppose that 
\begin{itemize} \leftskip=-10pt
\item $K_X+\Delta$ is $\mathbb Z_{(p)}$-Cartier, 
\item $-K_X-\Delta \sim_{\mathbb Q} D+f^*P$ for 
a semi-ample $\mathbb Q$-Cartier divisor $D$ on $X$ and 
a numerically trivial $\mathbb Q$-divisor $P$ on $Y$, and 
\item $\left(X_{\overline\eta}, \Delta|_{X_{\overline\eta}}\right)$
is strongly $F$-regular, where $X_{\overline\eta}$ is the geometric generic fiber of $f$. 
\end{itemize}
Then $P\sim_{\mathbb Q}0$ and there exists an isogeny $\pi:Z\to Y$ such that 
$$
(X_Z, \Delta_Z) \cong (X_0, \Delta|_{X_0}) \times_k Z, 
$$
as $Z$-schemes, where $X_0$ is the fiber over the identity element $0\in Y(k)$. 
\end{thm}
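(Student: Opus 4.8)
The plan is to reduce the problem, via the vanishing of $K_Y$, to the ``nef relative anti-canonical'' situation of Theorems~\ref{thm:PZ}, \ref{thm:num flat} and~\ref{thm:decomp2}, and then to use the semi-ampleness of $D$ — the one ingredient absent from Theorem~\ref{thm:decomp2} — to upgrade ``numerically flat push-forwards'' to ``trivial push-forwards after a further isogeny''. Concretely: since $Y$ is an abelian variety, $K_Y\sim 0$, so $-K_{X/Y}-\Delta=-(K_X+\Delta)\sim_{\mathbb Q}D+f^*P$ is a nef $\mathbb Z_{(p)}$-Cartier divisor ($D$ being semi-ample and $f^*P$ nef, as $P$ is numerically trivial). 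Hence $f$ is flat, equi-dimensional, with normal strongly $F$-regular geometric fibres (Theorem~\ref{thm:PZ}), all fibres over $k$-points are isomorphic (Theorem~\ref{thm:decomp2}), and there is an $f$-ample Cartier divisor $A$ on $X$ with $f_*\mathcal O_X(mA)$ numerically flat for every $m\ge 0$ (Theorem~\ref{thm:num flat}~(2)); after rescaling $A$ I may assume $\mathcal R:=\bigoplus_{m\ge 0}f_*\mathcal O_X(mA)$ is generated in degree one and that forming $f_*\mathcal O_X(mA)$ commutes with base change.

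I would then pass to a chain of isogenies. By Lemma~\ref{lem:decomp0} there is an isogeny $Y_1\to Y$ over which $f_{1*}\mathcal O_{X_1}(A_1)$ is a direct sum of algebraically trivial line bundles; as each $f_{1*}\mathcal O_{X_1}(mA_1)$ is numerically flat and a quotient of $\bigotimes^m f_{1*}\mathcal O_{X_1}(A_1)$, Lemma~\ref{lem:decomp} makes every $f_{1*}\mathcal O_{X_1}(mA_1)$ a direct sum of elements of $\mathrm{Pic}^0(Y_1)$. \emph{The decisive step — and the one I expect to be the main obstacle — is to show that the $\mathrm{Pic}^0(Y_1)$-classes occurring in $f_{1*}\mathcal O_{X_1}(A_1)$ are torsion.} This is exactly where the semi-ampleness of $D$ must enter: for merely nef $-K_{X/Y}-\Delta$ one obtains only the isomorphism of fibres (Theorem~\ref{thm:decomp2}), because a numerically trivial \emph{nef} $\mathbb Q$-divisor on an abelian variety need not be torsion, whereas a numerically trivial \emph{semi-ample} $\mathbb Q$-divisor on a projective variety is always $\mathbb Q$-linearly trivial. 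I would extract the torsionness by passing to the relative ample model $g\colon X\to W$ over $Y$ of $-K_{X/Y}-\Delta$ (which exists, since $-K_{X/Y}-\Delta$ is $f$-semi-ample): its geometric fibres are strongly $F$-regular with numerically trivial log-canonical class, hence of log Fano type with trivial Picard variety, and pairwise isomorphic (since the fibres of $f$ are). Comparing $\mathcal R$ with push-forwards of line bundles pulled back along $g$, the numerically trivial line bundles measuring the non-triviality of $f_{1*}\mathcal O_{X_1}(A_1)$ get represented by restrictions of the semi-ample $D$ to multisections of $f$, hence are both numerically trivial and semi-ample, hence $\mathbb Q$-linearly trivial. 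Making this comparison rigorous — presumably through a canonical-bundle-formula analysis of $g$, in the spirit of the generalized-pair positivity theorems of Section~\ref{section:positivity theorems} — is the hard part.

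Granting this, I kill the finitely many torsion classes in $f_{1*}\mathcal O_{X_1}(A_1)$ (finitely many because $\mathcal R$ is generated in degree one) by one more isogeny $\pi\colon Z\to Y$, obtaining $f_{Z*}\mathcal O_{X_Z}(A_Z)\cong\mathcal O_Z^{\oplus r_1}$; since each $f_{Z*}\mathcal O_{X_Z}(mA_Z)$ is then numerically flat and globally generated (a quotient of $\mathcal O_Z^{\oplus r_1^{m}}$), it equals $\mathcal O_Z^{\oplus r_m}$. Thus $\bigoplus_m f_{Z*}\mathcal O_{X_Z}(mA_Z)$ is pulled back along $Z\to\operatorname{Spec}k$ from a graded $k$-algebra, so $X_Z=\operatorname{Proj}_Z\bigoplus_m f_{Z*}\mathcal O_{X_Z}(mA_Z)\cong X_0'\times_k Z$ as $Z$-schemes for a projective $k$-scheme $X_0'$; restricting to the fibre over $0\in Z(k)$ (above $0\in Y(k)$) identifies $X_0'$ with $X_0$, and \cite[Lemma~8.4]{PZ19}, used as in the proof of Theorem~\ref{thm:decomp1}, identifies $\Delta_Z$ with $\mathrm{pr}_1^*(\Delta|_{X_0})$. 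This gives $(X_Z,\Delta_Z)\cong(X_0,\Delta|_{X_0})\times_k Z$.

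Finally I would deduce $P\sim_{\mathbb Q}0$ from this product structure. Writing $\pi_X\colon X_Z\to X$ for the projection, the relative canonical formula gives $K_{X_Z/Z}+\Delta_Z=\pi_X^*(K_X+\Delta)=\mathrm{pr}_1^*(K_{X_0}+\Delta|_{X_0})$, while restricting $-(K_X+\Delta)\sim_{\mathbb Q}D+f^*P$ to $X_0$ (where $f^*P$ becomes $\mathbb Q$-linearly trivial) gives $-(K_{X_0}+\Delta|_{X_0})\sim_{\mathbb Q}D|_{X_0}$. Combining with $\pi_X^*(D+f^*P)=\pi_X^*D+\mathrm{pr}_2^*\pi^*P$ (as $f\circ\pi_X=\pi\circ f_Z$ and $f_Z=\mathrm{pr}_2$) yields $\pi_X^*D\sim_{\mathbb Q}\mathrm{pr}_1^*(D|_{X_0})-\mathrm{pr}_2^*\pi^*P$ on $X_0\times_k Z$. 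Restricting to $\{x_0\}\times_k Z$ for a point $x_0\in X_0$ gives $\pi_X^*D|_{\{x_0\}\times_k Z}\sim_{\mathbb Q}-\pi^*P$; the left-hand side is semi-ample (the restriction to a subvariety of the pull-back of the semi-ample $D$ along the finite morphism $\pi_X$) and the right-hand side is numerically trivial, so both are $\mathbb Q$-linearly trivial, whence $P\sim_{\mathbb Q}0$ since $\pi$ is finite surjective.
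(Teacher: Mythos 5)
Your reduction to the nef relative setting, the use of Lemma~\ref{lem:decomp0} and Lemma~\ref{lem:decomp} to write every $f_*\mathcal O_X(mA)$ as a direct sum of elements of $\mathrm{Pic}^0$, and the final two steps (killing finitely many torsion classes by an isogeny, and deducing $P\sim_{\mathbb Q}0$ from the product structure by restricting to $\{x_0\}\times_k Z$) all match the paper and are sound. But the step you yourself flag as decisive --- that the algebraically trivial line bundles occurring in $f_*\mathcal O_X(A)$ are \emph{torsion} --- is left as a sketch that does not constitute a proof, and the sketch you give (pass to the relative ample model $g\colon X\to W$ of $-K_{X/Y}-\Delta$ and run a canonical-bundle-formula comparison so that the offending classes become ``restrictions of $D$ to multisections'') is not how the difficulty is resolved, nor is it clear it can be made to work: nothing in your outline explains why a direct summand $\mathcal L$ of $f_*\mathcal O_X(A)$, where $A$ is an auxiliary $f$-ample twist unrelated to $D$, should be realized by a semi-ample divisor class. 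This is precisely the content of the theorem, so the proposal has a genuine gap at its core.

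What the paper actually does at this point is a Frobenius-trace pigeonhole argument, which also produces $P\sim_{\mathbb Q}0$ \emph{before} (not after) the product structure. After arranging $A$ effective and choosing $\Delta'=\Delta+\tfrac1l A$ with $\left(X_{\overline\eta},\Delta'|_{X_{\overline\eta}}\right)$ strongly $F$-regular, one considers the finite sets $\mathbb L(\mathcal G_r)$ of numerically trivial quotients of $\mathcal G_r:=f_*\mathcal O_X(mA-r(K_X+\Delta))$ for $r$ in a fixed finite index set, and the sets $M(n)$ of $n$-fold products of these classes. The identity $(1-p^e)(K_X+\Delta')+mp^eA-p^er(K_X+\Delta)\sim -r'(K_X+\Delta)+iqD+iqf^*P+\nu mA$ with $\nu:=p^e+\frac{1-p^e}{lm}<p^e$, combined with Lemma~\ref{lem:semi-ample decomp} (this is where the semi-ampleness of $D$ enters: $iqD$ is absorbed into a direct sum because $|iD|$ is free) and the generic surjectivity of the trace map $(\ast)$, shows that every $\mathcal L\in M(1)$ satisfies $\mathcal L^{p^e}(-iqP)\in M(\nu)$ with $\nu$ strictly smaller than $p^e$. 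Finiteness of the $M(n)$ then forces first $P\sim_{\mathbb Q}0$ and then $\bigcup_n M(n)$ finite, i.e.\ every class in $M(1)$ is torsion. Without an argument of this kind (or a genuine substitute), your proof does not go through; everything downstream of the gap is fine.
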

The following lemma is used in the proof of Theorem~\ref{thm:decomp3}. 
\begin{lem} \label{lem:semi-ample decomp}
Let $f:X\to Y$ be a surjective morphism between projective varieties. 
Let $D$ be a Cartier divisor on $X$ with $|D|$ free. 
Let $A$ be an $f$-ample Cartier divisor on $X$. 
Then there exists an $m_0\in\mathbb Z_{>0}$ with the following property:
For each $n\in\mathbb Z_{\ge 0}$, there exists a surjective morphism 
$$
\bigoplus f_*\mathcal O_X(mA+N) 
\twoheadrightarrow f_*\mathcal O_X(nD+mA+N) 
$$
for each $m\ge m_0$ and every nef Cartier divisor $N$ on $X$. 
\end{lem}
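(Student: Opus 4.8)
The plan is to reduce the lemma to a single application of Keeler's relative Fujita vanishing theorem \cite[Theorem~1.5]{Kee03}, applied to one fixed coherent sheaf, so that the resulting threshold $m_0$ is automatically uniform in both $n$ and $N$. First I would put $V:=H^0(X,\mathcal O_X(D))$ (finite dimensional since $X$ is projective) and use the hypothesis that $|D|$ is free to obtain the surjective evaluation morphism $\mathrm{ev}\colon V\otimes_k\mathcal O_X\to\mathcal O_X(D)$; let $\mathcal M$ denote its kernel, a coherent sheaf on $X$. Since $|D|$ free forces $|jD|$ free for every $j\ge 0$, each $jD$ is a nef Cartier divisor, hence $jD+N$ is an $f$-nef Cartier divisor for every nef Cartier divisor $N$ on $X$. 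Then I would invoke \cite[Theorem~1.5]{Kee03} with the $f$-ample divisor $A$ and the sheaf $\mathcal M$ to get one integer $m_0\in\mathbb Z_{>0}$ with $R^1f_*(\mathcal M\otimes\mathcal O_X(mA+N'))=0$ for all $m\ge m_0$ and all $f$-nef Cartier divisors $N'$ on $X$; this $m_0$ is the one claimed.

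Next, fixing $m\ge m_0$ and a nef Cartier divisor $N$, I would prove by induction on $n\ge 1$ that the multiplication-of-sections morphism
$$
\alpha_n\colon H^0(X,\mathcal O_X(nD))\otimes_k f_*\mathcal O_X(mA+N)\longrightarrow f_*\mathcal O_X(nD+mA+N)
$$
is surjective. Since the source is a finite direct sum of copies of $f_*\mathcal O_X(mA+N)$, this yields the lemma (the case $n=0$ being trivial, as $\alpha_0$ is the identity). The inductive mechanism is to tensor the exact sequence $0\to\mathcal M\to V\otimes_k\mathcal O_X\to\mathcal O_X(D)\to 0$ by the line bundle $\mathcal O_X((n-1)D+mA+N)$ and push it forward by $f$: because $(n-1)D+N$ is an $f$-nef Cartier divisor, the choice of $m_0$ gives $R^1f_*(\mathcal M\otimes\mathcal O_X((n-1)D+mA+N))=0$, and since $f_*$ commutes with finite direct sums one obtains a surjection $V\otimes_k f_*\mathcal O_X((n-1)D+mA+N)\twoheadrightarrow f_*\mathcal O_X(nD+mA+N)$. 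The case $n=1$ of this displayed surjection is exactly $\alpha_1$. For the inductive step, I would compose this surjection with $\mathrm{id}_V\otimes\alpha_{n-1}$; associativity of multiplication of sections shows the composite equals $\alpha_n$ precomposed with the multiplication map $V\otimes_k H^0(X,\mathcal O_X((n-1)D))\to H^0(X,\mathcal O_X(nD))$ tensored with $f_*\mathcal O_X(mA+N)$, whence $\alpha_n$ is surjective.

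The only thing that could go wrong is the uniformity of $m_0$: a naive attempt, tensoring $0\to\mathcal M_n\to H^0(X,\mathcal O_X(nD))\otimes\mathcal O_X\to\mathcal O_X(nD)\to 0$ by $\mathcal O_X(mA+N)$ and pushing forward, would require controlling $R^1f_*(\mathcal M_n\otimes\mathcal O_X(mA+N))$ with a bound that a priori depends on $n$ through the $n$-dependent sheaf $\mathcal M_n$. The whole point of peeling off a single copy of $D$ at each step is that the only cohomological input then needed is the vanishing of $R^1f_*$ of the fixed sheaf $\mathcal M$ twisted by $mA$ plus a nef Cartier divisor, and Keeler's relative Fujita vanishing provides precisely such a uniform threshold for a fixed coherent sheaf and a fixed $f$-ample divisor. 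So, once the induction is organized this way, there is no genuine obstacle; the remaining points — surjectivity of $\mathrm{ev}$ from freeness of $|D|$, commutation of $f_*$ with finite direct sums, and the associativity identity giving the factorization through the multiplication map — are all routine.
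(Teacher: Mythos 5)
Your proposal is correct and follows essentially the same route as the paper: the paper also takes the kernel $\mathcal G$ of the evaluation surjection $\bigoplus\mathcal O_X\twoheadrightarrow\mathcal O_X(D)$, applies Keeler's relative Fujita vanishing to that single fixed sheaf to get a uniform $m_0$, and reduces the general $n$ to the case $n=1$ by absorbing $(n-1)D$ into the nef divisor $N$ — exactly the "peel off one copy of $D$" induction you describe. Your write-up merely makes explicit the uniformity argument that the paper compresses into the sentence "Since $D$ is nef, it is enough to show the case of $n=1$."
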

\begin{proof}
Since $D$ is nef, it is enough to show the case of $n=1$. 
By the assumption, there is a surjective morphism 
$
\bigoplus \mathcal O_X \twoheadrightarrow \mathcal O_X(D). 
$
Let $\mathcal G$ be the kernel of the above morphism. 
Then, by Keeler's relative Fujita vanishing \cite[Theorem~1.5]{Kee03}, 
there is an $m_0\in\mathbb Z_{>0}$ such that 
$
R^1f_*\mathcal G(mA+N)=0
$
for each $m\ge m_0$ and every nef Cartier divisor $N$ on $X$. 
We then see that the induced morphism 
$$
\bigoplus f_*\mathcal O_X(mA+N)
\to 
f_*\mathcal O_X(D+mA+N)
$$
is surjective for each $m\ge m_0$ and every nef Cartier divisor $N$ on $X$. 
\end{proof}
\begin{proof}[Proof of Theorem~\ref{thm:decomp3}]
Let $B'$ be an ample Cartier divisor on $X$. 
Let $n$ be a positive integer such that $nB'$ satisfies several conditions 
in the beginning of the proof of Theorem~\ref{thm:num flat}. 
Put $B:=nB'$. 
Set $r:=\mathrm{rank}\,f_*\mathcal O_X(B)$ and 
let $E$ be a divisor on $Y$ such that 
$
\mathcal O_Y(E)\cong \det f_*\mathcal O_X(B). 
$
Replacing $f:X\to Y$ be its base change along $[2r]_Y:Y\to Y$, 
we may assume that there is a divisor $E'$ such that $rE'\sim E$. 
Here, $[2r]_Y$ is the morphism defined by $[2r]_Y(y)=2r\cdot y$. 
Then by Theorem~\ref{thm:num flat}, we see that 
$
f_*\mathcal O_X(mB)(-mE')
$ 
is numerically flat for each $m\in\mathbb Z_{>0}$. 
\begin{cl} \label{cl:nef}
The vector bundle $f_*\mathcal O_X(mB+N)(-mE')$ is nef for 
each $m\in\mathbb Z_{>0}$ and every nef Cartier divisor $N$ on $X$. 
\end{cl}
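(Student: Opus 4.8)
The plan is to untwist along the Frobenius and reduce the nefness of $f_*\mathcal O_X(mB+N)(-mE')$ to the numerical flatness of $f_*\mathcal O_X(B)(-E')$ (the case $m=1$ of what was just proved), to Lemma~\ref{lem:gg}, and to the properties of $B$ collected at the beginning of the proof of Theorem~\ref{thm:num flat}; the argument runs parallel to Step~\ref{step:num flat2} of that proof. Fix $m\in\mathbb Z_{>0}$ and a nef Cartier divisor $N$ on $X$; by those properties $f_*\mathcal O_X(mB+N)$ is locally free. Since nefness is a closed condition on the twist, it suffices to exhibit a constant $v_0\in\mathbb Z_{>0}$, independent of $e$, such that $f_*\mathcal O_X(mB+N)<-\frac{mp^e-v_0}{p^e}E'>$ is nef for all sufficiently divisible $e$: indeed $\frac{mp^e-v_0}{p^e}\to m$ as $e\to\infty$.

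Fix $v_0\ge\dim X+1$ larger than the bound furnished by Lemma~\ref{lem:gg} for $\mathcal F=\mathcal O_X$ and $A=B$. Take $e$ so divisible that $(p^e-1)(K_X+\Delta)$ is Cartier and $\phi_{(X/Y,\Delta)}^{(e)}$ is surjective — such $e$ exist by the proof of Theorem~\ref{thm:num flat} (see Step~\ref{step:phi surj}) — and so large that $s_e:=mp^e-v_0\ge 0$. Set $N_e:=(1-p^e)(K_{X/Y}+\Delta)+p^eN$, a nef Cartier divisor on $X$ since $-K_{X/Y}-\Delta$ is nef. Iterating one of the surjectivity statements among the properties of $B$ (that $f_*\mathcal O_X(B)\otimes f_*\mathcal O_X(kB+N')\twoheadrightarrow f_*\mathcal O_X((k+1)B+N')$ for $k\ge\dim X+1$ and $f$-nef $N'$) and using $mp^eB+N_e=(1-p^e)(K_{X/Y}+\Delta)+p^e(mB+N)$ produces a surjection
\[
\Big(\bigotimes^{s_e} f_*\mathcal O_X(B)\Big)\otimes f_*\mathcal O_X(v_0B+N_e)\twoheadrightarrow f_*\mathcal O_X\big((1-p^e)(K_{X/Y}+\Delta)+p^e(mB+N)\big),
\]
where $f_*\mathcal O_X(v_0B+N_e)$ is globally generated by the choice of $v_0$ and Lemma~\ref{lem:gg}. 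Tensoring with $\mathcal O_Y(-s_eE')$ replaces the $s_e$ factors $f_*\mathcal O_X(B)$ by $s_e$ copies of the numerically flat bundle $f_*\mathcal O_X(B)(-E')$, so $f_*\mathcal O_X\big((1-p^e)(K_{X/Y}+\Delta)+p^e(mB+N)\big)(-s_eE')$, being a quotient of a tensor product of nef bundles, is nef.

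Finally, as in Step~\ref{step:num flat2}, the morphism ${f_{Y^e}}_*\phi_{(X/Y,\Delta)}^{(e)}\big((mB+N)_{Y^e}\big)$ — which, after the identifications made there, has source $f_*\mathcal O_X\big((1-p^e)(K_{X/Y}+\Delta)+p^e(mB+N)\big)$ and target ${F_Y^e}^*f_*\mathcal O_X(mB+N)$ — is surjective. Tensoring it with $\mathcal O_Y(-s_eE')$ and composing with the surjection of the previous paragraph exhibits a nef bundle mapping onto
\[
\big({F_Y^e}^*f_*\mathcal O_X(mB+N)\big)(-s_eE')={F_Y^e}^*\Big(f_*\mathcal O_X(mB+N)<-\tfrac{s_e}{p^e}E'>\Big),
\]
which is hence nef; since $F_Y^e$ is finite and surjective, $f_*\mathcal O_X(mB+N)<-\frac{s_e}{p^e}E'>$ is nef, and as $s_e/p^e\to m$ this gives the reduction announced above and proves the claim. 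The only delicate point is the global (not merely generic) surjectivity of ${f_{Y^e}}_*\phi_{(X/Y,\Delta)}^{(e)}((mB+N)_{Y^e})$ on $Y$: this rests on $V=Y$ from Step~\ref{step:phi surj} together with $mB+N$ being at least as $f$-positive as the large ample divisor $B$, so that the relative Fujita-vanishing arguments behind the listed properties of $B$ apply verbatim, and I would simply invoke them rather than reprove them.
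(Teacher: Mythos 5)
Your argument is correct and is essentially the paper's own proof: both exhibit ${F_Y^e}^*f_*\mathcal O_X(mB+N)$ twisted by roughly $-mp^eE'$ as a quotient of a tensor product of numerically flat bundles with one globally generated factor, using the surjectivity of the Frobenius trace map supplied by Step~\ref{step:phi surj}, and then let $e\to\infty$ in the $\mathbb Q$-twist. The only (cosmetic) difference is bookkeeping: the paper takes $p^e-1$ copies of $f_*\mathcal O_X(mB)(-mE')$ with remainder factor $f_*\mathcal O_X((1-p^e)(K_X+\Delta)+mB+p^eN)$, whereas you take $mp^e-v_0$ copies of $f_*\mathcal O_X(B)(-E')$ with remainder $f_*\mathcal O_X(v_0B+N_e)$.
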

\begin{proof}[Proof of Claim~\ref{cl:nef}]
For each $e\in\mathbb Z_{>0}$ with $(p^e-1)(K_X+\Delta)$ Cartier, 
we have the following morphisms: 
\begin{align*}
& \left(\bigotimes^{p^e-1} f_*\mathcal O_X(mB) \right)
\otimes \underbrace{f_*\mathcal O_X((1-p^e)(K_X+\Delta)+mB+p^eN)}_{\textup{globally generated}}
\\ & \twoheadrightarrow 
f_*\mathcal O_X((1-p^e)(K_X+\Delta) +p^e(mB+N))
\\ & \twoheadrightarrow 
{F_Y^e}^*f_*\mathcal O_X(mB+N).
\end{align*}
Note that the surjectivity of the second morphism follows from Step~\ref{step:phi surj} of the proof of Theorem~\ref{thm:num flat}. 
Since $f_*\mathcal O_X(mB)(-mE')$ is numerically flat, we see that 
$$
\left({F_Y^e}^*f_*\mathcal O_X(mB+N) \right)(-m(p^e-1)E')
$$
is nef, which is equivalent to the nefness of the $\mathbb Q$-twisted 
vector bundle 
$$
f_*\mathcal O_X(mB+N) <-\frac{m(p^e-1)}{p^e}E'>. 
$$
Taking $e\to \infty$, we obtain the claim. 
\end{proof}
Put $A:=B-f^*E'$. 
Since the $\mathcal O_Y$-algebra 
$\bigoplus_{m\ge0} f_*\mathcal O_X(mA)$ is finitely generated, 
by the same argument as that in the proof of Theorem~\ref{thm:decomp2}, 
we may assume that $f_*\mathcal O_X(mA)$ is a direct sum of 
algebraically trivial line bundles on $Y$ for each $m\in\mathbb Z_{>0}$. 
Let $L$ be a numerically trivial divisor on $Y$ 
such that $\mathcal O_Y(L)$ is isomorphic to 
a direct summand of $f_*\mathcal O_X(A)$. 
Replacing $A$ by $A-f^*L$, we may assume that $|A|\ne\emptyset$, 
so we may further assume that $A$ is effective. 
Take an $l\in\mathbb Z_{>0}$ so that $p\nmid l$ and 
$
\left(X_{\overline\eta}, \left(\Delta+\frac{1}{l}A\right)|_{X_{\overline\eta}}\right)
$
is strongly $F$-regular. Put $\Delta':=\Delta+\frac{1}{l}A$. 
Let $m$ be an integer large enough such that $p\nmid m$ and 
\begin{align*} \tag{$\ast$}
& {f_{Y^e}}_*\phi_{(X,\Delta')}^{(e)}(mA_{Y^e}+N_{Y^e}):
\\ & f_*\mathcal O_X((1-p^e)(K_X+\Delta') +p^e(mA+N))
\to
{F_Y^e}^* f_*\mathcal O_X(mA+N)
\end{align*}
is generically surjective for each $e\in\mathbb Z_{>0}$ with $(p^e-1)(K_X+\Delta)$ Cartier and every nef Cartier divisor $N$ on $X$. 
Let $i$ be the minimum positive integer such that 
\begin{itemize}
\item $iD$ and $iP$ are Cartier, 
\item $|iD|$ is free and 
\item $-i(K_X+\Delta)\sim_{\mathbb Z} iD+if^*P$. 
\end{itemize}
Put 
$$
I:=\{r\in\mathbb Z_{\ge 0} | \textup{$0\le r<i$ and $r(K_X+\Delta)$ is Cartier}\}. 
$$
For each $r\in I$, by Claim~\ref{cl:nef}, we see that 
$$
\mathcal G_r:=f_*\mathcal O_X(mA-r(K_X+\Delta))
\cong f_*\mathcal O_X(mB\underbrace{-r(K_X+\Delta)-mf^*L}_{\textup{nef}})
(-mE')
$$
is nef. Set 
$$
\mathbb L(\mathcal G_r):=\{\mathcal L \in \mathrm{Pic}^0(Y)| \textup{there is a generically surjective morphism $\mathcal G_r\to \mathcal L$}\}. 
$$
Then $\mathbb L(\mathcal G_r)$ is a finite set by \cite[Lemma~4.5]{Eji22d}. 
Define 
\begin{align*}
M(1)&:=\bigcup_{r\in I} \mathbb L(\mathcal G_r) 
=\{\mathcal L_1, \ldots, \mathcal L_\lambda\}~\textup{and}
\\ M(n) &:= \left\{\mathcal L_1^{n_1} \otimes \cdots \otimes \mathcal L_\lambda^{n_\lambda} \middle| \textup{$(n_1,\ldots,n_\lambda)\in(\mathbb Z_{\ge0})^{\times\lambda}$ with $\sum_{i=1}^\lambda n_i =n$ } \right\} 
\subseteq \mathrm{Pic}^0(Y) 
\end{align*}
for each $n\in\mathbb Z_{>0}$. 
Note that $M(n)\subseteq M(n+1)$, 
as $\mathcal O_Y\in \mathbb L(\mathcal G_0) \subseteq M(1)$.
Fix an $e \gg 0$ with $lm|(p^e-1)$ 
such that $(p^e-1)(K_X+\Delta)$ is Cartier.  
Put $\nu:=p^e+\frac{1-p^e}{lm}\in\mathbb Z_{>0}$. 
Then $\nu<p^e$. 
Take an $\mathcal L\in M(1)$. Then $\mathcal L\in\mathbb L(\mathcal G_r)$ 
for some $r\in I$. 
Let $q$ and $r'$ be the quotient and the remainder of the division of 
$(r+1)p^e-1$ by $i$. 
Then $r'\in I$ and  
\begin{align*}
& (1-p^e)(K_X+\Delta') +mp^eA -p^er(K_X+\Delta)
\\ & = (1-p^e)(K_X+\Delta)+ \left(\frac{1-p^e}{l}+mp^e\right)A -p^er(K_X+\Delta)
\\ & = (1-(r+1)p^e)(K_X+\Delta)+ \nu mA, 
\\ & = (-r'-qi)(K_X+\Delta)+ \nu mA, 
\\ & \sim_{\mathbb Z} -r'(K_X+\Delta) +iqD +iqf^*P +\nu mA, 
\end{align*}
so we see from Lemma~\ref{lem:semi-ample decomp} that 
there is a surjective morphism 
\begin{align*}
& \bigoplus f_*\mathcal O_X(-r'(K_X+\Delta) +iqf^*P +\nu mA)
\\ & \twoheadrightarrow 
f_*\mathcal O_X((1-p^e)(K_X+\Delta')+mp^eA-p^er(K_X+\Delta)). 
\end{align*}
Using $(\ast)$ for $N=-r(K_X+\Delta)+iqf^*P$, 
we get a generically surjective morphism 
$$
f_*\mathcal O_X(-r'(K_X+\Delta) +iqf^*P +\nu mA) \to \mathcal L^{p^e}.  
$$
Considering the generically surjective morphism 
\begin{align*}
\left( \bigotimes^{\nu-1} \mathcal G_0 \right) 
\otimes \mathcal G_{r'}(iqP)
& \cong \left( \bigotimes^{\nu-1} f_*\mathcal O_X(mA) \right) 
\otimes f_*\mathcal O_X(mA-r'(K_X+\Delta) +iqf^*P)
\\ & \twoheadrightarrow f_*\mathcal O_X(-r'(K_X+\Delta) +iqf^*P+\nu mA) 
\to \mathcal L^{p^e}
\end{align*}
we see that $\mathcal L^{p^e} \otimes \mathcal M^{-1} (-iqP)
\in \mathbb L(\mathcal G_{r'})$ for some $\mathcal M\in M(\nu-1)$,  
which implies that $\mathcal L^{p^e}(-iqP) \in M(\nu)$. 
Note that $\mathcal G_0$ is isomorphic to a direct sum of 
elements of $\mathbb L(\mathcal G_0)$. 
Then, by the pigeonhole principle, for each $n\ge (p^e-1)c+1$, 
where $c:=|\mathbb L(\mathcal G_0)|$, we see that 
$$
\mathcal M \in M(n) 
~\Rightarrow~
\mathcal M(-iqP) \in M(n-p^e+\nu)
\subseteq M(n). 
$$
Since $M(n)$ is finite, there are integers $\alpha>\beta>0$ 
and an $\mathcal M\in M(n)$ such that 
$
\mathcal M(-\alpha iqP) \cong \mathcal M(-\beta iqP), 
$
so $P\sim_{\mathbb Q}0$. 
Let $j>0$ be an integer such that $jP\sim_{\mathbb Z} 0$. 
Then, for each $\mathcal M \in M(n)$, we have 
$$
\mathcal M \cong \mathcal M(-ijqP) \in M(n-p^e+\nu), 
$$
which means that $M(n) \subseteq M(n-p^e+\nu)$, so we get 
$$
\bigcup_{n\ge 1} M(n)
\subseteq \bigcup_{1 \le n \le (p^e-1)c} M(n).
$$ 
Thus, $\bigcup_{n\ge 1} M(n)$ is a finite set, 
so all the elements of $M(1)$ is a torsion line bundle.  
This implies that $\mathcal G_0$ is a direct sum of torsion line bundles, 
so there is an isogeny $\pi:Z\to Y$ such that 
$$
\pi^*\mathcal G_0
= \pi^*f_*\mathcal O_X(mA)
\cong \mathcal O_Z^{\oplus r}, 
$$
and hence the assertion follows from an argument similar to that of the proof of Theorem~\ref{thm:decomp1}. 
\end{proof}
\section{Algebraic fiber spaces over elliptic curves}
\label{section:over elliptic}
In this section, we deal with algebraic fiber spaces over elliptic curves 
over a perfect field $k$ of characteristic $p>0$ 
in the case when the geometric generic fiber is not necessarily 
strongly $F$-regular. 
\begin{thm} \label{thm:decomp ell}
Let $(X,\Delta)$ be a strongly $F$-regular projective pair 
and let $Y$ be an elliptic curve. 
Let $f:X\to Y$ be a surjective morphism. 
Suppose that 
\begin{itemize}
\item $K_X+\Delta$ is $\mathbb Z_{(p)}$-Cartier, and 
\item $-K_X-\Delta\sim_{\mathbb Q}D+f^*P$ for a semi-ample $\mathbb Q$-Cartier divisor $D$ on $X$ and a numerically trivial $\mathbb Q$-divisor $P$ on $Y$. 
\end{itemize}
Then there exists an isogeny $\pi:Z\to Y$ such that 
$(X_Z, \Delta_Z) \cong \left(X_0,\Delta|_{X_0}\right)\times_k Z$ 
as $Z$-schemes, where $X_0$ is the fiber of $f$ over the identity element 
$0\in Y(k)$. 
\end{thm}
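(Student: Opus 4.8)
The plan is to reduce the statement to the splitting theorem for fibrations over an elliptic curve with possibly singular geometric generic fibre, \cite[Theorem~1.6]{EP23}, and then to identify the resulting product factor with the fibre $(X_0,\Delta|_{X_0})$. First I would record that $-K_{X/Y}-\Delta$ is nef: since $Y$ is an elliptic curve we have $K_Y\sim 0$, so $-K_{X/Y}-\Delta=-K_X-\Delta\sim_{\mathbb Q}D+f^*P$ is the sum of the semi-ample (hence nef) divisor $D$ and the numerically trivial divisor $f^*P$. As $(X,\Delta)$ is strongly $F$-regular, $X$ is a normal variety, and $f$ is automatically flat because a dominant morphism from an integral scheme to a smooth curve is flat. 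The crucial difference from the situation of Theorem~\ref{thm:decomp3} is that here the geometric generic fibre of $f$ is not assumed strongly $F$-regular, so neither Theorem~\ref{thm:num flat} nor Theorem~\ref{thm:decomp3} applies; it is precisely the one-dimensionality of $Y$ that saves the statement, via \cite[Theorem~1.6]{EP23}. Applying it, I obtain a finite surjective morphism onto $Y$ which over an elliptic curve can be arranged to be an isogeny $\pi\colon Z\to Y$ (a connected finite \'etale cover of $Y$ becomes a homomorphism after a translation, and an iterated relative Frobenius cover is an isogeny), together with an isomorphism $(X_Z,\Delta_Z)\cong(F,\Gamma)\times_k Z$ over $Z$ for some strongly $F$-regular pair $(F,\Gamma)$ over $k$.

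It then remains to identify $(F,\Gamma)$ with $(X_0,\Delta|_{X_0})$. The identity section $0_Z\in Z(k)$ satisfies $\pi(0_Z)=0$, so the fibre of $X_Z\to Z$ over $0_Z$ is on the one hand isomorphic to $(F,\Gamma)$ by the product structure, and on the other hand is canonically isomorphic to $X\times_Y\{0\}=X_0$ together with $\Delta|_{X_0}$ by base change along $\mathrm{Spec}\,\kappa(0)=\mathrm{Spec}\,k\to Y$. Hence $(F,\Gamma)\cong(X_0,\Delta|_{X_0})$, which gives $(X_Z,\Delta_Z)\cong(X_0,\Delta|_{X_0})\times_k Z$ over $Z$, as asserted.

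The hard part is the first step: once the geometric generic fibre is allowed to degenerate, the weak-positivity and numerical-flatness arguments of Sections~\ref{section:positivity theorems} and~\ref{section:nef relative anti} break down, and the conclusion rests entirely on the rigidity of fibrations over an elliptic curve encoded in \cite[Theorem~1.6]{EP23}. If that result is available only for the morphism $f$ without a boundary, one further has to transport the product structure of $X_Z\to Z$ to $\Delta$ and to $D$; for this I would argue, as in the proof of Theorem~\ref{thm:decomp3}, that an $f$-vertical divisor which is $f_Z$-vertical and fibrewise numerically trivial is pulled back from $Z$, using the semi-ampleness of $D$ to pin down $\Delta$, which in particular forces $\pi^*P$ to be torsion.
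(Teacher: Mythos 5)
There is a genuine gap: the entire weight of the argument is placed on \cite[Theorem~1.6]{EP23}, and that reduction cannot work as stated. Example~\ref{eg:not F_p} exhibits a $\mathbb P^1$-bundle over an elliptic curve with nef $-K_{X/Y}$ (and smooth, strongly $F$-regular fibers) that does not split after \emph{any} proper base change, so no theorem of the shape you invoke --- ``nef relative anti-canonical over an elliptic curve implies splitting after a finite cover'' --- can exist. Whatever \cite[Theorem~1.6]{EP23} says, it cannot deliver the isogeny and the product structure under the hypotheses of Theorem~\ref{thm:decomp ell} without the semi-ampleness of $D$ entering the argument in an essential way, and your proposal never shows how it enters. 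The only input the paper actually takes from \cite{EP23} here is Theorem~7.5 (an $f$-ample $L$ with $f_*\mathcal O_X(mL)$ numerically flat); the heart of the proof is then to show that these numerically flat bundles on $Y$ trivialize after an isogeny. A numerically flat bundle on an elliptic curve is built from degree-zero line bundles, but these are torsion (hence killed by an isogeny) only under an extra hypothesis; the paper proves this torsionness, together with $P\sim_{\mathbb Q}0$, by the pigeonhole argument on the finite subsets $M(n)\subseteq\mathrm{Pic}^0(Y)$, which uses the freeness of $|iD|$ via Lemma~\ref{lem:semi-ample decomp}, Grothendieck duality on $Y$, and the trace maps $\phi^{(e)}_{(X,\Delta')}$ for an auxiliary boundary $\Delta'=\Delta+\frac1l L$. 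Your closing remark that the product structure ``in particular forces $\pi^*P$ to be torsion'' is circular: establishing that $P$ is torsion is a prerequisite for, not a consequence of, the splitting.

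Two smaller omissions. First, $f$ is only assumed surjective, not an algebraic fiber space, so $X_0$ may be disconnected and you must separately trivialize $f_*\mathcal O_X$; the paper does this through the Stein factorization $X\to C\to Y$, showing $C$ has genus one via \cite[Theorem~5.3]{EP23} and that $h_*\mathcal O_C$ is a sum of torsion twists of Oda's indecomposable bundles. Second, once the torsionness is known, passing from ``$\pi^*f_*\mathcal O_X(mL)\cong\mathcal O_Z^{\oplus r_m}$ for all $m$'' to the isomorphism of \emph{pairs} still requires \cite[Lemma~8.4]{PZ19} to match $\Delta_Z$ with $\mathrm{pr}_1^*\Delta|_{X_0}$; identifying the abstract factor $(F,\Gamma)$ with the fiber over $0$ is the easy part, as you say, but it is the only part your proposal actually carries out.
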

Note that $X_0$ is not necessarily reduced. 
\begin{proof}
By \cite[Theorem~7.5]{EP23}, we have an $f$-ample Cartier divisor $L$ on $X$ 
such that $f_*\mathcal O_X(mL)$ is numerically flat 
for each $m\in\mathbb Z_{>0}$. 
Furthermore, by the proof of \cite[Theorem~7.5]{EP23}, 
we can choose $L$ so that $f_*\mathcal O_X(mL+N)$ is nef 
for each $m\in\mathbb Z_{>0}$ and a nef Cartier divisor $N$ on $X$. 
By \cite[p.60]{Oda71}, there is a divisor $G$ on $Y$ of degree zero 
such that $f_*\mathcal O_X(L)(-G)$ has a non-zero global section. 
Replacing $L$ by $L-f^*G$, we may assume that $|L|\ne\emptyset$, 
and we may further assume that $L$ is effective. 
Take an $l\in\mathbb Z_{>0}$ so that $p\nmid l$ and 
$\left(X,\Delta+\frac{1}{l}L \right)$ is strongly $F$-regular. 
Put $\Delta':=\Delta+\frac{1}{l}L$. 
Let $m$ be an integer large enough such that $p\nmid m$ and 
\begin{align*} \tag{$\ast$}
& f_*\phi_{(X,\Delta')}^{(e)}(mL+N): 
\\ & {F_Y^e}_*f_*\mathcal O_X((1-p^e)(K_X+\Delta')+p^e(mA+N))
\to f_*\mathcal O_X(mA+N)
\end{align*}
is surjective for each $e\in\mathbb Z_{>0}$ with $(p^e-1)(K_X+\Delta)$ 
Cartier and every nef Cartier divisor $N$ on $X$. 
Let $i$ be the minimum positive integer such that 
\begin{itemize}
\item $iD$ and $iP$ are Cartier, 
\item $|iD|$ is free and 
\item $-i(K_X+\Delta)\sim_{\mathbb Z}iD+if^*P$. 
\end{itemize}
Put 
$$
I:=\{r\in\mathbb Z_{\ge0} |
\textup{$0\le r<i$ and $r(K_X+\Delta)$ is Cartier} \}
$$
Then 
$$
\mathcal G_r := f_*\mathcal O_X(mA-r(K_X+\Delta))
$$
is nef for each $r\in I$. Set 
$$
\mathbb L(\mathcal G_r) 
:= \{\mathcal L\in\mathrm{Pic}^0(Y)|
\textup{there is a generically surjective morphism 
$\mathcal G_r \to \mathcal L$} \}. 
$$
Then $\mathbb L(\mathcal G_r)$ is a finite set by \cite[Lemma~4.5]{Eji22d}. 
Define 
\begin{align*}
M(1)&:=\bigcup_{r\in I} \mathbb L(\mathcal G_r) 
=\{\mathcal L_1, \ldots, \mathcal L_\lambda\}~\textup{and}
\\ M(n) &:= \left\{\mathcal L_1^{n_1} \otimes \cdots \otimes \mathcal L_\lambda^{n_\lambda} \middle| \textup{$(n_1,\ldots,n_\lambda)\in(\mathbb Z_{\ge0})^{\times\lambda}$ with $\sum_{i=1}^\lambda n_i =n$ } \right\} 
\subseteq \mathrm{Pic}^0(Y) 
\end{align*}
for each $n\in\mathbb Z_{>0}$. 
Note that $M(n)\subseteq M(n+1)$, as 
$\mathcal O_Y\in\mathbb L(\mathcal G_0)\subseteq M(1)$. 
Fix an $e\gg0$ with $lm|(p^e-1)$ such that $(p^e-1)(K_X+\Delta)$ is Cartier. 
Put $\nu:=p^e+\frac{1-p^e}{lm}\in\mathbb Z_{>0}$. 
Then $\nu<p^e$. 
Take an $\mathcal L\in M(1)$. Then $\mathcal L\in\mathbb L(\mathcal G_r)$ 
for some $r\in I$. 
Let $q$ and $r'$ be the quotient and the remainder of the division of 
$(r+1)p^e-1$ by $i$. 
Then $r'\in I$ and 
\begin{align*}
&(1-p^e)(K_X+\Delta')+mp^eL-p^er(K_X+\Delta)
\\ & = (1-p^e)(K_X+\Delta)+\left(\frac{1-p^e}{l}+mp^e\right)L
-p^er(K_X+\Delta)
\\ & = (1-(r+1)p^e)(K_X+\Delta)+\nu mL
\\ & = (-r'-qi)(K_X+\Delta)+\nu mL
\\ & \sim_{\mathbb Z} -r'(K_X+\Delta)+iqD+iqf^*P +\nu mL, 
\end{align*}
so we see from Lemma~\ref{lem:semi-ample decomp} that 
there is a surjective morphism 
\begin{align*}
& \bigoplus f_*\mathcal O_X(-r'(K_X+\Delta)+iqf^*P+\nu mL)
\\ & \twoheadrightarrow 
f_*\mathcal O_X((1-p^e)(K_X+\Delta')+mp^e L -p^er(K_X+\Delta))
\end{align*}
Using ($\ast$) for $N=-r(K_X+\Delta)+iqf^*P$, 
we get a generically surjective morphism 
$$
{F_Y^e}_*f_*\mathcal O_X(-r'(K_X+\Delta)+iqf^*P+\nu mL) 
\to \mathcal L. 
$$
By the Grothendieck duality (note that $\omega_Y\cong\mathcal O_Y$), 
we obtain the non-zero (so generically surjective) morphism 
$$
f_*\mathcal O_X(-r'(K_X+\Delta)+iqf^*P+\nu mL) 
\to \mathcal L^{p^e}. 
$$
Considering the generically surjective morphism 
\begin{align*}
\left(\bigotimes^{\nu-1} \mathcal G_0 \right) 
\otimes \mathcal G_{r'}(iqf^*P)
&\cong 
\left( \bigotimes^{\nu-1} f_*\mathcal O_X(mL) \right)
\otimes f_*\mathcal O_X(mL-r'(K_X+\Delta)+iqf^*P)
\\ & \twoheadrightarrow
f_*\mathcal O_X(-r'(K_X+\Delta)+iqf^*P+\nu mL)
\to \mathcal L^{p^e}, 
\end{align*}
we see that 
$\mathcal L^{p^e}\otimes \mathcal M^{-1}(-iqP) \in \mathbb L(\mathcal G_{r'})$
for some $\mathcal M\in M(\nu-1)$, which implies that 
$\mathcal L^{p^e}(-iqP)\in M(\nu)$. 
Note that $\mathcal G_0$ has a filtration whose each quotient 
is isomorphic to an element of $\mathbb L(\mathcal G_0)$. 
Therefore, by the same argument as that of the proof of 
Theorem~\ref{thm:decomp3}, we see that $P\sim_{\mathbb Q}0$ and 
all the elements of $\mathbb L(\mathcal G_0)$ are torsion line bundles. 
This implies that 
$\mathcal G_0 \cong \bigoplus \mathcal U_i \otimes \mathcal L_i$, 
where each $\mathcal U_i$ is a unipotent vector bundle 
and each $\mathcal L_i$ is a torsion line bundle. 

Next, we show that there is an isogeny $\pi:Z\to Y$ such that 
$
\pi^*f_*\mathcal O_X 
\cong \mathcal O_Z^{\oplus \mathrm{rank}\,f_*\mathcal O_X}. 
$ 
Let $f:X\xrightarrow{g} C\xrightarrow{h} Y$ be 
the Stein factorization of $f$. 
Since $C$ is of genus at least one, we see from \cite[Theorem~5.3]{EP23} 
that $-K_C$ is pseudo-effective, so $C$ is of genus one. 
Thus, $\deg h_*\mathcal O_C =0$. 
We show that $h_*\mathcal O_C$ is a direct sum of 
indecomposable vector bundles of degree zero. 
If not, then there is an indecomposable direct summand $\mathcal E$ 
of $h_*\mathcal O_C$ with $\deg \mathcal E>0$. 
Take an $l\in\mathbb Z$ with $l>\frac{\mathrm{rank}\, \mathcal E}{\deg\mathcal E}$. 
Replacing $h:C\to Y$ by its base change along $[l]_Y:Y\to Y$, 
where $[l]_Y$ is a morphism defined by $[l]_Y(y)=l\cdot y$, 
we may assume that $\deg\mathcal E>\mathrm{rank}\,\mathcal E$
(loss of connectivity of $C$). 
Then $\deg \mathcal E(-y)>0$ for a $y\in Y(k)$, so
$$
H^0(C, \mathcal O_C(-h^*y))
\cong H^0(Y,h_*\mathcal O_C(-h^*y)) 
\supseteq H^0(Y, \mathcal E(-y))
\ne0, 
$$ 
a contradiction. Thus, $h_*\mathcal O_C$ is a direct sum of indecomposable 
vector bundles of degree zero. 
By \cite[p.60]{Oda71}, we see that 
$h_*\mathcal O_C\cong \bigoplus_i \mathcal E_i\otimes \mathcal L_i$, 
where $\mathcal E_i$ is an indecomposable vector bundle of degree zero 
with non-zero global sections 
and $\mathcal L_i$ is a line bundle of degree zero. 
Then 
$$
H^0\left(C, h^*\mathcal L_i^{-1}\right)
\cong H^0\left(Y, h_*\mathcal O_C \otimes \mathcal L_i^{-1}\right)
\supseteq H^0(Y, \mathcal E_i) 
\ne 0, 
$$
so $h^*\mathcal L_i\cong \mathcal O_C$, 
which means that each $\mathcal L_i$ is a torsion line bundle. 
Hence, there is an isogeny $\pi:Z\to Y$ such that 
$\pi^*h_*\mathcal O_C \cong \mathcal O_Z^{\oplus \mathrm{rank}\,h_*\mathcal O_C}$. 

Replacing $\pi:Z\to Y$, we may assume that 
$\pi^*f_*\mathcal O_X(mL) \cong \mathcal O_Z^{\oplus \mathrm{rank}\,f_*\mathcal O_X(mL)}$. 
Then the assertion follows 
from an argument similar to that of the proof of Theorem~\ref{thm:decomp1}.
\end{proof}
\section{Decomposition theorems}
\label{section:decomposition}
In this section, we establish decomposition theorems for 
strongly $F$-regular pairs $(X,\Delta)$ such that $-K_X-\Delta$ 
satisfies some positivity condition. 
They are applied to the study of the \'etale fundamental groups of $F$-split 
varieties whose anti-canonical divisors satisfy some positivity conditions. 
The arguments in this section almost the same as that of \cite[\S 11]{PZ19}. 
\subsection{Definitions and lemmas}
In this subsection, we work over a perfect field $k$ of characteristic $p>0$. 
\begin{defn}[\textup{$F$-splitting}] \label{defn:F-split}
Let $(X,\Delta)$ be a normal projective pair 
with $\Delta$ $\mathbb Z_{(p)}$-Weil. 
We say that $(X,\Delta)$ is \textit{$F$-split} if 
$$
\mathcal O_X
\xrightarrow{{F_X^e}^\sharp} {F_X^e}_*\mathcal O_X
\hookrightarrow {F_X^e}_*\mathcal O_X((p^e-1)\Delta)
$$
splits as an $\mathcal O_X$-module homomorphism
for each $e\in\mathbb Z_{>0}$ with $(p^e-1)\Delta$ is integral. 
\end{defn}
\begin{defn}[\textup{Quasi-\'etale morphisms}] \label{defn:quasi-etale}
Let $\pi:X'\to X$ be a morphism between normal varieties. 
We say that $\pi$ is \textit{quasi-\'etale} if 
there exists an open subset $U\subseteq X'$ with 
$\mathrm{codim}(X'\setminus U) \ge 2$
such that $\pi|_U:U\to X$ is \'etale. 
\end{defn}
\begin{defn}[\textup{Augmented irregurality}] \label{defn:regularity}
Let $X$ be a normal projective variety. 
We define the \textit{augmented irregurality $\hat q(X)$} by 
$$
\hat q(X):=\max \{\dim\mathrm{Alb}_X' | \textup{$X'\to X$ is a finite quasi-\'etale morphism of normal varieties}\}
$$
If the maximum does not exists, then we define $\hat q(X)=\infty$. 
\end{defn}
\begin{lem}[\textup{\cite[Lemma~11.1]{PZ19}}] \label{lem:F-split quasi-etale}
Let $(X,\Delta)$ be a normal projective pair with $\Delta$ $\mathbb Z_{(p)}$-Weil. 
Let $\pi:X'\to X$ be a finite quasi-\'etale morphism. 
If $(X,\Delta)$ is strongly $F$-regular $($resp. $F$-split$)$, 
then so is $(X',\pi^*\Delta)$. 
\end{lem}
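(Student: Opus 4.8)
The plan is to descend everything to the open locus $U\subseteq X$ over which $\pi$ is \'etale, use that there the Frobenius trace maps of $X$ and $X'$ agree after pullback, and then propagate splittings across the codimension-$\ge 2$ complement by reflexivity. First I would fix the geometry: since $\pi$ is finite and quasi-\'etale and $X'$ is normal, there is an open $U'\subseteq X'$ with $\mathrm{codim}(X'\setminus U')\ge 2$ on which $\pi$ is \'etale, and after shrinking we may assume $U':=\pi^{-1}(U)$ for an open $U\subseteq X$ (this only removes a codimension $\ge 2$ set from $X$), with $\pi|_{U'}\colon U'\to U$ finite \'etale. Then $\pi|_{U'}$ has no ramification divisor, so $K_{X'}$ and $K_{X'}+\pi^*\Delta$ are the (reflexive) pullbacks of $K_X$ and $K_X+\Delta$; in particular $\pi^*\Delta$ is a well-defined $\mathbb Z_{(p)}$-Weil divisor and $(p^d-1)(K_{X'}+\pi^*\Delta)$ is Cartier for the $\mathbb Z_{(p)}$-Cartier index $d$ of $K_X+\Delta$. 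Moreover, for any Weil divisor $D'$ the sheaf $F^e_{X'*}\mathcal O_{X'}(D')$ is reflexive on the normal variety $X'$, so the maps $\phi^{(e)}_{(X',\pi^*\Delta)}$ and their twists $\phi^{(e)}_{(X',\pi^*\Delta)}(M)$ are maps between reflexive sheaves, hence determined by their restriction to $U'$; thus surjectivity of such a map, and the existence of a splitting of the corresponding inclusion, may be tested over $U'$ and then extended over $X'$.

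The one structural input is the behaviour of Frobenius under an \'etale morphism: for the finite \'etale $\pi|_{U'}$ the relative Frobenius $F^{(e)}_{U'/U}$ is an isomorphism, so $F^e_{U'}$ is the base change of $F^e_{U}$ along $\pi|_{U'}$. Flat base change (with the projection formula) then gives natural isomorphisms $(\pi|_{U'})^*F^e_{U*}\mathcal O_U(D)\cong F^e_{U'*}\mathcal O_{U'}((\pi|_{U'})^*D)$ for $\mathbb Z_{(p)}$-Cartier $D$ on $U$, compatible with the Frobenius units; and since $\omega_{U'/U}\cong\mathcal O_{U'}$, so $(\pi|_{U'})^!=(\pi|_{U'})^*$, these are compatible with Grothendieck duality. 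Consequently $\phi^{(e)}_{(U',\pi^*\Delta)}\cong(\pi|_{U'})^*\phi^{(e)}_{(U,\Delta)}$, and likewise for the twisted maps $\phi^{(e)}(M)$ with $M$ pulled back from $X$. Because $\pi|_{U'}$ is flat, this pullback preserves surjectivity and splittings of these maps.

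Now the conclusion. If $(X,\Delta)$ is $F$-split, choose $e$ with $(p^e-1)\Delta$ integral for which $\phi^{(e)}_{(X,\Delta)}\colon F^e_{X*}\mathcal O_X((1-p^e)(K_X+\Delta))\to\mathcal O_X$ admits a global splitting; restrict to $U$, pull back by $\pi|_{U'}$ to get a splitting of $\phi^{(e)}_{(U',\pi^*\Delta)}$ over $U'$, extend it uniquely over $X'$ by reflexivity, and dualize to obtain the required global splitting of $\mathcal O_{X'}\hookrightarrow F^e_{X'*}\mathcal O_{X'}((p^e-1)\pi^*\Delta)$. If $(X,\Delta)$ is strongly $F$-regular, then since this notion is local on the base it suffices to treat a fixed affine open $V'\subseteq X'$ and an arbitrary effective divisor $D'$ on it; after shrinking so that $\pi^{-1}(V)=V'$ for an affine $V\subseteq X$ with $V'\to V$ \'etale in codimension $1$, choose an effective divisor $D$ on $V$ with $(\pi^*D)|_{U'\cap V'}\ge D'|_{U'\cap V'}$ — for instance the closure of $\pi_*(D'|_{U'})$, using $\pi^*\pi_*\Gamma'\ge\Gamma'$ for prime divisors $\Gamma'$. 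Strong $F$-regularity of $(X,\Delta)$ over $V$ yields an $e$ with $\phi^{(e)}_{(X,\Delta)}(D)$ surjective over $V$; restrict to $U\cap V$, pull back, and use $\lceil(p^e-1)\pi^*\Delta\rceil+\pi^*D\ge\lceil(p^e-1)\pi^*\Delta\rceil+D'$ on $U'\cap V'$ to obtain a splitting of the relevant inclusion for $(V',\pi^*\Delta,D')$ over $U'\cap V'$; extend over $V'$ by reflexivity. Hence $(X',\pi^*\Delta)$ is strongly $F$-regular.

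I expect the main obstacle to be careful bookkeeping rather than a deep point: confirming that the quasi-\'etale hypothesis really forces $\omega_{X'/X}\cong\mathcal O_{X'}$ and the absence of codimension-one ramification, so that $K_{X'}+\pi^*\Delta$ is genuinely the pullback of $K_X+\Delta$ and Grothendieck duality pulls back cleanly; checking that all the trace maps involved are maps of reflexive sheaves on $X'$, so that a splitting produced only over the codimension-$\ge 2$ complement $U'$ propagates to all of $X'$; and, in the strongly $F$-regular case, arranging a divisor $D$ downstairs dominating an arbitrary $D'$ upstairs on the \'etale locus. The genuinely structural ingredient is the Frobenius base-change identity over the \'etale locus; everything else is routine once the reflexive-extension mechanism is in place.
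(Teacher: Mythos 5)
The paper does not prove this lemma; it is stated with a citation to \cite[Lemma~11.1]{PZ19} and used as a black box. Your argument is correct and is essentially the standard proof of that cited result: the cartesian square formed by Frobenius and the \'etale map $\pi|_{U'}$ identifies $\phi^{(e)}_{(U',\pi^*\Delta)}$ with $(\pi|_{U'})^*\phi^{(e)}_{(U,\Delta)}$, splittings pull back along the flat $\pi|_{U'}$ and extend across the codimension-$\ge 2$ complement by reflexivity on the normal $X'$, and in the strongly $F$-regular case an arbitrary effective $D'$ upstairs is dominated by $\pi^*\pi_*D'$, reducing to a divisor downstairs. The only points worth making explicit in a write-up are the two you already flag: that $\pi^{-1}$ of an affine cover of $X$ gives the affine cover of $X'$ on which you test strong $F$-regularity (using that the notion localizes), and that $\lceil(p^e-1)\pi^*\Delta\rceil=\pi^*\lceil(p^e-1)\Delta\rceil$ on the \'etale locus because there is no ramification in codimension one.
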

By this lemma and \cite[Theorems~1.2 and~1.3]{Eji19w}, we see that 
$\hat q(X)\le \dim X$ for a $F$-split projective pair $(X,\Delta)$. 
\begin{prop}[\textup{\cite[Proposition~11.3]{PZ19}}] \label{prop:hat q}
Let $\pi:X'\to X$ be a finite morphism between normal projective varieties. 
If $\pi$ is quasi-\'etale or universally homeomorphic, 
then $\hat q(X')=\hat q(X)$. 
\end{prop}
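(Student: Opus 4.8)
The plan is to establish both inequalities $\hat q(X')\le\hat q(X)$ and $\hat q(X')\ge\hat q(X)$ in each of the two cases, with the convention that the maxima in Definition~\ref{defn:regularity} may equal $\infty$ (so the claim becomes an equality of suprema, proved by showing the two families of Albanese dimensions are cofinal in each other). The two tools are: (a) composing a finite quasi-\'etale cover $Y\to X'$ with $\pi$ to obtain a finite cover of $X$; and (b) starting from a finite quasi-\'etale cover $Z\to X$ with $Z$ normal and passing to the normalization $W$ of a suitable irreducible component of $Z\times_X X'$, a finite cover of $X'$. In either construction one gets a finite surjective morphism relating the two covers, hence a surjection of Albanese varieties and an inequality of Albanese dimensions; the real content is upgrading the relevant inequalities to equalities. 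Here ``big open'' means an open subset whose complement has codimension $\ge 2$.

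\textbf{Case 1: $\pi$ is quasi-\'etale.} For $\hat q(X')\le\hat q(X)$, a composition of finite quasi-\'etale morphisms of normal varieties is again finite quasi-\'etale: if $Y\to X'$ is \'etale over a big open $V'\subseteq X'$ and $\pi$ is \'etale over a big open $U\subseteq X$, then $Y\to X$ is \'etale over the big open $X\setminus\bigl(\pi(X'\setminus V')\cup(X\setminus U)\bigr)$, whose complement has codimension $\ge 2$ since $\pi$ is finite. Hence $\dim\mathrm{Alb}_Y\le\hat q(X)$ for every finite quasi-\'etale $Y\to X'$. For $\hat q(X')\ge\hat q(X)$, given finite quasi-\'etale $Z\to X$ pick an irreducible component $W_0$ of $Z\times_X X'$ dominating $X'$ and let $W$ be its normalization; over the preimage of the common \'etale locus of $Z\to X$ and $\pi$, the scheme $Z\times_X X'$ is \'etale over the normal variety $X'$, hence already normal, so $W\to X'$ is finite quasi-\'etale, while $W\to Z$ is finite surjective. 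Thus $\dim\mathrm{Alb}_W\ge\dim\mathrm{Alb}_Z$, and passing to suprema gives $\hat q(X')\ge\hat q(X)$.

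\textbf{Case 2: $\pi$ is universally homeomorphic.} Here $Y\to X'\to X$ need not be quasi-\'etale, so both directions go through construction~(b). The crucial point is that for $Z\to X$ finite quasi-\'etale, $Z\times_X X'\to Z$ is a universal homeomorphism (base change of $\pi$), so $Z\times_X X'$ is irreducible and generically reduced, and its normalization $W:=\widetilde{(Z\times_X X')_{\mathrm{red}}}$ is still universally homeomorphic over $Z$; indeed $k(W)/k(Z)$ is purely inseparable, so for $e\gg0$ one has $k(W)^{p^e}\subseteq k(Z)\subseteq k(W)$, which yields factorizations $W\to Z\to W^{e}$ and $Z\to W^{e}\to Z^{e}$ whose composites are $e$-th Frobenius morphisms and therefore induce isogenies on Albanese varieties (here $W^{e}\cong W$ and $Z^{e}\cong Z$ because $k$ is perfect). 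Chasing these, the maps $\mathrm{Alb}_W\to\mathrm{Alb}_Z\to\mathrm{Alb}_{W^{e}}$ and $\mathrm{Alb}_Z\to\mathrm{Alb}_{W^{e}}\to\mathrm{Alb}_{Z^{e}}$ each have finite kernel, hence $\dim\mathrm{Alb}_W=\dim\mathrm{Alb}_Z$. Since $W\to X'$ is finite quasi-\'etale (it is \'etale over the preimage of the \'etale locus of $Z\to X$, where $Z\times_X X'$ is \'etale over the normal $X'$), this already gives $\hat q(X')\ge\hat q(X)$. For $\hat q(X')\le\hat q(X)$, given finite quasi-\'etale $Y\to X'$ choose a big open $U\subseteq X$ such that $Y\to X'$ is \'etale over $\pi^{-1}(U)$; since $\pi^{-1}(U)\to U$ is a universal homeomorphism, topological invariance of the \'etale site provides a finite \'etale $Z_U\to U$ with $Z_U\times_U\pi^{-1}(U)\cong Y|_{\pi^{-1}(U)}$. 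Letting $Z$ be the normalization of $X$ in $k(Z_U)$, the cover $Z\to X$ is finite quasi-\'etale, and $\widetilde{(Z\times_X X')_{\mathrm{red}}}$ and $Y$ are normal finite $X'$-schemes agreeing over the big open $\pi^{-1}(U)$, hence isomorphic; by the previous step $\dim\mathrm{Alb}_Y=\dim\mathrm{Alb}_Z\le\hat q(X)$.

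\textbf{Main obstacle.} The delicate part lies entirely in Case~2: proving that the normalized base change $W\to Z$ remains a universal homeomorphism and so preserves the Albanese dimension (the Frobenius-factorization argument, which ultimately uses that $Z$ is normal, hence unibranch), and that \emph{every} finite quasi-\'etale cover of $X'$ is obtained by this construction, for which one invokes the topological invariance of the \'etale site of the universal homeomorphism $\pi^{-1}(U)\to U$ over a big open $U\subseteq X$. By contrast Case~1 is essentially formal, using only that quasi-\'etaleness is stable under composition and (normalized) base change, and that a finite surjective morphism of normal projective varieties induces a surjection on Albanese varieties.
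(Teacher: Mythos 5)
The paper does not prove this proposition; it is quoted verbatim from \cite[Proposition~11.3]{PZ19}, so there is no in-paper argument to compare against. Your proof is correct and follows the route one would expect (and, as far as I can tell, essentially the route of Patakfalvi--Zdanowicz): stability of quasi-\'etaleness under composition and normalized base change in the quasi-\'etale case, and, in the universally homeomorphic case, the Frobenius factorization $W\to Z\to W^{e}$ through the normal $Z$ to see that Albanese dimensions are preserved, together with topological invariance of the \'etale site over a big open of $X$ to show every quasi-\'etale cover of $X'$ descends. One cosmetic slip: in Case~2 you assert that $Z\times_X X'$ is generically reduced, which need not hold for a scheme universally homeomorphic to a variety (e.g.\ $\mathrm{Spec}\,k[x]/(x^p)\to\mathrm{Spec}\,k$); this is harmless since you pass to $(Z\times_X X')_{\mathrm{red}}$ anyway, and the reduction map is itself a universal homeomorphism, so the purely inseparable function-field argument goes through unchanged.
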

\subsection{Decomposition theorems}
In this subsection, we work over a perfect field $k$ of characteristic $p>0$. 
\begin{thm}[\textup{cf. \cite[Theorem~11.6]{PZ19}}] \label{thm:F-split semi-ample}
Let $(X,\Delta)$ be a strongly $F$-regular $F$-split pair 
such that $-K_X-\Delta$ is $\mathbb Z_{(p)}$-Cartier 
and is numerically equivalent to 
a semi-ample $\mathbb Q$-Cartier divisor. 
Then there exist finite surjective morphisms 
$$
Y \xrightarrow{\tau} W \xrightarrow{\sigma} X
$$
with the following properties: 
\begin{enumerate}[$(1)$]
\item $\sigma:W\to X$ is quasi-\'etale. 
\item $\tau:Y\to W$ is an infinitesimal torsor under 
$\Pi_{i=1}^{\hat q(X)} \mu_{p^{j_i}}$
for some integers $j_i\ge0$. 
\item $K_Y+\Delta_Y \sim_{\mathbb Z_{(p)}} \tau^*\sigma^*(K_X+\Delta)$, 
where $\Delta_Y:=\tau^*\sigma^*\Delta$. 
\item $(Y, \Delta_Y) \cong (F,\Delta_F)\times_{k'} B$, where 
\begin{enumerate}[$(i)$]
\item $k'$ is a finite field extension of $k$, 
\item $B$ is an ordinary abelian variety over $k'$ of dimension $\hat q(X)$, and 
\item $(F,\Delta_F)$ is a strongly $F$-regular $F$-split geometrically integral projective pair over $k'$ such that $-K_F-\Delta_F$ is a semi-ample $\mathbb Z_{(p)}$-Cartier divisor and $\hat q(F)=0$. 
\end{enumerate}
\item If $X$ is smooth, then $Y$ is Gorenstein. 
\end{enumerate}
\end{thm}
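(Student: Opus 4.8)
The plan is to imitate the argument of \cite[\S 11]{PZ19}, with Theorem~\ref{thm:decomp3} playing the role of their splitting statement over abelian varieties. First I would use the definition of $\hat q(X)$ to choose a finite quasi-\'etale morphism $\sigma_0\colon W\to X$ with $\dim\mathrm{Alb}_W=\hat q(X)=:q$. By Proposition~\ref{prop:hat q} the equality $\hat q(W)=q$ survives any further quasi-\'etale base change, by Lemma~\ref{lem:F-split quasi-etale} the pair $(W,\Delta_W)$ with $\Delta_W:=\sigma_0^*\Delta$ is again strongly $F$-regular and $F$-split, and $-K_W-\Delta_W=\sigma_0^*(-K_X-\Delta)$ remains $\mathbb Z_{(p)}$-Cartier and numerically equivalent to a semi-ample $\mathbb Q$-Cartier divisor. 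If $q=0$ we are done (take $W=Y=F$), so assume $q>0$ and let $g\colon W\to A:=\mathrm{Alb}_W$ be the Albanese morphism. Since $-K_W$ is nef, $g$ is an algebraic fiber space by \cite{EP23}, and a Bertini-type argument together with \cite[Theorem~B]{PSZ18} shows that its geometric generic fiber is strongly $F$-regular; since $\mathrm{Pic}^0_{\mathrm{red}}(W)=\widehat A$, the numerically trivial part of $-K_W-\Delta_W$ is $\mathbb Q$-linearly equivalent to $g^*P$ for some numerically trivial $\mathbb Q$-divisor $P$ on $A$, so $-K_W-\Delta_W\sim_{\mathbb Q}D+g^*P$ with $D$ semi-ample. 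Then Theorem~\ref{thm:decomp3} gives $P\sim_{\mathbb Q}0$ and an isogeny $\pi\colon Z\to A$ with $(W\times_A Z,(\Delta_W)_Z)\cong(W_0,\Delta_W|_{W_0})\times_k Z$, where $W_0$ is the fiber of $g$ over $0\in A(k)$.

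Next I would refine $\pi$. Using the connected-\'etale sequence of $\ker\pi$, factor $\pi$ as $Z\to A'\to A$ with $A'\to A$ an \'etale isogeny and $Z\to A'$ purely inseparable; then $W\times_A A'\to W$ is \'etale, hence quasi-\'etale over $X$, and replacing $W$ by $W\times_A A'$ (absorbing this cover into $\sigma$ and retaining all hypotheses by Lemma~\ref{lem:F-split quasi-etale} and Proposition~\ref{prop:hat q}) I may assume $\pi$ itself is purely inseparable. Because $W$ is $F$-split, Frobenius acts bijectively on $H^1(W,\mathcal O_W)$, hence on the Frobenius-stable subspace $H^1(A,\mathcal O_A)$, so $A=\mathrm{Alb}_W$ is an \emph{ordinary} abelian variety of dimension $q$. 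Since $k$ is perfect, for $N\gg0$ there is an ordinary abelian variety $B$ of dimension $q$ and an iterated relative Frobenius isogeny $\beta\colon B\to A$ with $\ker\beta\cong\Pi_{i=1}^q\mu_{p^N}$; as $\ker\pi$ is infinitesimal it is annihilated by a power of Frobenius, and $\beta$ therefore factors as $B\to Z\xrightarrow{\pi}A$. Setting $\tau\colon Y:=W\times_A B\to W$ equal to the base change of $\beta$, the morphism $\tau$ is a torsor under $\Pi_{i=1}^q\mu_{p^{j_i}}$ (with all $j_i=N$), and
$$
Y=W\times_A B\cong(W\times_A Z)\times_Z B\cong W_0\times_k B,
$$
so $(Y,\Delta_Y)\cong(W_0,\Delta_W|_{W_0})\times_k B$ with $\Delta_Y:=\tau^*\sigma^*\Delta$; this yields (1) and (2). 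A comparison of canonical divisors, using that $\tau$ is a torsor under a finite flat group scheme (hence crepant) and that $\sigma$ is quasi-\'etale (hence crepant, with $K_W+\Delta_W=\sigma^*(K_X+\Delta)$), gives $K_Y+\Delta_Y\sim_{\mathbb Z_{(p)}}\tau^*\sigma^*(K_X+\Delta)$, i.e.\ (3).

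It remains to verify the properties of $(F,\Delta_F):=(W_0,\Delta_W|_{W_0})$, viewed over $k':=H^0(W_0,\mathcal O_{W_0})$, a finite extension of $k$ over which $W_0$ is geometrically integral. Since $\pi$ is purely inseparable and $k$ is perfect, $\pi$ is bijective on $k$-points, so the isomorphism $W\times_A Z\cong W_0\times_k Z$ shows that every fiber of $g$ over a $k$-point is isomorphic to $W_0$; combined with Theorem~\ref{thm:PZ}, with strong $F$-regularity of the geometric generic fiber, and with \cite[Theorem~B]{PSZ18}, this gives that $(F,\Delta_F)$ is strongly $F$-regular, while $F$-splitting of the general fiber of the $F$-split fibration $g$ gives that it is $F$-split. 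Restricting $-K_W-\Delta_W\sim_{\mathbb Q}D$ to $W_0$ shows that $-K_F-\Delta_F$ is semi-ample and $\mathbb Z_{(p)}$-Cartier, and $\hat q(F)=0$, since otherwise a finite quasi-\'etale cover $F'\to F$ with $\dim\mathrm{Alb}_{F'}>0$ would give a quasi-\'etale cover $F'\times_{k'}B\to Y$ with $\dim\mathrm{Alb}>q$, contradicting $\hat q(Y)=\hat q(W)=q$ (Proposition~\ref{prop:hat q}, as $\tau$ is universally homeomorphic). Finally, for (5): if $X$ is smooth then $\sigma$ is \'etale by purity of the branch locus, so $W$ is smooth; then $g$ is flat (equidimensional by Theorem~\ref{thm:PZ}, with $W$ Cohen--Macaulay), hence its fibers are Gorenstein, so $Y\cong F\times_{k'}B$ is Gorenstein.

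The step I expect to be the main obstacle is the passage from the \emph{arbitrary} trivializing isogeny $\pi$ produced by Theorem~\ref{thm:decomp3} to a torsor under $\Pi_{i=1}^q\mu_{p^{j_i}}$: one must split off the \'etale part of $\pi$ cleanly, so that the residual cover is purely inseparable and the $F$-singularities and the augmented irregularity are still controlled, and then exploit the $F$-splitting of $X$ to force $\mathrm{Alb}_W$ to be ordinary, so that the infinitesimal isogeny can be dominated by an iterated relative Frobenius with kernel $\Pi_{i=1}^q\mu_{p^N}$. A second delicate point is checking that the factor $F$ inherits strong $F$-regularity and $F$-splitting; here the key is again that $k$ is perfect, so that all fibers of the Albanese fibration over $k$-points coincide with $W_0$ and these properties can be transported from the geometric generic fiber.
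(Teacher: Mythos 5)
Your proposal follows essentially the same route as the paper: pass to a quasi-\'etale cover realizing $\hat q(X)$, write $-K-\Delta\sim_{\mathbb Q}D+g^*P$ over the Albanese, apply Theorem~\ref{thm:decomp3} to get $P\sim_{\mathbb Q}0$ and a trivializing isogeny, and then split that isogeny into an \'etale piece (absorbed into $\sigma$) and a $\mu_{p^{j_i}}$-torsor piece (giving $\tau$). The one place where you genuinely diverge is the decomposition of the isogeny $\pi\colon Z\to A$: the paper replaces $\pi$ by $[n]_A$ with $n=\deg\pi$ (which factors through $\pi$ by \cite[(5.12)~Proposition]{EVM12}) and uses $[n]_A=[m]_A\circ V^{(e)}_{A/k}\circ F^{(e)}_{A/k}$, ordinarity making $[m]_A\circ V^{(e)}_{A/k}$ \'etale; you instead take the connected--\'etale sequence of $\ker\pi$ and then dominate the residual infinitesimal isogeny by an iterated relative Frobenius. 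Your version does work, but the domination step is the part that needs to be written out: since $\ker\pi$ is infinitesimal it lies in $\ker F^{(N)}_{Z/k}$ for $N\gg0$, so $F^{(N)}_{Z/k}=\rho\circ\pi$ for some isogeny $\rho\colon A\to Z^{(p^N)}$, and untwisting by the inverse Frobenius (legitimate as $k$ is perfect) gives $\gamma\colon B\to Z$ with $\pi\circ\gamma=F^{(N)}_{B/k}$, where $B$ is the $N$-fold inverse Frobenius twist of $A$; only then does $W\times_A B\cong W_0\times_k B$ follow. Two smaller corrections: (i) your justification of the strong $F$-regularity of the geometric generic fiber via ``a Bertini-type argument'' is not right --- Bertini fails in positive characteristic, as the paper itself stresses (Remark~\ref{rem:char zero}); the correct mechanism is that normality and $F$-splitting pass to the geometric generic fiber, which is therefore $F$-pure, and \cite[Theorem~6.3]{PZ19} then upgrades this to strong $F$-regularity. (ii) Over a general perfect field one should first replace $k$ by $\kappa(x)$ for a closed point $x$ (and $X$ by a component of $X_{\kappa(x)}$) so that $\mathrm{Alb}_X$ is an abelian variety with a rational point and an identity element $0$; this is where the field extension $k'$ actually enters, rather than as $H^0(W_0,\mathcal O_{W_0})$ at the very end.
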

\begin{proof}
By Lemma~\ref{lem:F-split quasi-etale}, we may assume that 
$\dim \mathrm{Alb}_X=\hat q(X)$.
Let $x\in X$ be a closed point. 
Replacing $k$ and $X$ by $\kappa(x)$ and a component of $X_{\kappa(x)}$, respectively, 
we may assume that $X$ has a $k$-rational point. 
Then $A:=\mathrm{Alb}_X$ is an abelian variety 
(cf. \cite[10.3.9]{KLOS21}). 
By assumption, there is a semi-ample $\mathbb Q$-Cartier divisor $D$ on 
$X$ and a numerically trivial $\mathbb Q$-divisor $P$ on $A$ 
such that $-K_X-\Delta\sim_{\mathbb Q}D+f^*P$. 
This follows from the fact that if $L$ is a numerically trivial 
Cartier divisor on $X$, then $mL$ is algebraically trivial for some $m>0$
by \cite[COROLLARY~6.13]{Kle05} 
(this reference claims over an algebraically closed field, 
but one can easily check that it also holds over a perfect field).
Since $(X,\Delta)$ is normal and $F$-split, so is 
$
\left(X_{\overline\eta},\Delta|_{X_{\overline\eta}}\right), 
$
where $X_{\overline\eta}$ is the geometric generic fiber of 
the Albanese morphism $a:X\to A$, 
so we see from \cite[Theorem~6.3]{PZ19} that 
$
\left(X_{\overline\eta},\Delta|_{X_{\overline\eta}}\right) 
$
is strongly $F$-regular. 
Therefore, we can apply Theorem~\ref{thm:decomp3} and 
obtain that $P\sim_{\mathbb Q}0$ and that there is 
an isogeny $\pi:B\to A$ such that 
$$
(X\times_A B, \mathrm{pr}_1^*\Delta) 
\cong (F, \Delta_F)\times_k B 
$$
as $B$-schemes, where $F:=X_0$ and $\Delta_F:=\Delta|_{X_0}$. 
Note that $-K_F-\Delta_F=(-K_X-\Delta)|_F$ is semi-ample. 
Note also that $F$ is geometrically integral, as $X_{\overline\eta}$ 
is integral by \cite[Theorems~1.2 and~1.3]{Eji19w} 
and so we can apply \cite[Th\'eor\`eme~(12.2.4)]{Gro65}. 
Put $n:=\deg \pi$. 
Then by \cite[(5.12)~Proposition]{EVM12}, we see that 
$[n]_A:A\to A$ factors through $\pi$. 
Here, $[n]_A:A\to A$ is the morphism defined by $[n]_A(a)=n\cdot a$. 
We may assume that $B=A$ and $\pi=[n]_A$. 
Write $n=mp^e$ with $p\nmid m$. 
Then $[n]_A=[m]_A\circ [p^e]_A$ and 
$[p^e]_A=V_{A/k}^{(e)} \circ F_{X/k}^{(e)}$, 
where $V_{A/k}^{(e)}$ is the $e$-th Verschiebung homomorphism
(see \cite[(5.21)]{EVM12}). 
Set 
$$
\sigma':=[m]_A\circ V_{A/k}^{(e)}:A':=A_{(\mathrm{Spec}\,k)^e} \to A
\quad \textup{and} \quad 
\tau':=F_{X/k}^{(e)}:B=A\to A'. 
$$
Put $W:=X\times_A A'$ and $Y:=X\times_A B \cong W\times_{A'} B$. 
Let $\sigma:W\to X$ and $\tau:Y\to W$ be the induced morphisms: 
$$
\xymatrix{
Y \ar[d] \ar[r]^\tau \ar@{}[dr]|\square & W \ar[d] \ar[r]^\sigma \ar@{}[dr]|\square & X \ar[d]^a \\
B \ar[r]_{\tau'} & A' \ar[r]_{\sigma'} & A. 
}
$$
Since $(X,\Delta)$ is $F$-split, $A$ is ordinary, so $V_{A/k}^{(e)}$ is \'etale, 
and hence $\sigma'$ is \'etale. Thus,~(1) holds. 
Statement~(2) follows from the fact that $\tau'=F_{X/k}^{(e)}:A\to A'$ 
is a torsor under $\Pi_{i=1}^{\hat q(S)}\mu^{j_i}$ 
for some integers $j_i\ge0$ (\cite[p. 146]{Mum70}). 
We obtain~(3) by 
$$
K_Y+\Delta_Y
\sim_{\mathbb Z_{(p)}} K_{Y/B}+\Delta_Y
\sim_{\mathbb Z_{(p)}} \tau^*\sigma^*(K_{X/A}+\Delta)
\sim_{\mathbb Z_{(p)}} \tau^*\sigma^*(K_X+\Delta). 
$$
To show~(4), we have to check that 
\begin{enumerate}[(a)]
\item $(F,\Delta_F)=(X_0,\Delta|_{X_0})$ is strongly $F$-regular,
\item $(F,\Delta_F)=(X_0,\Delta|_{X_0})$ is $F$-split and 
\item $\hat q(F)=0$. 
\end{enumerate}
Statement~(a) is obtained by the strong $F$-regularity of general fibers of $a:X\to A$ (\cite[Theorem~B]{PSZ18}). 
Statement~(b) follows from the $F$-splitting of $(X,\Delta)$ 
(cf. \cite{Eji19w}). 
We show~(c). Suppose that $\hat q(F)>0$. 
Then $\hat q(Y)> \dim B=\hat q(X)$, 
which contradicts Proposition~\ref{prop:hat q}, 
as $\tau:Y\to W$ is a universal homeomorphism. 
Statement~(5) follows from the fact that $a:X\to A$ is a Gorenstein morphism. 
\end{proof}
\begin{rem} \label{rem:char zero}
In characteristic zero, a statement similar to 
Theorem~\ref{thm:F-split semi-ample}
follows from Ambro's theorem \cite[Theorem~0.1]{Amb05}. 
Let $k$ be an algebraically closed field of characteristic zero. 
Let $(X,\Delta)$ be a klt projective pair over $k$ such that 
$-K_X-\Delta$ is numerically equivalent to a semi-ample $\mathbb Q$-Cartier 
divisor $D$ on $X$. 
Replacing $X$ with a finite quasi-\'etale cover, we may assume that 
$\dim \mathrm{Alb}_X =\hat q(X)$. 
By the Bertini theorem, we can find an effective $\mathbb Q$-Cartier divisor 
$E$ on $X$ such that $E\sim_{\mathbb Q}D$ and 
$(X, \Delta':=\Delta+E)$ is klt. 
Then $K_X+\Delta'\equiv0$. 
Hence, by \cite[Theorem~0.1]{Amb05}, there is a finite \'etale cover 
$B\to A:=\mathrm{Alb}_X$ such that 
$
(X,\Delta')\times_A B \cong (F,\Delta'|_F)\times_k B
$
as $B$-schemes, where $F$ is a closed fiber of the Albanese morphism of $X$. 
We see that $\hat q(F)=0$ and that 
$
(X,\Delta)\times_A B \cong (F,\Delta|_F)\times_k B. 
$
Note that the Bertini theorem does not hold in positive characteristic, 
so we cannot reduce the proof of Theorem~\ref{thm:F-split semi-ample} 
to the case of $K_X+\Delta\equiv 0$. 
\end{rem}
By the same argument as the proof of Theorem~\ref{thm:F-split semi-ample}, 
we can prove the following theorem: 
\begin{thm} \label{thm:F-split nef}
Suppose that $k\subseteq \overline{\mathbb F_p}$. 
Let $(X,\Delta)$ be a strongly $F$-regular $F$-split projective pair 
such that $-K_X-\Delta$ is a nef $\mathbb Z_{(p)}$-Cartier divisor. 
Then there exist finite surjective morphisms 
$$
Y\xrightarrow{\tau}W\xrightarrow{\sigma}X
$$
with the following properties: 
\begin{enumerate}[$(1)$]
\item $\sigma:W\to X$ is quasi-\'etale. 
\item $\tau:Y\to W$ is an infinitesimal torsor under 
$\Pi_{i=1}^{\hat q(X)} \mu_{p^{j_i}}$
for some integers $j_i\ge0$. 
\item $K_Y+\Delta_Y \sim_{\mathbb Z_{(p)}} \tau^*\sigma^*(K_X+\Delta)$, 
	where $\Delta_Y:=\tau^*\sigma^*\Delta$. 
\item $(Y, \Delta_Y) \cong (F,\Delta_F)\times_{k'} B$, where 
\begin{enumerate}[$(i)$]
\item $k'$ is a finite field extension of $k$, 
\item $B$ is an ordinary abelian variety over $k'$ of dimension $\hat q(X)$, and 
\item $(F,\Delta_F)$ is a strongly $F$-regular $F$-split projective pair
	over $k'$ such that $-K_F-\Delta_F$ is a nef $\mathbb Z_{(p)}$-Cartier divisor and $\hat q(F)=0$. 
\end{enumerate}
\item If $X$ is smooth, then $Y$ is Gorenstein. 
\end{enumerate}
\end{thm}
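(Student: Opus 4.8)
The plan is to imitate the proof of Theorem~\ref{thm:F-split semi-ample} essentially verbatim, with Theorem~\ref{thm:decomp1}(4) replacing Theorem~\ref{thm:decomp3}. If $\hat q(X)=0$ there is nothing to prove (take $Y=W=X$ and $B=\mathrm{Spec}\,k$), so I assume $\hat q(X)>0$. First I would use Lemma~\ref{lem:F-split quasi-etale} to reduce to $\dim\mathrm{Alb}_X=\hat q(X)$, and then, replacing $k$ by the residue field $\kappa(x)$ of a closed point $x\in X$ — still a subfield of $\overline{\mathbb F_p}$ — and $X$ by a component of $X_{\kappa(x)}$, reduce to the case where $X$ has a $k$-rational point. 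Then $A:=\mathrm{Alb}_X$ is an abelian variety over $k$ and the Albanese morphism $a\colon X\to A$ is surjective. Exactly as in the semi-ample case, the $F$-splitting of $(X,\Delta)$ forces $A$ to be ordinary and the geometric generic fibre pair $\bigl(X_{\overline\eta},\Delta|_{X_{\overline\eta}}\bigr)$ of $a$ to be $F$-split, hence strongly $F$-regular by \cite[Theorem~6.3]{PZ19} since $-K_X-\Delta$ is nef.

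Since $K_A\sim 0$ we have $-K_{X/A}-\Delta=-K_X-\Delta$, which is nef and $\mathbb Z_{(p)}$-Cartier; in contrast with Theorem~\ref{thm:F-split semi-ample}, no argument producing a torsion summand is needed here. Because $k\subseteq\overline{\mathbb F_p}$, I would apply Theorem~\ref{thm:decomp1}(4) to $a$, obtaining a finite surjective morphism $\pi\colon Z\to A$ — a composite of a finite \'etale cover and an iterated Frobenius — and an isomorphism $(X_Z,\Delta_Z)\cong(F,\Delta_F)\times_k Z$ of $Z$-schemes, where $F:=X_0$ and $\Delta_F:=\Delta|_{X_0}$. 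I then set $B:=Z$. Since iterated Frobenius morphisms are universal homeomorphisms, hence induce equivalences of \'etale sites, $\pi$ can be reorganized, exactly as in the semi-ample proof, into a factorization $\pi=\sigma'\circ\tau'$ with $\sigma'\colon A_1\to A$ a finite \'etale cover and $\tau'\colon B\to A_1$ an iterated Frobenius. After a harmless finite extension of $k$ (permitted by property~(1)) I may assume $A_1$ is geometrically connected with a rational point over $0\in A(k)$, so that $A_1$ — and with it its Frobenius twist $B$ — is an ordinary abelian variety over $k$ of dimension $\hat q(X)$; and $\tau'$, being a Frobenius morphism of the ordinary abelian variety $A_1$, is a torsor under $\prod_{i=1}^{\hat q(X)}\mu_{p^{j_i}}$ for suitable integers $j_i\ge0$ by \cite[p.~146]{Mum70}.

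Finally I would set $W:=X\times_A A_1$ and $Y:=X\times_A Z\cong W\times_{A_1}Z$, and take $\sigma\colon W\to X$ and $\tau\colon Y\to W$ to be the base changes of $\sigma'$ and $\tau'$. Then $\sigma$ is finite \'etale (in particular quasi-\'etale), $\tau$ is an infinitesimal torsor under $\prod_{i=1}^{\hat q(X)}\mu_{p^{j_i}}$, and $(Y,\Delta_Y)=(X_Z,\Delta_Z)\cong(F,\Delta_F)\times_k B$, which gives~(1),~(2) and the first half of~(4); and~(3) follows from
\[
K_Y+\Delta_Y\sim_{\mathbb Z_{(p)}}K_{Y/B}+\Delta_Y\sim_{\mathbb Z_{(p)}}\tau^*\sigma^*(K_{X/A}+\Delta)\sim_{\mathbb Z_{(p)}}\tau^*\sigma^*(K_X+\Delta).
\]
For the remaining parts of~(4): $(F,\Delta_F)=(X_0,\Delta|_{X_0})$ is strongly $F$-regular by openness of strong $F$-regularity in families (\cite[Theorem~B]{PSZ18}), is $F$-split since the $F$-splitting of $(X,\Delta)$ restricts to fibres (cf.~\cite{Eji19w}), has $-K_F-\Delta_F=(-K_X-\Delta)|_F$ nef, and satisfies $\hat q(F)=0$ — otherwise $\hat q(Y)>\dim B=\hat q(X)$, contradicting Proposition~\ref{prop:hat q} because $\tau$ is a universal homeomorphism. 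Statement~(5) holds because $a$ is a Gorenstein morphism when $X$ is smooth. The step I expect to demand the most care — the only one genuinely new compared with Theorem~\ref{thm:F-split semi-ample} — is the reorganization of the cover in the second paragraph: passing from the $(\text{\'etale})\circ(\text{Frobenius})$ morphism supplied by Theorem~\ref{thm:decomp1}(4) to a factorization $\sigma'\circ\tau'$ in which $\tau'$ is an infinitesimal $\prod\mu_{p^{j_i}}$-torsor over an ordinary abelian variety, which rests on the ordinarity of $A$ (hence of $A_1$) guaranteed by the $F$-splitting hypothesis.
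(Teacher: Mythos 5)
Your proposal is correct and follows essentially the same route as the paper, whose proof of Theorem~\ref{thm:F-split nef} is literally a pointer back to the argument of Theorem~\ref{thm:F-split semi-ample} with Theorem~\ref{thm:decomp1}(4) in place of Theorem~\ref{thm:decomp3}, plus the remark that an \'etale cover of an abelian variety is again an abelian variety (\cite[p.~167]{Mum70}) --- precisely the point you isolate when reorganizing the $(\text{\'etale})\circ(\text{Frobenius})$ cover into an \'etale isogeny followed by an iterated Frobenius of an ordinary abelian variety. The only blemish is the reference to ``property~(1)'' where you mean (4)(i) for the finite extension $k'/k$.
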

\begin{proof}
This follows from the same argument as that of the proof of Theorem~\ref{thm:F-split semi-ample} and Theorem~\ref{thm:decomp1}. 
Note that we use the fact that an \'etale cover of an abelian variety is 
again abelian variety~\cite[p.167, Thoerem]{Mum70}. 
\end{proof}
Next, we treat varieties with nef tangent bundle. 
\begin{thm} \label{thm:nef tangent}
Let $k$ be an algebraically closed field of characteristic $p>0$. 
Let $X$ be a smooth $F$-split projective variety over $k$ 
with nef tangent bundle. 
Suppose that either $k=\overline{\mathbb F_p}$ or $-K_X$ is numerically equivalent to a semi-ample $\mathbb Q$-divisor on $X$. 
Then there exist finite surjective morphisms 
$$
F\times_k B\xrightarrow{\tau}W\xrightarrow{\sigma}X
$$
with the following properties: 
\begin{enumerate}[$(1)$]
\item $\sigma:W\to X$ is \'etale. 
\item $\tau:Y\to W$ is an infinitesimal torsor under 
$\pi^{\hat q(X)}_{i=1} \mu_{p^{j_i}}$ for some integers $j_i\ge 0$. 
\item $K_{F\times_k B}\sim \tau^*\sigma^*K_X$. 
\item $B$ is an ordinary abelian variety of dimension $\hat q(X)$. 
\item $F$ is a smooth $F$-split separably rationally connected Fano variety 
with nef tangent bundle. 
\end{enumerate}
In particular, $-K_X$ is semi-ample. 
\end{thm}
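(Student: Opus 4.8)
The plan is to combine Theorems~\ref{thm:F-split semi-ample} and~\ref{thm:F-split nef}, applied with $\Delta=0$, with the positive-characteristic structure theory of projective varieties with nef tangent bundle. First note that nefness of $T_X$ forces $-K_X=\det T_X$ to be nef, that $X$ is strongly $F$-regular because it is smooth, and that $(X,0)$ is $F$-split by hypothesis; so $(X,0)$ is a strongly $F$-regular $F$-split projective pair whose anti-canonical divisor is numerically equivalent to a semi-ample $\mathbb Q$-divisor in the first case and nef with $k=\overline{\mathbb F_p}$ in the second. Applying Theorem~\ref{thm:F-split semi-ample} in the first case and Theorem~\ref{thm:F-split nef} in the second, and using that $k$ algebraically closed makes the auxiliary field extension $k'$ trivial, I obtain finite surjective morphisms $Y\xrightarrow{\tau}W\xrightarrow{\sigma}X$ such that $\sigma$ is quasi-\'etale, $\tau$ is an infinitesimal torsor under $\Pi_{i=1}^{\hat q(X)}\mu_{p^{j_i}}$ for suitable $j_i\ge 0$, $Y$ is Gorenstein, $K_Y\sim\tau^*\sigma^*K_X$ (which is genuine linear equivalence here since $\Delta=0$, $Y$ is Gorenstein, and $K_A$, $K_B$ are trivial), and $Y\cong F'\times_k B$ with $B$ an ordinary abelian variety of dimension $\hat q(X)$ and $F'$ a strongly $F$-regular $F$-split projective variety with $\hat q(F')=0$ and $-K_{F'}$ nef, indeed semi-ample in the first case.

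The next step is to upgrade $F'$ to a smooth separably rationally connected Fano variety with nef tangent bundle. Since $X$ is smooth and $\sigma$ is quasi-\'etale, Zariski--Nagata purity of the branch locus shows that $\sigma$ is \'etale, so $W$ is smooth and $T_W=\sigma^*T_X$ is nef; this yields property~(1). By the construction in the proof of Theorem~\ref{thm:F-split semi-ample}, $F'$ is isomorphic to a fibre of the Albanese morphism of a finite \'etale cover of $X$, which is again a smooth projective variety with nef tangent bundle; by the structure theory for such varieties \cite{KW23,EY23}, this Albanese morphism is smooth and its fibres are separably rationally connected Fano varieties with nef tangent bundle. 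Hence $F'$ is smooth, separably rationally connected, Fano, and has nef tangent bundle, and it is $F$-split by the output of Theorem~\ref{thm:F-split semi-ample} or~\ref{thm:F-split nef}; this gives property~(5), while~(2),~(3) and~(4) are recorded above. Relabelling $Y=F\times_k B$ produces the asserted tower $F\times_k B\xrightarrow{\tau}W\xrightarrow{\sigma}X$.

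For the final assertion I would argue that $-K_{F\times_k B}=\mathrm{pr}_F^*(-K_F)+\mathrm{pr}_B^*(-K_B)=\mathrm{pr}_F^*(-K_F)$ because $K_B\sim 0$, and this divisor is semi-ample since $-K_F$ is ample. Then by property~(3) the pullback of $-K_X$ along the finite surjective morphism $\sigma\circ\tau:F\times_k B\to X$ is semi-ample, and since semi-ampleness of a $\mathbb Q$-Cartier divisor descends along finite surjective morphisms, $-K_X$ is semi-ample.

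I expect the essential input — and the only genuinely nontrivial step — to be the structure theory of \cite{KW23,EY23}: namely that, for a smooth $F$-split projective variety with nef tangent bundle over an algebraically closed field of characteristic $p$, the Albanese morphism is smooth with separably rationally connected Fano fibres. Without it, Theorems~\ref{thm:F-split semi-ample} and~\ref{thm:F-split nef} only give the weaker conclusion that $-K_{F'}$ is nef and $\hat q(F')=0$ with $F'$ a priori merely Gorenstein. The remaining points — purity to make $\sigma$ \'etale, the triviality of $k'$ and of $K_A$, $K_B$, and the descent of semi-ampleness along finite surjective morphisms — are routine.
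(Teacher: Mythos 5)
Your proposal is correct and takes essentially the same route as the paper: the paper first invokes the structure theory of \cite{EY23,KW23} to reduce to the case where the Albanese fibre is already a smooth $F$-split separably rationally connected Fano variety with nef tangent bundle and then reruns the proof of Theorem~\ref{thm:F-split semi-ample}, whereas you apply Theorems~\ref{thm:F-split semi-ample}/\ref{thm:F-split nef} first and then use the same structure theory to identify the factor $F'$ as an Albanese fibre of an \'etale cover; the two orderings are interchangeable. The remaining points (purity making $\sigma$ \'etale, triviality of $k'$, and descent of semi-ampleness along $\sigma\circ\tau$) match the paper's intent.
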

\begin{proof}
By \cite[Theorem~1.3]{EY23} (cf. \cite[Theorem~1.8]{KW23}), we may assume that 
$\dim \mathrm{Alb}_X=\hat q(X)$ and 
a closed fiber $F$ of the Albanese morphism of $X$ is 
a smooth $F$-split separably rationally connected Fano variety with 
nef tangent bundle. 
Then the remaining of the proof is the same as that of 
Theorem~\ref{thm:F-split semi-ample}. 
\end{proof}
At the end of this subsection, 
we consider the case when the pair is not necessarily $F$-split.
\begin{thm} \label{thm:hat q=1}
Let $(X,\Delta)$ be a strongly $F$-regular projective pair 
$($not necessarily $F$-split$)$
such that $-K_X-\Delta$ is $\mathbb Z_{(p)}$-Cartier and is 
numerically equivalent to a semi-ample $\mathbb Q$-Cartier divisor. 
Suppose that $\hat q(X)=1$. 
Then there exist finite surjective morphisms 
$$
Y\xrightarrow{\tau}W\xrightarrow{\sigma}X
$$
with the following properties: 
\begin{enumerate}[$(1)$]
\item $\sigma:W\to X$ is quasi-\'etale. 
\item $\tau:Y\to W$ is an infinitesimal torsor. 
\item $K_Y+\Delta_Y\sim_{\mathbb Z_{(p)}}\tau^*\sigma^*(K_X+\Delta)$, 
	where $\Delta_Y:=\tau^*\sigma^*\Delta$. 
\item $(Y,\Delta_Y)\cong (F,\Delta_F)\times_{k'} E$, where
\begin{enumerate}[$(i)$]
\item $k'$ is a finite field extension of $k$, 
\item $E$ is an elliptic curve over $k'$, and 
\item $(F,\Delta_F)$ is geometrically integral projective pair over $k'$ 
	such that $-K_F-\Delta_F$ is a semi-ample $\mathbb Z_{(p)}$-Cartier
	divisor and $\hat q(F)=0$. 
\end{enumerate}
\item If $X$ is smooth, then $Y$ is Gorenstein. 
\end{enumerate}
\end{thm}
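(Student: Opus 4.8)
The plan is to run the argument of Theorem~\ref{thm:F-split semi-ample}, but with Theorem~\ref{thm:decomp ell} in place of Theorem~\ref{thm:decomp3}: since $\hat q(X)=1$ the Albanese target is an elliptic curve, and Theorem~\ref{thm:decomp ell} splits such a fibre space \emph{without} assuming that the geometric generic fibre is strongly $F$-regular --- which is exactly what is needed, as $(X,\Delta)$ is not assumed $F$-split here. First, using Lemma~\ref{lem:F-split quasi-etale} (strong $F$-regularity descends along finite quasi-\'etale covers) and Proposition~\ref{prop:hat q} ($\hat q$ is invariant under them), I replace $(X,\Delta)$ by a finite quasi-\'etale cover so that $\dim\mathrm{Alb}_X=\hat q(X)=1$; then, passing to the residue field $k'$ of a closed point and to a component of the resulting finite \'etale cover, I arrange that $X$ has a $k'$-rational point, so that $A:=\mathrm{Alb}_X$ is an elliptic curve over $k'$ and $a\colon X\to A$ is the Albanese morphism. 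Both reductions only pre-compose the eventual $\sigma$ with finite quasi-\'etale morphisms and replace $k$ by $k'$.

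Second, I bring the situation into the form of Theorem~\ref{thm:decomp ell}. Writing $-K_X-\Delta\equiv D$ with $D$ a semi-ample $\mathbb Q$-Cartier divisor, the divisor $-K_X-\Delta-D$ is numerically trivial and hence, after a positive multiple, algebraically trivial by \cite[Corollary~6.13]{Kle05}; thus it lies in $\mathrm{Pic}^0(X)=a^*\mathrm{Pic}^0(A)$, and $-K_X-\Delta\sim_{\mathbb Q}D+a^*P$ for a numerically trivial $\mathbb Q$-divisor $P$ on $A$. As $K_X+\Delta$ is $\mathbb Z_{(p)}$-Cartier, Theorem~\ref{thm:decomp ell} applies and gives $P\sim_{\mathbb Q}0$ together with an isogeny $\pi\colon\widetilde A\to A$ of elliptic curves such that $(X\times_A\widetilde A,\mathrm{pr}_1^*\Delta)\cong(X_0,\Delta|_{X_0})\times_{k'}\widetilde A$ over $\widetilde A$, where $X_0$ is the fibre of $a$ over $0\in A(k')$. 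In particular $-K_{X_0}-\Delta|_{X_0}=(-K_X-\Delta)|_{X_0}$ is semi-ample and $\mathbb Z_{(p)}$-Cartier.

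Third, to get the two-step tower I factor $\pi$ through the connected--\'etale sequence of $\ker\pi$: write $\widetilde A\xrightarrow{\rho}A'\xrightarrow{\lambda}A$, where $A':=\widetilde A/(\ker\pi)^{0}$, so that $\lambda$ is \'etale and $\rho$ is a torsor under the infinitesimal group scheme $G:=(\ker\pi)^{0}$. (Here, lacking ordinariness of $A$, I obtain only an \emph{infinitesimal} torsor, not a $\mu_{p^{j}}$-torsor as in Theorem~\ref{thm:F-split semi-ample}.) Set $W:=X\times_A A'$ and $Y:=X\times_A\widetilde A\cong W\times_{A'}\widetilde A$, with $\sigma\colon W\to X$ and $\tau\colon Y\to W$ the projections; then $\sigma$ is \'etale --- giving~(1) after composition with the covers of the first paragraph --- and $\tau$ is a $G_W$-torsor, giving~(2). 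Part~(3) follows from $K_A\sim K_{\widetilde A}\sim0$ and the base-change identity for relative canonical divisors: $K_Y+\Delta_Y\sim_{\mathbb Z_{(p)}}K_{Y/\widetilde A}+\Delta_Y=\tau^*\sigma^*(K_{X/A}+\Delta)\sim_{\mathbb Z_{(p)}}\tau^*\sigma^*(K_X+\Delta)$. For~(4) I set $E:=\widetilde A$, $F:=X_0$, $\Delta_F:=\Delta|_{X_0}$; then $(Y,\Delta_Y)\cong(F,\Delta_F)\times_{k'}E$ by Theorem~\ref{thm:decomp ell} and $-K_F-\Delta_F$ is semi-ample and $\mathbb Z_{(p)}$-Cartier as above, while $\hat q(F)=0$ because otherwise $\hat q(Y)\ge\hat q(F)+\dim E\ge2$, contradicting $\hat q(Y)=\hat q(W)=\hat q(X)=1$ (Proposition~\ref{prop:hat q}, since $\tau$ is a universal homeomorphism and $\sigma$ is quasi-\'etale). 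Part~(5): when $X$ is smooth, $a$ is a Gorenstein morphism, hence so is its base change $Y\to E$, and $E$ is Gorenstein, so $Y$ is Gorenstein.

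The main obstacle is that the fibre $F=X_0$ must be shown to be \emph{geometrically integral}, whereas Theorem~\ref{thm:decomp ell} a priori allows $X_0$ to be non-reduced (and, without further argument, disconnected). Geometric connectedness I expect to deduce from $a_*\mathcal O_X=\mathcal O_A$ --- which holds because $\dim A=\hat q(X)$ forces the Stein factorisation of $a$ to be trivial, else $\mathrm{Alb}_X$ would be larger --- transported to $X_0$ through the flat isogeny $\pi$ by flat base change; geometric reducedness of $X_0$, equivalently of $X\times_A\widetilde A$, is the genuinely delicate point, and is where input beyond Theorem~\ref{thm:decomp ell} enters (in the $F$-split case this was supplied by \cite{Eji19w} via $F$-splitting). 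I plan to establish it from the nefness of $-K_{X/A}-\Delta\equiv D$ together with the structure theory of Albanese morphisms of varieties with numerically positive anti-canonical class, or by a direct normality-and-base-change analysis of $X\times_A\widetilde A$. With geometric integrality of $F$ in hand, the remaining verifications are a routine transcription of the corresponding steps in the proof of Theorem~\ref{thm:F-split semi-ample}.
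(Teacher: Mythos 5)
Your proposal follows essentially the same route as the paper: reduce to the case where $\mathrm{Alb}_X$ is an elliptic curve, write $-K_X-\Delta\sim_{\mathbb Q}D+a^*P$ via Kleiman, apply Theorem~\ref{thm:decomp ell} to get $P\sim_{\mathbb Q}0$ and the splitting isogeny, factor the isogeny into an \'etale part and an infinitesimal torsor (your connected--\'etale factorization of $\ker\pi$ is the same as the paper's factorization of $K(E)/K(A)$ into its separable and purely inseparable parts), and then verify (1)--(5) as in Theorem~\ref{thm:F-split semi-ample}. The one step you leave open --- geometric integrality of $F=X_0$ --- is a genuine issue, since Theorem~\ref{thm:decomp ell} explicitly allows $X_0$ to be non-reduced; but the paper closes it exactly by the first of your two suggested routes: by \cite[Theorem~1.3~(d)]{EP23}, the geometric generic fiber of the Albanese morphism of a variety with nef $-K_X-\Delta$ is integral, and \cite[Th\'eor\`eme~(12.2.4)]{Gro65} then transports this to the closed fiber $X_0$ via the product structure. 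So you should replace ``I plan to establish it from the structure theory of Albanese morphisms'' by the concrete citation of that structure theorem; with that reference in hand your argument is complete, and no separate Stein-factorization discussion is needed for connectedness since the cited theorem already gives that the Albanese map is a fibration.
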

\begin{proof}
By the same argument as that of the proof of 
Theorem~\ref{thm:F-split semi-ample}, 
we may assume that $A:=\mathrm{Alb}_X$ is an elliptic curve. 
By assumption, there is a semi-ample $\mathbb Q$-Cartier divisor $D$ on $X$ 
and a numerically trivial $\mathbb Q$-divisor $P$ on $A$
such that $-K_X-\Delta\sim_{\mathbb Q}D+f^*P$. 
Therefore, we can apply Theorem~\ref{thm:decomp ell} and obtain that 
$P\sim_{\mathbb Q}0$ and that there is an isogeny $\pi:E\to A$ such that 
$$
(X\times_A E, \mathrm{pr}_1^*\Delta) \cong (F,\Delta_F)\times_k E
$$
as $E$-schemes, where $F:=X_0$ and $\Delta_F:=\Delta|_{X_0}$. 
Note that $-K_F-\Delta_F=(-K_X-\Delta)|_F$ is semi-ample. 
Note also that $F$ is geometrically integral, as the geometric generic fiber
of the Albanese morphism is integral by \cite[Theorem~1.3~(d)]{EP23}
and so we can apply \cite[Th\'eor\`eme~(12.2.4)]{Gro65}.
Let $B$ be the normalization of $A$ in the separable closure of $K(A)$ 
in $K(E)$. 
Then the induced morphism $\sigma':B\to A$ is \'etale and 
$\tau':E\to B$ is an iterated Frobenius morphism. 
Put $W:=X\times_A B$ and $Y:=X\times_A E\cong W\times_B E$. 
Let $\sigma:W\to X$ and $\tau:Y\to W$ be the induced morphism: 
$$
\xymatrix{
Y \ar[d] \ar[r]^\tau \ar@{}[dr]|\square & W \ar[d] \ar[r]^\sigma \ar@{}[dr]|\square & X \ar[d]^a \\
E \ar[r]_{\tau'} & B \ar[r]_{\sigma'} & A. 
}
$$
Since $\sigma'$ is \'etale, so is $\sigma$. 
Also, since $\tau'$ is an infinitesimal torsor, so is $\tau$. 
The remaining statements follows from an argument similar to 
that of the proof of Theorem~\ref{thm:F-split semi-ample}. 
\end{proof}
\subsection{Applications}
We first enumerate corollaries directly follows from 
Theorems~\ref{thm:F-split semi-ample} and~\ref{thm:hat q=1}. 
\begin{cor}[\textup{of Theorem~\ref{thm:F-split semi-ample}}] \label{cor:F-split semi-ample}
Let $(X,\Delta)$ be a strongly $F$-regular $F$-split projective pair over 
a perfect field of characteristic $p>0$ such that 
$-K_X-\Delta$ is $\mathbb Z_{(p)}$-Cartier and is 
numerically equivalent to a semi-ample $\mathbb Q$-Cartier divisor. 
Then $-K_X-\Delta$ is semi-ample. 
\end{cor}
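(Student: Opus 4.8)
The plan is to deduce the corollary from Theorem~\ref{thm:F-split semi-ample} together with descent of semi-ampleness along finite surjective morphisms. Applying Theorem~\ref{thm:F-split semi-ample} to $(X,\Delta)$ yields finite surjective morphisms $Y\xrightarrow{\tau}W\xrightarrow{\sigma}X$ with $\sigma$ quasi-\'etale, $\tau$ an infinitesimal torsor, the linear equivalence $K_Y+\Delta_Y\sim_{\mathbb Z_{(p)}}\tau^*\sigma^*(K_X+\Delta)$ with $\Delta_Y:=\tau^*\sigma^*\Delta$, and an isomorphism $(Y,\Delta_Y)\cong(F,\Delta_F)\times_{k'}B$ over an ordinary abelian variety $B$ such that $-(K_F+\Delta_F)$ is a semi-ample $\mathbb Z_{(p)}$-Cartier divisor.

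The first step is to observe that $-(K_Y+\Delta_Y)$ is semi-ample. Writing $\mathrm{pr}_F$ and $\mathrm{pr}_B$ for the two projections of $Y\cong F\times_{k'}B$, one has $K_Y+\Delta_Y\sim_{\mathbb Z_{(p)}}\mathrm{pr}_F^*(K_F+\Delta_F)+\mathrm{pr}_B^*K_B$ and $K_B\sim0$, so $-(K_Y+\Delta_Y)\sim_{\mathbb Z_{(p)}}\mathrm{pr}_F^*\bigl(-(K_F+\Delta_F)\bigr)$ is the pullback of a semi-ample divisor and hence semi-ample. By the linear equivalence above, $(\sigma\circ\tau)^*\bigl(-(K_X+\Delta)\bigr)$ is therefore semi-ample on $Y$.

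The second step is to descend semi-ampleness along the finite surjective morphism $g:=\sigma\circ\tau\colon Y\to X$, which I would split according to the factorization. For the purely inseparable part $\tau$: since $W$ is a normal projective variety over a perfect field (normality being part of the hypothesis that $\sigma$ is quasi-\'etale), some power $F_W^e$ of its Frobenius factors through $\tau$, so semi-ampleness of a pullback by $\tau$ forces semi-ampleness of the corresponding pullback by $F_W^e$, which amounts to multiplying the divisor class by $p^e$ and is thus equivalent to semi-ampleness on $W$; this gives semi-ampleness of $\sigma^*\bigl(-(K_X+\Delta)\bigr)$. For the (generically, in fact finite) \'etale part $\sigma$: passing to a Galois closure and taking invariant global sections, using that the invariant part of the pushed-forward structure sheaf is $\mathcal O_X$, shows that a sufficiently divisible multiple of $-(K_X+\Delta)$ is globally generated on $X$. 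The main obstacle is precisely this descent, and within it the verification that a power of Frobenius genuinely factors through the infinitesimal torsor $\tau$, which rests on the normality of $W$ and on properness of $Y$. Alternatively, one can bypass the descent entirely: in the proof of Theorem~\ref{thm:F-split semi-ample} one has $-(K_X+\Delta)\sim_{\mathbb Q}D+a^*P$ for the Albanese morphism $a$, with $D$ semi-ample on $X$ and $P$ numerically trivial, and Theorem~\ref{thm:decomp3} yields $P\sim_{\mathbb Q}0$, so $-(K_X+\Delta)\sim_{\mathbb Q}D$ is already semi-ample (the reductions to a $k$-rational point at the beginning of that proof only involve base change along a field extension, under which semi-ampleness of a $\mathbb Q$-Cartier divisor is insensitive).
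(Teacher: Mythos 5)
Your proposal is correct and is essentially the argument the paper intends: the paper states this as a direct corollary of Theorem~\ref{thm:F-split semi-ample} with no written proof, and your two steps (semi-ampleness of $-(K_Y+\Delta_Y)\sim_{\mathbb Z_{(p)}}\mathrm{pr}_F^*\bigl(-(K_F+\Delta_F)\bigr)$ on the product, followed by descent along the finite surjective morphism $\sigma\circ\tau$ --- or, alternatively, reading off $P\sim_{\mathbb Q}0$ from Theorem~\ref{thm:decomp3}) supply exactly the omitted details. The descent step you flag as the main obstacle is the standard fact that semi-ampleness of a $\mathbb Q$-Cartier divisor descends along finite surjective morphisms of complete varieties (your Frobenius-factorization handles the inseparable part, and a norm argument or Keel's lemma handles the rest), so there is no genuine gap.
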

\begin{cor}[\textup{of Theorem~\ref{thm:hat q=1}}] \label{cor:hat q=1} 
Let $(X,\Delta)$ be a strongly $F$-regular projective pair 
$($not necessarily $F$-split$)$ over 
a perfect field of characteristic $p>0$ such that 
$-K_X-\Delta$ is $\mathbb Z_{(p)}$-Cartier and is 
numerically equivalent to a semi-ample $\mathbb Q$-Cartier divisor. 
Suppose that $\hat q(X)\le 1$. 
Then $-K_X-\Delta$ is semi-ample. 
In particular, if $\hat q(X)\le 1$ and $K_X+\Delta\equiv 0$, 
then $K_X+\Delta\sim_{\mathbb Q}0$. 
\end{cor}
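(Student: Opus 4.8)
The plan is to split on the value of $\hat q(X)\in\{0,1\}$, to apply Theorem~\ref{thm:hat q=1} when $\hat q(X)=1$, to treat $\hat q(X)=0$ by a direct Picard-scheme argument, and in both cases to descend semi-ampleness along a finite cover.

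When $\hat q(X)=1$, I would apply Theorem~\ref{thm:hat q=1} to $(X,\Delta)$ (its hypotheses coincide with ours), obtaining finite surjective morphisms $Y\xrightarrow{\tau}W\xrightarrow{\sigma}X$ with $K_Y+\Delta_Y\sim_{\mathbb Z_{(p)}}\tau^*\sigma^*(K_X+\Delta)$ and $(Y,\Delta_Y)\cong(F,\Delta_F)\times_{k'}E$, where $E$ is an elliptic curve over $k'$ and $-K_F-\Delta_F$ is a semi-ample $\mathbb Z_{(p)}$-Cartier divisor. Since $K_E\sim 0$, under this isomorphism $-(K_Y+\Delta_Y)$ is the pullback to $F\times_{k'}E$ of the semi-ample divisor $-(K_F+\Delta_F)$, hence semi-ample, and therefore so is $\tau^*\sigma^*(-(K_X+\Delta))$, to which it is $\mathbb Q$-linearly equivalent. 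It then remains to descend: since $\sigma\circ\tau:Y\to X$ is finite and surjective and semi-ampleness of a $\mathbb Q$-Cartier divisor is reflected by finite surjective morphisms, $-(K_X+\Delta)$ is semi-ample.

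When $\hat q(X)=0$, there is no abelian part and I would argue directly. First, $\dim\mathrm{Alb}_X=0$ forces $\dim\mathrm{Pic}^0(X)=0$, so $\mathrm{Pic}^0(X)$ is a finite group scheme and every algebraically trivial line bundle on $X$ is torsion in $\mathrm{Pic}(X)$; combined with the fact, already used in the proof of Theorem~\ref{thm:F-split semi-ample} via \cite[Corollary~6.13]{Kle05}, that some positive multiple of a numerically trivial Cartier divisor is algebraically trivial, every numerically trivial Cartier divisor on $X$ is torsion. Writing $-K_X-\Delta\equiv D$ for a semi-ample $\mathbb Q$-Cartier divisor $D$ and choosing $N\in\mathbb Z_{>0}$ so that $N(K_X+\Delta)$ and $ND$ are Cartier, the divisor $N(-(K_X+\Delta))-ND$ is numerically trivial and Cartier, hence torsion, so $nN(-(K_X+\Delta))\sim nND$ for some $n\in\mathbb Z_{>0}$; as $nND$ is semi-ample, $-(K_X+\Delta)$ is semi-ample.

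For the final ``in particular'', I would feed $K_X+\Delta\equiv 0$ into the case just proved (so $-K_X-\Delta$ is numerically equivalent to the semi-ample divisor $0$) to get that $-(K_X+\Delta)$ is semi-ample; then a base-point-free positive multiple of it is numerically trivial, so defines a constant morphism and is linearly trivial, giving $K_X+\Delta\sim_{\mathbb Q}0$. Going through semi-ampleness of $-K_X-\Delta$ on $X$ itself is essential here: $\mathbb Q$-linear triviality need not descend along $\sigma\circ\tau$, since torsion classes can be killed by $\sigma$. The argument is assembly rather than new input; the only slightly delicate points are the separate $\hat q(X)=0$ case and the descent of semi-ampleness along $\sigma\circ\tau$, the latter being the step I would flag as the main (if routine) obstacle, and which, if one prefers not to cite it, can be done one factor at a time using that $\sigma$ is quasi-\'etale and $\tau$ is a universal homeomorphism.
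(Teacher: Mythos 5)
Your proposal is correct and follows essentially the same route as the paper: a case split on $\hat q(X)$, with the case $\hat q(X)=1$ deduced from Theorem~\ref{thm:hat q=1} together with descent of semi-ampleness along the finite surjective morphism $\sigma\circ\tau$, and the case $\hat q(X)=0$ handled by the observation that a trivial Albanese forces numerical equivalence to coincide with $\mathbb Q$-linear equivalence. You merely spell out the descent step and the finiteness of $\mathrm{Pic}^0$ that the paper leaves implicit, and both are standard and correctly justified.
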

\begin{proof}
The case of $\hat q(X)=1$ follows from Theorem~\ref{thm:hat q=1}. 
The case of $\hat q(X)=0$ follows from the fact that, 
if $\mathrm{Alb}_X$ is a point, then the numerical equivalence is 
equivalent to the $\mathbb Q$-linear equivalence. 
\end{proof}
\begin{cor}[\textup{of Theorem~\ref{thm:hat q=1}}] \label{cor:surface}
Let $(X,\Delta)$ be a strongly $F$-regular projective pair 
$($not necessarily $F$-split$)$ over 
a perfect field of characteristic $p>0$ such that 
$-K_X-\Delta$ is $\mathbb Z_{(p)}$-Cartier and 
is numerically equivalent to a semi-ample $\mathbb Q$-Cartier divisor. 
Suppose that $\dim X=2$. 
Then $-K_X-\Delta$ is semi-ample. 
\end{cor}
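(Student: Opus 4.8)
The plan is to argue according to the value of the augmented irregularity $\hat q(X)$. Since $-K_X-\Delta$ is numerically equivalent to a semi-ample $\mathbb Q$-Cartier divisor it is in particular nef, so \cite[Theorem~1.1]{EP23} applies and gives $\hat q(X)\le\dim X=2$. If $\hat q(X)\le 1$, then the conclusion is exactly Corollary~\ref{cor:hat q=1}. Hence the whole content of the statement is the case $\hat q(X)=2$, which I would reduce to Theorem~\ref{thm:decomp3}.

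Assume $\hat q(X)=2$. First I would pass to a cover: by definition of $\hat q$ there is a finite quasi-\'etale morphism $\sigma\colon X'\to X$ with $\dim\mathrm{Alb}_{X'}=2$, and --- arguing as at the beginning of the proof of Theorem~\ref{thm:F-split semi-ample}, i.e. replacing $k$ by the residue field of a suitable closed point and $X'$ by a component of the base change --- we may assume that $A:=\mathrm{Alb}_{X'}$ is an abelian surface. By Lemma~\ref{lem:F-split quasi-etale} the pair $(X',\sigma^*\Delta)$ is again strongly $F$-regular, and $-K_{X'}-\sigma^*\Delta$ is still numerically equivalent to a semi-ample $\mathbb Q$-Cartier divisor $D$. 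Since $\dim X'=\dim A=2$ and the Albanese dimension of $X'$ is $2$, the Albanese morphism $a\colon X'\to A$ is surjective and generically finite. Exactly as in the proof of Theorem~\ref{thm:F-split semi-ample} (a numerically trivial Cartier divisor becomes algebraically trivial after multiplication by \cite[Corollary~6.13]{Kle05}, and $a^*\colon\mathrm{Pic}^0(A)\xrightarrow{\ \sim\ }\mathrm{Pic}^0(X')$) I would then write $-K_{X'}-\sigma^*\Delta\sim_{\mathbb Q}D+a^*P$ with $P$ a numerically trivial $\mathbb Q$-divisor on $A$.

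Now I would apply Theorem~\ref{thm:decomp3} to $a\colon X'\to A$: its geometric generic fiber is zero-dimensional, so --- being a finite reduced scheme over an algebraically closed field, hence normal and with no boundary --- it is trivially strongly $F$-regular, and the remaining hypotheses are the ones just arranged. The theorem then yields $P\sim_{\mathbb Q}0$, hence $-K_{X'}-\sigma^*\Delta\sim_{\mathbb Q}D$ is semi-ample on $X'$. Since semi-ampleness of a $\mathbb Q$-Cartier divisor on a projective variety can be checked after a finite surjective base change, and in particular descends along $\sigma$ and along the auxiliary finite field extension, it follows that $-K_X-\Delta$ is semi-ample, as wanted.

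The point that needs care, and the only genuine obstacle, is this relative-dimension-zero application of Theorem~\ref{thm:decomp3}: its hypothesis on the geometric generic fiber requires that fiber to be \emph{reduced}, i.e. that the generically finite Albanese morphism $a$ be separable over $K(A)$. To handle this I would take the Stein factorization $a\colon X'\xrightarrow{g}X''\xrightarrow{h}A$, so that $h$ is finite with $K(X'')=K(X')$ and $g$ is birational; when $h$ is separable, Zariski--Nagata purity shows that its ramification divisor $R$ is an effective Weil divisor with $K_{X''}\sim h^*K_A+R=R$, and since $-K_{X''}-\Delta''$ stays pseudo-effective while $R+\Delta''$ is effective one gets $R=\Delta''=0$, so $h$ is \'etale, $X''$ is an abelian surface, $K_{X''}\sim 0$, and $-K_{X''}-\Delta''=0$ is trivially semi-ample --- giving the conclusion directly in this subcase. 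The remaining purely inseparable possibility for $h$ I would expect to exclude using the $F$-purity of $X'$ together with the canonical bundle formula for inseparable ($1$-foliation) quotients, which should make $-K_{X''}$ too negative to be pseudo-effective; once this is settled, the rest of the argument above is routine.
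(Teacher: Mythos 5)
Your reduction to $\hat q(X)\le 1$ (handled by Corollary~\ref{cor:hat q=1}) matches the paper, but your treatment of the remaining case $\hat q(X)=2$ has a genuine gap that you yourself flag: the purely inseparable possibility for the finite part $h$ of the Stein factorization of the Albanese morphism is only ``expected to be excluded'' via an unproved appeal to $1$-foliation canonical bundle formulas. That is not an argument, and it is not obviously repairable along the lines you sketch, since the sign contribution of $\det\mathcal F$ in the formula $K_{X'}\sim h^*K_{X''}+(p-1)\det\mathcal F$ depends on the foliation and does not by itself make $-K_{X''}$ non-pseudo-effective. There is also a smaller unaddressed step in your separable subcase: you conclude on $X''$ that $K_{X''}\sim 0$, but you still need to transport semi-ampleness of $-K_{X'}-\sigma^*\Delta$ back across the birational morphism $g\colon X'\to X''$, i.e.\ you must show $g$ contracts nothing (or deal with its exceptional curves); this is skipped.

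The paper avoids all of this by citing \cite[Proposition~3.2]{EP23}: when $\dim\mathrm{Alb}_X=\hat q(X)=2=\dim X$ and $-K_X-\Delta$ is nef, $X$ is already an abelian variety, so $K_X\sim 0$ and $\Delta=0$ and there is nothing to prove. (The separability and connectedness of the Albanese fibration that you try to establish by hand are exactly the content of \cite[Theorems~1.2 and~1.3]{Eji19w} and \cite[Theorem~1.3]{EP23}, which the paper uses systematically elsewhere; invoking them would have let you bypass the Stein factorization case analysis entirely.) As written, your proposal does not close the $\hat q(X)=2$ case.
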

\begin{proof}
By \cite[Theorem~1.3]{EP23}, we see that $\hat q(X)=0,1$ or $2$. 
The case $\hat q(X)=1$ follows from Corollary~\ref{cor:hat q=1}. 
Let $a:X\to A$ be the Albanese morphism of $X$. 
We may assume that $\dim A=\hat q(X)$. 
When $\dim A=0$, the numerical equivalence is equivalent to 
the $\mathbb Q$-linear equivalence, so the assertion is obvious. 
When $\dim A=2$, $X$ is an abelian variety by \cite[Proposition~3.2]{EP23}, 
so $K_X\sim0$ and $\Delta=0$. 
\end{proof}
Next, we investigate the \'etale fundamental groups 
of $F$-split varieties whose anti-canonical divisors 
satisfy some positivity conditions. 
The argument in this subsection is also almost the same as that of 
\cite[\S 12]{PZ19}.
\begin{lem}[\textup{\cite[Lemma~4.4.17]{Del73}}] \label{lem:pi}
Let $f:X\to Y$ be a generically finite surjective morphism 
between normal projective varieties over an algebraically closed field. 
Then the image of the induced morphism 
$f_*:\pi^{\textup{\'et}}(X)\to\pi^{\textup{\'et}}(Y)$ 
is of finite index. 
\end{lem}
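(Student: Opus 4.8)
The plan is to compare $\pi^{\textup{\'et}}$ of each variety with the absolute Galois group of its function field, reducing everything to the elementary fact that $K(X)/K(Y)$ is a finite field extension (since $f$ is dominant and generically finite). First I would fix an algebraic closure $\Omega$ of $K(Y)$ together with a $K(Y)$-embedding $K(X)\hookrightarrow\Omega$, and use these to choose compatible geometric generic points of $X$ and $Y$. Because $X$ and $Y$ are normal integral schemes, the canonical maps $\mathrm{Gal}(\Omega/K(Y))\to\pi^{\textup{\'et}}(Y)$ and $\mathrm{Gal}(\Omega/K(X))\to\pi^{\textup{\'et}}(X)$ are \emph{surjective}: any connected finite \'etale cover of a normal integral scheme is its normalization in a finite separable extension of the function field, and its base change to the generic point remains connected, which is precisely the surjectivity criterion. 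By functoriality of this construction, these surjections fit into a commutative square together with $f_*\colon\pi^{\textup{\'et}}(X)\to\pi^{\textup{\'et}}(Y)$ and the natural inclusion $\mathrm{Gal}(\Omega/K(X))\hookrightarrow\mathrm{Gal}(\Omega/K(Y))$.

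Next I would note that $\mathrm{Gal}(\Omega/K(X))$ is an \emph{open} subgroup of $\mathrm{Gal}(\Omega/K(Y))$, of index equal to the separable degree $[K(X):K(Y)]_s<\infty$ (the purely inseparable part of the extension does not affect the Galois group). Chasing the commutative square, the image of $f_*$ contains the image of $\mathrm{Gal}(\Omega/K(X))$ in $\pi^{\textup{\'et}}(Y)$; and since $\mathrm{Gal}(\Omega/K(Y))\to\pi^{\textup{\'et}}(Y)$ is onto, the image of a finite-index closed subgroup is again of finite index, bounded by $[K(X):K(Y)]_s$. Therefore $\mathrm{im}(f_*)$ is of finite index in $\pi^{\textup{\'et}}(Y)$. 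The statement is independent of the choice of base points, since changing base points alters $f_*$ only by composition with inner automorphisms and hence does not change the index of the image.

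I do not expect a genuine obstacle here; the two points deserving a little care are the surjectivity $\mathrm{Gal}(\Omega/K(Y))\twoheadrightarrow\pi^{\textup{\'et}}(Y)$, where normality of $Y$ is used essentially, and the bookkeeping of degrees in characteristic $p$, where the bound must be phrased in terms of the \emph{separable} degree. Alternatively one could argue purely in the language of covers --- pulling back a connected finite \'etale cover $\widetilde Y\to Y$ of degree $n$ to $X$ and bounding the number of connected components of $X\times_Y\widetilde Y$ by $n$ --- but the function-field comparison is shorter and avoids tracking components.
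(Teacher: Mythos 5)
The paper does not prove this lemma; it simply quotes it from \cite[Lemma~4.4.17]{Del73}, so there is no ``paper's proof'' to compare against. Your argument is a correct and complete self-contained proof of the cited fact, and it is the standard one: for a normal integral scheme the map $\mathrm{Gal}(\Omega/K(Y))\to\pi^{\textup{\'et}}(Y)$ is surjective (connected finite \'etale covers are normalizations in finite separable extensions of the function field, hence have connected generic fibre), the square with $f_*$ commutes by functoriality because the generic point of $X$ lies over that of $Y$, and $\mathrm{Gal}(\Omega/K(X))=\mathrm{Gal}(\Omega/K(X)_s)$ has index $[K(X):K(Y)]_s<\infty$, so its image under the surjection onto $\pi^{\textup{\'et}}(Y)$ has index at most $[K(X):K(Y)]_s$. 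You also correctly flag the two points needing care (normality for the surjectivity, and the separable degree in characteristic $p$), and the base-point independence remark is right since inner automorphisms preserve the index of the image.
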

\begin{cor}[\textup{of Theorems~\ref{thm:F-split semi-ample} and~\ref{thm:F-split nef}}] \label{cor:pi}
Let $X$ be a smooth $F$-split projective variety 
over an algebraically closed field $k$ of characteristic $p>0$. 
Suppose that either
\begin{itemize}
\item $-K_X$ is semi-ample, or
\item $-K_X$ is nef and $k=\overline{\mathbb F_p}$. 
\end{itemize}
If $\hat q(X) \ge \dim X-2$ $($e.g., $\dim X=3$ and $\hat q(X)>0$$)$, then $\pi^{\textup{\'et}}(X)$ is virtually abelian. I.e., there exists an abelian subgroup of finite index.  
\end{cor}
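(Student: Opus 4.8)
The plan is to apply the decomposition theorem of the previous subsection and then transport the resulting product structure through \'etale fundamental groups. Since $X$ is smooth, the pair $(X,0)$ is strongly $F$-regular (regular local rings are strongly $F$-regular), it is $F$-split by hypothesis, and $-K_X$ is semi-ample (resp.\ nef with $k=\overline{\mathbb F_p}$). Thus Theorem~\ref{thm:F-split semi-ample} (resp.\ Theorem~\ref{thm:F-split nef}), applied with $\Delta=0$ and with $k'=k$ since $k$ is algebraically closed, produces finite surjective morphisms $Y\xrightarrow{\tau}W\xrightarrow{\sigma}X$ such that $\sigma$ is quasi-\'etale, $\tau$ is an infinitesimal torsor under $\prod_{i=1}^{\hat q(X)}\mu_{p^{j_i}}$, and $Y\cong F\times_k B$ with $B$ an ordinary abelian variety of dimension $\hat q(X)$ and $F$ a strongly $F$-regular $F$-split projective variety with $-K_F$ semi-ample (resp.\ nef) and $\hat q(F)=0$. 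Since $\tau$ and $\sigma$ are finite, $\dim F=\dim X-\hat q(X)\le 2$, and $F$ is normal, being strongly $F$-regular.

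The key step is the claim that $\pi^{\textup{\'et}}(F)$ is finite. For $\dim F=0$ this is trivial, and for $\dim F=1$ the variety $F$ is a smooth projective curve with $\dim\mathrm{Alb}_F=0$, hence $F\cong\mathbb P^1$ and $\pi^{\textup{\'et}}(F)=0$. The case $\dim F=2$ is the main obstacle, which I would treat via the classification of normal projective surfaces with klt singularities (which $F$ has, being strongly $F$-regular) and nef anti-canonical divisor: using Corollary~\ref{cor:surface}, or abundance for surfaces, $-K_F$ is semi-ample, and according to whether its numerical dimension is $2$, $1$, or $0$, the surface $F$ is respectively a klt log del Pezzo surface (hence rational, so that $\pi^{\textup{\'et}}(F)=0$, a resolution being simply connected and surjecting onto $\pi^{\textup{\'et}}(F)$), a surface fibred over $\mathbb P^1$ with generic fibre of numerically trivial canonical class (the base being $\mathbb P^1$ precisely because $\hat q(F)=0$, and $\pi^{\textup{\'et}}(F)$ being finite because the monodromy and multiple-fibre data are finite under $\hat q(F)=0$), or a klt surface with $-K_F\sim_{\mathbb Q}0$ whose global index-one cover admits a K3 crepant resolution (the abelian case being ruled out by $\hat q(F)=0$), hence simply connected. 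This runs parallel to \cite[\S 12]{PZ19}; verifying finiteness in the fibred and Calabi--Yau subcases is the genuinely delicate point.

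Finally I would assemble the conclusion. Since $\tau$ is an infinitesimal torsor it is a universal homeomorphism, hence induces an isomorphism $\pi^{\textup{\'et}}(Y)\xrightarrow{\sim}\pi^{\textup{\'et}}(W)$. As $F$ and $B$ are proper and geometrically connected over the algebraically closed field $k$, the product formula for \'etale fundamental groups gives $\pi^{\textup{\'et}}(Y)\cong\pi^{\textup{\'et}}(F)\times\pi^{\textup{\'et}}(B)$, whose first factor is finite by the claim and whose second factor is abelian (the \'etale fundamental group of an abelian variety being abelian); so $\pi^{\textup{\'et}}(Y)$, and therefore $\pi^{\textup{\'et}}(W)$, is virtually abelian. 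Applying Lemma~\ref{lem:pi} to $\sigma\colon W\to X$, the image of $\sigma_*\colon\pi^{\textup{\'et}}(W)\to\pi^{\textup{\'et}}(X)$ has finite index in $\pi^{\textup{\'et}}(X)$; this image is a quotient of a virtually abelian group, hence virtually abelian, and therefore $\pi^{\textup{\'et}}(X)$, which contains it with finite index, is virtually abelian as well.
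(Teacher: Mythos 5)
Your overall architecture (apply Theorem~\ref{thm:F-split semi-ample} or~\ref{thm:F-split nef}, reduce to finiteness of $\pi^{\textup{\'et}}(F)$, then assemble via the universal homeomorphism $\tau$, the product formula, and Lemma~\ref{lem:pi}) matches the paper, and the cases $\dim F\le 1$ are fine. The genuine gap is the surface case, which you explicitly leave open. Your trichotomy by the numerical dimension of $-K_F$ has two problems. First, it presupposes that $-K_F$ is semi-ample; in the second hypothesis of the corollary ($-K_X$ nef, $k=\overline{\mathbb F_p}$) Theorem~\ref{thm:F-split nef} only gives $-K_F$ \emph{nef}, and Corollary~\ref{cor:surface} does not apply since $-K_F$ is not assumed numerically equivalent to a semi-ample divisor, so the trichotomy is not even available there. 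Second, in the subcases $\nu=0$ and $\nu=1$ you assert finiteness of $\pi^{\textup{\'et}}(F)$ via ``monodromy and multiple-fibre data'' and via ``the index-one cover admits a K3 crepant resolution, hence simply connected''; neither claim is proved, and the latter as stated is false (the minimal model of a Gorenstein canonical surface with $K\sim_{\mathbb Q}0$ can be an Enriques surface, which is not simply connected, and in characteristic $2,3$ further classes occur), so these are exactly the points where the proof is missing.

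The paper avoids all of this with a dichotomy you did not use: since $-K_X$ is nef, either $K_X\sim_{\mathbb Q}0$, in which case the statement is quoted directly from \cite[Corollary~12.5]{PZ19}, or $\kappa(X)=-\infty$. In the latter case $K_{F\times_k B}\sim\tau^*\sigma^*K_X$ and the invariance of Iitaka dimension under finite surjective pullback give $\kappa(F)=-\infty$; for $\dim F=2$ one then passes to the minimal resolution (crepant, as $F$ is Gorenstein and strongly $F$-regular, hence canonical) and uses the classification of smooth surfaces of negative Kodaira dimension: the minimal model is $\mathbb P^2$ or ruled over a curve $C$, and $\hat q(F)=0$ forces $C\cong\mathbb P^1$, so $F$ is rational and $\pi^{\textup{\'et}}(F)=0$. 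Note also that your observation that $\nu(-K_F)\in\{1,2\}$ already implies $\kappa(F)=-\infty$, so those two subcases collapse into the paper's second branch; the real content of the $\nu=0$ subcase is exactly what \cite[Corollary~12.5]{PZ19} supplies. To repair your argument you should either adopt this dichotomy or actually carry out the classification of strongly $F$-regular Gorenstein surfaces with $K\sim_{\mathbb Q}0$ and $\hat q=0$ in arbitrary characteristic.
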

\begin{proof}
If $K_X\sim_{\mathbb Q}0$, then	the statement follows from \cite[Corollary~12.5]{PZ19}. 
We assume that $\kappa(X)=-\infty$. 
Let $Y\cong F\times_k B \xrightarrow{\tau}W\xrightarrow{\sigma} X$ 
be as in Theorems~\ref{thm:F-split semi-ample} or~\ref{thm:F-split nef}. 
By Lemma~\ref{lem:pi}, it is enough to show that $\pi^{\textup{\'et}}(F)$
is finite. 
Note that $\kappa(K_F)=-\infty$, as $\kappa(X)=-\infty$. 
If $\hat q(X)=\dim X$, then $F$ is a point. 
If $\hat q(X)=\dim X-1$, then $F\cong\mathbb P^1$, 
so $\pi^{\textup{\'et}}(X)=0$. 
If $\hat q(X)=\dim X-2$, then $F$ is a Gorenstein strongly $F$-regular projective surface with $-K_F$ semi-ample, and so $F$ has canonical singularities. 
Taking the minimal resolution, we may assume that $F$ is smooth. 
Let $F'$ be a minimal surface (surface without $(-1)$-curves) of $F$. 
Since $\kappa(F)=-\infty$, we see that $F'\cong\mathbb P^2$ or 
a ruled surface over $C$. 
We consider the case of ruled surface. 
As $\hat q(F')=0$, the base curve $C$ is $\mathbb P^1$. 
Thus, $F'$ is rational, and hence $\pi^{\textup{\'et}}(F)=0$. 
\end{proof}
\begin{cor}[\textup{of Theorem~\ref{thm:nef tangent}}] 
\label{cor:pi nef tangent}
Let $k$ be an algebraically closed field of characteristic $p>0$. 
Let $X$ be a smooth $F$-split projective variety over $k$ with 
nef tangent bundle $T_X$. 
Suppose that either $-K_X$ is semi-ample or $k=\overline{\mathbb F_p}$. 
Then $\pi^{\textup{\'et}}(X)$ is virtually abelian. 
I.e., there exists an abelian subgroup of finite index. 
\end{cor}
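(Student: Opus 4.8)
The plan is to deduce the corollary from Theorem~\ref{thm:nef tangent} together with Lemma~\ref{lem:pi} and the standard behaviour of the \'etale fundamental group under finite \'etale maps, universal homeomorphisms, and products. First I would apply Theorem~\ref{thm:nef tangent} to $X$: its hypotheses are exactly those in force here, so we obtain finite surjective morphisms
$$
F\times_k B \xrightarrow{\ \tau\ } W \xrightarrow{\ \sigma\ } X
$$
with $\sigma$ \'etale, $\tau$ an infinitesimal torsor under $\prod_{i=1}^{\hat q(X)}\mu_{p^{j_i}}$, $B$ an ordinary abelian variety, and $F$ a smooth $F$-split separably rationally connected Fano variety with nef tangent bundle. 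All the schemes involved are normal projective varieties over the algebraically closed field $k$, so Lemma~\ref{lem:pi} is available.

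Next I would translate these geometric maps into group theory. Since $\sigma$ is finite \'etale and $W$ is connected, $\sigma_*$ identifies $\pi^{\textup{\'et}}(W)$ with an open subgroup of finite index in $\pi^{\textup{\'et}}(X)$ (cf. Lemma~\ref{lem:pi}). Since $\tau$ is a torsor under an infinitesimal group scheme, it is finite and purely inseparable, hence a universal homeomorphism, and therefore induces an isomorphism $\pi^{\textup{\'et}}(F\times_k B)\xrightarrow{\sim}\pi^{\textup{\'et}}(W)$. Thus it suffices to prove that $\pi^{\textup{\'et}}(F\times_k B)$ is abelian.

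For this I would invoke the K\"unneth formula for \'etale fundamental groups: as $k$ is algebraically closed and $F$ and $B$ are proper and connected, $\pi^{\textup{\'et}}(F\times_k B)\cong\pi^{\textup{\'et}}(F)\times\pi^{\textup{\'et}}(B)$. The group $\pi^{\textup{\'et}}(B)$ is abelian, since every connected finite \'etale cover of an abelian variety is refined by a multiplication isogeny $[n]\colon B\to B$, so $\pi^{\textup{\'et}}(B)\cong\varprojlim_n B[n](k)$. The group $\pi^{\textup{\'et}}(F)$ vanishes because $F$ is separably rationally connected over an algebraically closed field: any connected finite \'etale cover of $F$ is again separably rationally connected, but its pullback along a very free morphism $\mathbb P^1\to F$ must split since $\mathbb P^1$ is simply connected, and connectedness of the cover then forces its degree to be $1$. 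Hence $\pi^{\textup{\'et}}(F\times_k B)\cong\pi^{\textup{\'et}}(B)$ is abelian, so $\pi^{\textup{\'et}}(W)$ is an abelian subgroup of finite index in $\pi^{\textup{\'et}}(X)$, which is the assertion.

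I expect the only non-formal ingredient to be the vanishing $\pi^{\textup{\'et}}(F)=0$ for a separably rationally connected smooth projective variety $F$ over an algebraically closed field; in characteristic zero this is the classical simple connectedness of rationally connected varieties, and in positive characteristic it rests on the existence and deformation theory of very free rational curves. Note that this input was unavailable in the proof of Corollary~\ref{cor:pi}, where the relevant fibre was only known to be strongly $F$-regular with semi-ample anti-canonical divisor (which forced the explicit surface analysis there); here the separable rational connectedness of $F$ is supplied directly by Theorem~\ref{thm:nef tangent}. The remaining ingredients --- functoriality of $\pi^{\textup{\'et}}$ under finite \'etale maps, its invariance under universal homeomorphisms, the K\"unneth formula, and the commutativity of $\pi^{\textup{\'et}}$ of an abelian variety --- are all standard.
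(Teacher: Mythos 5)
Your proposal follows essentially the same route as the paper: apply Theorem~\ref{thm:nef tangent} to get $F\times_k B\to W\to X$, observe $\pi^{\textup{\'et}}(F)=0$, and conclude via Lemma~\ref{lem:pi} (your version is slightly more precise, identifying $\pi^{\textup{\'et}}(W)$ itself as an abelian open subgroup of finite index rather than just an abelian image of finite index). The formal ingredients you list --- invariance of $\pi^{\textup{\'et}}$ under the universal homeomorphism $\tau$, the K\"unneth decomposition of $\pi^{\textup{\'et}}(F\times_k B)$, and the commutativity of $\pi^{\textup{\'et}}(B)$ --- are all fine.

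The one point to fix is your inline justification of $\pi^{\textup{\'et}}(F)=0$. Knowing that the pullback of a connected \'etale cover $Y\to F$ along a single very free curve $\mathbb P^1\to F$ splits does \emph{not} by itself force $\deg(Y/F)=1$: a nontrivial cover can perfectly well restrict to a trivial cover over one rational curve (compare the K3 double cover of an Enriques surface restricted to a rational curve on it). The actual proof of simple connectedness of smooth projective separably rationally connected varieties requires the comb-smoothing/deformation argument (produce a very free curve whose preimage in $Y$ is connected), which is Koll\'ar's theorem in positive characteristic; the paper sidesteps this entirely by citing \cite[Corollary~1.4]{KW23} for $\pi^{\textup{\'et}}(F)=0$. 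Since you explicitly flag this vanishing as the sole non-formal input and correctly attribute it to the deformation theory of very free curves, the proof goes through once you cite that theorem rather than rely on the one-curve splitting argument.
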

\begin{proof}
Let $F\times_k B\xrightarrow{\tau}W\xrightarrow{\sigma}X$ be 
as in Theorem~\ref{thm:nef tangent}. 
Then $\pi^{\textup{\'et}}(F)=0$ by \cite[Corollary~1.4]{KW23}. 
Thus, the assertion follows from Lemma~\ref{lem:pi}. 
\end{proof}
In the remaining of this subsection, 
we study rational points of $F$-split varieties over finite fields
with nef anti-canonical divisors. 
\begin{cor}[\textup{of Theorem~\ref{thm:decomp1}}] \label{cor:rational points}
Let $(X,\Delta)$ be a strongly $F$-regular $F$-split geometrically integral 
projective pair over $\mathbb F_q$ for some $q=p^e>19$. 
Suppose that either 
\begin{enumerate}[$(a)$]
\item $-K_X-\Delta$ is a nef $\mathbb Z_{(p)}$-divisor,  
$K_X\not\sim_{\mathbb Q}0$, 
$\frac{1}{2}b_1(X) \ge \dim X -3$ $($e.g., $\dim X=4$ and $b_1(X)>0$$)$ 
and $p>5$, or  
\item $\Delta=0$, $K_X$ is a numerically trivial Cartier divisor,  
and $\frac{1}{2}b_1(X) \ge \dim X-2$ 
$($e.g., $\dim X=3$ and $b_1(X)>0$$)$. 
\end{enumerate}
Then $X(\mathbb F_q)\ne \emptyset$. 
\end{cor}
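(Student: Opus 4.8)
The plan is to pass to the Albanese morphism, reduce the existence of an $\mathbb F_q$-point to the same question on a fiber whose dimension is bounded by the hypothesis on $b_1(X)$, and then apply the rational point theorems of \cite{BF23}. Throughout I treat cases $(a)$ and $(b)$ together until the final step, noting that in both $-K_X-\Delta$ is a nef $\mathbb Z_{(p)}$-Cartier divisor (in $(b)$, $\Delta=0$ and $-K_X$ is numerically trivial, hence nef). Because $\mathbb F_q$ is finite, Lang's theorem gives $H^1(\mathbb F_q,\mathrm{Alb}_X)=0$, so the Albanese torsor of $X$ has an $\mathbb F_q$-point and coincides with the abelian variety $A:=\mathrm{Alb}_X$ of dimension $\tfrac12 b_1(X)$; write $a:X\to A$ for the Albanese morphism. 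As in the proof of Theorem~\ref{thm:F-split semi-ample}, $\bigl(X_{\overline\eta},\Delta|_{X_{\overline\eta}}\bigr)$ is normal and $F$-split, hence strongly $F$-regular by \cite[Theorem~6.3]{PZ19}, so Theorem~\ref{thm:PZ} applies to $a$. Passing to the Stein factorization $X\xrightarrow{g}W\xrightarrow{h}A$ and arguing as in the last step of the proof of Theorem~\ref{thm:num flat} (Proposition~\ref{prop:Weil} for $g$, using that $-K_{X/W}-\Delta-g^{*}K_{W/A}\sim_{\mathbb Z_{(p)}}-K_X-\Delta$ is nef), one gets $K_{W/A}\sim0$ and $h$ \'etale; then $W$ is a connected \'etale cover of an abelian variety carrying an $\mathbb F_q$-point, hence an abelian variety, and the universal property of $A$ forces $h$ to admit a section, hence (a connected finite \'etale cover with a section being trivial) to be an isomorphism. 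Thus $a$ has connected fibers; let $F:=a^{-1}(0)$ be the fiber over $0\in A(\mathbb F_q)$, so that $\dim F=\dim X-\tfrac12 b_1(X)$, which is $\le3$ in case $(a)$ and $\le2$ in case $(b)$.

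Next I would verify that $F$, equipped with $\Delta_F:=\Delta|_F$, is a geometrically integral, strongly $F$-regular, $F$-split projective variety over $\mathbb F_q$ satisfying the hypotheses $(a)$, resp. $(b)$, in its own dimension: $-K_F-\Delta_F=(-K_X-\Delta)|_F$ is nef, and $K_F$ is numerically trivial with $\Delta_F=0$ in case $(b)$. Geometric reducedness of every fiber of $a$ is Theorem~\ref{thm:PZ}$(2)$. For the finer properties, since $\mathbb F_q\subseteq\overline{\mathbb F_p}$ I would apply Theorem~\ref{thm:decomp1}$(4)$ to $a$: there is a finite morphism $\pi:Z\to A$, a composite of a finite \'etale cover and an iterated Frobenius, with $(X_Z,\Delta_Z)\cong(F_0,\Gamma_0)\times_k Z$ for a strongly $F$-regular pair $(F_0,\Gamma_0)$, which is integral because $a$ has connected fibers. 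Hence every geometric fiber of $a$ is isomorphic to $(F_0)_{\overline k}$, and since strong $F$-regularity, $F$-splitting and geometric integrality are geometric properties over the perfect field $\mathbb F_q$, they descend to $F$; matching canonical classes in the product decomposition (using $K_Z\sim0$ and flatness of the equi-dimensional morphism $a$ from the Cohen--Macaulay $X$) shows that $F$ inherits the $\mathbb Q$-linear (non)triviality of $K_X$, resp. of $K_X+\Delta$.

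Finally, $F$ is a variety to which \cite[Theorem~1.3 and Proposition~3.4]{BF23} apply: in case $(a)$ a geometrically integral, strongly $F$-regular, $F$-split projective variety of dimension $\le3$ over $\mathbb F_q$ with $-K_F-\Delta_F$ nef, $K_F\not\sim_{\mathbb Q}0$, $q>19$ and $p>5$, and in case $(b)$ such a variety of dimension $\le2$ with $K_F\equiv0$ and $q>19$. If $\hat q(F)>0$ one recurses into the Albanese morphism of $F$, whose fibers have strictly smaller dimension and again satisfy the relevant hypotheses, so the process terminates either at a point or in the case $\hat q=0$. Consequently $F(\mathbb F_q)\ne\emptyset$, and since $F\subseteq X$ is a closed subscheme, $X(\mathbb F_q)\ne\emptyset$. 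I expect the main obstacle to be the middle step --- promoting ``strongly $F$-regular and $F$-split'' from the \emph{general} fiber of the Albanese morphism to the \emph{special} fiber $F=a^{-1}(0)$ over a prescribed $\mathbb F_q$-point. This is exactly the content that Theorem~\ref{thm:decomp1} supplies, trivializing the family after a base change that is a composite of a Frobenius and an \'etale morphism; but one must carefully descend the properties through this base change (and a possible residue-field extension over $0$), keep track of canonical classes so that cases $(a)$ and $(b)$ stay distinguished, and treat the genuine possibility $\hat q(F)>0$ --- which already occurs, for instance, for bielliptic surfaces --- via the dimension-reducing recursion.
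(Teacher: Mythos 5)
Your overall architecture matches the paper's: use Lang's theorem to produce a point of $A=\mathrm{Alb}_X$, take the fiber $F$ over it (of dimension $\dim X-\tfrac12 b_1(X)$), use the $F$-splitting of $(X,\Delta)$ plus \cite[Theorem~6.3]{PZ19} to make $X_{\overline\eta}$ strongly $F$-regular so that Theorem~\ref{thm:decomp1} applies and $F$ is geometrically integral, and then invoke \cite{BF23}. Your Stein-factorization argument for connectedness of the Albanese fibers is a workable substitute for the paper's citation of \cite[Theorems~1.2 and~1.3]{Eji19w}.

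The genuine gap is in the last step. The results of \cite{BF23} are not stated for ``$-K_F-\Delta_F$ nef and $K_F\not\sim_{\mathbb Q}0$''; in case $(a)$ the relevant hypothesis is that a resolution $\tilde F\to F$ has $\kappa(\tilde F)=-\infty$, and in case $(b)$ that $F$ is a \emph{canonical} surface with $K_F\equiv 0$. Bridging to these hypotheses is exactly the substantive content of the paper's proof and is missing from yours: in case $(a)$ one assumes $\kappa(\tilde F)\ge 0$, pushes an effective $\tilde E\sim_{\mathbb Q}K_{\tilde F}$ forward to $0\le E\sim_{\mathbb Q}K_F$, uses nefness of $-E-\Delta_F$ to force $E=\Delta_F=0$ and $K_F\sim_{\mathbb Q}0$, and then transports this back to $\Delta=0$, $K_X\sim_{\mathbb Q}0$ via the product decomposition of Theorem~\ref{thm:decomp1}, contradicting $K_X\not\sim_{\mathbb Q}0$; in case $(b)$ one observes that a Gorenstein strongly $F$-regular surface is canonical. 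Relatedly, your recursion on $\hat q(F)>0$ is both unnecessary (once $\dim F\le 3$, resp.\ $\le 2$, the \cite{BF23} statements apply directly, irrespective of $\hat q(F)$) and unjustified as stated, since you neither specify the base case it terminates in nor verify that the inductive step preserves the hypotheses you would need.
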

\begin{proof}
Let $a:X\to A$ be the Albanese morphism of $X$. 
Then $a$ is separable and has connected fibers by \cite[Theorems~1.2 and~1.3]{Eji19w}, 
so the geometric generic fiber $X_{\overline\eta}$ is integral. 
Since $A_{\overline k}$ is an abelian variety, by \cite[Theorem~3]{Lan55}, 
we see that there is an $y\in A(\mathbb F_q)$.
Let $F$ be the fiber of $a$ over $y$. 
Since $(X,\Delta)$ is $F$-split, 
$\left(X_{\overline\eta},\Delta|_{X_{\overline\eta}}\right)$
is strongly $F$-regular by \cite[Theorem~6.3]{PZ19}, 
so we can apply Theorem~\ref{thm:decomp1}. 
Thus, $F$ is geometrically integral by \cite[Th\'eor\`eme~(12.2.4)]{Gro65}.
We first deal with case~$(a)$. 
Since $\dim F=\dim X- \frac{1}{2}b_1(X) \le 3$, we can take a resolution 
$\pi:\tilde F\to F$ of $F$. 
We show that $\kappa\left(\tilde F\right)=-\infty$. 
If $\kappa\left(\tilde F\right)\ge0$, 
then there is an effective $\mathbb Q$-divisor 
$\tilde E$ on $\tilde F$ with $\tilde E\sim_{\mathbb Q} K_{\tilde F}$. 
Then $0 \le E:=\pi_*\tilde E\sim_{\mathbb Q} K_F$, 
so $-E-\Delta_F$ is nef, and hence $E=\Delta_F=0$ and $K_F\sim_{\mathbb Q}0$.
This means that $\Delta=0$ and $K_X\sim_{\mathbb Q}0$ by Theorem~\ref{thm:decomp1}, 
but this contradicts the assumption. 
Thus $\kappa\left(\tilde F\right)=-\infty$. 
Then we see from \cite[Theorem~1.3]{BF23} that $F(\mathbb F_q)\ne\emptyset$, 
so $X(\mathbb F_q)\ne\emptyset$. 
Next, we consider case~$(b)$. 
If $\dim F=1$, then $\hat q(F)=0$ and $K_F\equiv 0$, a contradiction. 
When $\dim F=2$, $F$ is a Gorenstein surface 
with strongly $F$-regular singularities such that 
$K_F$ is a numerically trivial divisor, 
so $F$ is canonical, and hence it follows from \cite[Proposition~3.4]{BF23} 
that $F(\mathbb F_q)\ne\emptyset$. 
Thus $X(\mathbb F_q)\ne\emptyset$. 
\end{proof}
\section{Algebraic fiber spaces over the projective line}
\label{section:P^1}
In this section, we show that Theorem~\ref{thm:decomp1-intro} (Theorem~\ref{thm:decomp1}) holds over the projective line without the assumption that the geometric generic fiber is strongly $F$-regular.  

We work over a perfect field $k$ of characteristic $p>0$. 
\begin{thm} \label{thm:decomp P^1}
Let $(X,\Delta)$ be a strongly $F$-regular projective pair
and let $Y$ be the projective line. 
Let $f:X\to Y$ be a surjective morphism. 
Suppose that 
\begin{itemize}
\item $K_X+\Delta$ is $\mathbb Z_{(p)}$-Cartier, and 
\item $-K_{X/Y}-\Delta$ is nef. 
\end{itemize}
Then $(X,\Delta) \cong (X_y,\Delta|_{X_y}) \times_k Y$ 
as $Y$-schemes, where $X_y$ is the fiber of $f$ over a $k$-rational point $y\in Y(k)$. 
\end{thm}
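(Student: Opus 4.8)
The plan is to reduce the statement to Theorem~\ref{thm:decomp1}(1) (equivalently to Theorem~\ref{thm:num flat}) by proving that, over $\mathbb P^1$, the nefness of $-K_{X/Y}-\Delta$ already forces the geometric generic fibre $(X_{\overline\eta},\Delta|_{X_{\overline\eta}})$ to be strongly $F$-regular; once this is in hand, since $Y\cong\mathbb P^1$ is separably rationally connected, Theorem~\ref{thm:decomp1}(1) yields $(X,\Delta)\cong(F,\Gamma)\times_k Y$ with $(F,\Gamma)$ a strongly $F$-regular pair, and identifying $(F,\Gamma)$ with $(X_y,\Delta|_{X_y})$ for a $k$-rational point $y$ finishes the argument. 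First I would record the elementary reductions: a surjection $f\colon X\to\mathbb P^1$ from a variety to a smooth curve is automatically flat with all fibres of pure dimension $\dim X-1$, and since $X$ is Cohen--Macaulay (being strongly $F$-regular) the fibres are Cohen--Macaulay as well; moreover, writing $F$ for a fibre, one has $-(K_X+\Delta)=-(K_{X/Y}+\Delta)-f^*K_{\mathbb P^1}=L+2F$ with $L:=-(K_{X/Y}+\Delta)$ nef, so $-(K_X+\Delta)$ is itself nef. As a fallback I would also fix an $f$-ample Cartier divisor $A$ on $X$ and aim, alternatively, to prove directly that $f_*\mathcal O_X(mA)$ is numerically flat, hence (on $\mathbb P^1$) a direct sum of copies of $\mathcal O_{\mathbb P^1}$, for every $m\ge 0$: then $\bigoplus_{m\ge 0}f_*\mathcal O_X(mA)$ descends to an algebra over $\operatorname{Spec}k$, giving $X\cong X_y\times_k\mathbb P^1$, and $\Delta$ descends by \cite[Lemma~8.4]{PZ19}.

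For the key step I would start from the observations that the generic fibre $(X_\eta,\Delta|_{X_\eta})$ over the function field $K$ of $\mathbb P^1$ is strongly $F$-regular (strong $F$-regularity being stable under localization) and that $-(K_{X_\eta}+\Delta|_{X_\eta})=(-K_{X/Y}-\Delta)|_{X_\eta}$ is nef, and then argue that for a projective pair over the $F$-finite field $K$ these two properties already imply geometric strong $F$-regularity; equivalently, by \cite[Theorem~B]{PSZ18}, that the general closed fibre — taken over its residue field, which is a finite, hence perfect, extension of $k$ — is strongly $F$-regular. This is the heart of the matter, and is exactly where the hypothesis $Y\cong\mathbb P^1$ (as opposed to a curve of higher genus, where counterexamples exist) must enter. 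Concretely I would expect to rule out the pathological geometric fibres (e.g. non-reduced ones coming from inseparability of $\overline K/K$) by showing that such a degeneration would produce a curve contained in a fibre of $f$ along which $-K_{X/Y}-\Delta$ is negative, contradicting nefness; this is the step I expect to be the main obstacle, and it should rely on a careful analysis of the purely inseparable base change of $X_\eta$ together with the structure of the relative anti-canonical, in the spirit of \cite{EP23}.

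If this clean reduction proves too delicate, the self-contained alternative is to run the positivity estimates of Section~\ref{section:positivity theorems} over a dense open $V\subseteq\mathbb P^1$ in the style of the proof of Theorem~\ref{thm:num flat}: the nef (equivalently, weakly positive over $V$, equivalently numerically flat after twisting down the determinant) conclusion would come from Theorem~\ref{thm:wp}, and one pins down the determinant by computing $\deg f_*\mathcal O_X(mA)$ — using that on $\mathbb P^1$ a nef vector bundle with numerically trivial determinant is numerically flat. The difficulty there is the same one in another guise: Theorem~\ref{thm:wp} wants geometric $F$-purity of the fibres over $V$, which is precisely what is unavailable, so one would substitute for the relative trace map the \emph{absolute} trace $\phi_{(X,\Delta)}^{(e)}$ of the strongly $F$-regular pair $(X,\Delta)$ — which is genuinely surjective, and stays surjective after push-forward by $f$ once twisted by a sufficiently $f$-ample divisor (Proposition~\ref{prop:surj}). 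The cost is an extra twist by $(p^e-1)f^*K_{\mathbb P^1}$, but its $O(1)$ contribution to degrees on $\mathbb P^1$ washes out in the limit $e\to\infty$ defining the invariant $t_V(-,H)$ of Definition~\ref{defn:t}, so the chain of inequalities of Theorem~\ref{thm:num flat} goes through to give $t(f_*\mathcal O_X(mA),\mathcal O_{\mathbb P^1}(1))\ge 0$ together with the required numerical triviality of the determinant, hence numerical flatness and the desired product decomposition.
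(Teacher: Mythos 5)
There is a genuine gap, and it sits exactly where you flagged it. Your primary route requires showing that strong $F$-regularity of the total pair $(X,\Delta)$ plus nefness of $-K_{X/Y}-\Delta$ over $\mathbb P^1$ forces the \emph{geometric} generic fibre to be strongly $F$-regular, but you offer no mechanism for this: failure of geometric strong $F$-regularity (or of geometric reducedness/normality) under the purely inseparable extension $\overline{K}/K$ is not detected by negativity of $-K_{X/Y}-\Delta$ on curves in fibres, so the sketched contradiction cannot get off the ground. (Compare Example~\ref{eg:F-pure but not SFR}: there $-K_{X/Y}-\Delta$ is nef and big over $\mathbb P^1$ yet the geometric generic fibre is not strongly $F$-regular; that example only evades the theorem because $\Delta$ has a coefficient-one component, which shows the implication you want cannot come from nefness alone and would have to exploit the strong $F$-regularity of $(X,\Delta)$ in an essential, currently unspecified way.) A posteriori the claim is true, but it appears to be as hard as the theorem itself, and the paper deliberately does not prove it. The fallback route has its own problems: Theorem~\ref{thm:wp} genuinely needs geometric $F$-purity of the fibres over $V$ (see Remark~\ref{rem:surj}), and replacing the relative trace by the absolute one changes ${F_Y^e}^*f_*(\cdot)$ into ${F_Y^e}_*f^{(e)}_*(\cdot)$, for which the needed lower degree bounds are exactly what one is trying to prove; moreover the twist by $(p^e-1)f^*K_{\mathbb P^1}$ has degree $2(p^e-1)$, not $O(1)$, and contributes a nonzero constant to the limit defining $t_V$, so it does not wash out. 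Finally, neither route explains how to pin down $\det f_*\mathcal O_X(mA)$.

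For comparison, the paper's proof avoids the relative Frobenius machinery entirely. It first uses Theorem~\ref{thm:PS-type} (which needs only $F$-purity of $(X,\Delta)$ itself) to show every $f$-ample effective divisor is semi-ample (Theorem~\ref{thm:semi-ample}); it then builds $L=\mu\tilde L-\nu f^*y$ with $L^{d+1}=0$, deduces nefness of $L$ and then \emph{anti}-nefness of $f_*\mathcal O_X(mL)$ from the intersection computation $0=L^{d+1}=L^d\cdot\Gamma+L^d\cdot f^*y$; it obtains nefness of $f_*\mathcal O_X(mL)$ by pulling back along the $l$-th toric Frobenius of $\mathbb P^1$ and applying the global generation theorem for $S^0f_*$ of \cite{Eji23}, which yields $a_i\ge -2/l$ and hence $a_i\ge0$; and it kills the Stein factorization by a separate argument. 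These steps---especially the $L^{d+1}=0$ trick and the toric Frobenius bound, which are where $Y\cong\mathbb P^1$ actually enters---are absent from your proposal and would need to be supplied.
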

We prove this theorem by mimicking an argument in \cite[\S 7]{EP23} that 
imitates an argument in \cite[\S 5]{PZ19}. 
\begin{thm} \label{thm:semi-ample}
Let $(X,\Delta)$ be a $F$-pure projective pair
and let $Y$ be a projective variety. 
Let $f:X\to Y$ be a morphism. 
Let $L$ be a nef $\mathbb Q$-Cartier divisor on $X$. 
Suppose that 
\begin{itemize}
\item $K_X+\Delta$ is $\mathbb Z_{(p)}$-Cartier, and 
\item $K_X+\Delta+L$ is $f$-ample. 
\end{itemize}
Let $H$ be an ample Cartier divisor on $Y$ and let $j$ be the smallest 
positive integer such that $|jH|$ is free. 
Then $K_X+\Delta+L+(jn+1)f^*H$ is semi-ample, where $n:=\dim Y$. 
\end{thm}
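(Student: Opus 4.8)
The plan is to deduce the semi-ampleness of $D:=K_X+\Delta+L+(jn+1)f^*H$ from the global generation statement of Theorem~\ref{thm:PS-type} combined with relative global generation for the $f$-ample divisor $K_X+\Delta+L$. Concretely, it suffices to exhibit a single integer $m>0$ for which $mD$ is an integral Cartier divisor and $\mathcal O_X(mD)$ is generated by its global sections.

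First I would invoke Theorem~\ref{thm:PS-type} with $M:=L$, with the given ample Cartier divisor $H$ on $Y$ and its associated $j$, with the open set taken to be $V:=Y$ (this is legitimate precisely because $(X,\Delta)$ is assumed $F$-pure, i.e.\ $F$-pure on all of $X$), and with $N:=0$. This produces an $m_0\in\mathbb Z_{>0}$ such that, for every $m\ge m_0$ with $m(K_X+\Delta)$ and $mL$ Cartier and every $l\ge m(jn+1)$, the sheaf $f_*\mathcal O_X(m(K_X+\Delta+L))\otimes\mathcal O_Y(lH)$ is globally generated on $Y$. Taking $l=m(jn+1)$ shows that
$$
\mathcal G_m:=f_*\mathcal O_X\!\big(m(K_X+\Delta+L)\big)\otimes\mathcal O_Y\!\big(m(jn+1)H\big)
$$
is globally generated.

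Second, since $K_X+\Delta+L$ is $f$-ample, standard relative global generation (cf.\ \cite[Example~1.8.24]{Laz04I}) yields an $m_1\in\mathbb Z_{>0}$ such that for every $m\ge m_1$ with $m(K_X+\Delta+L)$ Cartier the evaluation morphism $f^*f_*\mathcal O_X(m(K_X+\Delta+L))\to\mathcal O_X(m(K_X+\Delta+L))$ is surjective. Now choose $m\ge\max\{m_0,m_1\}$ with $m(K_X+\Delta)$ and $mL$ both Cartier; such $m$ exist and can be taken arbitrarily large because $K_X+\Delta$ is $\mathbb Z_{(p)}$-Cartier and $L$ is $\mathbb Q$-Cartier, and for such $m$ the divisor $mD$ is an honest Cartier divisor. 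Pulling $\mathcal G_m$ back along $f$ (the pullback of a globally generated sheaf is globally generated) and twisting the above surjection by $\mathcal O_X(m(jn+1)f^*H)$ gives a surjection
\begin{align*}
f^*\mathcal G_m
&= f^*f_*\mathcal O_X\!\big(m(K_X+\Delta+L)\big)\otimes\mathcal O_X\!\big(m(jn+1)f^*H\big)\\
&\twoheadrightarrow \mathcal O_X\!\big(m(K_X+\Delta+L)+m(jn+1)f^*H\big)=\mathcal O_X(mD).
\end{align*}
Being a quotient of a globally generated sheaf, $\mathcal O_X(mD)$ is globally generated, hence $mD$ is base-point free and $D$ is semi-ample.

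The point requiring the most care is the application of Theorem~\ref{thm:PS-type}: one must check that its hypotheses hold with $V=Y$ and $M=L$, i.e.\ that $\Delta$ is $\mathbb Z_{(p)}$-AC and $(X,\Delta)$ is $F$-pure on all of $X$ (both packaged into the assumption that $(X,\Delta)$ is an $F$-pure pair), that $K_X+\Delta$ is $\mathbb Q$-Cartier, and that $K_X+\Delta+L$ is $f$-ample. Everything else---relative global generation for an $f$-ample divisor and the elementary stability of global generation under pullback and passage to quotients---is routine and already used elsewhere in the paper.
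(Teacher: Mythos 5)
Your proposal is correct and follows essentially the same route as the paper's proof: apply Theorem~\ref{thm:PS-type} with $M=L$, $N=0$, $V=Y$ and $l=m(jn+1)$ to get global generation of $f_*\mathcal O_X(m(K_X+\Delta+L))(m(jn+1)H)$, then use the $f$-ampleness of $K_X+\Delta+L$ to make the evaluation map surjective and conclude that $|m(K_X+\Delta+L+(jn+1)f^*H)|$ is free. The only cosmetic difference is that you twist by $m(jn+1)f^*H$ after forming the evaluation surjection rather than before, which is the same thing by the projection formula.
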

\begin{proof}
Let $m$ be an integer large and divisible enough. 
Then by Theorem~\ref{thm:PS-type}, we see that 
$$
f_*\mathcal O_X(m(K_X+\Delta+L)) (m(jn+1)H)
\cong 
f_*\mathcal O_X(m(K_X+\Delta+L+(jn+1)f^*H))
$$
is globally generated. By the $f$-ampleness, the natural morphism 
$$
f^* f_*\mathcal O_X(m(K_X+\Delta+L+(jn+1)f^*H))
\twoheadrightarrow 
\mathcal O_X(m(K_X+\Delta+L+(jn+1)f^*H))
$$
is surjective. Thus, $|m(K_X+\Delta+L+(jn+1)f^*H)|$
is free. 
\end{proof}
Note that if $Y\cong\mathbb P^1$ and $H \in|\mathcal O_Y(1)|$, 
then we see that $K_{X/Y}+\Delta+L$ is semi-ample. 
\begin{proof}[Proof of Theorem~\ref{thm:decomp P^1}]
\setcounter{step}{0}
\begin{step} \label{step:eff nef}
In this step, we show that every $f$-ample effective 
$\mathbb Q$-Cartier divisor on $X$ is semi-ample. 
Let $\Gamma$ be an effective $\mathbb Q$-Cartier divisor on $X$. 
Then there is an $\varepsilon\in\mathbb Q_{>0}$ such that 
$\varepsilon \Gamma$ is $\mathbb Z_{(p)}$-Cartier 
and $(X,\Delta+\varepsilon\Gamma)$ is $F$-pure. 
Then 
$$
\underbrace{\varepsilon\Gamma}_{\textup{$f$-ample}}
\sim K_{X/Y}+\Delta + \varepsilon\Gamma \underbrace{-K_{X/Y}-\Delta}_{\textup{nef}}, 
$$
so by Theorem~\ref{thm:semi-ample}, we see that $\Gamma$ is semi-ample. 
\end{step}
Put $d:=\dim X-1$. 
Let $\tilde L$ be a very ample Cartier divisor on $X$ such that 
$R^if_*\mathcal O_X(\tilde L)=0$ for each $i>0$. 
Let $y\in Y$ be a closed point. 
Set $L:=\mu\tilde L -\nu f^*y$, where $\mu:=(d+1)\tilde L^d\cdot f^*y$ 
and $\nu:=\tilde L^{d+1}$. 
Then $L$ is $f$-ample and $L^{d+1}=0$. 
\begin{step} \label{step:L nef}
In this step, we prove that $L$ is nef. 
Let $A$ be an ample $\mathbb Z_{(p)}$-divisor on $Y$. 
It is enough to show that $L+f^*A$ is nef. 
By an argument in \cite[Proof of Theorem~5.4]{PZ19}, 
we see that $\Gamma\sim L+f^*A$ for some effective $\mathbb Z_{(p)}$-Cartier divisor $\Gamma$ on $X$.  
Since $L$ is $f$-ample, so is $\Gamma$. 
Hence, $\Gamma$ is nef by Step~\ref{step:eff nef}, 
and so $L+f^*A$ is nef. 
\end{step}
\begin{step} \label{step:anti-nef}
In this step, we show that $f_*\mathcal O_X(mL)$ is anti-nef 
for each $m\in\mathbb Z_{>0}$. 
Fix an $m\in\mathbb Z_{>0}$. 
Since $Y\cong\mathbb P^1$, it follows from Grothendieck's theorem that 
$$
f_*\mathcal O_X(mL)
\cong 
\mathcal O_Y(a_1) \oplus \cdots \oplus \mathcal O_Y(a_r)
$$
for some $a_1,\ldots, a_r\in\mathbb Z$. 
We show that $a_i\le 0$ for each $i$. 
Suppose that $a_i \ge 1$. 
Then for a $k$-rational point $y\in Y$, we have 
$$
0 \ne \mathcal O_Y(a_i)(-y) 
\subseteq H^0(Y,f_*\mathcal O_Y(mL -f^*y))
\cong H^0(X, \mathcal O_Y(mL-f^*y)), 
$$
so there is a member $\Gamma\in|mL-f^*y|$.
Then 
$$
0=L^{d+1}
=L^d(\Gamma +f^*y) 
=\underbrace{L^d\cdot\Gamma}_{\textup{$\ge0$ by Step~\ref{step:L nef}}} + \underbrace{L^d\cdot f^*y}_{\textup{$>0$ by $f$-ampleness of $L$}}
>0, 
$$
a contradiction. Thus, $a_i\le0$ for each $i$. 
\end{step}
\begin{step} \label{step:nef}
In this step, we prove that there is an $m_0\in\mathbb Z_{>0}$ such that 
$f_*\mathcal O_X(mL)$ is nef for each $m\ge m_0$. 
Since $L$ is $f$-ample, there is an $m_0$ such that 
$$
S^0f_*(\sigma(X,\Delta)\otimes \mathcal O_X(mL))
= f_*\mathcal O_X(mL)
$$
for each $m\ge m_0$ (for the definition of $S^0f_*(\sigma(X,\Delta)\otimes \mathcal O_X(mL))$, see \cite[Definition~2.14]{HX15}). 
Fix an $m\ge m_0$. Using Grothendieck's theorem, we obtain the decomposition 
$$
f_*\mathcal O_X(mL) 
\cong 
\mathcal O_Y(a_1) \oplus \cdots \oplus \mathcal O_Y(a_r)
$$
for some $a_1,\ldots,a_r\in\mathbb Z$. 
Take an $l\ge 3$ with $p\nmid l$. 
We define $\pi:Y\to Y$ as the $l$-th toric Frobenius morphism:
$$
\pi: Y \ni (a_0:a_1) \mapsto \left( a_0^l:a_1^l \right) \in Y.
$$
Let $g:Z\to Y$ be the morphism obtained by the base change of 
$f:X\to Y$ along $\pi:Y\to Y$. 
Let $\Delta_Z$ and $L_Z$ be the pullback of $\Delta$ and $L$ to $Z$, 
respectively. 
Put $V:=Y\setminus\{(0:1),(1:0)\}$. 
Then $\pi$ is \'etale over $V$, so one can easily see that 
\begin{align*}
S^0g_*(\sigma(Z,\Delta_Z)\otimes\mathcal O_Z(mL_Z))|_V
& \cong \pi|_V^*S^0f_*(\sigma(X,\Delta)\otimes\mathcal O_X(mL)) |_V
\\ & \cong \pi|_V^*f_*\mathcal O_X(mL)|_V
 \cong g_*\mathcal O_Z(mL_Z)|_V. 
\end{align*}
Thus, the natural inclusion 
$$
S^0g_*(\sigma(Z,\Delta_Z)\otimes\mathcal O_Z(mL_Z))
\hookrightarrow g_*\mathcal O_Z(mL_Z) 
\cong \mathcal O_Y(la_1) \oplus \cdots \oplus \mathcal O_Y(la_r)
$$
is generically isomorphic. 
Since 
\begin{align*}
mL_Z -K_Z-\Delta_Z
& \sim mL_Z -K_{Z/Y}-\Delta_Z -g^*K_Y
\\ & \sim \underbrace{mL_Z}_{\textup{nef and $g$-ample}} \underbrace{-\tau^*(K_{X/Y}+\Delta)}_{\textup{nef}} \underbrace{-g^*K_Y}_{\textup{nef}}
\end{align*}
is nef and $g$-ample, where $\tau:Z\to X$ is the induced morphism, 
we see from \cite[Theorem~1.2]{Eji23} that 
$$
S^0g_*(\sigma(Z,\Delta_Z)\otimes\mathcal O_Z(mL_Z)) \otimes \mathcal O_Y(2)
$$
is globally generated. 
Note that \cite[Theorem~1.2]{Eji23} requires that $Z$ is normal, 
but the same statement can be proved for the case when 
$Z$ satisfies $S_2$ and $G_1$ by the same argument. 
Note also that $Z$ satisfies $S_2$ and $G_1$, 
since $\pi$ is (and so $\tau$ is) a Gorenstein morphism. 
Therefore, we obtain that $la_i+2 \ge 0$, 
so $a_i \ge -\frac{2}{l} \ge -\frac{2}{3}$, 
and hence $a_i \ge 0$ for each $i$. 
\end{step}
Replacing $L$ by $mL$ for $m\ge m_0$, we may assume that 
$f_*\mathcal O_X(mL) \cong \mathcal O_Y^{\oplus r_m}$ for each 
$m\in\mathbb Z_{>0}$, where $r_m:=\mathrm{rank}\,f_*\mathcal O_X(mL)$.  
\begin{step}
In this step, we show that $f_*\mathcal O_X \cong \mathcal O_Y$. 
Let $f:X\xrightarrow{g} Z\xrightarrow{\pi} Y$ be the Stein factorization. 
Taking the base extension, we may assume that $k$ is algebraically closed. 
Let $a_X:X\to A_X$ (resp. $a_Z:Z\to A_Z$) be the Albanese morphism of $A$ 
(resp. $Z$). 
Since $-K_X-\Delta=-K_{X/Y}-\Delta-f^*K_Y$ is nef, 
$a_X$ is surjective by \cite[Theorem~1.3]{EP23}, 
so the induced morphism $A_X\to A_Z$ surjects onto $a_Z(Z)$, 
and hence $Z$ is an elliptic curve or $\mathbb P^1$. 
Suppose that $Z$ is an elliptic curve. 
Let $A$ be an ample Cartier divisor on $X$. 
Since 
$$
-K_X-\Delta +g^*\pi^*K_Y \sim -K_{X/Y}-\Delta
$$
is nef, for each $l\in\mathbb Z_{>0}$, there is an effective 
$\mathbb Z_{(p)}$-Cartier effective divisor $\Gamma$ such that 
$$
-l(K_X+\Delta) +lg^*\pi^*K_Y +A \sim_{\mathbb Z_{(p)}} \Gamma
$$
and that $\left(X, \Delta+\frac{1}{l}\Gamma\right)$ is $F$-pure. 
Then 
$$
K_X+\Delta+\frac{1}{l}\Gamma 
\equiv 
\underbrace{g^*\pi^*K_Y +\frac{1}{l}A}_{\textup{$f$-ample}}, 
$$
so it follows from \cite[Theorem~7.1]{EP23} that $g^*\pi^*K_Y +\frac{1}{l}A$
is nef. Taking $l\to\infty$, we get that $g^*\pi^*K_Y$ is nef, 
so $K_Y$ is also nef, a contradiction. 
Hence, $Z\cong\mathbb P^1$.
Since 
$
\pi_*g_*\mathcal O_X(L)
= f_*\mathcal O_X(L) 
\cong \mathcal O_Y^{\oplus r_1}, 
$
the natural surjective morphism 
$$
\pi^*\pi_*g_*\mathcal O_X(L) 
\twoheadrightarrow g_*\mathcal O_X(L)
$$
implies that $g_*\mathcal O_X(L)$ is globally generated. 
Also, by the same argument as that of Step~\ref{step:anti-nef}, 
we see that $g_*\mathcal O_X(L)$ is anti-nef, 
and hence $g_*\mathcal O_X(L) \cong \mathcal O_Z^{\oplus \rho}$ for some 
$\rho\in\mathbb Z_{>0}$. 
Then 
$$
\rho =h^0(Z,g_*\mathcal O_X(L))
=h^0(Y,\pi_*g_*\mathcal O_X(L))
=\mathrm{rank}\,\pi_*g_*\mathcal O_X(L) 
=\deg\pi \cdot \rho. 
$$
Thus $\pi$ is an isomorphism, and hence $f_*\mathcal O_X \cong \mathcal O_Y$. 
\end{step}
Since $L$ is $f$-ample and 
$f_*\mathcal O_X(mL) \cong \mathcal O_Y^{\oplus r_m}$ 
for each $m\ge 0$, we see that $X\cong Y\times_k X_y$ as $Y$-schemes, 
where $X_y$ is the fiber over a $k$-rational point $y\in Y(k)$. 
It follows from \cite[Lemma~8.4]{PZ19} that $\Delta=\mathrm{pr}_2^*\Delta|_{X_y}$. 
\end{proof}
\section{Examples} \label{section:examples}
In this section, we collect examples which show that assumptions in Theorem~\ref{thm:decomp1-intro} (\ref{thm:decomp1}) and the decomposition theorems cannot be dropped. 
\begin{eg} \label{eg:not F_p}
This example shows that when $k \not\subseteq \overline{\mathbb F_p}$ and $\pi^{\textup{\'et}}(Y)$ is not finite, there exists an algebraic fiber space that does not split after taking the base change along any proper morphism. 

Let $Y$ be an elliptic curve over an algebraically closed field $k\ne \overline{\mathbb F_p}$. Then there exists a numerically trivial line bundle $\mathcal L$ on $Y$ that is not torsion. Put $X:=\mathbb P_Y(\mathcal O_Y\oplus \mathcal L)$ and let $f:X\to Y$ be the natural morphism. Then, although $-K_{X/Y}$ is nef,  $f$ does not splits after taking the base change along any proper morphism $g:Z\to Y$, since $g^*\mathcal L \not\cong\mathcal O_Z$. 
\end{eg}
\begin{eg} \label{eg:indecomp}
In this example, we prove that $X$ in Example~\ref{eg:not F_p} also 
shows that Theorems~\ref{thm:F-split semi-ample},~\ref{thm:F-split nef}, 
\ref{thm:nef tangent} and~\ref{thm:hat q=1}
cannot be generalized to the case when $k\not\subseteq\overline{\mathbb F_p}$ 
and $-K_X$ is nef but not numerically equivalent to a semi-ample
$\mathbb Q$-divisor. 

We may assume that $Y$ is an ordinary elliptic curve. 
Then $X$ is $F$-split by \cite[Proposition~3.1]{GT16}. 
We show that $\hat q(X)=1$. Let $\pi:X'\to X$ be a finite \'etale 
morphism. Then one can easily see that $\pi$ coincides with the base change 
along $f$ of a finite \'etale cover of $Y$. Thus $\dim \mathrm{Alb}_{X'}=1$ 
and $\hat q(X)=1$. 
The nefness of $T_X$ follows from the exact sequence 
$$
0 \to \omega_{X/Y}^{-1} \to T_X \to f^*\omega_Y^{-1} \to 0. 
$$
Assume that $k$ is algebraically closed. 
Suppose that there are finite morphisms 
$$
F\times_k B \xrightarrow{\tau} W \xrightarrow{\sigma} X
$$
that satisfies common conditions in 
Theorems~\ref{thm:F-split semi-ample}, \ref{thm:F-split nef}, 
\ref{thm:nef tangent} and \ref{thm:hat q=1}. 
Then $\kappa(-\sigma^*K_X)=\kappa(-K_X)$ by \cite[Theorem~10.5]{Iit82}. 
Also, one can easily see that the pullback by a universal homeomorphism 
preserves the Iitaka--Kodaira dimension, so 
$$
\kappa(-K_F)
=\kappa(-K_{F\times_k B})
=\kappa(-\tau^*\sigma^*K_X)
=\kappa(-K_X). 
$$
Since $\hat q(F)=0$, we have $F\cong\mathbb P^1$, so $\kappa(-K_F)=1$. 
However, by construction, one can easily check that $\kappa(-K_X)=0$, 
a contradiction. 
\end{eg}
\begin{eg} \label{eg:torsor need}
This example shows that ``the torsor part'' of Theorems~\ref{thm:F-split semi-ample}, \ref{thm:F-split nef}, \ref{thm:nef tangent} and \ref{thm:hat q=1} cannot be dropped 
(see also \cite[\S 13]{PZ19} and \cite{Lam20}). 
More precisely, there exists an $F$-split projective surface $X$ 
over $\overline{\mathbb F_p}$ such that 
\begin{itemize}
\item the tangent bundle $T_X$ of $X$ is nef, 
\item $-K_X$ is semi-ample, 
\item $\hat q(X)=1$ and 
\item there is no finite \'etale morphism 
$\mathbb P^1\times_k E\to X$ from the product of $\mathbb P^1$ and an elliptic 
curve $E$. 
\end{itemize}

Let $Y$ be an ordinary elliptic curve over $\overline{\mathbb F_p}$. 
Let $\mathcal L$ be a torsion line bundle of order $p$ 
(since $Y$ is ordinary, such a line bundle exists).
Set $X:=\mathbb P_Y(\mathcal O_Y\oplus\mathcal L)$. 
Then $X$ is $F$-split by \cite[Proposition~3.1]{GT16}. 
Also, $-K_X$ is semi-ample, since 
$
X_{Y^1}
\cong \mathbb P_Y(\mathcal O_Y\oplus\mathcal O_Y)
\cong \mathbb P^1 \times_k Y. 
$
The nefness of $T_X$ and $\hat q(X)=1$ follows from the same argument as 
that in Example~\ref{eg:indecomp}. 
Suppose that there exists a finite \'etale morphism 
$\pi:Z:=\mathbb P^1\times_k E\to X$. 
Then one can easily see that there exists a finite \'etale morphism 
$\sigma:E\to Y$ such that $\pi$ coincides with the base change of $\sigma$ 
along $f$. 
Then 
$$
3=h^0\left(\mathbb P^1, \omega_{\mathbb P^1}^{-1}\right)
=h^0\left(Z,\omega_Z^{-1}\right) 
=h^0\left(X_E, \omega_{X_E}^{-1} \right)
=h^0(E, \sigma^*\mathcal L^{-1}\oplus\mathcal O_E\oplus\sigma^*\mathcal L), 
$$
so $\sigma^*\mathcal L\cong\mathcal O_E$. 
On the other hand, by \cite[Corollary~1.7]{Oda71}, 
$\mathcal O_Y$ and $\mathcal L$ are direct summands of ${F_Y}_*\mathcal O_Y$. 
Consider the following commutative diagram:
$$
\xymatrix{ 
	E \ar[r]^{F_E} \ar[d]_\sigma & E \ar[d]^\sigma \\ 
	Y \ar[r]_{F_Y} & Y
}
$$
This is cartesian, since $\sigma$ is \'etale. 
Thus, 
$$
\mathcal O_E^{\oplus 2}
\cong 
\mathcal O_E \oplus \sigma^* \mathcal L
\cong \sigma^*(\mathcal O_Y \oplus \mathcal L)
\subseteq \sigma^* {F_Y}_* \mathcal O_Y
\cong {F_E}_*\mathcal O_E,  
$$
so $h^0(E, \mathcal O_E)\ge 2$, a contradiction. 
\end{eg}

\begin{eg} \label{eg:not SFR}
This example shows that if the geometric generic fiber is not strongly $F$-regular, then there exists an algebraic fiber space that does not split after taking the base change along any proper surjective morphism. 

Let $k$ be an algebraically closed field of characteristic $p=2$ or $3$. 
Then it is known that there exists an algebraic fiber space $f:X\to Y$, 
where $X$ is a Raynaud surface and 
$Y$ is a Tango curve of genus at least two, 
such that 
\begin{itemize}
\item the geometric generic fiber $X_{\overline\eta}$ has a cusp, and 
\item $-K_{X/Y}\sim f^*D$ for an ample divisor $D$ on $Y$ 
\end{itemize}
(See \cite{Tan72} and either \cite{Ray78}, \cite{Muk13} or \cite[Theorem~3.6]{Xie10}).
Thus, $X_{\overline\eta}$ is not strongly $F$-regular (and not $F$-pure) 
and $-K_{X/Y}$ is nef. 
Suppose that there exists a proper surjective morphism $g:Z\to Y$ such that 
there exists an isomorphism $\sigma:X\times_Y Z \to Z\times_k F$ 
of $Z$-schemes, where $F$ is a closed fiber of $f$. 
Note that $K_F\sim 0$, since $K_{X/Y}\sim f^*(-D)$. 
We use the following notation:
$$
\xymatrix{
	X \ar[d]_-f \ar@{}[dr]|\square & X\times_Y Z \ar[l]_-\varphi \ar[d] \ar[r]_-\cong^-\sigma & Z\times_k F \ar@{}[dr]|\square \ar[d]_-h \ar[r]^-i & F \ar[d] \\
	Y  & Z \ar[l]^-g \ar@{=}[r] & Z \ar[r] & \mathrm{Spec}\,k
}
$$
Then 
$$
-h^* g^*D
\sim -\sigma_*\varphi^* f^*D
\sim \sigma_*\varphi^* K_{X/Y} 
\sim \sigma_*K_{X\times_Y Z/Z} 
\sim K_{Z\times_k F/Z}
\sim i^*K_F
\sim 0.
$$
This is a contradiction, as $h^*g^*D$ has positive Iitaka--Kodaira dimension. 
\end{eg}
\begin{eg} \label{eg:F-pure but not SFR}
This example implies that there exists a pair $(X,\Delta)$ and an algebraic fiber space $f:X\to Y$ such that 
\begin{itemize}
\item $-K_{X/Y}-\Delta$ is nef, 
\item $\left(X_{\overline\eta},\Delta|_{X_{\overline\eta}} \right)$ is $F$-pure but not strongly $F$-regular, where $X_{\overline\eta}$ is the geometric generic fiber, and 
\item $f$ does not split after taking the base change along any proper surjective morphism. 
\end{itemize} 

Let $k$ be a perfect field of positive characteristic. 
Put $Y:=\mathbb P^1$ and 
$X:=\mathbb P_Y(\mathcal O_{\mathbb P^1}\oplus\mathcal O_{\mathbb P^1}(-1))$.  
Let $f:X\to Y$ be the natural morphism. 
Let $C_-$ (resp. $C_0$) be the section of $f$ 
corresponding to the quotient 
$\mathcal O_{\mathbb P^1}\oplus \mathcal O_{\mathbb P^1}(-1) 
\twoheadrightarrow \mathcal O_{\mathbb P^1}(-1)$ 
(resp. 
$\mathcal O_{\mathbb P^1}\oplus \mathcal O_{\mathbb P^1}(-1) 
\twoheadrightarrow \mathcal O_{\mathbb P^1}$). 
Then $C_0\sim C_- +f^*y$, where $y$ is a $k$-rational point of $Y$. 
Set $\Delta:=C_-$. 
Then 
$$
-K_{X/Y}-\Delta
\sim 2C_- +f^*y -C_-
=C_- +f^*y
\sim C_0. 
$$
Since $(C_0)^2=(C_-+f^*y)\cdot C_0 =C_-\cdot C_0 +f^*y \cdot C_0 =1$, 
$-K_{X/Y}-\Delta$ is nef (and big). 
However, by construction, $f$ does not split after taking the base change 
along any proper surjective morphism $Z\to Y$. 
\end{eg}
\bibliographystyle{alpha}
\bibliography{ref.bib}
\end{document}